
\documentclass{article}%
\usepackage{amsfonts}
\usepackage{amsmath}
\usepackage{amssymb}
\usepackage{xcolor}
\usepackage{graphicx}%
\setcounter{MaxMatrixCols}{30}
\providecommand{\U}[1]{\protect\rule{.1in}{.1in}}
\numberwithin{equation}{section}
\newtheorem{theorem}{Theorem}[section]

\newtheorem{corollary}[theorem]{Corollary}

\newtheorem{lemma}[theorem]{Lemma}

\newtheorem{proposition}[theorem]{Proposition}
\newtheorem{remark}[theorem]{Remark}

\newenvironment{proof}[1][Proof]{\noindent\textbf{#1.} }{\ \rule{0.5em}{0.5em}}

\newcommand{\EEE}{\color{black}} 
\begin{document}

\title{Second-Order $\Gamma$-Limit for the Cahn--Hilliard Functional with Dirichlet
Boundary Conditions, I}
\author{Irene Fonseca\\Department of Mathematical Sciences,\\Carnegie Mellon University,\\Pittsburgh PA 15213-3890, USA
\and Leonard Kreutz\\ School of Computation, Information and Technology,\\ Technical University of Munich \\  Garching bei M\"unchen, 85748, Germany 
\and Giovanni Leoni\\Department of Mathematical Sciences, \\Carnegie Mellon University, \\Pittsburgh PA 15213-3890, USA}
\maketitle

\begin{abstract}
This paper addresses the asymptotic development of order 2 by the $\Gamma$ -convergence of the Cahn--Hilliard functional with Dirichlet boundary
conditions. The Dirichlet data are assumed to be well separated from one of
the two wells. In the case where there are no interfaces, it is shown that
there is a transition layer near the boundary of the domain.

\end{abstract}

\section{Introduction}

In this paper, we study the second-order asymptotic development via $\Gamma$-convergence of the Cahn-Hilliard functional
\begin{equation}
F_{\varepsilon}(u):=\int_{\Omega}(W(u)+\varepsilon^{2}|\nabla u|^{2}%
)\,dx,\quad u\in H^{1}(\Omega), \label{functional cahn-hilliard}%
\end{equation}
subject to the Dirichlet boundary condition%
\begin{equation}
\operatorname*{tr}u=g_{\varepsilon}\quad\text{on }\partial\Omega.
\label{dirichlet boundary conditions}%
\end{equation}
Here $W:\mathbb{R}\rightarrow\lbrack0,\infty)$ is a double-well potential
with
\begin{equation}
W^{-1}(\{0\})=\{a,b\}, \label{wells}%
\end{equation}
$\Omega\subset\mathbb{R}^{N}$ is an open, bounded set with a smooth boundary,
$N\geq2$, and $g_{\varepsilon}\in H^{1/2}(\partial\Omega)$.

We recall that, given a metric space $X$ and a family of functions
$\mathcal{F}_{\varepsilon}:X\rightarrow\lbrack-\infty,\infty]$ for
$\varepsilon>0$, \emph{the asymptotic development of order} $n$   via $\Gamma$-convergence is written as:%
\begin{equation}
\mathcal{F}_{\varepsilon}=\mathcal{F}^{(0)}+\varepsilon\mathcal{F}%
^{(1)}+\cdots+\varepsilon^{n}\mathcal{F}^{(n)}+o(\varepsilon^{n}).
\label{asymptotic development}%
\end{equation}
This expansion holds if we can find $\mathcal{F}^{(i)}%
:X\rightarrow\lbrack-\infty,\infty]$, $i=0,\ldots,n$, such that the functions
\[
\mathcal{F}_{\varepsilon}^{(i)}:=\frac{\mathcal{F}_{\varepsilon}^{(i-1)}%
-\inf_{X}\mathcal{F}^{(i-1)}}{\varepsilon},\quad \mathcal{F}_{\varepsilon}^{(0)}:=\mathcal{F}_{\varepsilon}%
\]
 are  well-defined and the family $\{\mathcal{F}_{\varepsilon}^{(i)}%
\}_{\varepsilon}$ $\Gamma$-converges to $\mathcal{F}^{(i)}$ as $\varepsilon
\rightarrow0^{+}$. 

 The notion of asymptotic expansion was introduced by Anzellotti and Baldo in 1993 \cite{anzellotti-baldo1993}. Observe that if we define
\begin{align*}
\mathcal{U}_i := \{\text{minimizers of } \mathcal{F}^{(i)} \}, 
\end{align*} 
it can be shown that 
\begin{align*}
\mathcal{F}^{(i)} =\infty \text{ on } X \setminus \mathcal{U}_{i-1}
\end{align*}
and the sets of minimizers satisfy the nested relationship 
\begin{align*}
\mathcal{U}_n \subseteq \mathcal{U}_{n-1} \subseteq \cdots \subseteq \mathcal{U}_0 = \{\text{limits of minimizers of } \mathcal{F}_\varepsilon\}\,.
\end{align*}
 In general, the above set inclusions can be shown to be strict. Therefore, leveraging the hierarchical structure of functionals $\mathcal{F}^{(i)}$, this framework provides a systematic selection criterion for the limits of the minimizers of functionals $\mathcal{F}_\varepsilon$.

In many cases, the powers of $\varepsilon$ in the asymptotic development (\ref{asymptotic development}) may be replaced by more general scales $\delta_{\varepsilon}^{(i)}$, where $\delta_{\varepsilon}^{(i)}>0$ for all $i=1,\ldots,m$ and $\varepsilon>0$, $\delta_\varepsilon^{(0)}:=1$, and $\sigma_\varepsilon^{(i)}:=\delta_{\varepsilon}%
^{(i)}/\delta_{\varepsilon}^{(i-1)}\rightarrow0$ as $\varepsilon
\rightarrow0^{+}$ for all $i=1,\ldots,m$, and the asymptotic expansion takes
the form
 \[
\mathcal{F}_{\varepsilon}=\mathcal{F}^{(0)}+\delta_{\varepsilon}%
^{(1)}\mathcal{F}^{(1)}+\cdots+\delta_{\varepsilon}^{(n)}\mathcal{F}%
^{(n)}+o(\delta_{\varepsilon}^{(n)}).
\] 
In this setting, the functions $\mathcal{F}_{\varepsilon}^{(i)}$ are defined
by
\[
 \mathcal{F}_{\varepsilon}^{(i)}:=\frac{\mathcal{F}_{\varepsilon}^{(i-1)}%
-\inf_{X}\mathcal{F}^{(i-1)}}{\sigma_\varepsilon^{(i)}}, \quad \mathcal{F}_{\varepsilon}^{(0)}:=\mathcal{F}_{\varepsilon}.
\]

The second-order asymptotic expansion of the Cahn-Hilliard functional
(\ref{functional cahn-hilliard}) subject to a mass constraint
\begin{equation}
\int_{\Omega}u(x)\,dx=m \label{mass constraint}%
\end{equation}
was studied by the third author and Murray in \cite{leoni-murray2016},
\cite{leoni-murray2019} in dimension $N\geq2$. With $X := L^{1}(\Omega)$ and
\begin{equation}
\mathcal{G}_{\varepsilon}(u):=\left\{
\begin{array}
[c]{ll}%
\int_{\Omega}(W(u)+\varepsilon^{2}|\nabla u|^{2})\,dx & \text{if }u\in
H^{1}(\Omega),\,\int_{\Omega}u\,dx=m,\\
\infty & \text{otherwise in }L^{1}(\Omega),
\end{array}
\right.  \label{functional LM}%
\end{equation}
they proved that, under appropriate hypotheses on $\Omega$ and $W$, if $W$ is
quadratic near the wells, then
\[
\mathcal{G}_{\varepsilon}^{(2)}(u)=\frac{1}{2}\frac{C_{W}^{2}(N-1)^{2}%
}{W^{\prime\prime}(a)(b-a)^{2}}\kappa_{u}^{2}+(C_{\operatorname*{sym}}%
+C_{W}\tau_{u})\kappa_{u}\operatorname*{P}(\{u=a\};\Omega),
\]
where $u$ is a minimizer of the first-order functional (see \cite{baldo1990},
\cite{fonseca-tartar1989}, \cite{modica-mortola1977}, \cite{modica1987},
\cite{sternberg1988}),
\[
\mathcal{G}^{(1)}(v):=\left\{
\begin{array}
[c]{ll}%
C_{W}\operatorname*{P}(\{v=a\};\Omega) & \text{if }v\in BV(\Omega
;\{a,b\}),\,\int_{\Omega}v\,dx=m,\\
\infty & \text{otherwise in }L^{1}(\Omega),
\end{array}
\right.
\]
$\tau_{u}\in\mathbb{R}$ is a constant related to the mass constraint
(\ref{mass constraint}), $\kappa_{u}$ and $\operatorname*{P}(\{u=a\};\Omega)$
are the constant mean curvature and the perimeter of the set $\{u=a\}$ in
$\Omega$, respectively, the constants $C_{W}$ and $C_{\operatorname*{sym}}$
are given by\footnote{Note that our constants $c_{W}$ and
$c_{\operatorname*{sym}}$ differ by correspond to the constants $2c_{W}$ and
$2c_{\operatorname*{sym}}$ in \cite{leoni-murray2016}, \cite{leoni-murray2019}%
.}%
\begin{equation}
C_{W}:=2\int_{a}^{b}W^{1/2}(\rho)\,d\rho, \label{cW definition}%
\end{equation}
and
\[
C_{\operatorname*{sym}}:=2\int_{\mathbb{R}}W(z_{c}(t))t\,dt,
\]
where $c$ is the central zero of $W^{\prime}$ (see (\ref{W' three zeroes})),
and for $\alpha\in\mathbb{R}$, $z_{\alpha}$ solves to the Cauchy problem
\begin{equation}
\left\{
\begin{array}
[c]{l}%
z_{\alpha}^{\prime}=W^{1/2}(z_{\alpha}),\\
z_{\alpha}(0)=\alpha.
\end{array}
\right.  \label{cauchy problem z alpha}%
\end{equation}

The third author and Murray in \cite{leoni-murray2016},
\cite{leoni-murray2019} also considered the case where $W$ exhibits
subquadratic growth near the wells. This scenario had been previously
analyzed by the first and third authors together with Dal Maso in
\cite{dalmaso-fonseca-leoni2015}, where they assumed both zero Dirichlet
boundary conditions and the mass constraint (\ref{mass constraint}).

In the case of Dirichlet boundary conditions
(\ref{dirichlet boundary conditions}), we take $X:= L^{1}(\Omega)$ and define
\[
\mathcal{F}_{\varepsilon}(u):=\left\{
\begin{array}
[c]{ll}%
\int_{\Omega}(W(u)+\varepsilon^{2}|\nabla u|^{2})\,dx & \text{if }u\in
H^{1}(\Omega),\,\operatorname*{tr}u=g_{\varepsilon}\text{ on }\partial
\Omega,\\
\infty & \text{otherwise in }L^{1}(\Omega).
\end{array}
\right.
\]
Under suitable assumptions on $\Omega$, $W$, and $g_{\varepsilon}$, Owen,
Rubinstein, and Sternberg \cite{owen-rubinstein-sternberg1990} showed that the first non-trivial scale is $\delta_{\varepsilon}^{(1)}=\varepsilon$, i.e.,
\[
\mathcal{F}_{\varepsilon}^{(1)}(u)=\left\{
\begin{array}
[c]{ll}%
\int_{\Omega}(\frac{1}{\varepsilon}W(u)+\varepsilon|\nabla u|^{2})\,dx &
\text{if }u\in H^{1}(\Omega),\,\operatorname*{tr}u=g_{\varepsilon}\text{ on
}\partial\Omega,\\
\infty & \text{otherwise in }L^{1}(\Omega),
\end{array}
\right.
\]
and that the functionals $\{\mathcal{F}_{\varepsilon}^{(1)}\}_{\varepsilon}$
$\Gamma$-converge as $\varepsilon\rightarrow0^{+}$ to%
\begin{equation}
\mathcal{F}^{(1)}(u):=\left\{
\begin{array}
[c]{ll}%
\begin{split}
&C_{W}\operatorname*{P}(\{u= a\};\Omega)\\&\quad+\int_{\partial\Omega}\operatorname*{d}%
\nolimits_{W}(\operatorname*{tr}u,g)\,d\mathcal{H}^{N-1} \end{split} & \text{if }u\in
BV(\Omega;\{a,b\}),\\
\infty & \text{otherwise in }L^{1}(\Omega),
\end{array}
\right.  \label{firstOrderFormalDefinition}%
\end{equation}
where $g_{\varepsilon}\rightarrow g$ in $L^{1}(\partial\Omega)$,
$\operatorname*{d}\nolimits_{W}$ is the geodesic distance determined by $W$
\begin{equation}
\operatorname*{d}\nolimits_{W}(r,s):=\left\{
\begin{array}
[c]{ll}%
2\left\vert \int_{r}^{s}W^{1/2}(\rho)\,d\rho\right\vert  & \text{if }%
r\in\{a,b\}\text{ or }s\in\{a,b\},\\
\infty & \text{otherwise,}%
\end{array}
\right.  \label{distance definition}%
\end{equation}
and the constant $C_{W}$ is given in (\ref{cW definition}). We also refer to
the recent work by Cristoferi and Gravina \cite{cristoferi-gravina2021}, who
addressed the vectorial case and considered potentials where the wells depend
on the spatial variable $x$, and to the work by Gazoulis \cite{gazoulis2024},
who studied the vectorial case under different settings.

We aim to extend the results of Owen, Rubinstein, and Sternberg
\cite{owen-rubinstein-sternberg1990} by determining the second-order
asymptotic expansion of $\mathcal{F}_{\varepsilon}$ via $\Gamma$-convergence,
assuming the boundary data $g_{\varepsilon}:\overline{\Omega}\rightarrow
\mathbb{R}$ stay away from one of the two wells $a$, $b$:%
\begin{equation}
a<\alpha_{-}\leq g_{\varepsilon}(x)\leq b\label{bounds g}%
\end{equation}
for all $x\in\overline{\Omega}$, all $\varepsilon\in(0,1)$, and some constant
$\alpha_{-}$. In this article, we consider only the case where the Dirichlet boundary datum is close to the value 
$b$ and far from $a$. The case where the boundary datum is close to $a$ and far from $b$ can be addressed using the same arguments. Under this hypothesis, when the constant $\alpha_{-}$ is
sufficiently close to $b$, the only minimizer of $\mathcal{F}^{(1)}$ is the
constant function $b$ (see Proposition \ref{proposition b minimizer} below).
Hence, we assume that
\begin{equation}
u_{0}\equiv b\quad\text{is the unique minimizer of }\mathcal{F}^{(1)}%
\text{.}\label{u0=b}%
\end{equation}
In this case, due to \eqref{firstOrderFormalDefinition}, we have 
\begin{align*}
\min \mathcal{F}^{(1)} = \int_{\partial\Omega}\operatorname*{d}%
\nolimits_{W}(b,g)\,d\mathcal{H}^{N-1}
\end{align*}
and we define%
\begin{align}
&  \mathcal{F}_{\varepsilon}^{(2)}(u):=\frac{\mathcal{F}_{\varepsilon}%
^{(1)}(u)-\min\mathcal{F}^{(1)}}{\varepsilon}\label{F 2 epsilon}\\
&  =\int_{\Omega}\left(  \frac{1}{\varepsilon^{2}}W(u)+|\nabla u|^{2}\right)
\,dx-\frac{1}{\varepsilon}\int_{\partial\Omega}\operatorname*{d}%
\nolimits_{W}(b,g)\,d\mathcal{H}^{N-1}\nonumber
\end{align}
if $u\in H^{1}(\Omega)$ and $\operatorname*{tr}u=g_{\varepsilon}$ on
$\partial\Omega$, and $\mathcal{F}_{\varepsilon}^{(2)}(u):=\infty$ otherwise
in $L^{1}(\Omega)$.

The main result of this paper is the following theorem:

\begin{theorem}
\label{theorem main}Let $\Omega\subset\mathbb{R}^{N}$ be an open, bounded,
connected set with boundary of class $C^{2,d}\ $, $0<d\leq1$. Assume that $W$
satisfies  \eqref{W_Smooth}-\eqref{W' three zeroes} and that
$g_{\varepsilon}$ satisfy \eqref{bounds g},
\eqref{g epsilon smooth}-\eqref{g epsilon -g bound}. Suppose also that
\eqref{u0=b} holds. Then
\begin{equation}
\mathcal{F}^{(2)}(u)=\int_{\partial\Omega}\kappa(y)\int_{0}^{\infty}%
2W^{1/2}(z_{g(y)}(s))z_{g(y)}^{\prime}(s)s\,ds\,d\mathcal{H}^{N-1}(y)
\label{second order}%
\end{equation}
if $u=b$ and $\mathcal{F}^{(2)}(u)=\infty$ otherwise in $L^{1}(\Omega)$. Here,
$\kappa$ is the mean curvature of $\partial\Omega$ and $z_{\alpha}$ is the
solution to the Cauchy problem \eqref{cauchy problem z alpha} with
$\alpha=g(y)$.

In particular, if $u_{\varepsilon}\in H^{1}(\Omega)$ is a minimizer of
\eqref{functional cahn-hilliard} subject to the Dirichlet boundary condition
\eqref{dirichlet boundary conditions}, then
\begin{align}
\int_{\Omega}(W(u_{\varepsilon})  &  +\varepsilon^{2}|\nabla u_{\varepsilon
}|^{2})\,dx=\varepsilon\int_{\partial\Omega}\operatorname*{d}\nolimits_{W}%
(b,g)\,d\mathcal{H}^{N-1}\label{sharp bound}\\
&  +\varepsilon^{2}\int_{\partial\Omega}\kappa(y)\int_{0}^{\infty}%
2W^{1/2}(z_{g(y)}(s))z_{g(y)}^{\prime}(s)s\,dsd\mathcal{H}^{N-1}%
(y)+o(\varepsilon^{2}).\nonumber
\end{align}

\end{theorem}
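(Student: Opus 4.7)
The plan is to execute the standard two-step $\Gamma$-convergence scheme, simplified substantially by hypothesis \eqref{u0=b}: since $\mathcal{F}^{(2)}$ is finite only at $u \equiv b$, it suffices to establish (i) compactness of sequences with $\sup_\varepsilon \mathcal{F}_\varepsilon^{(2)}(u_\varepsilon) < \infty$, (ii) a matching $\liminf$ bound along such sequences, and (iii) a recovery sequence. For (i), rearranging \eqref{F 2 epsilon} yields $\mathcal{F}_\varepsilon^{(1)}(u_\varepsilon) \le \min \mathcal{F}^{(1)} + O(\varepsilon)$, so the first-order $\Gamma$-convergence of Owen--Rubinstein--Sternberg together with the uniqueness in \eqref{u0=b} forces $u_\varepsilon \to b$ in $L^1(\Omega)$. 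This also gives the $\liminf$ lower bound $\mathcal{F}^{(2)}(u) = +\infty$ for $u \not\equiv b$.

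\textbf{Liminf inequality.} The heart of the argument is localization near $\partial\Omega$. Because $\partial\Omega$ is of class $C^{2,d}$, there exists $s_0 > 0$ such that $(y, s) \mapsto y + s\,\nu(y)$ (with $\nu$ the inward unit normal) is a $C^{1,d}$-diffeomorphism of $\partial\Omega \times [0, s_0)$ onto a tubular neighborhood $\Omega_{s_0}$, with Jacobian $J(y, s) = 1 - \kappa(y)\,s + O(s^2)$ and $\kappa$ the mean curvature. Setting $v_\varepsilon(y, t) := u_\varepsilon(y + \varepsilon t\,\nu(y))$ and using $|\nabla u_\varepsilon|^2 \ge |\partial_s u_\varepsilon|^2$, slicing yields
\[
\int_{\Omega_{s_0}}\!\Bigl( \tfrac{W(u_\varepsilon)}{\varepsilon^2} + |\nabla u_\varepsilon|^2 \Bigr)\,dx \;\ge\; \frac{1}{\varepsilon}\int_{\partial\Omega}\!\int_0^{s_0/\varepsilon}\! \bigl( W(v_\varepsilon) + |\partial_t v_\varepsilon|^2 \bigr)\,\bigl(1 - \varepsilon\kappa(y)\,t + O(\varepsilon^2 t^2)\bigr)\, dt\, d\mathcal{H}^{N-1}(y),
\]
while the bulk term on $\Omega \setminus \Omega_{s_0}$ is nonnegative and, by the compactness already proved, contributes $o(1)$ to $\mathcal{F}_\varepsilon^{(2)}$. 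The elementary bound $W(v) + (\partial_t v)^2 \ge 2 W^{1/2}(v)\,|\partial_t v|$ combined with $v_\varepsilon(y, 0) = g_\varepsilon(y)$ produces the leading $\varepsilon^{-1}$ contribution $\varepsilon^{-1}\int_{\partial\Omega} d_W(b, g_\varepsilon)\, d\mathcal{H}^{N-1}$, which cancels $\varepsilon^{-1}\int_{\partial\Omega} d_W(b, g)\, d\mathcal{H}^{N-1}$ up to $o(1)$ by \eqref{g epsilon -g bound} and Lipschitz continuity of $\alpha \mapsto d_W(b, \alpha)$. The residual $O(1)$ contribution comes from the $-\varepsilon\kappa(y) t$ term in $J$: identifying the limit profile of $v_\varepsilon(y, \cdot)$ as $z_{g(y)}$ via the equipartition identity $W(z) = (z')^2$ and the boundary matching at $t=0$ reproduces exactly the right-hand side of \eqref{second order}.

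\textbf{Recovery sequence.} In normal coordinates, define
\[
u_\varepsilon(y + s \nu(y)) := \eta(s)\, z_{g_\varepsilon(y)}(s/\varepsilon) + \bigl(1 - \eta(s)\bigr)\, b, \qquad s \in [0, s_0),
\]
and $u_\varepsilon \equiv b$ elsewhere, with $\eta \in C_c^\infty([0, s_0))$ equal to $1$ near $0$. Then $\operatorname{tr} u_\varepsilon = g_\varepsilon$ on $\partial\Omega$; because $z_\alpha(t) - b$ decays exponentially in $t$ (using $W''(b) > 0$ from \eqref{W' three zeroes}), the cutoff region contributes an exponentially small error. Reversing the inequalities of the $\liminf$ step in the same tubular-coordinate computation gives $\mathcal{F}_\varepsilon^{(2)}(u_\varepsilon) \to$ the right-hand side of \eqref{second order}, completing the $\Gamma$-convergence; the pointwise asymptotics of minimizers in \eqref{sharp bound} then follow from the usual equi-boundedness of energies along a minimizing sequence.

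\textbf{Main obstacle.} The principal difficulty is that the cancellation of the two $O(\varepsilon^{-1})$ contributions must be sharp to order $o(1)$, not merely leading order. This demands quantitative control of $d_W(b, g_\varepsilon) - d_W(b, g)$ via \eqref{g epsilon -g bound}, of the dependence of the Cauchy problem \eqref{cauchy problem z alpha} on $\alpha$, and a proof that $v_\varepsilon(y, \cdot)$ is actually close to the specific profile $z_{g(y)}$ along almost every normal slice (rather than merely some energy-equipartitioned profile). A secondary but nontrivial issue is showing that the discarded tangential-derivative contribution $|\nabla u_\varepsilon|^2 - |\partial_s u_\varepsilon|^2$ is $o(1)$ at second order, which is where the $C^{2,d}$-regularity of $\partial\Omega$ and the regularity hypotheses on $g_\varepsilon$ enter decisively.
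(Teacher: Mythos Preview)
Your overall plan and the recovery-sequence construction match the paper's approach closely. The genuine gap is in the $\liminf$ argument, and it is exactly the point you flag as the ``main obstacle'' but do not resolve.

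The difficulty is this: after slicing and applying the Modica--Mortola inequality $W(v)+(\partial_t v)^2\ge 2W^{1/2}(v)|\partial_t v|$, the second-order contribution comes from the weighted term
\[
\int_0^{s_0/\varepsilon}\bigl(W(v_\varepsilon)+(\partial_t v_\varepsilon)^2\bigr)\,\kappa(y)\,t\,dt,
\]
which has no sign (since $\kappa$ is signed) and cannot be evaluated or bounded without knowing that $v_\varepsilon(y,\cdot)$ is close to the specific profile $z_{g(y)}$. Equipartition is a consequence of optimality, not a hypothesis you can impose on an arbitrary slice; and the cancellation of the $O(\varepsilon^{-1})$ leading terms also requires that $v_\varepsilon(y,s_0/\varepsilon)\to b$ pointwise, which $L^1$-convergence of $u_\varepsilon$ to $b$ does not provide.

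The paper resolves this by a different route. Since $\mathcal{F}_\varepsilon^{(2)}(u_\varepsilon)\ge \mathcal{F}_\varepsilon^{(2)}(\text{minimizer})$ for any admissible $u_\varepsilon$, it suffices to prove the $\liminf$ bound only for \emph{minimizers} of $F_\varepsilon$. For minimizers, the paper establishes (Section~\ref{section minimizers}) via Caffarelli--C\'ordoba density estimates and a Sternberg--Zumbrun barrier that $b-u_\varepsilon$ decays exponentially outside $\Omega_\delta$; this supplies the correct endpoint value on every slice. Then, rather than analyzing the slice $\tilde u_\varepsilon(y,\cdot)$ directly, the paper lower-bounds it by the \emph{one-dimensional minimizer} $v_\varepsilon^y$ of the weighted functional $\int_0^\delta(\varepsilon^{-2}W(v)+(v')^2)\,\omega(y,t)\,dt$ with the same boundary data. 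The bulk of the paper (Section~\ref{section 1d functional}) is then devoted to proving the sharp second-order $\liminf$ for these 1D minimizers, using their Euler--Lagrange equation to establish monotonicity, exponential decay, and convergence of the rescaled profile to $z_\alpha$. This is where the work lies; your proposal does not supply a substitute for it.
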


\begin{remark}
In the case where $g$ is allowed to take the value $a$ but \eqref{u0=b}
continues to hold, the scaling
\[
\mathcal{F}_{\varepsilon}^{(2)}(u):=\frac{\mathcal{F}_{\varepsilon}%
^{(1)}(u)-\min\mathcal{F}^{(1)}}{\varepsilon|\log\varepsilon|}%
\]
should replace the scaling in \eqref{F 2 epsilon}, as the latter becomes
incorrect in this context. We address this problem in the paper \cite{fonseca-kreutz-leoni2025II}.
\end{remark}

\begin{remark}
The case where the minimizer $u_{0}$ of the functional $\mathcal{F}^{(1)}$ in
\eqref{firstOrderFormalDefinition} is not constant, the analysis becomes
considerably more complex. By leveraging recent results from De Phillipis and
Maggi \cite{dephilippis-maggi2015}, it can be shown that if $\Omega$ and $g$
are sufficiently regular, then by modifying $E_{0}:=\{u_{0}=a\}$ on a set of
Lebesgue measure zero, $E_{0}$ is open and its trace $\partial E_{0}%
\cap\partial\Omega$ has finite perimeter in $\partial\Omega$. Moreover, if
$M=\overline{\partial E_{0}\cap\Omega}$, then $\partial_{\partial\Omega
}(\partial E_{0}\cap\partial\Omega)=M\cap\partial\Omega$, and there exists a
closed set $\Sigma\subseteq M$, with $\mathcal{H}^{N-2}(M\setminus\Sigma)=0$,
such that $M\setminus\Sigma$ is a $C^{1,1/2}$ hypersurface with boundary,
$M\setminus\Sigma$ has zero mean curvature in $\Omega$ and satisfies the
Young's law%
\begin{equation}
\nu_{E_{0}}(x)\cdot\nu_{\partial\Omega}(x)=\frac{1}{C_{W}}(\operatorname*{d}%
\nolimits_{W}(a,g(x))-\operatorname*{d}\nolimits_{W}(b,g(x)))
\label{young's law 1}%
\end{equation}
for all $x\in(M\cap\partial\Omega)\setminus\Sigma$. Here $\nu_{E_{0}}$ and
$\nu_{\partial\Omega}$ are the outward unit normals to $E_{0}$ and $\Omega$,
respectively. We are currently investigating this problem in dimension two. In
this setting,
\[
u_{0}=a\chi_{E_{0}}+b\chi_{\Omega\setminus E_{0}},
\]
where%
\[
\partial E_{0}\cap\Omega=\bigcup_{i=1}^{m}\Sigma_{i},
\]
with $\Sigma_{i}$ being disjoint segments that have endpoints $P_{i}$ and
$Q_{i}$ on $\partial\Omega$ and form angles $\theta_{i}$ that satisfy Young's
law (\ref{young's law 1}). By adapting the techniques presented in this paper,
we have constructed $u_{\varepsilon}\in H^{1}(\Omega)$ satisfying the
Dirichlet boundary conditions (\ref{dirichlet boundary conditions}) and
converging to $u_{0}$ in $L^{1}(\Omega)$, such that%
\begin{align*}
&  \limsup_{\varepsilon\rightarrow0^{+}}\mathcal{F}_{\varepsilon}%
^{(2)}(u_{\varepsilon})\leq\int_{\partial\Omega\cap\overline{\Omega}^{b}%
}\kappa(y)\int_{0}^{\infty}2W^{1/2}(z_{g(y)}(s))z_{g(y)}^{\prime
}(s)s\,dsd\mathcal{H}^{1}(y)\\
&  \quad+\int_{\partial\Omega\cap\overline{\Omega}^{a}}\kappa(y)\int_{-\infty
}^{0}2W^{1/2}(z_{g(y)}(s))z_{g(y)}^{\prime}(s)|s|\,dsd\mathcal{H}^{1}(y)\\
&  \quad-\sum_{i=1}^{m}\frac{1+\cos\theta_{i}}{\sin\theta_{i}}C_{i}-\sum
_{i=1}^{m}\frac{1-\cos\theta_{i}}{\sin\theta_{i}}D_{i},
\end{align*}
where
\begin{align*}
C_{i}  &  :=\int_{0}^{\infty}2W^{1/2}(z_{g(P_{i})}(s))z_{g(P_{i})}^{\prime
}(s)s\,ds+\int_{0}^{\infty}2W^{1/2}(z_{g(Q_{i})}(s))z_{g(Q_{i})}^{\prime
}(s)s\,ds,\\
D_{i}  &  :=\int_{-\infty}^{0}2W^{1/2}(z_{g(P_{i})}(s))z_{g(P_{i})}^{\prime
}(s)|s|\,ds+\int_{-\infty}^{0}2W^{1/2}(z_{g(Q_{i})}(s))z_{g(Q_{i})}^{\prime
}(s)|s|\,ds,
\end{align*}
and $z_{\alpha}$ solves the Cauchy problem \eqref{cauchy problem z alpha} with
$\alpha=g(y)$, and $\Omega^{r}:=\{x\in\Omega:\,u_{0}(x)=r\}$, $r\in\{a,b\}$.
\end{remark}

Theorem \ref{theorem main} is in the same spirit as the work by Anzellotti,
Baldo, and Orlandi \cite{anzellotti-baldo-orlandi1996}, who considered the
case $W(\rho)=\rho^{2}$ and derived a formula similar to (\ref{second order}).
Our proof, however, takes a different approach and relies on the asymptotic
development of order two by $\Gamma$-convergence of the weighted
one-dimensional functional%
\begin{equation}
G_{\varepsilon}(v):=\int_{0}^{T}(W(v(t))+\varepsilon^{2}(v^{\prime}%
(t))^{2})\omega(t)\,dt,\quad v\in H^{1}(I), \label{functional 1d}%
\end{equation}
subject to the Dirichlet boundary conditions
\begin{equation}
v(0)=\alpha_{\varepsilon},\quad v(T)=\beta_{\varepsilon},
\label{dirichlet boundary conditions 1d}%
\end{equation}
where $\omega$ is a smooth positive weight, and
\begin{equation}
a<\alpha_{\varepsilon},\,\beta_{\varepsilon}\leq b. \label{initial values 1d}%
\end{equation}
The second-order asymptotic expansion of this functional was studied by the
third author and Murray (\cite{leoni-murray2016}, \cite{leoni-murray2019}) in
the case where the Dirichlet boundary conditions
(\ref{dirichlet boundary conditions 1d}) were replaced by the mass constraint:%
\begin{equation}
\int_{0}^{T}v(t)\omega(t)\,dt=m. \label{mass 1d}%
\end{equation}
The key difference in our proof of the $\Gamma$-liminf inequality is that in
\cite{leoni-murray2016}, \cite{leoni-murray2019}, the authors utilized a
rearrangement technique based on the isoperimetric function to reduce the
functional (\ref{functional LM}) to the one-dimensional weighted problem
(\ref{functional 1d}) and (\ref{mass 1d}). This approach, however, is not
feasible in our case (except in the case of trivial boundary conditions).
Instead, we adapt techniques from Sternberg and Zumbrum
\cite{sternberg-zumbrun1998} and Caffarelli and Cordoba
\cite{caffarelli-cordoba1995} to study the behavior of minimizers of
(\ref{functional cahn-hilliard}) and (\ref{dirichlet boundary conditions})
near the boundary and use slicing arguments.

The case $N=1$ was previously addressed by Anzellotti and Baldo
\cite{anzellotti-baldo1993} under the assumption that $W$ is zero in a
neighborhood of $a$ and $b$, and by Bellettini, Nayam, and Novaga
\cite{bellettini-nayam-novaga2015} in the periodic case.

This paper is organized as follows. In Section \ref{section 1d functional}, we
characterize the asymptotic development of order two by the $\Gamma$ convergence
of the weighted one-dimensional family of functionals $G_{\varepsilon}$
defined in (\ref{functional 1d}). Section \ref{section minimizers} explores the
qualitative properties of critical points and minimizers of functional
\ref{functional cahn-hilliard}. Finally, in Section
\ref{section main theorems}, we prove Theorem \ref{theorem main}.

\section{Preliminaries}

We assume that the double-well potential $W:\mathbb{R}\rightarrow
\lbrack0,\infty)$ satisfies the following hypotheses:%
\begin{align}
 \begin{split} &W\text{ is of class $C^{2,\alpha_{0}}(\mathbb{R})$, }\alpha_{0}%
\in(0,1)\text{, and has precisely two zeros} \\&\text{at } a \text{ and } b, \text{ with } a<b,\end{split}\label{W_Smooth}\\
&  W^{\prime\prime}(a)>0,\quad W^{\prime\prime}(b)>0,\label{WPrime_At_Wells}\\
&  \lim_{s\rightarrow-\infty}W^{\prime}(s)=-\infty,\quad\lim_{s\rightarrow
\infty}W^{\prime}(s)=\infty,\label{WGurtin_Assumption}\\
& \text{$W^{\prime}$ has exactly 3 zeros at $a,b,c$ with $a<c<b$,}\quad W^{\prime\prime
}(c)<0, \label{W' three zeroes}%
\end{align}
Let
\begin{equation}
a<\alpha_{-}<\min\left\{  c,\frac{a+b}{2}\right\}  \leq\max\left\{
c,\frac{a+b}{2}\right\}  <\beta_{-}<b. \label{alpha and beta minus}%
\end{equation}

\begin{remark}
\label{remark W near b}Since $W\in C^{2}(\mathbb{R})$, $W(a)=W^{\prime}(a)=0$,
$W(b)=W^{\prime}(b)=0$, and $W^{\prime\prime}(a)$, $W^{\prime\prime}(b)>0$,
there exists a constant $\sigma>0$ depending on $\alpha_{-}$ and $\beta_{-}$
such that%
\begin{align}
\sigma^{2}(b-s)^{2}  &  \leq W(s)\leq\frac{1}{\sigma^{2}}(b-s)^{2}%
\quad\text{for all }\alpha_{-}\leq s\leq b+1,\label{W near b}\\
\sigma^{2}(s-a)^{2}  &  \leq W(s)\leq\frac{1}{\sigma^{2}}(s-a)^{2}%
\quad\text{for all }a-1\leq s\leq\beta_{-}. \label{W near a}%
\end{align}

\end{remark}

\bigskip

\begin{proposition}
\label{proposition asymptotic behavior}For $a<\alpha_{-}<b$ and  $0 <\delta\leq\sigma^{-1}$, we
have%
\begin{align}
-  &  \frac{\sigma^{-1}}{2}\log(\sigma^{-2}\delta)+\sigma^{-1}\log
(b-\alpha)-\sigma^{-1}\log(1+2\sigma^{-1}(b-\beta)/\delta^{1/2})\nonumber\\
&  \leq\int_{\alpha}^{\beta}\frac{1}{(\delta+W(s))^{1/2}}\,ds\leq-\frac
{\sigma}{2}\log(\sigma^{2}\delta)+\sigma\log(1+2(b-a)) \label{near b log}%
\end{align}
for every $\alpha_{-}\leq\alpha\leq\beta\leq b$, where $\sigma>0$ is the
constant given in \eqref{W near b}.
\end{proposition}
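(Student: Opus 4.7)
The plan is to use the two-sided quadratic control on $W$ near $b$ from Remark~\ref{remark W near b} to sandwich $1/(\delta+W(s))^{1/2}$ between explicit model integrands of the form $1/(\delta+c^{2}(b-s)^{2})^{1/2}$, and then reduce each one-sided bound to elementary logarithms via the change of variable $u=c(b-s)/\delta^{1/2}$, under which $\int ds/(\delta+c^{2}(b-s)^{2})^{1/2}=c^{-1}\sinh^{-1}(u)=c^{-1}\log(u+\sqrt{1+u^{2}})$, for $c$ equal to $\sigma$ or $\sigma^{-1}$ depending on which quadratic bound is invoked.

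For the upper bound in \eqref{near b log} I would apply the lower quadratic bound on $W$ to majorize the integrand, perform the corresponding substitution, drop the nonnegative subtracted $\sinh^{-1}$-value at $\beta$, and then use the elementary inequality $\sinh^{-1}(u)\leq\log(1+2u)$ for $u\geq 0$ (which is equivalent to $\sqrt{1+u^{2}}\leq 1+u$); bounding $b-\alpha\leq b-a$, peeling $\delta^{-1/2}$ out of the logarithm, and using $\delta\leq\sigma^{-1}$ to absorb the remaining $\delta^{1/2}$-factor into the additive constant $1+2(b-a)$ would reproduce the claimed right-hand side. For the lower bound I would instead apply the upper quadratic bound on $W$ to minorize the integrand and perform the corresponding substitution, and then estimate the resulting difference of $\sinh^{-1}$-values from below by using the sharper inequality $\sinh^{-1}(u)\geq\log(1+u)$ (which follows from $\sqrt{1+u^{2}}\geq 1$) at the $\alpha$-endpoint and $\sinh^{-1}(u)\leq\log(1+2u)$ at the $\beta$-endpoint, so that the difference collapses to a single logarithm of a ratio that decomposes into the three summands appearing on the left-hand side of \eqref{near b log}.

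The main piece of non-routine bookkeeping is the asymmetric treatment of the two endpoints: since $\alpha\geq\alpha_{-}$ keeps $b-\alpha$ bounded away from $0$, the $\alpha$-term must be extracted as an explicit $\log(b-\alpha)$ summand via the sharper lower inequality, whereas $b-\beta$ may be arbitrarily small and so the $\beta$-term can only be controlled through $\log(1+2u_{\beta})$, not through $\log u_{\beta}$. Matching each endpoint with the correct side of $\log(1+u)\leq\sinh^{-1}(u)\leq\log(1+2u)$, and tracking the $\sigma$ versus $\sigma^{-1}$ scaling produced by the two substitutions together with the identity $-\tfrac{c}{2}\log(c^{\pm 2}\delta)=c\log(c^{\mp 1})-\tfrac{c}{2}\log\delta$, is what isolates the logarithmic terms in \eqref{near b log} in the precise form stated.
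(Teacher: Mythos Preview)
Your approach is essentially identical to the paper's: both sandwich the integrand via the quadratic bounds \eqref{W near b}, integrate explicitly to obtain the antiderivative $\log\bigl(t+\sqrt{1+t^{2}}\bigr)$ (which you name $\sinh^{-1}$), and then bound this above and below by elementary logarithms. The only cosmetic difference is at the $\alpha$-endpoint in the lower bound: the paper simply uses $b-\alpha+\sqrt{r+(b-\alpha)^{2}}\geq b-\alpha$ (i.e.\ $\sinh^{-1}(u)\geq\log u$) to read off the $\log(b-\alpha)$ term directly, rather than passing through your intermediate inequality $\sinh^{-1}(u)\geq\log(1+u)$ and then decomposing a ratio.
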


\begin{proof}
By (\ref{W near b}),
\[
\frac{\sigma^{-1}}{(\sigma^{-2}\delta+(b-s)^{2})^{1/2}}\leq\frac{1}%
{(\delta+W(s))^{1/2}}\leq\frac{\sigma}{(\sigma^{2}\delta+(b-s)^{2})^{1/2}}.
\]
Hence, it suffices to estimate%
\[
\mathcal{A}:=\int_{\alpha}^{\beta}\frac{1}{(r+(b-s)^{2})^{1/2}}\,ds.
\]
Consider the change of variables $r^{1/2}t=b-s$, so that $- r^{1/2}dt=ds$. Then%
\begin{align*}
\mathcal{A}  &  =\int_{\alpha}^{\beta}\frac{1}{(r+(b-s)^{2})^{1/2}}%
\,ds=\frac{r^{1/2}}{r^{1/2}}\int_{(b-\beta)/r^{1/2}}^{(b-\alpha)/r^{1/2}}%
\frac{1}{\left(  1+t^{2}\right)  ^{1/2}}\,dt\\
&  =[\log[t+(t^{2}+1)^{1/2}]]_{(b-\beta)/r^{1/2}}^{(b-\alpha)/r^{1/2}} \\
&  =-\frac{1}{2}\log r+\log(b-\alpha+[r+(b-\alpha)^{2}]^{1/2})\\
&  \quad-\log((b-\beta)/r^{1/2}+[1+(b-\beta)^{2}/r]^{1/2}).
\end{align*}
Hence, for $0<r\leq 1$, we have 
\begin{align*}
-\frac{1}{2}  &  \log r+\log(b-\alpha)-\log(1+2(b-\beta)/r^{1/2})\\
&  \leq\mathcal{A}\leq-\frac{1}{2}\log r+\log(1+2(b-a)).
\end{align*}\hfill
\end{proof}

\begin{proposition}
\label{proposition difference}Let $a\leq\alpha_{\varepsilon}\leq
\beta_{\varepsilon}\leq b$. Then there exists a constant $C>0$  depending on $\sigma$  such that
\begin{equation}
\int_{\alpha_{\varepsilon}}^{\beta_{\varepsilon}}\left[  \frac{2}%
{(\delta+W(s))^{1/2}+W^{1/2}(s)}-\frac{1}{(\delta+W(s))^{1/2}}\right]
\,ds\leq C \label{2d 40}%
\end{equation}
for all $0<\delta<1$, where $\sigma>0$ is the constant given in (\ref{W near b}).
\end{proposition}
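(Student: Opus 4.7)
The plan is to first simplify the bracket by rationalization. Writing $A := (\delta+W(s))^{1/2}$ and $B := W^{1/2}(s)$, we have
\[
\frac{2}{A+B} - \frac{1}{A} \;=\; \frac{A-B}{A(A+B)} \;=\; \frac{A^{2}-B^{2}}{A(A+B)^{2}} \;=\; \frac{\delta}{A(A+B)^{2}},
\]
which is manifestly nonnegative. Consequently, it suffices to bound the integral of this quantity over the larger interval $[a,b]$ by a constant depending only on $\sigma$.

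By \eqref{alpha and beta minus} one has $\alpha_{-}<c<\beta_{-}$, so \eqref{W near b} is valid on $[c,b]$ and \eqref{W near a} is valid on $[a,c]$. I would treat the two pieces symmetrically; consider $[c,b]$. The lower bounds $W(s)\geq \sigma^{2}(b-s)^{2}$ and $W^{1/2}(s)\geq \sigma(b-s)$ from \eqref{W near b} yield the pointwise estimate
\[
\frac{\delta}{A(A+B)^{2}} \;\leq\; \frac{\delta}{(\delta+\sigma^{2}(b-s)^{2})^{1/2}\,[(\delta+\sigma^{2}(b-s)^{2})^{1/2}+\sigma(b-s)]^{2}}.
\]
Performing the change of variables $t = \sigma(b-s)/\delta^{1/2}$ (so that $ds = -\delta^{1/2}\sigma^{-1}\,dt$) scales $\delta$ out entirely and transforms the integral on $[c,b]$ into
\[
\frac{1}{\sigma}\int_{0}^{\sigma(b-c)/\delta^{1/2}}\frac{dt}{(1+t^{2})^{1/2}\,[(1+t^{2})^{1/2}+t]^{2}} \;\leq\; \frac{1}{\sigma}\int_{0}^{\infty}\frac{dt}{(1+t^{2})^{1/2}\,[(1+t^{2})^{1/2}+t]^{2}}.
\]

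The only point that requires any checking is convergence of the improper integral on the right. Since $(1+t^{2})^{1/2}+t \geq 1+t$ and $(1+t^{2})^{1/2}\geq 1$ for $t\geq 0$, the integrand is majorized by $(1+t)^{-2}$, which is integrable on $[0,\infty)$. This yields a constant $C_{1}=C_{1}(\sigma)$; the analogous argument on $[a,c]$, using \eqref{W near a} and the substitution $t = \sigma(s-a)/\delta^{1/2}$, yields a constant $C_{2}=C_{2}(\sigma)$. Setting $C := C_{1}+C_{2}$ delivers the claim. There is no real obstacle here: the hypothesis $\delta<1$ is not even needed, since the rescaling $t = \sigma(b-s)/\delta^{1/2}$ absorbs all $\delta$-dependence and the only work is the bookkeeping of the change of variables.
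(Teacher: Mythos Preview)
Your proof is correct and follows essentially the same approach as the paper: rationalize the bracket to obtain the nonnegative expression $\delta/[A(A+B)^{2}]$, extend the integral to $[a,b]$, split at $c$, use the quadratic lower bounds \eqref{W near a}--\eqref{W near b} on each half, and rescale by $\delta^{1/2}$ to reduce to a $\delta$-independent convergent integral. The only cosmetic differences are that the paper absorbs $\sigma$ differently in the change of variables and does not spell out the convergence of the resulting improper integral; your observation that the hypothesis $\delta<1$ is not actually needed is also correct.
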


\begin{proof}
For $A\geq0$, we have%
\begin{align*}
\frac{2}{(\delta+A)^{1/2}+A^{1/2}}-\frac{1}{(\delta+A)^{1/2}}  &
=\frac{(\delta+A)^{1/2}-A^{1/2}}{[(\delta+A)^{1/2}+A^{1/2}](\delta+A)^{1/2}}\\
&  =\frac{\delta}{[(\delta+A)^{1/2}+A^{1/2}]^{2}(\delta+A)^{1/2}}\geq0.
\end{align*}
Hence, the left  side of (\ref{2d 40}) can be bounded from above by%
\begin{align*}
&  \int_{a}^{b}\frac{\delta}{[(\delta+W(s))^{1/2}+W^{1/2}(s)]^{2}%
(\delta+W(s))^{1/2}}ds\\
&  =\int_{a}^{c}\frac{\delta}{[(\delta+W(s))^{1/2}+W^{1/2}(s)]^{2}%
(\delta+W(s))^{1/2}}\\
&  \quad+\int_{c}^{b}\frac{\delta}{[(\delta+W(s))^{1/2}+W^{1/2}(s)]^{2}%
(\delta+W(s))^{1/2}}\\
&  =:\mathcal{A}+\mathcal{B}.
\end{align*}
By (\ref{W near a}) we have
\begin{align*}
\mathcal{A}  &  \leq\int_{a}^{c}\frac{\delta}{[(\delta+\sigma^{2}%
(s-a)^{2})^{1/2}+\sigma(s-a)]^{2}(\delta+\sigma^{2}(s-a)^{2})^{1/2}}ds\\
&  =\int_{0}^{(c-a)/\delta^{1/2}}\frac{\delta}{[(\delta+\sigma^{2}\delta
t^{2})^{1/2}+\sigma\delta^{1/2}t]^{2}(\delta+\sigma^{2}\delta t^{2})^{1/2}%
}\delta^{1/2}dt\\
&  \leq\int_{0}^{\infty}\frac{1}{[(1+\sigma^{2}t^{2})^{1/2}+\sigma
t]^{2}(1+\sigma^{2}t^{2})^{1/2}}dt,
\end{align*}
where we have made the change of variables $s-a=\delta^{1/2}t$, so that
$ds=\delta^{1/2}dt$, and used the fact that $0<\delta<1$\EEE. A similar estimate holds for $\mathcal{B}$.\hfill
\end{proof}

Next, we study the properties of the solutions to the Cauchy problem
(\ref{cauchy problem z alpha}).

\begin{proposition}
\label{proposition z epsilon}Assume that $W$ satisfies
\eqref{W_Smooth}-\eqref{W' three zeroes} and let $a<\alpha<b$. Then the
Cauchy problem \eqref{cauchy problem z alpha} admits a unique global solution
$z_{\alpha}:\mathbb{R}\rightarrow\mathbb{R}$. The function $z_{\alpha}$ is
increasing with%
\[
a<z_{\alpha}(t)<b\quad\text{for all }t\in\mathbb{R},
\]
and
\begin{equation}
\lim_{t\rightarrow-\infty}z_{\alpha}(t)=a,\quad\lim_{t\rightarrow\infty
}z_{\alpha}(t)=b. \label{limits infinity}%
\end{equation}
Moreover, if $\alpha_{-}\leq\alpha<b$, where $\alpha_{-}$ is given in
\eqref{alpha and beta minus}, then
\begin{equation}
(b-\alpha)e^{-\sigma^{-1}t}\leq b-z_{\alpha}(t)\leq(b-a)e^{-\sigma t}
\label{estimate z alpha}%
\end{equation}
for all $t\geq0$.
\end{proposition}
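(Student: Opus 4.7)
The plan is a standard ODE analysis, relying on the quadratic structure of $W$ near the wells $a$ and $b$.

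First, I would establish local existence and uniqueness via Picard--Lindel\"of. The subtle point is to verify that $\rho\mapsto W^{1/2}(\rho)$ is locally Lipschitz on all of $\mathbb{R}$, including at the wells. Away from $\{a,b\}$ this is clear since $W\in C^{2,\alpha_0}$ and $W>0$ there. Near $b$, Taylor's theorem together with $W(b)=W'(b)=0$ and $W''(b)>0$ gives a factorization $W(\rho)=(b-\rho)^{2}h(\rho)$ with $h$ continuous, $h(b)=W''(b)/2>0$; hence $W^{1/2}(\rho)=(b-\rho)\sqrt{h(\rho)}$ is Lipschitz in a neighborhood of $b$, and analogously at $a$. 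This yields a unique local solution $z_{\alpha}$ through the initial datum $\alpha\in(a,b)$.

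Next, I would show that the solution remains strictly inside $(a,b)$ and is strictly increasing. The constant functions $z\equiv a$ and $z\equiv b$ solve $z'=W^{1/2}(z)$ because $W(a)=W(b)=0$, so the uniqueness just established forbids the orbit starting at $\alpha\in(a,b)$ from ever touching $a$ or $b$. Since $W^{1/2}(z_{\alpha})>0$ on $(a,b)$, the solution is strictly increasing, hence bounded, and therefore extends globally to $\mathbb{R}$. Being monotone and bounded, the limits $L_{\pm}:=\lim_{t\to\pm\infty}z_{\alpha}(t)$ exist in $[a,b]$. If we had $L_{+}<b$, then $W(L_{+})>0$, so $z_{\alpha}'(t)\to W^{1/2}(L_{+})>0$ as $t\to\infty$, contradicting the convergence of $z_{\alpha}$. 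Hence $L_{+}=b$ and symmetrically $L_{-}=a$, which is \eqref{limits infinity}.

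Finally, for the exponential bounds, I would set $\phi(t):=b-z_{\alpha}(t)$ for $t\geq 0$. By monotonicity $z_{\alpha}(t)\geq\alpha\geq\alpha_{-}$, so \eqref{W near b} applies and yields $\sigma\,\phi(t)\leq W^{1/2}(z_{\alpha}(t))\leq\sigma^{-1}\phi(t)$. Since $\phi'(t)=-W^{1/2}(z_{\alpha}(t))$, this gives the differential sandwich
\[
-\sigma^{-1}\phi(t)\leq\phi'(t)\leq-\sigma\,\phi(t),
\]
and a Gr\"onwall-type integration with $\phi(0)=b-\alpha\leq b-a$ delivers \eqref{estimate z alpha}. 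The only mild obstacle in the whole argument is the Lipschitz analysis of $W^{1/2}$ at the wells, without which uniqueness (and hence the invariance of $(a,b)$) would fail; once that is secured, the rest is routine.
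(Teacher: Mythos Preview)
Your proof is correct and follows essentially the same route as the paper: Lipschitz continuity of $W^{1/2}$ (which the paper simply asserts on $[a-1,b+1]$ while you justify it via the factorization $W(\rho)=(b-\rho)^2 h(\rho)$), the invariance of $(a,b)$ via uniqueness against the constant solutions, and the Gr\"onwall sandwich from \eqref{W near b} for the exponential bounds. If anything, your argument is slightly cleaner for $t\geq 0$, since you use monotonicity directly to get $z_\alpha(t)\geq\alpha\geq\alpha_-$, whereas the paper takes a small detour through an auxiliary time $T_\alpha<0$.
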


\begin{proof}
 As $\sqrt{W}$ is Lipschitz continuous in $[a-1,b+1]$, the Cauchy problem
(\ref{cauchy problem z alpha}) admits a unique local solution. As the
constant functions $a$ and $b$ are solutions to the differential equation, by
uniqueness, $a<z_{\alpha}(t)<b$ for all $t$ in the interval of existence of
$z_{\alpha}$. This implies that $z_{\alpha}$ can be uniquely extended to the
entire real line. Standard ODEs techniques show that (\ref{limits infinity}) is valid.

If $\alpha_{-}\leq\alpha<b$, since $z_{\alpha}$ is increasing, using
(\ref{limits infinity}), we can find $T_{\alpha}<0$ such that $z_{\alpha
}(T_{\alpha})=\alpha_{-}$ and $z_{\alpha}(t)>\alpha_{-}$ for all $t>T_{\alpha
}$. In turn, by (\ref{W near b}),
\[
\sigma(b-z_{\alpha}(t))\leq z_{\alpha}^{\prime}(t)\leq\sigma^{-1}(b-z_{\alpha
}(t))\quad\text{for all }t\geq T_{\alpha}.
\]
Dividing by $b-z_{\alpha}(t)$ and integrating from $0$ to $t$ gives
\[
-\sigma^{-1}t\leq\log\left(  \frac{b-z_{\alpha}(t)}{b-\alpha}\right)
\leq-\sigma t,
\]
which implies (\ref{estimate z alpha}).\hfill
\end{proof}

We assume that $g_{\varepsilon}:\partial\Omega\rightarrow\mathbb{R}$ and
$g:\partial\Omega\rightarrow\mathbb{R}$ satisfy the following hypotheses:
\begin{align}
g_{\varepsilon}  &  \in H^{1}(\partial\Omega),\label{g epsilon smooth}\\
(\varepsilon|\log\varepsilon|)^{1/2}\int_{\partial\Omega}|\nabla_{\tau
}g_{\varepsilon}|^{2}d\mathcal{H}^{N-1}  &  =o(1)\quad\text{as }%
\varepsilon\rightarrow0^{+},\label{g epsilon bound derivatives}\\
|g_{\varepsilon}(x)-g(x)|  &  \leq C\varepsilon^{\gamma},\quad x\in
\partial\Omega,\quad\gamma>1 \label{g epsilon -g bound}%
\end{align}
for all $\varepsilon\in(0,1)$ and for some constant $C>0$. Here, $\nabla
_{\tau}$ denotes the tangential gradient. 

 Condition \eqref{g epsilon bound derivatives} is of a technical nature and ensures that, in the energy estimates for the recovery sequence in the $\Gamma$-limsup inequality, the tangential component of the gradient near the boundary of $\Omega$ does not contribute to the limiting energy (see \eqref{299c} below). In particular, this condition is satisfied if $g_\varepsilon=g$ for all $\varepsilon>0$ for some $g \in H^1(\partial \Omega)$.

 Observe that the hypotheses \eqref{g epsilon bound derivatives} and \eqref{g epsilon -g bound} imply some regularity of $g$. In particular, when $N=2$, we see that the functions $g_\varepsilon$ are continuous, and since \eqref{g epsilon -g bound} implies uniform convergence, it follows that $g$ must be continuous.

For $a\leq\alpha\leq b$, let%
\begin{align*}
\phi(\alpha)  &  :=\operatorname*{d}\nolimits_{W}(a,\alpha)-\operatorname*{d}%
\nolimits_{W}(\alpha,b)\\
&  =2\int_{a}^{\alpha}W^{1/2}(\rho)\,d\rho-2\int_{\alpha}^{b}W^{1/2}%
(\rho)\,d\rho,
\end{align*}
where $\operatorname*{d}\nolimits_{W}$ is defined in
(\ref{distance definition}). Since $\phi(a)=- C_{W}$, $\phi(b)=C_{W}$,
$\phi^{\prime}(\alpha)=4W^{1/2}(\alpha)>0$ for $\alpha\in(a,b)$, there exists
a unique $\bar{\alpha}\in(a,b)$ such that
\begin{equation}
\phi(\bar{\alpha})=0\quad\text{and\quad}\phi(\alpha) >0\quad\text{for all
}\bar{\alpha}<\alpha\leq b. \label{phi bar}%
\end{equation}

\begin{proposition}
\label{proposition b minimizer}Let $\Omega\subset\mathbb{R}^{N}$ be an open,
bounded, connected set with boundary of class $C^{2,d}$, $0<d\leq1$. Assume
that $W$ satisfies \eqref{W_Smooth}-\eqref{W' three zeroes} and
that $g_{\varepsilon}$ satisfy \eqref{bounds g},
\eqref{g epsilon smooth}-\eqref{g epsilon -g bound}. Suppose that
\begin{equation}
g_{-}>\bar{\alpha} \label{g minus}%
\end{equation}
where $\bar{\alpha}$ is given in \eqref{phi bar}. Then the constant function
$b$ is the unique minimizer of the functional $\mathcal{F}^{(1)}$ defined in \eqref{firstOrderFormalDefinition}.
\end{proposition}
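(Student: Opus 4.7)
The strategy is to reduce to competing functions $u \in BV(\Omega;\{a,b\})$ (outside this class the functional is infinite) and compare $\mathcal{F}^{(1)}(u)$ with $\mathcal{F}^{(1)}(b)$ directly. Given such a $u$, set $E := \{u = a\}$, so that $u = a\chi_E + b\chi_{\Omega \setminus E}$ with $\operatorname{P}(E;\Omega) < \infty$. Since $u$ takes values in $\{a,b\}$, the trace $\operatorname{tr} u$ takes values in $\{a,b\}$ $\mathcal{H}^{N-1}$-a.e., so I decompose (modulo $\mathcal{H}^{N-1}$-null sets) $\partial\Omega = \Gamma_a \cup \Gamma_b$ with $\Gamma_r := \{\operatorname{tr} u = r\}$ for $r \in \{a,b\}$.

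For the constant function $b$ we have $\mathcal{F}^{(1)}(b) = \int_{\partial\Omega} \operatorname{d}_W(b,g)\,d\mathcal{H}^{N-1}$. Subtracting and splitting the boundary integral in the definition \eqref{firstOrderFormalDefinition} along the decomposition above, I get
\[
\mathcal{F}^{(1)}(u) - \mathcal{F}^{(1)}(b) = C_W \operatorname{P}(E;\Omega) + \int_{\Gamma_a} \bigl[\operatorname{d}_W(a,g(y)) - \operatorname{d}_W(b,g(y))\bigr]\,d\mathcal{H}^{N-1}(y),
\]
and by the definition of $\phi$ preceding \eqref{phi bar}, the bracketed quantity equals $\phi(g(y))$.

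Now I invoke the hypothesis. Interpreting $g_- := \operatorname{ess\,inf}_{\partial\Omega} g$, the assumption $g_- > \bar{\alpha}$ together with the strict monotonicity of $\phi$ on $(a,b)$ and $\phi(\bar{\alpha}) = 0$ from \eqref{phi bar} yields $\phi(g) > 0$ $\mathcal{H}^{N-1}$-a.e. on $\partial\Omega$, hence on $\Gamma_a$. Since also $C_W > 0$ and $\operatorname{P}(E;\Omega) \geq 0$, both terms on the right-hand side are non-negative, so $\mathcal{F}^{(1)}(u) \geq \mathcal{F}^{(1)}(b)$, with equality only if $\operatorname{P}(E;\Omega) = 0$ and $\mathcal{H}^{N-1}(\Gamma_a) = 0$.

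To conclude uniqueness, I use that $\Omega$ is connected: a set of finite perimeter in $\Omega$ with $\operatorname{P}(E;\Omega) = 0$ must have either zero Lebesgue measure or full measure (a standard consequence of the De Giorgi structure theorem applied in a connected open set). Hence $u \equiv a$ or $u \equiv b$ a.e. in $\Omega$. The first alternative forces $\Gamma_a = \partial\Omega$ up to null sets, contradicting $\mathcal{H}^{N-1}(\Gamma_a) = 0$ since $\mathcal{H}^{N-1}(\partial\Omega) > 0$. Therefore $u \equiv b$, proving that $b$ is the unique minimizer. The only subtle point — and the step I expect to warrant the most care — is the trace splitting $\partial\Omega = \Gamma_a \cup \Gamma_b$ and the verification that, together with connectedness of $\Omega$, the vanishing of both non-negative terms really does force $u \equiv b$; but both are standard facts about $BV$ functions valued in a two-point set.
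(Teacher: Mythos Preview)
Your proof is correct and follows essentially the same approach as the paper: both split the boundary integral according to whether $\operatorname{tr}u=a$ or $\operatorname{tr}u=b$, use the hypothesis $g_->\bar\alpha$ together with \eqref{phi bar} to make $\operatorname{d}_W(a,g)>\operatorname{d}_W(b,g)$ pointwise, and conclude that both the perimeter term and the $\Gamma_a$-integral must vanish at a minimizer. Your write-up is in fact slightly more explicit than the paper's in handling the uniqueness step via the connectedness of $\Omega$.
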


\begin{proof}
Let $u\in BV(\Omega;\{a,b\})$. We have  $\operatorname*{tr}u (x)\in\{a,b\}$ for $\mathcal{H}^{N-1}$-a.e.~$x \in \partial \Omega$ and thus 
\begin{align*}
\mathcal{F}^{(1)}(u)  &  \geq C_{W}\operatorname*{P}(\{u=b\};\Omega
)+\int_{\partial\Omega}\operatorname*{d}\nolimits_{W}(\operatorname*{tr}%
u,g)\,d\mathcal{H}^{N-1}\\
&  \geq\int_{\partial\Omega}\operatorname*{d}\nolimits_{W}(b,g)\,d\mathcal{H}%
^{N-1}=\mathcal{F}^{(1)}(b)
\end{align*}
provided
\[
\int_{\partial\Omega\cap\{\operatorname*{tr}u=a\}}\operatorname*{d}%
\nolimits_{W}(a,g)\,d\mathcal{H}^{N-1}\geq\int_{\partial\Omega\cap
\{\operatorname*{tr}u=a\}}\operatorname*{d}\nolimits_{W}(g,b)\,d\mathcal{H}%
^{N-1}.
\]
By (\ref{bounds g}), (\ref{phi bar}), and (\ref{g minus}), we obtain
\begin{align*}
\operatorname*{d}\nolimits_{W}(a,g(x))  >\operatorname*{d}\nolimits_{W}(g(x),b).
\end{align*} 
Hence, if $\operatorname*{P}(\{u=b\};\Omega)>0$ or $\mathcal{H}^{N-1}%
(\partial\Omega\cap\{\operatorname*{tr}u=a\})>0$, we have that $\mathcal{F}%
^{(1)}(u)>\mathcal{F}^{(1)}(b)$, which shows that the constant function $b$ is
the unique minimizer of $\mathcal{F}^{(1)}$.\hfill
\end{proof}

In what follows, given $z\in\mathbb{R}^{N}$, with a slight abuse of notation,
we write%
\begin{equation}
z=(z^{\prime},z_{N})\in\mathbb{R}^{N-1}\times\mathbb{R}, \label{z' notation}%
\end{equation}
where $z^{\prime}:=(z_{1},\ldots,z_{N-1})$. We also write
\begin{equation}
\nabla^{\prime}:=\left(  \frac{\partial}{\partial z_{1}},\ldots,\frac
{\partial}{\partial z_{N-1}}\right)  . \label{grad' notation}%
\end{equation}
Also, given $\delta>0$ we define%
\begin{equation}
\Omega_{\delta}:=\{x\in\Omega:\,\operatorname*{dist}(x,\partial\Omega
)<\delta\}. \label{Omega delta}%
\end{equation}

The following result is classical. We recall it and its proof for the reader's convenience.

\begin{lemma}
\label{lemma diffeomorphism}Assume that $\Omega\subset\mathbb{R}^{N}$ is an
open, bounded, connected set and that its boundary $\partial\Omega$ is of
class $C^{2,d}$, $0<d\leq1$. If $\delta>0$ is sufficiently small, the mapping
\[
\Phi:\partial\Omega\times\lbrack0,\delta]\rightarrow\overline{\Omega}_{\delta}%
\]
given by%
\[
\Phi(y,t)=y+t\nu(y),
\]
where $\nu(y)$ is the unit inward normal vector to $\partial\Omega$ at $y$ and
$\Omega_{\delta}$ is defined in \eqref{Omega delta}, is a diffeomorphism of
class $C^{1,d}$. Moreover, $\Omega\setminus\Omega_{\delta}$ is connected for
all $\delta>0$ sufficiently small. Finally,%
\begin{equation}
\det J_{\Phi}(y,0)=1\quad\text{for all }y\in\partial\Omega\label{det=1}%
\end{equation}
and%
\begin{equation}
\frac{\partial}{\partial t}\left.  \det J_{\Phi}(y,t)\right\vert _{t=0}%
=\kappa(y)\quad\text{for all }y\in\partial\Omega, \label{curvature}%
\end{equation}
where $\kappa(y)$ is the mean curvature of $\partial\Omega$ at $y$.
\end{lemma}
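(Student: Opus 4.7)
My plan is to first establish $\Phi$ as a local $C^{1,d}$ diffeomorphism near the boundary via the inverse function theorem, then globalize by compactness, and finally read off the Jacobian formulas from a principal-direction frame. Fix $y_0 \in \partial\Omega$. Since $\partial\Omega$ is $C^{2,d}$, the inward unit normal $y \mapsto \nu(y)$ is $C^{1,d}$, so $\Phi(y,t) = y + t\nu(y)$ is $C^{1,d}$. At $(y_0,0)$ the tangential derivatives of $\Phi$ reduce to the identity on $T_{y_0}\partial\Omega$, while the $t$-derivative is $\nu(y_0)$; since $\nu(y_0) \perp T_{y_0}\partial\Omega$, the differential $D\Phi(y_0,0)$ is invertible, and the H\"older inverse function theorem produces a neighborhood of $(y_0,0)$ on which $\Phi$ is a $C^{1,d}$ diffeomorphism onto its image.

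To upgrade to global injectivity on $\partial\Omega\times[0,\delta]$ for some $\delta>0$, I would argue by compactness and contradiction. If this failed for every $\delta>0$, there would exist $\delta_n\to 0$ and distinct pairs $(y_n,t_n), (\tilde y_n,\tilde t_n)\in\partial\Omega\times[0,\delta_n]$ with $\Phi(y_n,t_n)=\Phi(\tilde y_n,\tilde t_n)$. Since $|y_n-\tilde y_n|\le t_n+\tilde t_n\le 2\delta_n\to 0$ and $\partial\Omega$ is compact, both sequences admit a common limit $y_\ast$, and for large $n$ both pairs lie in a neighborhood of $(y_\ast,0)$ on which local injectivity already holds, a contradiction. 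Surjectivity onto $\overline{\Omega}_\delta$ then follows because every $x\in\overline{\Omega}_\delta$ has a nearest boundary point $y$, and $x=y+\operatorname{dist}(x,\partial\Omega)\nu(y)=\Phi(y,\operatorname{dist}(x,\partial\Omega))$ whenever $\delta$ is below the reach guaranteed by the previous step.

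For the identities \eqref{det=1} and \eqref{curvature}, at $y\in\partial\Omega$ I choose an orthonormal basis $\{\tau_1,\ldots,\tau_{N-1}\}$ of $T_y\partial\Omega$ of principal directions, so that $D\nu(y)\tau_i=-\kappa_i(y)\tau_i$ with $\kappa_i(y)$ the principal curvatures. In the orthonormal frame $\{\tau_1,\ldots,\tau_{N-1},\nu(y)\}$ the Jacobian matrix of $\Phi$ at $(y,t)$ is diagonal, with tangential entries $1-t\kappa_i(y)$ and normal entry $1$, so
\[
\det J_\Phi(y,t) = \prod_{i=1}^{N-1}\bigl(1-t\kappa_i(y)\bigr).
\]
At $t=0$ this equals $1$, giving \eqref{det=1}; differentiating in $t$ at $t=0$ gives $-\sum_i\kappa_i(y)$, which is $\kappa(y)$ with the paper's sign convention for the mean curvature.

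For connectedness of $\Omega\setminus\Omega_\delta$, I would define a continuous retraction $r:\overline{\Omega}\to\overline{\Omega\setminus\Omega_\delta}$ that sends $\Phi(y,t)$ with $t\in[0,\delta]$ to $\Phi(y,\delta)$ and fixes points of $\overline{\Omega\setminus\Omega_\delta}$; connectedness of $\overline{\Omega}$ then passes to its continuous image, and hence to $\Omega\setminus\Omega_\delta$. The only step with genuine content is the global injectivity for small $\delta$, which amounts to a quantitative lower bound on the reach of $\partial\Omega$; the rest reduces to the inverse function theorem and elementary linear algebra in principal directions.
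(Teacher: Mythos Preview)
Your proposal is correct and follows the same overall architecture as the paper: cite/prove the tubular neighborhood diffeomorphism, use a retraction onto $\partial(\Omega\setminus\Omega_\delta)$ for connectedness, and compute the Jacobian determinant locally at a boundary point. The connectedness argument is essentially identical to the paper's.

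The one place you diverge is in the computation of \eqref{det=1} and \eqref{curvature}. The paper works in explicit graph coordinates $(z',f(z'))$ with $\nabla'f(0)=0$, writes out all partial derivatives of $\Psi$, and then invokes Jacobi's formula $\partial_t\det J_\Psi = \det J_\Psi\cdot\operatorname{tr}(J_\Psi^{-1}\partial_t J_\Psi)$ to extract the mean curvature. Your route via a principal-direction frame is more direct: the product formula $\det J_\Phi(y,t)=\prod_i(1-t\kappa_i(y))$ gives both identities by inspection, with no need for Jacobi's formula or the lengthy coordinate computation. What the paper's approach buys is that it stays entirely in a concrete chart and never invokes the spectral decomposition of the shape operator, so it is more self-contained for a reader unfamiliar with principal curvatures; what your approach buys is brevity and a transparent link between the $t$-derivative and the sum of principal curvatures.
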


\begin{proof}
The fact that $\Phi:\partial\Omega\times\lbrack0,\delta]\rightarrow
\overline{\Omega}_{\delta}$ is a diffeomorphism $\delta>0$ is sufficiently
small is classical (see, e.g. \cite[Theorem 6.17]{lee-book2013}). Its inverse
is given by%
\[
\Phi^{-1}(x)=(y(x),\operatorname*{dist}(x,\partial\Omega)),
\]
where we denote by $y(x)\in\partial\Omega$ the unique projection of $x$ onto
$\partial\Omega$, with
\[
\operatorname*{dist}(x,\partial\Omega)=|y(x)-x|.
\]

Next, we show that $\Omega\setminus\Omega_{\delta}$ is pathwise connected. Let
$x_{0}$ and $x_{1}$ be two points in $\Omega\setminus\Omega_{\delta}$. Since
$\Omega$ is open and connected, there exists a continuous function
$f:[0,1]\rightarrow\Omega$ such that $f(0)=x_{0}$ and $f(1)=x_{1}$. Since
$\Phi$ is a diffeomorphism and $\Phi(\partial\Omega\times\{\delta
\})=\partial(\Omega\setminus\Omega_{\delta})$, the function%
\[
h(x)=y(x)+ \delta  \nu(y(x)),\quad x\in\overline{\Omega}_{\delta},
\]
is continuous, with $h(\overline{\Omega}_{\delta})=\partial(\Omega
\setminus\Omega_{\delta})$. Note that if $x\in\partial(\Omega\setminus
\Omega_{\delta})$, then $h(x)=x$. Hence, if we extend $h$ to be the identity
in $\Omega\setminus\Omega_{\delta}$, we have a continuous function
$h:\overline{\Omega}\rightarrow\Omega\setminus\Omega_{\delta}$. Then $h\circ
f:[0,1]\rightarrow\Omega\setminus\Omega_{\delta}$ is continuous and $(h\circ
f)(0)=x_{0}$ and $(h\circ f)(1)=x_{1}$, which shows that $\Omega
\setminus\Omega_{\delta}$ is pathwise connected.

To prove (\ref{det=1}) and (\ref{curvature}), we fix $y_{0}\in\partial\Omega$
and find a rigid motion $T:\mathbb{R}^{N}\rightarrow\mathbb{R}^{N}$, with
$T(y_{0})=0$, $r>0$, and a function $f:B_{N-1}(0,r)\rightarrow\mathbb{R}$ of
class $C^{2,d}$ such that $f(0)=0$, $\nabla^{\prime}f(0)=0$, and
\[
T(B(y_{0},r)\cap\Omega)=\{z\in\mathbb{R}^{N}:\,z_{N}>f(z^{\prime}%
),\,z^{\prime}\in B_{N-1}(0,r)\}=:V,
\]
where we are using the notations (\ref{z' notation}) and (\ref{grad' notation}%
) and $B_{N-1}(0,r)$ is the open ball centered at $0$ and radius $r$ in
$\mathbb{R}^{N-1}$. The unit inward normal to $\partial V$ at a point
$(z^{\prime},f(z^{\prime}))$ is the vector%
\[
\nu=\frac{(-\nabla^{\prime}f(z^{\prime}),1)}{(1+|\nabla^{\prime}f(z^{\prime
})|_{N-1}^{2})^{1/2}}%
\]
Hence, if we consider%
\[
\Psi(z^{\prime},t):=(z^{\prime},f(z^{\prime}))+t\frac{(-\nabla^{\prime
}f(z^{\prime}),1)}{(1+|\nabla^{\prime}f(z^{\prime})|_{N-1}^{2})^{1/2}},
\]
we have that for $i,j=1,\ldots,N-1$,
\begin{align*}
\frac{\partial\Psi_{j}}{\partial z_{i}}\left(  z^{\prime},t\right)   &
=\delta_{i,j}+t\frac{\partial}{\partial z_{i}}\left(  \frac{\frac{\partial
f}{\partial z_{j}}(z^{\prime})}{\sqrt{1+|\nabla^{\prime}f(z^{\prime}%
)|_{N-1}^{2}}}\right)  ,\\
\frac{\partial\Psi_{N}}{\partial z_{i}}\left(  z^{\prime},t\right)   &
=\frac{\partial f}{\partial z_{i}}(z^{\prime})-t\frac{\partial}{\partial
z_{i}}\left(  \frac{1}{\sqrt{1+|\nabla^{\prime}f(z^{\prime})|_{N-1}^{2}}%
}\right)  ,\\
\frac{\partial\Psi_{j}}{\partial t}(z^{\prime},t)  &  =\frac{-\frac{\partial
f}{\partial z_{j}}(z^{\prime})}{\sqrt{1+|\nabla^{\prime}f(z^{\prime}%
)|_{N-1}^{2}}},\quad\frac{\partial\Psi_{N}}{\partial t}\left(  z^{\prime
},t\right)  =\frac{1}{\sqrt{1+|\nabla^{\prime}f(z^{\prime})|_{N-1}^{2}}}.
\end{align*}
In particular, since $\nabla^{\prime}f(0)=0$,%
\[
J_{\Psi}(0,0)=I_{N-1}.
\]
This proves (\ref{det=1}). As 
\begin{align*}
\frac{\partial^{2}\Psi_{j}}{\partial t\partial z_{i}}\left(  z^{\prime
},t\right)   &  =\frac{\partial}{\partial z_{i}}\left(  \frac{\frac{\partial
g}{\partial z_{j}}(z^{\prime})}{\sqrt{1+|\nabla^{\prime}g(z^{\prime}%
)|_{N-1}^{2}}}\right)  ,\\
\frac{\partial^{2}\Psi_{N}}{\partial t\partial z_{i}}\left(  z^{\prime
},t\right)   &  =-\frac{\partial}{\partial z_{i}}\left(  \frac{1}%
{\sqrt{1+|\nabla^{\prime}g(z^{\prime})|_{N-1}^{2}}}\right)  ,\\
\frac{\partial^{2}\Psi_{j}}{\partial^{2}t}(z^{\prime},t)  &  =0,\quad
\frac{\partial^{2}\Psi_{N}}{\partial t^{2}}\left(  z^{\prime},t\right)  =0,
\end{align*}
using Jacobi's formula, we obtain%
\[
\frac{\partial\det J_{\Psi}}{\partial t}(z^{\prime},t)=\det J_{\Psi}%
(z^{\prime},t)\operatorname*{tr}\left(  J_{\Psi}^{-1}(z^{\prime}%
,t)\frac{\partial J_{\Psi}(z^{\prime},t)}{\partial t}\right)
\]
In particular, taking $z^{\prime}=0$ and using the fact that $J_{\Psi
}(0,t)=I_{N-1}$  we get
\[
\frac{\partial\det J_{\Psi}}{\partial t}(0,t)=\sum_{i=1}^{N-1}\left.
\frac{\partial}{\partial z_{i}}\left(  \frac{\frac{\partial g}{\partial z_{j}%
}(z^{\prime})}{\sqrt{1+|\nabla^{\prime}g(z^{\prime})|_{N-1}^{2}}}\right)
\right\vert _{z^{\prime}=0}=\kappa(y_{0}).
\]
By the arbitrariness of $y_{0}$, this concludes the proof of \eqref{curvature}.\hfill
\end{proof}

\section{A 1D Functional Problem}

\label{section 1d functional}Let
\[
I:=(0,T)
\]
and consider a weight function%
\begin{equation}
\omega\in C^{1,d}([0,T]),\quad\min_{\lbrack0,T]}\omega>0. \label{etaSmooth}%
\end{equation}
The prototype we have in mind is
\[
\omega(t):= 1+t\kappa(t).
\]
In this section, we study the second-order $\Gamma$-convergence of the family of functionals
\[
G_{\varepsilon}(v):=\int_{I}(W(v(t))+\varepsilon^{2}(v^{\prime}(t))^{2}%
)\omega(t)\,dt,\quad v\in\,H^{1}(I),
\]
subject to the Dirichlet boundary condition
\begin{equation}
v(0)=\alpha_{\varepsilon},\quad v(T)=\beta_{\varepsilon}, \label{1d dirichlet}%
\end{equation}
where $\alpha_\varepsilon,\beta_\varepsilon \in \mathbb{R}$.
In what follows, we will use the weighted BV space $BV_{\omega}(I)$ given by
all functions $v\in BV_{\operatorname*{loc}}(I)$ for which the norm
\[
\Vert v\Vert_{BV_{\omega}}:=\int_{I}|v(t)|\omega(t)\,dt+\int_{I}%
\omega(t)\,d|Dv|(t)
\]
is finite. For $v\in BV_{\omega}(I)$ we will also write the weighted total
variation of the derivative as
\[
|Dv|_{\omega}(E) :=\int_{E}\omega(t)\,d|Dv|(t).
\]
For a more detailed introduction to weighted BV spaces and their applications to phase-field models, we refer to \cite{baldi2001,fonseca-liu2017}.

We will study the second-order $\Gamma$-convergence with respect to
the metric in $L^{1}(I)$. This choice is motivated by the following
compactness result.

\begin{theorem}
[Compactness]\label{theorem 1d compactness}Assume that $W$ satisfies \eqref{W_Smooth}-\eqref{W' three zeroes}, that $\omega$
satisfies \eqref{etaSmooth}, and that $\alpha_{\varepsilon
}\rightarrow\alpha$ and $\beta_{\varepsilon}\rightarrow\beta$ as
$\varepsilon\rightarrow0^{+}$ for some $\alpha,\beta\in\mathbb{R}$. Let
$\varepsilon_{n}\rightarrow0^{+}$ and $v_{n}\in$\thinspace$H^{1}(I)$ be such
that%
\[
\sup_{n}\int_{I}\left(  \frac{1}{\varepsilon_{n}}W(v_{n}(t))+\varepsilon
_{n}(v_{n}^{\prime}(t))^{2}\right)  \omega(t)\,dt<\infty.
\]
Then there exist a subsequence $\{v_{n_{k}}\}_{k}$ of $\{v_{n}\}_{n}$ and
$v\in BV_{\omega}(I;\{a,b\})$ such that $v_{n_{k}}\rightarrow v$ in
\thinspace$L^{1}(I)$.
\end{theorem}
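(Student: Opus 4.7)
The plan is to follow the classical Modica--Mortola compactness strategy, adapted to the weighted setting (which is routine because, by \eqref{etaSmooth}, the $BV_\omega(I)$ norm is equivalent to the standard $BV(I)$ norm). I would introduce the composition variable $w_n := \Phi \circ v_n$, where
\[
\Phi(s) := 2\int_c^s W^{1/2}(\rho)\, d\rho, \qquad s \in \mathbb{R}.
\]
By \eqref{W_Smooth} and \eqref{W' three zeroes}, $\Phi$ is of class $C^1$ and strictly increasing on $\mathbb{R}$, since $W^{1/2}>0$ off $\{a,b\}$; combining $W\ge 0$ with \eqref{WGurtin_Assumption} gives $W(s)\to\infty$ as $|s|\to\infty$, so $\Phi$ is proper and hence a homeomorphism of $\mathbb{R}$ onto itself mapping bounded sets to bounded sets. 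Young's inequality yields
\[
\frac{1}{\varepsilon_n}W(v_n)+\varepsilon_n(v_n')^2 \ge 2W^{1/2}(v_n)|v_n'|=|w_n'|,
\]
so multiplying by $\omega$ and integrating gives $\int_I|w_n'|\omega\,dt \le C$ uniformly in $n$.

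The next step is a standard extraction. Since $v_n(0)=\alpha_{\varepsilon_n}$ is bounded, so is $w_n(0)=\Phi(\alpha_{\varepsilon_n})$; combined with the weighted BV bound above and $\min_{[0,T]}\omega>0$, $\{w_n\}$ is uniformly bounded in $L^\infty(I)$ and therefore in $BV_\omega(I) \simeq BV(I)$. Helly's selection theorem (equivalently, the compact embedding of $BV(I)$ into $L^1(I)$) yields a subsequence $w_{n_k}\to h$ in $L^1(I)$ and a.e. Because $\Phi^{-1}$ is continuous and $\{w_n\}$ is uniformly $L^\infty$-bounded, so is $\{v_n\}$; dominated convergence then gives $v_{n_k}\to v:=\Phi^{-1}(h)$ in $L^1(I)$.

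Finally, to identify the limit: the energy bound gives $\int_I W(v_n)\omega\,dt \le C\varepsilon_n \to 0$, so after a further subsequence $W(v_{n_k})\to 0$ a.e., and by \eqref{wells} this forces $v(t)\in\{a,b\}$ a.e. Lower semicontinuity of the weighted total variation gives $|Dh|_\omega(I) \le \liminf_k \int_I |w_{n_k}'|\omega\,dt \le C$, and since $v$ takes only two values, the chain rule in BV gives $|Dv|_\omega(I)=|Dh|_\omega(I)/C_W<\infty$, where $C_W=\Phi(b)-\Phi(a)$ is the constant from \eqref{cW definition}; thus $v\in BV_\omega(I;\{a,b\})$. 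The only step that genuinely requires thought is the passage from $L^1$ convergence of $w_n$ back to $v_n$, which relies on the $L^\infty$ control of $\{v_n\}$; this is where \eqref{WGurtin_Assumption} is essential, as it ensures $\Phi$ is proper and hence $\Phi^{-1}$ maps bounded sets to bounded sets. Without this, one would need an additional truncation argument; with it, the remainder of the proof is routine.
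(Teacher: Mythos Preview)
The paper does not give its own proof of this theorem; it simply states that the argument is identical to \cite[Proposition~4.3]{leoni-murray2016} and omits it. Your proposal is precisely the standard Modica--Mortola compactness argument (the one that reference presumably records), and it is correct in substance.

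Two minor remarks. First, you invoke $v_n(0)=\alpha_{\varepsilon_n}$ to anchor the $L^\infty$ bound on $w_n$, but the theorem as stated does not explicitly impose \eqref{1d dirichlet} on the $v_n$; the presence of the hypotheses $\alpha_\varepsilon\to\alpha$, $\beta_\varepsilon\to\beta$ makes it clear this is the intended reading, but you might flag that you are using it. (Without it one would instead truncate $v_n$ to $[a,b]$ and use the coercivity of $W$ from \eqref{WGurtin_Assumption} to show the truncation error vanishes in $L^1$---a routine variant.) Second, in the last step the relation between $|Dv|_\omega$ and $|Dh|_\omega$ is off by a factor: since $v\in\{a,b\}$ and $h=\Phi\circ v\in\{\Phi(a),\Phi(b)\}$ with $\Phi(b)-\Phi(a)=C_W$, one has $|Dh|_\omega=\frac{C_W}{b-a}|Dv|_\omega$, so $|Dv|_\omega(I)=\frac{b-a}{C_W}|Dh|_\omega(I)$ rather than $|Dh|_\omega(I)/C_W$. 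This does not affect the conclusion that $v\in BV_\omega(I;\{a,b\})$.
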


The proof is identical to the one of \cite[Proposition 4.3]{leoni-murray2016}
and so we omit it. In view of the previous theorem, we extend $G_{\varepsilon
}$ to \thinspace$L^{1}(I)$ by setting
\begin{equation}
G_{\varepsilon}(v):=\left\{
\begin{array}
[c]{ll}%
\int_{I}(W(v(t))+\varepsilon^{2}(v^{\prime}(t))^{2})\omega(t)\,dt & \text{if
}v\in H^{1}(I)\text{ satisfies \eqref{1d dirichlet}},\\
\infty & \text{otherwise in }L^{1}(I).
\end{array}
\right.  \label{1d functional}%
\end{equation}

\subsection{Zeroth and First-Order $\Gamma$-limit of $G_{\varepsilon}$}

We begin by establishing the zeroth-order $\Gamma$-limit of the functional
$G_{\varepsilon}$.

\begin{theorem}
\label{theorem 1d zero gamma} Assume that $W$ satisfies 
\eqref{W_Smooth}-\eqref{W' three zeroes}, that $\omega$ satisfies \eqref{etaSmooth}, and that $\alpha_{\varepsilon}\rightarrow\alpha$
and $\beta_{\varepsilon}\rightarrow\beta$ as $\varepsilon\rightarrow0^{+}$ for
some $\alpha,\beta\in\mathbb{R}$. Then the family $\{G_{\varepsilon
}\}_{\varepsilon}$ $\Gamma$-converges to $G^{(0)}$ in \thinspace$L^{1}(I)$ as
$\varepsilon\rightarrow0^{+}$, where
\[
G^{(0)}(v):=\int_{I}W(v(t))\omega(t)\,dt.
\]

\end{theorem}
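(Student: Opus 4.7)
The plan is to prove the two $\Gamma$-convergence inequalities separately; at this zeroth order the gradient term acts as a vanishing perturbation, so the argument is substantially softer than the Modica--Mortola computation that will be needed at first order.

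For the $\Gamma$-liminf, I would take $v_n \to v$ in $L^1(I)$ and $\varepsilon_n \to 0^+$, reduce to the case $\liminf_n G_{\varepsilon_n}(v_n) < \infty$, extract a subsequence realising the liminf and then a further subsequence along which $v_n \to v$ almost everywhere. Along this subsequence $v_n \in H^1(I)$ satisfies the Dirichlet conditions \eqref{1d dirichlet}, so discarding the non-negative gradient contribution yields $G_{\varepsilon_n}(v_n) \ge \int_I W(v_n)\omega\,dt$, and Fatou's lemma together with the continuity of $W$ and the positivity of $\omega$ gives $\liminf_n G_{\varepsilon_n}(v_n) \ge G^{(0)}(v)$.

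For the $\Gamma$-limsup, I would fix $v \in L^1(I)$; the case $G^{(0)}(v) = \infty$ is trivial (any Dirichlet-admissible smooth sequence converging to $v$ in $L^{1}$ works), so assume $G^{(0)}(v) < \infty$. My plan is a two-step approximation followed by a diagonal extraction. First, approximate $v$ in $L^1(I)$ by a smooth bounded function $w$ for which $\int_I W(w)\omega\,dt$ is close to $G^{(0)}(v)$; this can be achieved by truncating $v$ at a large height, using the superlinear growth of $W$ at $\pm\infty$ from \eqref{WGurtin_Assumption} to control the tail, and then mollifying at a small scale, invoking continuity of $W$. Second, for each such $w$ and each $n$ I would define a Dirichlet-admissible competitor $v_{n,w}$ by affinely interpolating between the boundary data and $w$ on two layers of width $\delta_n$:
$$v_{n,w}(t) := \alpha_{\varepsilon_n} + \bigl(w(\delta_n) - \alpha_{\varepsilon_n}\bigr)\frac{t}{\delta_n} \text{ on } [0,\delta_n], \qquad v_{n,w} := w \text{ on } [\delta_n, T-\delta_n],$$
with a symmetric definition near $T$. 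Choosing $\delta_n \to 0$ so that $\varepsilon_n^2/\delta_n \to 0$ (for instance $\delta_n := \varepsilon_n$), the boundary layer contributes $O(\delta_n)$ to $\int_I W(v_{n,w})\omega\,dt$ and $O(\varepsilon_n^2/\delta_n)$ to $\varepsilon_n^2 \int_I (v_{n,w}')^2 \omega\,dt$, both of which vanish, while $v_{n,w} \to w$ in $L^1(I)$. A standard diagonal argument (Attouch's lemma) then produces the required recovery sequence.

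The main obstacle is the first approximation step, because $v$ is merely in $L^1(I)$ and may be unbounded; one has to exploit the growth of $W$ at infinity from \eqref{WGurtin_Assumption} to ensure that a truncation $v_M$ converges to $v$ simultaneously in $L^1$ and in $G^{(0)}$-energy, and likewise for the subsequent mollification. Once this approximation is in hand, the rest is routine: the coarse boundary correction is harmless at this scale precisely because the prefactor $\varepsilon_n^2$ absorbs the $1/\delta_n$ blow-up of the interpolant's derivative, so no fine matched-asymptotic profile is required here---such profiles will become relevant only at the second order.
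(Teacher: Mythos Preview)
Your proposal is correct and follows essentially the same approach as the paper: the liminf is identical (Fatou along a pointwise-convergent subsequence), and the limsup proceeds by reducing to bounded $v$ via truncation and then building an $H^{1}$ competitor whose gradient contribution vanishes because of the $\varepsilon^{2}$ prefactor. The only cosmetic difference is that the paper obtains the Dirichlet conditions by mollifying an extension of $v$ that already equals $\alpha_{\varepsilon_{n}}$, $\beta_{\varepsilon_{n}}$ near the endpoints (so no separate affine interpolation is needed), and passes from bounded to general $v$ via lower semicontinuity of the $\Gamma$-limsup together with monotone convergence rather than an explicit diagonal argument.
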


\begin{proof}
To prove the liminf inequality, let $\varepsilon_{n}\rightarrow0^{+}$ and
$v_{n}\rightarrow v$ in \thinspace$L^{1}(I)$. Write $\alpha_{n}:=\alpha
_{\varepsilon_{n}}$ and $\beta_{n}:=\beta_{\varepsilon_{n}}$. Consider a
subsequence $\{\varepsilon_{n_{k}}\}_{k}$ of $\{\varepsilon_{n}\}_{n}$ such
that%
\[
\lim_{k\rightarrow\infty}G_{\varepsilon_{n_{k}}}(v_{n_{k}})=\liminf
_{n\rightarrow\infty}G_{\varepsilon_{n}}(v_{n}).
\]
Since $v_{n}\rightarrow v$ in \thinspace$L^{1}(I)$ and $\inf_{I}\omega>0$, by
selecting a further subsequence, not relabeled, we can assume that $v_{n_{k}%
}(t)\rightarrow v(t)$ for $\mathcal{L}^{1}$-a.e. $t\in I$. Hence, by Fatou's
lemma and the continuity and nonnegativity of $W$, we have%
\[
\lim_{k\rightarrow\infty}G_{\varepsilon_{n_{k}}}(v_{n_{k}})\geq\liminf
_{k\rightarrow\infty}\int_{I}W(v_{n_{k}}(t))\omega(t)\,dt\geq\int%
_{I}W(v(t))\omega(t)\,dt.
\]
To prove the limsup inequality, let $\varepsilon_{n}\rightarrow0^{+}$ and
$v\in$\thinspace$L^{1}(I)$. Assume first that $v$ is bounded. Let $\bar{v}$ be
a representative of $v$ and let $\varphi_{\delta}$ be a standard mollifier,
where $\delta>0$. Let $\delta_{n}\rightarrow0^{+}$ to be chosen later on and
define%
\[
\bar{v}_{n}(t):=\left\{
\begin{array}
[c]{ll}%
\alpha_{n} & \text{if }-1<t<2\delta_{n},\\
\bar{v}(t) & \text{if }2\delta_{n}\leq t\leq T-2\delta_{n},\\
\beta_{n} & \text{if }T-2\delta_{n}<t<T+1,
\end{array}
\right.
\]
and $v_{n}:=\varphi_{\delta_{n}}\ast\bar{v}_{n}$. Assuming that
$\operatorname*{supp}\varphi\subseteq(-1,1)$, we have that $v_{n}%
(0)=(\varphi_{\delta_{n}}\ast\alpha_{n})(0)=\alpha_{n}$ and $v_{n}%
(T)=(\varphi_{\delta_{n}}\ast\beta_{n})(T)=\beta_{n}$. On the the other hand,
if $0<t_{0}<1$ is a Lebesgue point of $\bar{v}$ then for all $n$ sufficiently
large, we have that $v_{n}(t_{0})=(\varphi_{\delta_{n}}\ast\bar{v}%
)(t_{0})\rightarrow\bar{v}(t_{0})$ by standard properties of mollifiers. Using
the continuity of $W$, we may apply the Lebesgue dominated
convergence theorem to obtain that $v_{n}\rightarrow v$ in \thinspace
$L^{1}(I)$ and
\[
\lim_{n\rightarrow\infty}\int_{I}W(v_{n}(t))\omega(t)\,dt=\int_{I}%
W(v(t))\omega(t)\,dt.
\]
On the other hand,%
\[
|v_{n}^{\prime}(t)|=|(\varphi_{\delta_{n}}^{\prime}\ast\bar{v}_{n}%
)(t)|\leq\frac{C}{\delta_{n}}.
\]
Hence,
\[
\int_{I}\varepsilon_{n}^{2}(v_{n}^{\prime}(t))^{2}\omega(t)\,dt\leq
C\frac{\varepsilon_{n}^{2}}{\delta_{n}}\int_{I}\omega(t)\,dt\rightarrow0
\]
provided we choose $\delta_{n}$ such that $\frac{\varepsilon_{n}^{2}}%
{\delta_{n}}\rightarrow0$. It follows that
\[
\lim_{n\rightarrow\infty}G_{\varepsilon_{n}}(v_{n})=\int_{I}W(v(t))\omega
(t)\,dt.
\]
To complete the proof, we use the fact that the $\Gamma$-lim sup is lower semicontinuous (see \cite[Proposition 6.8]{dalmaso-book1993}) and that every \thinspace$v \in L^{1}(I)$ can be approximated by an increasing sequence of bounded functions for which the convergence of the integral on the right-hand side above follows from the monotone convergence theorem. \hfill
\end{proof}

Since $W^{-1}(\{0\})=\{a,b\}$, we have
\[
\inf_{v\in\,L^{1}(I)}G^{(0)}(v)=0
\]
and therefore%
\begin{align}
G_{\varepsilon}^{(1)}(v)  &  :=\frac{G_{\varepsilon}(v)-\inf_{\,L^{1}%
(I)}G^{(0)}}{\varepsilon}\label{G1 epsilon}\\
&  =\int_{I}\left(  \frac{1}{\varepsilon}W(v(t))+\varepsilon(v^{\prime
}(t))^{2}\right)  \omega(t)\,dt\nonumber
\end{align}
if $v\in$\thinspace$H^{1}(I)$ satisfies \eqref{1d dirichlet} and
$G_{\varepsilon}^{(1)}(v):=\infty$ if $v\in$\thinspace$L^{1}(I)\backslash
$\thinspace$H^{1}(I)$ or if the boundary condition \eqref{1d dirichlet} fails.

We now characterize the first-order $\Gamma$-limit of the family $\{G_{\varepsilon}\}_{\varepsilon}$.

\begin{theorem}
\label{theorem 1d first gamma}Assume that $W$ satisfies
\eqref{W_Smooth}-\eqref{W' three zeroes}, that $\omega$ satisfies \eqref{etaSmooth}, and that $\alpha_{\varepsilon}\rightarrow\alpha$
and $\beta_{\varepsilon}\rightarrow\beta$ as $\varepsilon\rightarrow0^{+}$ for
some $\alpha,\beta\in\mathbb{R}$. Then the family $\{G_{\varepsilon}%
^{(1)}\}_{\varepsilon}$ $\Gamma$-converges to $G^{(1)}$ in \thinspace
$L^{1}(I)$ as $\varepsilon\rightarrow0^{+}$, where%
\begin{equation}
G^{(1)}(v):=\frac{C_{W}}{b-a}|Dv|_{\omega}(I)+\operatorname*{d}\nolimits_{W}%
(v(0),\alpha)\omega(0)+\operatorname*{d}\nolimits_{W}(v(T),\beta)\omega(T)
\label{G1}
\end{equation}
if $v\in BV_{\omega}(I;\{a,b\})$ 
and $G^{(1)}(v):=\infty$ otherwise in $L^{1}(I)$, where $\operatorname*{d}\nolimits_{W}$ and $C_{W}$ are defined in
\eqref{distance definition} and \eqref{cW definition}, respectively.
\end{theorem}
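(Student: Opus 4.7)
The proof splits into the $\Gamma$-liminf and $\Gamma$-limsup inequalities, following a weighted Modica--Mortola scheme in which the Dirichlet data give rise to boundary layers.

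For the liminf, take $\varepsilon_n \to 0^+$ and $v_n \to v$ in $L^1(I)$ with $\liminf_n G^{(1)}_{\varepsilon_n}(v_n) < \infty$ (otherwise the inequality is trivial). By Theorem \ref{theorem 1d compactness}, $v \in BV_\omega(I;\{a,b\})$. Set $H(s):=\int_a^s 2W^{1/2}(\rho)\,d\rho$, so that $H(a)=0$ and $H(b)=C_W$. The arithmetic-geometric inequality yields
\[
G^{(1)}_{\varepsilon_n}(v_n) \;\geq\; \int_I 2 W^{1/2}(v_n)\,|v_n'|\,\omega\, dt \;=\; \int_I |(H\circ v_n)'|\,\omega\, dt.
\]
Since $v_n(0)=\alpha_n:=\alpha_{\varepsilon_n}$ and $v_n(T)=\beta_n:=\beta_{\varepsilon_n}$, I extend $H\circ v_n$ continuously to $(-1,T+1)$ by the constants $H(\alpha_n)$ on $(-1,0)$ and $H(\beta_n)$ on $(T,T+1)$, obtaining $\tilde h_n \in W^{1,1}((-1,T+1))$ that converges in $L^1$ to the piecewise-constant function $\tilde h$ equal to $H(\alpha)$ on $(-1,0)$, $H\circ v$ on $I$, and $H(\beta)$ on $(T,T+1)$. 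Extending $\omega$ to a continuous positive function $\tilde\omega$ on $(-1,T+1)$, the lower semicontinuity of the weighted total variation with respect to $L^1$-convergence gives
\[
\liminf_{n} G^{(1)}_{\varepsilon_n}(v_n) \;\geq\; |D\tilde h|_{\tilde\omega}(-1,T+1) \;=\; G^{(1)}(v),
\]
where the last equality follows because at the interior jumps of $v$ the summed contributions are $C_W \sum_i \omega(t_i) = \tfrac{C_W}{b-a}|Dv|_\omega(I)$, while the jumps of $\tilde h$ at $0$ and $T$ contribute $|H(v(0))-H(\alpha)|\,\omega(0)=d_W(v(0),\alpha)\,\omega(0)$ and $d_W(v(T),\beta)\,\omega(T)$ (recalling $v(0),v(T)\in\{a,b\}$).

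For the limsup, fix $v \in BV_\omega(I;\{a,b\})$ with jumps at $0 < t_1 < \cdots < t_k < T$ (the jump-free case is a simpler instance of the same construction). I build $v_\varepsilon$ piecewise: I keep $v_\varepsilon=v$ away from the jumps and the endpoints; near each interior jump $t_i$ I insert a translated and rescaled optimal profile $\phi_i((t-t_i)/\varepsilon)$ with $\phi_i' = \pm W^{1/2}(\phi_i)$ connecting $v(t_i^-)$ to $v(t_i^+)$; near $t=0$ and $t=T$ I insert the rescaled solutions $z_{\alpha_\varepsilon}(t/\varepsilon)$ and $z_{\beta_\varepsilon}((T-t)/\varepsilon)$ of \eqref{cauchy problem z alpha} supplied by Proposition \ref{proposition z epsilon} (reflected as needed according to whether $v(0),v(T)$ equal $a$ or $b$), truncated at a slowly diverging scale $L_\varepsilon\to\infty$, and bridged by a short linear interpolation to the constant target value. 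Changing variables $s=(t-t_i)/\varepsilon$ and using $\phi_i'=\pm W^{1/2}(\phi_i)$ converts each profile integral to $\omega(t_i)\int 2W^{1/2}(\phi_i)|\phi_i'|\,ds + o(1)$, which yields $C_W\omega(t_i)$ for each interior transition and $d_W(v(0),\alpha)\omega(0)$, $d_W(v(T),\beta)\omega(T)$ for the boundary layers; summing gives exactly $G^{(1)}(v)$, and $L^1$-convergence $v_\varepsilon\to v$ is automatic since $v_\varepsilon=v$ off an $O(\varepsilon)$-set.

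The main technical point is the boundary-layer gluing: one must produce $v_\varepsilon$ with $v_\varepsilon(0)=\alpha_\varepsilon$ \emph{exactly}, while $z_{\alpha_\varepsilon}$ attains $v(0)\in\{a,b\}$ only asymptotically. The exponential decay \eqref{estimate z alpha} ensures $|z_{\alpha_\varepsilon}(L_\varepsilon)-v(0)|\leq (b-a)e^{-\sigma L_\varepsilon}$, so that a linear interpolation on a bridging interval whose length balances the $W$ and gradient contributions keeps the extra energy $o(1)$; the analogous argument applies at $t=T$. A standard approximation, combined with lower semicontinuity of the $\Gamma$-limsup, reduces the general BV datum to piecewise-constant profiles with finitely many jumps separated from $\partial I$.
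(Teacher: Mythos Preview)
Your liminf argument follows the paper's route almost verbatim: extend to $(-1,T+1)$, apply the Modica--Mortola substitution, and invoke lower semicontinuity of the weighted total variation. One technical point you skipped: to deduce $\tilde h_n\to\tilde h$ in $L^1$ from $v_n\to v$ in $L^1$ you need $H$ to be globally Lipschitz, but $2W^{1/2}$ is only locally bounded under the stated hypotheses (no growth assumption on $W$ at infinity is imposed). The paper fixes this by replacing $W$ with $W_1:=\min\{W,K\}$, $K:=\max_J W$, where $J$ is the smallest closed interval containing $[a,b]$ and all the $\alpha_n,\beta_n$; then $\Phi_1(s):=\int_a^s 2W_1^{1/2}$ is Lipschitz, and since $W_1=W$ on $J$ the limiting quantities are unchanged. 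This is a small patch, not a structural gap.

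For the limsup your construction is genuinely different from the paper's. You use the exact optimal profiles $z_\alpha$ solving $z'=\pm W^{1/2}(z)$, truncated at a diverging scale $L_\varepsilon$ and closed by a linear bridge; this is the classical Modica--Mortola recovery and it works, but the same truncation-plus-bridge is needed at every interior jump as well (the profile from $a$ to $b$ also takes infinite time), and you must supplement Proposition~\ref{proposition z epsilon} with its (easy) analogue for the decay towards $a$. The paper instead regularises the profile ODE to $\varepsilon\phi'=\pm(\delta_\varepsilon+W(\phi))^{1/2}$ with $\delta_\varepsilon\to0$ and $\varepsilon/\delta_\varepsilon^{1/2}\to0$: because the right-hand side is bounded away from zero, each transition reaches its exact target value in finite time $\leq(b-a)\varepsilon/\delta_\varepsilon^{1/2}\to0$, so no truncation or bridging is needed and all three layer types (boundary-to-$a$, $a$-to-$b$, boundary-to-$b$) are handled by the same mechanism. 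The price is an extra $O(\delta_\varepsilon)$ in the energy, which vanishes. Both approaches yield the bound; the paper's avoids some case-by-case gluing, while yours stays closer to the textbook argument and makes the role of the optimal profile more transparent.
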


\begin{proof}
\textbf{Step 1:} To prove the $\Gamma$-liminf inequality, let $\varepsilon
_{n}\rightarrow0^{+}$ and $v_{n}\rightarrow v$ in \thinspace$L^{1}(I)$. Write
$\alpha_{n}:=\alpha_{\varepsilon_{n}}$ and $\beta_{n}:=\beta_{\varepsilon_{n}%
}$. Assume that%
\[
\liminf_{n\rightarrow\infty}G_{\varepsilon_{n}}^{(1)}(v_{n})<\infty,
\]
since otherwise there is nothing to prove,  and consider a subsequence
$\{\varepsilon_{n_{k}}\}_{k}$ of $\{\varepsilon_{n}\}_{n}$ such that%
\[
\lim_{k\rightarrow\infty}G_{\varepsilon_{n_{k}}}^{(1)}(v_{n_{k}}%
)=\liminf_{n\rightarrow\infty}G_{\varepsilon_{n}}^{(1)}(v_{n}).
\]
Then for all $k$ sufficiently large, $v_{n_{k}}\in$\thinspace$H^{1}(I)$,
$v_{n_{k}}(0)=\alpha_{n_{k}}$, and $v_{n_{k}}(T)=\beta_{n_{k}}$. Extend
$\omega$ and $v_{n_{k}}$ to $(-1,T+1)$, by setting
\[
\bar{\omega}(t):=\left\{
\begin{array}
[c]{ll}%
\omega(0) & \text{if }t\leq0,\\
\omega(t) & \text{if }t\in I,\\
\omega(T) & \text{if }t\geq T,
\end{array}
\right.  \quad\bar{v}_{n_{k}}(t):=\left\{
\begin{array}
[c]{ll}%
\alpha_{n_{k}} & \text{if }t\leq0,\\
v_{n_{k}}(t) & \text{if }t\in I,\\
\beta_{n_{k}} & \text{if }t\geq T.
\end{array}
\right.
\]
Define
\[
W_{1}(s):=\min\{W(s),K\},\quad\Phi_{1}(s):=\int_{a}^{s}2W_{1}^{1/2}%
(\rho)\,d\rho,
\]
where
\[
K:=\max_{J}W
\]
and $J$ is the smallest closed interval that contains $[a,b]$, $\{\alpha
_{n_{k}}\}_{k}$, and $\{\beta_{n_{k}}\}_{k}$. Then
\begin{align*}
G_{\varepsilon_{n_{k}}}^{(1)}(v_{n_{k}})  &  =\int_{I}\left(  \frac
{1}{\varepsilon_{n_{k}}}W(v_{n_{k}}(t))+\varepsilon_{n_{k}}(v_{n_{k}}^{\prime
}(t))^{2}\right)  \omega(t)\,dt\\
&  \geq\int_{I}2W_{1}^{1/2}(v_{n_{k}}(t))|v_{n_{k}}^{\prime}(t)|\omega
(t)\,dt=\int_{I}|(\Phi_{1}\circ v_{n_{k}})^{\prime}(t)|\omega(t)\,dt\\
&  =\int_{-1}^{T+1}|(\Phi_{1}\circ\bar{v}_{n_{k}})^{\prime}(t)|\bar{\omega
}(t)\,dt,
\end{align*}
where in the last equality we used the fact that $(\Phi_{1}\circ\bar{v}%
_{n_{k}})(t)\equiv\Phi(\alpha_{n_{k}})$ in $(-1,0)$ and $(\Phi_{1}\circ\bar
{v}_{n_{k}})(t)\equiv\Phi(\beta_{n_{k}})$ in $(T,T+1)$ . Since $\Phi_{1}$ is
Lipschitz continuous, we have that $\Phi_{1}\circ\bar{v}_{n_{k}}%
\rightarrow\Phi_{1}\circ\bar{v}$ in \thinspace$L^{1}((-1,T+1))$, where
\[
\bar{v}(t):=\left\{
\begin{array}
[c]{ll}%
\alpha & \text{if }t\leq0,\\
v(t) & \text{if }t\in I,\\
\beta & \text{if }t\geq T.
\end{array}
\right.
\]
Hence, by standard lower semicontinuity results,%
\begin{align*}
\lim_{k\rightarrow\infty}G_{\varepsilon_{n_{k}}}^{(1)}(v_{n_{k}})  &
\geq\liminf_{k\rightarrow\infty}\int_{-1}^{T+1}|(\Phi_{1}\circ\bar{v}_{n_{k}%
})^{\prime}(t)|\bar{\omega}(t)\,dt\\
&  \geq\int_{-1}^{T+1}\bar{\omega}\,d|D(\Phi_{1}\circ\bar{v})|=\int_{-1}%
^{T+1}\bar{\omega}\,d|D(\Phi\circ\bar{v})|\\
&  =\int_{I}\omega\,d|D(\Phi\circ v)|+\operatorname*{d}\nolimits_{W}%
(\alpha,v(0))\omega(0)+\operatorname*{d}\nolimits_{W}(\beta,v(T))\omega(T)\\
&  =\frac{C_{W}}{b-a}\int_{I}\omega\,d|Dv|+\operatorname*{d}\nolimits_{W}%
(\alpha,v(0))\omega(0)+\operatorname*{d}\nolimits_{W}(\beta,v(T))\omega(T)\\
&  =G^{(1)}(v).
\end{align*}
\textbf{Step 2:} To prove the $\Gamma$-limsup inequality, assume first that $v$
is of the form
\[
v(t)=%
\begin{cases}
a & \text{ if }t\in\lbrack t_{2k},t_{2k+1}),\\
b & \text{ otherwise,}%
\end{cases}
\]
where $0=t_{0}<t_{1}<\cdots<t_{2\ell}=T$. Observe that
\begin{equation}
v(t)=\operatorname*{sgn}\nolimits_{a,b}(f(t)), \label{1d 98}%
\end{equation}
where
\begin{equation}
\operatorname*{sgn}\nolimits_{a,b}(t):=\left\{
\begin{array}
[c]{ll}%
a & \text{ if }t\leq0,\\
b & \text{ if }t>0,
\end{array}
\right.  \label{sgnabDefinition}%
\end{equation}
and%
\begin{equation}
f(t):=%
\begin{cases}
t-t_{1} & \text{ if }t\in\lbrack t_{0},t_{1}),\\
-\min\{t-t_{2k},t_{2k+1}-t\} & \text{ if }t\in\lbrack t_{2k},t_{2k+1}),\text{
and }k\geq1,\\
\min\{t-t_{2k+1},t_{2k+2}-t\} & \text{ if }t\in(t_{2k+1},t_{2k+2}],\text{ and
}k<\ell-1,\\
t-t_{2\ell-1} & \text{ if }t\in\lbrack t_{2\ell-1},t_{2\ell})
\end{cases}
\label{1d 99}%
\end{equation}
is the signed distance function of the set $E:=\{t\in I:\,v(t)=a\}$ relative
to $I$. We will construct smooth approximations of the function
$\operatorname*{sgn}\nolimits_{a,b}$ that almost minimize the energy
$G_{\varepsilon}^{(1)}$.

Since we expect each transition to happen in an infinitesimal interval and
$\omega(t)\sim\omega(t_{k})$ for $t$ close to $t_{k}$, to construct an
approximate solution, we consider minimizers of the functional $\int%
_{0}^{T_{\varepsilon}}\left(  \frac{1}{\varepsilon}W(\phi)+\varepsilon
(\phi^{\prime})^{2}\right)  \,dt$ with appropriate boundary conditions and
where $T_{\varepsilon}\rightarrow0^{+}$. The minimizers of this functional
satisfy the Euler--Lagrange equations $2\varepsilon^{2}\phi^{\prime\prime
}=W^{\prime}(\phi)$. If we multiply each side by $\phi^{\prime}$ and
integrate, we get%
\[
\varepsilon^{2}(\phi^{\prime}(t))^{2}=\varepsilon^{2}(\phi^{\prime}%
(0))^{2}-W(\phi(0))+W(\phi(t)).
\]
Thus,
\[
\varepsilon\phi^{\prime}(t)=\pm(\varepsilon^{2}(\phi^{\prime}(0))^{2}%
-W(\phi(0))+W(\phi(t)))^{1/2}.
\]
Setting $\delta_{\varepsilon}:=\varepsilon^{2}(\phi^{\prime}(0))^{2}%
-W(\phi(0))$ we solve the differential equation
\[
\varepsilon\phi^{\prime}(t)=\pm(\delta_{\varepsilon}+W(\phi(t)))^{1/2},
\]
where we determine the sign according to each transition. Provided
$\delta_{\varepsilon}+W(\phi(t))\neq0$, this gives%
\[
\pm\int_{\phi(0)}^{\phi(t)}\frac{\varepsilon}{(\delta_{\varepsilon}%
+W(\rho))^{1/2}}\,d\rho=t.
\]
We consider first the transition from $\alpha_{\varepsilon}$ to $a$. Let $\delta_{\varepsilon}>0$ and introduce the function%
\begin{equation}
\bar{\Psi}_{\varepsilon}(r):=\int_{r}^{\alpha_{\varepsilon}}\frac{\varepsilon
}{(\delta_{\varepsilon}+W(\rho))^{1/2}}\,d\rho. \label{1d 99a}%
\end{equation}
Define
\[
\bar{T}_{\varepsilon}:=\bar{\Psi}_{\varepsilon}(a)
\]
and observe that since $W\geq0$,
\begin{equation}
0<\bar{T}_{\varepsilon}=\int_{a}^{\alpha_{\varepsilon}}\frac{\varepsilon
}{(\delta_{\varepsilon}+W(\rho))^{1/2}}\,d\rho\leq(b-a)\frac{\varepsilon
}{\delta_{\varepsilon}^{1/2}}. \label{1d 100}%
\end{equation}
The function $\bar{\Psi}_{\varepsilon}$ is strictly decreasing and
differentiable. Let $\bar{\phi}_{\varepsilon}:[0,\bar{T}_{\varepsilon
}]\rightarrow\lbrack a,\alpha_{\varepsilon}]$ be the inverse of $\bar{\Psi
}_{\varepsilon}$ on the interval $[a,\alpha_{\varepsilon}]$. Then $\bar{\phi
}_{\varepsilon}\left(  0\right)  =\alpha_{\varepsilon}$, $\bar{\phi
}_{\varepsilon}(\bar{T}_{\varepsilon})=a$, and
\begin{equation}
\bar{\phi}_{\varepsilon}^{\prime}\left(  s\right)  =\frac{1}{\bar{\Psi
}_{\varepsilon}^{\prime}(\bar{\phi}_{\varepsilon}\left(  s\right)  )}%
=-\frac{(\delta_{\varepsilon}+W(\bar{\phi}_{\varepsilon}\left(  s\right)
))^{1/2}}{\varepsilon}. \label{1d 100a}%
\end{equation}
Extend $\bar{\phi}_{\varepsilon}$ to be $a$ for $t>\bar{T}_{\varepsilon}$.

Similarly, to transition from $a$ to $b$, we define
\[
\Psi_{\varepsilon}(r):=\int_{a}^{r}\frac{\varepsilon}{(\delta_{\varepsilon
}+W(\rho))^{1/2}}\,d\rho,
\]
and
\begin{equation}
0\leq T_{\varepsilon}:=\Psi_{\varepsilon}(b)\leq(b-a)\frac{\varepsilon}%
{\delta_{\varepsilon}^{1/2}}. \label{1d 101}%
\end{equation}
Let $\phi_{\varepsilon}:[0,T_{\varepsilon}]\rightarrow\lbrack a,b]$ be the
inverse of $\Psi_{\varepsilon}$ on the interval $[a,b]$. Then $\phi
_{\varepsilon}\left(  0\right)  =a$, $\phi_{\varepsilon}(T_{\varepsilon})=b$,
and
\begin{equation}
\phi_{\varepsilon}^{\prime}(s)=\frac{(\delta_{\varepsilon}+W(\phi
_{\varepsilon}\left(  s\right)  ))^{1/2}}{\varepsilon}. \label{1d 101a}%
\end{equation}
Extend $\phi_{\varepsilon}$ to be equal to $a$ for $s<0$ and $b$ for
$s>T_{\varepsilon}$.

Finally, to transition from $\beta_{\varepsilon}$ to $b$, define%
\[
\hat{\Psi}_{\varepsilon}(r):=\int_{\beta_{\varepsilon}}^{r}\frac{\varepsilon}%
{(\delta_{\varepsilon}+W(\rho))^{1/2}}\,d\rho,
\]
and
\begin{equation}
0\leq\hat{T}_{\varepsilon}:=\hat{\Psi}_{\varepsilon}(b)\leq(b-a)\frac
{\varepsilon}{\delta_{\varepsilon}^{1/2}}. \label{1d 102}%
\end{equation}
Let $\hat{\phi}_{\varepsilon}:[0,\hat{T}_{\varepsilon}]\rightarrow\lbrack
\beta_{\varepsilon},b]$ be the inverse of $\hat{\Psi}_{\varepsilon}$ on the interval
$[\beta_{\varepsilon},b]$. Then $\hat{\phi}_{\varepsilon}\left(  0\right)  =b$,
$\hat{\phi}_{\varepsilon}(\hat{T}_{\varepsilon})=\beta_{\varepsilon}$, and
\begin{equation}
\hat{\phi}_{\varepsilon}^{\prime}(s)=-\frac{(\delta_{\varepsilon}+W(\hat{\phi
}_{\varepsilon}\left(  s\right)  ))^{1/2}}{\varepsilon}. \label{1d 103}%
\end{equation}
Extend $\hat{\phi}_{\varepsilon}$ to be equal to $b$ for $s<0$. Assume that%
\begin{equation}
\delta_{\varepsilon}\rightarrow0,\quad\frac{\varepsilon}{\delta_{\varepsilon
}^{1/2}}\rightarrow0. \label{1d delta epsilon choice}%
\end{equation}
Taking $\varepsilon$ be so small that transition layers do not overlap or
leave $\overline{I}$, we can define%
\[
v_{\varepsilon}(t):=\left\{
\begin{array}
[c]{ll}%
\bar{\phi}_{\varepsilon}\left(  t\right)  & \text{if }0<t<\bar{T}%
_{\varepsilon},\\
\phi_{\varepsilon}\left(  f(t)\right)  & \text{if }\bar{T}_{\varepsilon}\leq
t\leq T-\hat{T}_{\varepsilon},\\
\hat{\phi}_{\varepsilon}(t-T+\hat{T}_{\varepsilon}) & \text{if }T-\hat
{T}_{\varepsilon}<t<T.
\end{array}
\right.
\]
Since $\bar{T}_{\varepsilon}\rightarrow0$, $T_{\varepsilon}\rightarrow0$, and
$\hat{T}_{\varepsilon}\rightarrow0$\ as $\varepsilon\rightarrow0^{+}$ by
(\ref{1d 100}), (\ref{1d 101}), and (\ref{1d 102}), respectively, in view of
(\ref{1d 98}), (\ref{1d 99}), we have that $v_{\varepsilon}(t)\rightarrow
v(t)$ for all $t\in I$. Moreover, $|v_{\varepsilon}(t)|\leq C$ for all $t\in
I$, all $\varepsilon>0$, and for some constant $C>0$. Hence, by the Lebesgue
dominated convergence theorem, $v_{\varepsilon}\rightarrow v$ in
\thinspace$L^{1}(I)$.

By (\ref{1d 100a}) and the change of variables $\rho=\bar{\phi}_{\varepsilon
}(t)$, we have%
\begin{align*}
&  \int_{0}^{\bar{T}_{\varepsilon}}\left(  \frac{1}{\varepsilon}%
W(v_{\varepsilon})+\varepsilon(v_{\varepsilon}^{\prime})^{2}\right)
\omega\,dt=\int_{0}^{\bar{T}_{\varepsilon}}\left(  \frac{1}{\varepsilon}%
W(\bar{\phi}_{\varepsilon})+\varepsilon(\bar{\phi}_{\varepsilon}^{\prime}%
)^{2}\right)  \omega\,dt\\
&  \leq\max_{\lbrack0,\bar{T}_{\varepsilon}]}\omega\int_{0}^{\bar
{T}_{\varepsilon}}\left(  \frac{1}{\varepsilon}(\delta_{\varepsilon}%
+W(\bar{\phi}_{\varepsilon}))+\varepsilon(\bar{\phi}_{\varepsilon}^{\prime
})^{2}\right)  \,dt\\
&  =\max_{[0,\bar{T}_{\varepsilon}]}\omega\int_{0}^{\bar{T}_{\varepsilon}%
}2(\delta_{\varepsilon}+W(\bar{\phi}_{\varepsilon}))^{1/2}|\bar{\phi
}_{\varepsilon}^{\prime}|\,dt=\max_{[0,\bar{T}_{\varepsilon}]}\omega\int%
_{a}^{\alpha_{\varepsilon}}2(\delta_{\varepsilon}+W(\rho))^{1/2}\,d\rho.
\end{align*}
On the other hand, by (\ref{1d 103}) and the changes of variables $t-T+\hat
{T}_{\varepsilon}=s$, $\rho=\hat{\phi}_{\varepsilon}(s)$,%
\begin{align*}
&  \int_{T-\hat{T}_{\varepsilon}}^{T}\left(  \frac{1}{\varepsilon
}W(v_{\varepsilon})+\varepsilon(v_{\varepsilon}^{\prime})^{2}\right)
\omega\,dt=\int_{0}^{\hat{T}_\varepsilon}\left(  \frac{1}{\varepsilon
}W(\hat{\phi}_{\varepsilon})+\varepsilon(\hat{\phi}_{\varepsilon}^{\prime
})^{2}\right)  \omega(T-\hat{T}_{\varepsilon}+s)\,ds\\
&  \leq \max_{\lbrack T-\hat{T}_{\varepsilon},T]}\omega\int_{0}^{\hat
{T}_{\varepsilon}} \left(  \frac{1}{\varepsilon}(\delta_{\varepsilon}%
+W(\hat{\phi}_{\varepsilon}))+\varepsilon(\hat{\phi}_{\varepsilon}^{\prime
})^{2}\right)  \,dt\\
&  =\max_{\lbrack T-\hat{T}_{\varepsilon},T]}\omega  \int_{0}^{\hat
{T}_{\varepsilon}}2(\delta_{\varepsilon}+W(\hat{\phi}_{\varepsilon}%
))^{1/2}|\hat{\phi}_{\varepsilon}^{\prime}|\,dt=\max_{\lbrack T-\hat{T}_{\varepsilon},T]}\omega \int_{\beta_{\varepsilon}}^{b}2(\delta_{\varepsilon}%
+W(\rho))^{1/2}d\rho.
\end{align*}
Similarly, by (\ref{1d 101a}),
\begin{align*}
&  \sum_{k=1}^{2\ell-1}\int_{t_{k-1}}^{t_{k}}\left(  \frac{1}{\varepsilon
}W(v_{\varepsilon})+\varepsilon(v_{\varepsilon}^{\prime})^{2}\right)
\omega\,dt\\
&  =\sum_{k=1}^{2\ell-1}\int_{0}^{T_{\varepsilon}}\left(  \varepsilon
(\phi_{\varepsilon}^{\prime}(s))^{2}+\varepsilon^{-1}W(\phi_{\varepsilon
}(s))\right)  \omega(t_{k}+(-1)^{k+1}s)\,ds\\
&  \leq\sum_{k=1}^{2\ell-1}\int_{0}^{T_{\varepsilon}}2(\delta_{\varepsilon
}+W(\phi_{\varepsilon}(s)))^{1/2}\phi_{\varepsilon}^{\prime}(s)\omega
(t_{k}+(-1)^{k+1}s)\,ds\\
&  \leq\sum_{k=1}^{2\ell-1}\sup\{\omega(t_{k}+(-1)^{k+1}r):\,r\in
(0,T_{\varepsilon})\}\int_{0}^{T_{\varepsilon}}2(\delta_{\varepsilon}%
+W(\phi_{\varepsilon}(s)))^{1/2}\phi_{\varepsilon}^{\prime}(s)\,ds\\
&  =\sum_{k=1}^{2\ell-1}\sup\{\omega(t_{k}+(-1)^{k+1}r):\,r\in
(0,T_{\varepsilon})\}\int_{a}^{b}2(\delta_{\varepsilon}+W(s))^{1/2}ds.
\end{align*}
Thus taking the limit as $\varepsilon\rightarrow0^{+}$ we find that
\begin{align*}
\limsup_{\varepsilon\rightarrow0^{+}}G_{\varepsilon}^{(1)}(v_{\varepsilon})
&  \leq\int_{a}^{\alpha}2W^{1/2}(s)\,ds\omega(0)+\int_{\beta}^{b}%
2W^{1/2}(s)ds\omega(T)+C_{W}\sum_{k=1}^{2\ell-1}\omega(t_{k})\\
&  =G^{(1)}(v).
\end{align*}
The cases where $v$ starts or ends at values different from those we assumed above are treated analogously. \hfill
\end{proof}

Next, we show that if $\omega$ is sufficiently close to $\omega(0)$,
$a<\alpha<b$, and $\beta=b$, then the unique minimizer of $G^{(1)}$ is the
constant function $b$. We recall that $\alpha$ and $\beta$ appear in the definition of $G^{(1)}$ (see \eqref{G1}).

\begin{corollary}
\label{corollary 1d minimizer}Assume that $W$ satisfies
\eqref{W_Smooth}-\eqref{W' three zeroes} and let $a<\alpha<b$
and\ $\beta=b$. Suppose that $\omega$ satisfies \eqref{etaSmooth}
and that
\begin{equation}
\omega(t)>\omega(0)-\omega_{0}\quad\text{for all }t\in(0,T],
\label{eta close to eta0}%
\end{equation}
where
\begin{equation}
0\leq\omega_{0}<\frac{1}{2}\frac{C_{W}-\operatorname*{d}\nolimits_{W}%
(\alpha,b)}{C_{W}}\omega(0). \label{eta0}%
\end{equation}
Then the unique minimizer of $G^{(1)}$ is the constant function $b$, with
\[
\min_{\,L^{1}(I)}G^{(1)}(v)=G^{(1)}(b)=\operatorname*{d}\nolimits_{W}%
(\alpha,b)\omega(0).
\]
\end{corollary}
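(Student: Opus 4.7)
The plan is to verify that for every $v\in L^{1}(I)$ with $v\not\equiv b$ one has $G^{(1)}(v)>G^{(1)}(b)=\operatorname{d}_{W}(\alpha,b)\omega(0)$. Since $G^{(1)}\equiv\infty$ outside $BV_{\omega}(I;\{a,b\})$, it suffices to consider $v\in BV_{\omega}(I;\{a,b\})$. For such $v$, the distributional derivative is a finite purely atomic measure supported on finitely many jump points $\{t_{1},\ldots,t_{N}\}\subset(0,T)$ with $|Dv|(\{t_{k}\})=b-a$, so that
\begin{equation*}
G^{(1)}(v)=C_{W}\sum_{k=1}^{N}\omega(t_{k})+\operatorname{d}_{W}(v(0^{+}),\alpha)\omega(0)+\operatorname{d}_{W}(v(T^{-}),b)\omega(T),
\end{equation*}
where $v(0^{+}),v(T^{-})\in\{a,b\}$ are the one-sided traces. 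In particular, choosing $v\equiv b$ yields the claimed value of $G^{(1)}(b)$.

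Next, I would isolate the key consequence of \eqref{eta close to eta0}--\eqref{eta0}, namely
\begin{equation*}
C_{W}\omega(t)>(2\operatorname{d}_{W}(\alpha,b)-C_{W})\omega(0)\qquad\text{for every }t\in(0,T].
\end{equation*}
This follows by chaining $\omega(t)>\omega(0)-\omega_{0}$ with $C_{W}\omega_{0}<\tfrac{1}{2}(C_{W}-\operatorname{d}_{W}(\alpha,b))\omega(0)$ and rearranging. Equivalently, using the identity $\operatorname{d}_{W}(a,\alpha)+\operatorname{d}_{W}(\alpha,b)=C_{W}$, it reads $C_{W}\omega(t)>(\operatorname{d}_{W}(\alpha,b)-\operatorname{d}_{W}(a,\alpha))\omega(0)$.

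A short case analysis on $(v(0^{+}),v(T^{-}))$ then concludes. If $v(0^{+})=v(T^{-})=b$ and $v\not\equiv b$, then $N\geq 2$, so $G^{(1)}(v)-G^{(1)}(b)\geq 2C_{W}\min_{[0,T]}\omega>0$. If $v(0^{+})=b$ and $v(T^{-})=a$, then $N\geq 1$ and $G^{(1)}(v)-G^{(1)}(b)\geq C_{W}\omega(t_{1})+C_{W}\omega(T)>0$. When $v(0^{+})=a$, using $\operatorname{d}_{W}(a,\alpha)=C_{W}-\operatorname{d}_{W}(\alpha,b)$ and keeping either the first jump term (if $N\geq 1$) or the right-boundary term $C_{W}\omega(T)$ (if $N=0$, i.e.\ $v\equiv a$), one finds $G^{(1)}(v)-G^{(1)}(b)\geq C_{W}\omega(t_{\ast})-(2\operatorname{d}_{W}(\alpha,b)-C_{W})\omega(0)$ for some $t_{\ast}\in(0,T]$, which is strictly positive by the displayed estimate.

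The only potential obstacle is purely combinatorial bookkeeping across the four sub-cases; no delicate analysis is required once the explicit formula for $G^{(1)}$ on $\{a,b\}$-valued $BV_{\omega}$ functions is in hand. The role of the threshold \eqref{eta0} is precisely to calibrate the trade-off between the boundary cost at $0$ and any additional wall or right-boundary cost, thereby guaranteeing that leaving the well $b$ is never energetically profitable.
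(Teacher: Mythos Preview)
Your proof is correct and follows essentially the same approach as the paper: directly estimate $G^{(1)}(v)$ for $v\in BV_{\omega}(I;\{a,b\})$ and compare to $\operatorname{d}_{W}(\alpha,b)\omega(0)$ using the constraint \eqref{eta0} on $\omega_{0}$. The paper's argument is slightly more streamlined in that it drops the nonnegative boundary terms whenever $v$ has a jump and simply uses $G^{(1)}(v)\geq C_{W}\omega(t_{0})>C_{W}(\omega(0)-\omega_{0})\geq \operatorname{d}_{W}(\alpha,b)\omega(0)$, which collapses your four trace sub-cases into the single dichotomy ``$v$ has a jump'' versus ``$v\equiv a$''.
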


\begin{proof}
Let $v\in BV_{\omega}(I;\{a,b\})$. If $v$ has at least one jump point at
$t_{0}\in I$, then by (\ref{eta close to eta0}) and (\ref{eta0}),%
\[
G^{(1)}(v)\geq\frac{C_{W}}{b-a}|Dv|_{\omega}(I)\geq C_{W}\omega(t_{0}%
)>C_{W}(\omega(0)-\omega_{0})\geq\operatorname*{d}\nolimits_{W}(\alpha
,b)\omega(0).
\]
Hence, either $v\equiv b$ or $v\equiv a$. If $v=a$, then again by
(\ref{eta close to eta0}) and (\ref{eta0})
\[
G^{(1)}(a)=\operatorname*{d}\nolimits_{W}(a,\alpha)\omega(0)+C_{W}%
\omega(T)>C_{W}(\omega(0)-\omega_{0})\geq\operatorname*{d}\nolimits_{W}%
(\alpha,b)\omega(0).
\]
This completes the proof. \hfill
\end{proof}

\begin{remark}
Note that condition \eqref{eta close to eta0} holds if either $\omega$ is
strictly increasing, with $\omega_{0}=0$, or if $T$ is sufficiently small, by
continuity of $\omega$.
\end{remark}

\subsection{Second-Order $\Gamma$-limsup}

Under the hypotheses of Corollary \ref{corollary 1d minimizer}, we have
\[
\min_{\,L^{1}(I)}G^{(1)}(v)=G^{(1)}(b)=\operatorname*{d}\nolimits_{W}%
(\alpha,b)\omega(0).
\]
We define%
\begin{align}
G_{\varepsilon}^{(2)}(v)  &  :=\frac{G_{\varepsilon}^{(1)}(v)-\inf
_{\,L^{1}(I)}G^{(1)}}{\varepsilon}\label{G 2 epsilon}\\
&  =\int_{I}\left(  \frac{1}{\varepsilon^{2}}W(v(t))+(v^{\prime}%
(t))^{2}\right)  \omega(t)\,dt-\operatorname*{d}\nolimits_{W}(\alpha
,b)\omega(0)\frac{1}{\varepsilon}\nonumber
\end{align}
if $v\in$\thinspace$H^{1}(I)$ satisfies \eqref{1d dirichlet} and
$G_{\varepsilon}^{(2)}(v):=\infty$ if $v\in$\thinspace$L^{1}(I)\backslash
$\thinspace$H^{1}(I)$ or if the boundary condition \eqref{1d dirichlet} fails.

We study the second-order $\Gamma$-limsup of the family $\{G_{\varepsilon
}\}_{\varepsilon}$.

\begin{theorem}
[Second-Order $\Gamma$-Limsup]\label{theorem 1d second gamma limsup}Assume that $W$
satisfies \eqref{W_Smooth}-\eqref{W' three zeroes}, that
$\alpha_{-}$ satisfies \eqref{alpha and beta minus}, and that
$\omega$ satisfies \eqref{etaSmooth}, \eqref{eta close to eta0},
where%
\begin{equation}
0\leq\omega_{0}<\frac{1}{2}\frac{\operatorname*{d}\nolimits_{W}(a,\alpha_{-}%
)}{C_{W}}\omega(0).\label{eta 0 alpha-}%
\end{equation}
Let
\begin{equation}
\alpha_{-}\leq\alpha_{\varepsilon},\,\beta_{\varepsilon}\leq
b,\label{1d alpha epsilon beta epsilon}%
\end{equation}
with
\begin{equation}
|\alpha_{\varepsilon}-\alpha|\leq A_{0}\varepsilon^{\gamma},\quad
|\beta_{\varepsilon}-b|\leq B_{0}\varepsilon^{\gamma}%
\label{alpha epsilon and beta epsilon}%
\end{equation}
for some $\alpha$,$\,\beta$ and where $A_{0}$, $B_{0}>0$, and $\gamma>1$. Then
there exist constants $0<\varepsilon_{0}<1$, $C,C_{0}>0$, and $\gamma
_{0},\gamma_{1}>0$, depending only on $\alpha_{-}$, $A_{0}$, $B_{0}$, $T$,
$\omega$, and $W$, and functions $v_{\varepsilon}\in$\thinspace$H^{1}(I)$
satisfying \eqref{1d dirichlet}, $a\leq v_{\varepsilon}\leq b$, and
$v_{\varepsilon}\rightarrow b$ in \thinspace$L^{1}(I)$, such that
\begin{equation}
G_{\varepsilon}^{(2)}(v_{\varepsilon})\leq\int_{0}^{l}2W(p_{\varepsilon
}(t))t\,dt\,\omega^{\prime}(0)+Ce^{-2\sigma l}\left(  2\sigma l+1\right)
+C\varepsilon^{2\gamma}l+C\varepsilon^{\gamma_{1}}|\log\varepsilon
|^{\gamma_{0}}\label{estimate limsup G1}%
\end{equation}
for all $0<\varepsilon<\varepsilon_{0}$ and all $l>0$, where $p_{\varepsilon
}(t)=v_{\varepsilon}(\varepsilon t)$ is such that $p_{\varepsilon}\rightarrow
z_{\alpha}$ pointwise in $[0,\infty)$, and where $z_{\alpha}$ solves the Cauchy
problem \eqref{cauchy problem z alpha}. In particular,%
\begin{equation}
\limsup_{\varepsilon\rightarrow0^{+}}G_{\varepsilon}^{(2)}(v_{\varepsilon
})\leq\int_{0}^{\infty}2W^{1/2}(z_{\alpha}(t))z_{\alpha}^{\prime
}(t)t\,dt\,\omega^{\prime}(0).\label{limsup G2}%
\end{equation}

\end{theorem}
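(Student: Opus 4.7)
The plan is to construct the recovery sequence as a rescaled Cauchy-problem profile near $t=0$, joined by a short perturbed-ODE corrector near $t=T$. Specifically, I would set
\[
v_\varepsilon(t) :=
\begin{cases}
z_{\alpha_\varepsilon}(t/\varepsilon) & \text{if } 0 \le t \le T - \varepsilon \hat{l}_\varepsilon, \\
\hat{\phi}_\varepsilon((t - T + \varepsilon \hat{l}_\varepsilon)/\varepsilon) & \text{if } T - \varepsilon \hat{l}_\varepsilon \le t \le T,
\end{cases}
\]
where $\hat{\phi}_\varepsilon$ is a perturbed-ODE profile built as in the proof of Theorem \ref{theorem 1d first gamma} with an auxiliary parameter $\hat{\delta}_\varepsilon > 0$, connecting $z_{\alpha_\varepsilon}((T - \varepsilon\hat{l}_\varepsilon)/\varepsilon)$ to $\beta_\varepsilon$ on a short interval of length $\varepsilon \hat{l}_\varepsilon \to 0$. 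Since $p_\varepsilon(t) := v_\varepsilon(\varepsilon t)$ coincides with $z_{\alpha_\varepsilon}(t)$ on $[0, l]$ for all sufficiently small $\varepsilon$, continuous dependence of ODEs on initial data yields $p_\varepsilon \to z_\alpha$ pointwise on $[0, \infty)$, and the $L^1$-convergence $v_\varepsilon \to b$ follows at once from $\varepsilon \hat{l}_\varepsilon \to 0$ and $z_{\alpha_\varepsilon}(t/\varepsilon) \to b$ for every $t>0$.

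The core of the energy estimate on the main layer uses the equipartition identity $(z'_{\alpha_\varepsilon})^2 = W(z_{\alpha_\varepsilon})$, which collapses $\varepsilon^{-1} W(v_\varepsilon) + \varepsilon(v'_\varepsilon)^2$ to $2\varepsilon^{-1}W(v_\varepsilon)$. After the change of variable $s = t/\varepsilon$, the contribution of $[0, T - \varepsilon \hat{l}_\varepsilon]$ to $G_\varepsilon^{(1)}(v_\varepsilon)$ becomes
\[
\int_0^{(T - \varepsilon \hat{l}_\varepsilon)/\varepsilon} 2 W(z_{\alpha_\varepsilon}(s)) \omega(\varepsilon s) \, ds.
\]
Taylor-expanding $\omega(\varepsilon s) = \omega(0) + \varepsilon s \omega'(0) + O((\varepsilon s)^2)$ and using
\[
\int_0^\infty 2W(z_{\alpha_\varepsilon}(s))\,ds = \int_{\alpha_\varepsilon}^b 2W^{1/2}(\rho)\,d\rho = \operatorname*{d}\nolimits_W(\alpha_\varepsilon, b),
\]
together with the decay $W(z_{\alpha_\varepsilon}(s)) \le Ce^{-2\sigma s}$ supplied by Proposition \ref{proposition z epsilon} and Remark \ref{remark W near b}, produces $\omega(0)\operatorname*{d}\nolimits_W(\alpha_\varepsilon, b) + \varepsilon \omega'(0) \int_0^l 2W(p_\varepsilon(s))s\,ds$ modulo remainders. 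Subtracting $\omega(0) \operatorname*{d}\nolimits_W(\alpha, b)$ and dividing by $\varepsilon$ cancels the leading $1/\varepsilon$-term up to $\omega(0)|\operatorname*{d}\nolimits_W(\alpha_\varepsilon, b) - \operatorname*{d}\nolimits_W(\alpha, b)|/\varepsilon = O(\varepsilon^{\gamma - 1})$; the tail beyond $s = l$ contributes $Ce^{-2\sigma l}(2\sigma l + 1)$ by Proposition \ref{proposition z epsilon}, and the Taylor remainder contributes $O(\varepsilon^{2\gamma} l)$ after using the uniform bound on $W(p_\varepsilon)$.

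The main obstacle will be the right-boundary corrector: since $z_{\alpha_\varepsilon}(T/\varepsilon)$ is exponentially close to $b$ while $\beta_\varepsilon$ is within $O(\varepsilon^\gamma)$ of $b$, the layer $\hat{\phi}_\varepsilon$ must interpolate between these two quite different scales. Using the perturbed-ODE construction with parameter $\hat{\delta}_\varepsilon$ and the logarithmic bound of Proposition \ref{proposition asymptotic behavior} to estimate the layer length as $\hat{l}_\varepsilon = O(|\log \hat{\delta}_\varepsilon|)$, and invoking Proposition \ref{proposition difference} to bound the resulting energy, the right-boundary contribution is of order $\varepsilon^{\gamma_1}|\log \varepsilon|^{\gamma_0}$ for suitable $\gamma_0, \gamma_1 > 0$ after optimizing $\hat{\delta}_\varepsilon$ against $|\beta_\varepsilon - b| \le B_0 \varepsilon^\gamma$. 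Finally, \eqref{limsup G2} follows from \eqref{estimate limsup G1} by letting $\varepsilon \to 0^+$ first (dominated convergence being justified by $W(z_\alpha(s)) s \le C s e^{-2\sigma s}$) and then $l \to \infty$, together with the identity $2W(z_\alpha) = 2W^{1/2}(z_\alpha) z'_\alpha$ coming from \eqref{cauchy problem z alpha}.
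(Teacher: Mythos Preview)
Your approach is correct and genuinely different from the paper's. The paper builds a \emph{single} perturbed profile: it introduces an auxiliary parameter $\delta_\varepsilon>0$, defines $\Psi_\varepsilon(r)=\int_{\alpha_\varepsilon}^r \varepsilon(\delta_\varepsilon+W)^{-1/2}$, takes $v_\varepsilon$ to be its inverse on $[0,T_\varepsilon]$ (so $v_\varepsilon$ goes from $\alpha_\varepsilon$ directly to $\beta_\varepsilon$), and extends by the constant $\beta_\varepsilon$ afterwards. All error terms then come from tracking $\delta_\varepsilon$ through the decomposition $\mathcal A+\mathcal B+\mathcal C+\mathcal D$, and the final choice $\delta_\varepsilon=\varepsilon^m$ optimizes them. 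Your construction instead uses the \emph{exact} heteroclinic $z_{\alpha_\varepsilon}$ on the bulk, which gives perfect equipartition $(z')^2=W(z)$ and makes the $\omega(0)$-level cancellation exact up to $|d_W(\alpha_\varepsilon,b)-d_W(\alpha,b)|=O(\varepsilon^\gamma)$; the price is a separate corrector near $t=T$. What your approach buys is a cleaner main layer with no $\delta_\varepsilon$ bookkeeping; what the paper's approach buys is a single formula for $v_\varepsilon$ with no matching.

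Two minor points to tighten. First, your definition of $\hat l_\varepsilon$ is circular: the corrector length depends on its left endpoint $z_{\alpha_\varepsilon}((T-\varepsilon\hat l_\varepsilon)/\varepsilon)$, which in turn depends on $\hat l_\varepsilon$. The easy fix is to match at a predetermined time, e.g.\ run $z_{\alpha_\varepsilon}(t/\varepsilon)$ on $[0,T-\varepsilon]$ and then interpolate linearly to $\beta_\varepsilon$ on $[T-\varepsilon,T]$; since both endpoints are within $O(\varepsilon^\gamma)$ of $b$, the linear piece contributes $O(\varepsilon^{2\gamma-1})$ to $G_\varepsilon^{(2)}$ and you do not even need the perturbed ODE or Proposition~\ref{proposition difference}. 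Second, your claim that the Taylor remainder is $O(\varepsilon^{2\gamma}l)$ is not what your argument actually gives: using $|R_1(\varepsilon s)|\le C(\varepsilon s)^{1+d}$ and $W(z_{\alpha_\varepsilon}(s))\le Ce^{-2\sigma s}$ yields $O(\varepsilon^{d})$, which is absorbed in the $C\varepsilon^{\gamma_1}|\log\varepsilon|^{\gamma_0}$ term of \eqref{estimate limsup G1} (and the extra nonnegative $C\varepsilon^{2\gamma}l$ in \eqref{estimate limsup G1} is then harmless).
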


\begin{proof}
Let $\delta_{\varepsilon}\rightarrow0^{+}$ as $\varepsilon\rightarrow0^{+}$,
and define
\begin{equation}
\Psi_{\varepsilon}(r):=\left\{
\begin{array}
[c]{cc}%
\int_{\alpha_{\varepsilon}}^{r}\frac{\varepsilon}{(\delta_{\varepsilon
}+W(s))^{1/2}}\,ds & \text{if }\alpha_{\varepsilon}\leq\beta_{\varepsilon},\\
\int_{r}^{\alpha_{\varepsilon}}\frac{\varepsilon}{(\delta_{\varepsilon
}+W(s))^{1/2}}\,ds & \text{if }\beta_{\varepsilon}<\alpha_{\varepsilon},
\end{array}
\right.  \label{1d 554}%
\end{equation}
and
\begin{equation}
0\leq T_{\varepsilon}:=\Psi_{\varepsilon}(\beta_{\varepsilon}). \label{1d 555}%
\end{equation}
Since $\alpha_{-}\leq\alpha_{\varepsilon},\,\beta_{\varepsilon}\leq b$, by
Proposition \ref{proposition asymptotic behavior} we have%
\begin{align*}
T_{\varepsilon}  &  \leq\int_{\alpha_{-}}^{b}\frac{\varepsilon}{(\delta
_{\varepsilon}+W(s))^{1/2}}\,ds\\
&  \leq-\frac{\sigma}{2}\varepsilon\log(\sigma^{2}\delta_{\varepsilon}%
)+\sigma\varepsilon\log(1+2(b-a)).
\end{align*}
Since $\delta_{\varepsilon}\rightarrow0^{+}$, there exist $C_{0}>0$ and
$\varepsilon_{0}>0$, depending only on $W$, $\gamma$, and $B$, such that
\begin{equation}
T_{\varepsilon}\leq C_{0}\varepsilon|\log\delta_{\varepsilon}|
\label{1d T epsilon above}%
\end{equation}
for all $0<\varepsilon<\varepsilon_{0}$.

On the other hand, if $\alpha<b$, by Proposition
\ref{proposition asymptotic behavior}, and
\eqref{alpha epsilon and beta epsilon}, we obtain%
\begin{align*}
T_{\varepsilon}  &  \geq-\varepsilon\log(\sigma^{-2}\delta_{\varepsilon
})+\sigma^{-1}\varepsilon\log(b-\alpha-A\varepsilon^{\gamma})\\
&  \quad-\sigma^{-1}\varepsilon\log(1+2\sigma^{-1}B\varepsilon^{\gamma}%
/\delta_{\varepsilon}^{1/2})
\end{align*}
for all $0<\varepsilon<\varepsilon_{\alpha}$. Since $\delta_{\varepsilon
}\rightarrow0^{+}$, by taking $\varepsilon_{\alpha}$ smaller if necessary, we can find
$C_{\alpha}>0$ such that
\begin{equation}
C_{\alpha}\varepsilon|\log\delta_{\varepsilon}|\leq T_{\varepsilon}%
\quad\text{if }\alpha<b \label{1d T epsilon below a}%
\end{equation}
for all $0<\varepsilon<\varepsilon_{\alpha}$.

Let $v_{\varepsilon}:[0,T_{\varepsilon}]\rightarrow\lbrack\alpha_{\varepsilon
},\beta_{\varepsilon}]$ be the inverse of $\Psi_{\varepsilon}$. Then
$v_{\varepsilon}\left(  0\right)  =\alpha_{\varepsilon}$, $v_{\varepsilon
}(T_{\varepsilon})=\beta_{\varepsilon}$, and
\begin{equation}
v_{\varepsilon}^{\prime}(t)=\pm\frac{(\delta_{\varepsilon}+W(v_{\varepsilon
}\left(  t\right)  ))^{1/2}}{\varepsilon}, \label{1d 556}%
\end{equation}
where we take the plus sign if $\alpha_{\varepsilon}\leq\beta_{\varepsilon}$
and the minus sign if $\beta_{\varepsilon}<\alpha_{\varepsilon}$. Extend
$v_{\varepsilon}$ to be equal to $\beta_{\varepsilon}$ for $t>T_{\varepsilon}$.

Since $\omega\in C^{1,d}(I)$, by Taylor's formula, for $t\in\lbrack0,T]$ we have
\[
\omega(t)=\omega(0)+\omega^{\prime}(0)t+R_{1}(t),
\]
where%
\begin{equation}
|R_{1}(t)|=|\omega^{\prime}(\theta t)-\omega^{\prime}(0)|t\leq|\omega^{\prime
}|_{C^{0,d}}t^{1+d}. \label{1d 103a}%
\end{equation}
Write%
\begin{align}
G_{\varepsilon}^{(2)}(v_{\varepsilon})  &  =\left[  \int_{0}^{T_{\varepsilon}%
}\left(  \frac{1}{\varepsilon}W(v_{\varepsilon})+\varepsilon(v_{\varepsilon
}^{\prime})^{2}\right)  \,dt-\operatorname*{d}\nolimits_{W}(b,\alpha)\right]
\frac{\omega(0)}{\varepsilon}\nonumber\\
&  \quad+\int_{0}^{T_{\varepsilon}}\left(  \frac{1}{\varepsilon}%
W(v_{\varepsilon})+\varepsilon(v_{\varepsilon}^{\prime})^{2}\right)
t\,dt\frac{\omega^{\prime}(0)}{\varepsilon}\label{1d 557}\\
&  \quad+\int_{0}^{T_{\varepsilon}}\left(  \frac{1}{\varepsilon}%
W(v_{\varepsilon})+\varepsilon(v_{\varepsilon}^{\prime})^{2}\right)
R_{1}\,dt\frac{1}{\varepsilon}\nonumber\\
&  \quad+\int_{T_{\varepsilon}}^{T}\left(  \frac{1}{\varepsilon}%
W(v_{\varepsilon})+\varepsilon(v_{\varepsilon}^{\prime})^{2}\right)
\omega\,dt\frac{1}{\varepsilon}=:\mathcal{A}+\mathcal{B}+\mathcal{C}%
+\mathcal{D}.\nonumber
\end{align}
\textbf{Step 1:} We estimate $\mathcal{A}$ in (\ref{1d 557}). Assume first
that $\alpha_{\varepsilon}\leq\beta_{\varepsilon}$. By (\ref{1d 554}),
(\ref{1d 555}), (\ref{1d 556}), the change of variables $s=v_{\varepsilon}%
(t)$, the fact that $a<\alpha_{\varepsilon},\,\beta_{\varepsilon}\leq b$, and
the equality
\[
(A+B)^{1/2}-B^{1/2}=\frac{A}{(A+B)^{1/2}+B^{1/2}},
\]
we have%
\begin{align*}
\int_{0}^{T_{\varepsilon}}  &  \left(  \frac{1}{\varepsilon}W(v_{\varepsilon
})+\varepsilon(v_{\varepsilon}^{\prime})^{2}\right)  \,dt=\int_{0}%
^{T_{\varepsilon}}\left(  \frac{1}{\varepsilon}(\delta_{\varepsilon
}+W(v_{\varepsilon}))+\varepsilon(v_{\varepsilon}^{\prime})^{2}\right)
\,dt-\frac{T_{\varepsilon}\delta_{\varepsilon}}{\varepsilon}\\
&  =\int_{0}^{T_{\varepsilon}}2(\delta_{\varepsilon}+W(v_{\varepsilon}%
))^{1/2}v_{\varepsilon}^{\prime}\,dt-\frac{T_{\varepsilon}\delta_{\varepsilon
}}{\varepsilon}\\
&  =\int_{\alpha_{\varepsilon}}^{\beta_{\varepsilon}}2(\delta_{\varepsilon
}+W(s))^{1/2}\,ds-\int_{\alpha_{\varepsilon}}^{\beta_{\varepsilon}}%
\frac{\delta_{\varepsilon}}{(\delta_{\varepsilon}+W(s))^{1/2}}\,ds\\
&  =\int_{\alpha_{\varepsilon}}^{\beta_{\varepsilon}}2W^{1/2}(s)\,ds+\int%
_{\alpha_{\varepsilon}}^{\beta_{\varepsilon}}\left[  \frac{2\delta
_{\varepsilon}}{(\delta_{\varepsilon}+W(s))^{1/2}+W^{1/2}(s)}-\frac
{\delta_{\varepsilon}}{(\delta_{\varepsilon}+W(s))^{1/2}}\right]  \,ds.
\end{align*}
By Proposition \ref{proposition difference},%
\[
\int_{\alpha_{\varepsilon}}^{\beta_{\varepsilon}}\left[  \frac{2\delta
_{\varepsilon}}{(\delta_{\varepsilon}+W(s))^{1/2}+W^{1/2}(s)}-\frac
{\delta_{\varepsilon}}{(\delta_{\varepsilon}+W(s))^{1/2}}\right]  \,ds\leq
C\delta_{\varepsilon}%
\]
for all $0<\varepsilon<\varepsilon_{0}$, while, since $a\leq\alpha_{\varepsilon},$ $\beta_{\varepsilon}\leq b$,%
\[
\int_{\alpha_{\varepsilon}}^{\beta_{\varepsilon}}2W^{1/2}(s)\,ds\leq
\int_{\alpha}^{b}2W^{1/2}(s).
\]
Hence, we obtain%
\begin{equation}
\int_{0}^{T_{\varepsilon}}\left(  \frac{1}{\varepsilon}W(v_{\varepsilon
})+\varepsilon(v_{\varepsilon}^{\prime})^{2}\right)  \,dt\leq\int_{\alpha}%
^{b}2W^{1/2}(s)\,ds+C\delta_{\varepsilon}.
\label{1d 200}%
\end{equation}
The case $\beta_{\varepsilon}<\alpha_{\varepsilon}$ is similar. We omit the details.

It follows from (\ref{1d 557}) that%
\[
\mathcal{A}\leq C\frac{\delta_{\varepsilon}}{\varepsilon}%
\]
for all $0<\varepsilon<\varepsilon_{0}$.

\noindent\textbf{Step 2:} We estimate $\mathcal{B}$ in (\ref{1d 557}). By
(\ref{1d 556}), (\ref{1d 557}), and the change of variables $t=\varepsilon s$
for $l>0$,
\begin{align*}
\mathcal{B}  &  =\int_{0}^{T_{\varepsilon}}\frac{2}{\varepsilon^{2}%
}W(v_{\varepsilon})t\,dt\,\omega^{\prime}(0)+\frac{\delta_{\varepsilon}%
}{\varepsilon^{2}}\int_{0}^{T_{\varepsilon}}t\,dt\,\omega^{\prime}(0)\\
&  =\int_{0}^{l}2W(p_{\varepsilon}(s))s\,ds\omega^{\prime}(0)+\int%
_{l}^{T_{\varepsilon}\varepsilon^{-1}}2W(p_{\varepsilon}(s))s\,ds\omega
^{\prime}(0)+\frac{1}{2}\omega^{\prime}(0)\frac{\delta_{\varepsilon
}T_{\varepsilon}^{2}}{\varepsilon^{2}}\\
&  :=\mathcal{B}_{1}+\mathcal{B}_{2}+\mathcal{B}_{3}.
\end{align*}
where $p_{\varepsilon}(s):=v_{\varepsilon}(\varepsilon s)$ solves the initial value problem
\begin{equation}
\left\{
\begin{array}
[c]{l}%
p_{\varepsilon}^{\prime}(s)=\pm(\delta_{\varepsilon}+W(p_{\varepsilon}\left(
s\right)  ))^{1/2},\\
p_{\varepsilon}(0)=\alpha_{\varepsilon}%
\end{array}
\right.  \label{1d ode p epsilon}%
\end{equation}
in $[0,T_{\varepsilon}\varepsilon^{-1}]$, while $p_{\varepsilon}\left(
s\right)  =\beta_{\varepsilon}$ for $s>T_{\varepsilon}\varepsilon^{-1}$.

Let $q_{\varepsilon}$ be the unique solution to
(\ref{1d ode p epsilon}) in $\mathbb{R}$. Since $\delta_{\varepsilon
}\rightarrow0$, and $\alpha_{\varepsilon}\rightarrow\alpha$, by standard
results on the continuous dependence of solutions on a parameter (see, e.g.
\cite[Section 2.4]{gerald-book2012}), it follows that $q_{\varepsilon
}\rightarrow z_{\alpha}$ pointwise in $\mathbb{R}$, where $z_{\alpha}$ is
given in (\ref{cauchy problem z alpha}).

If $\alpha<b$, then $T_{\varepsilon}\varepsilon^{-1}\geq C_{\alpha}|\log
\delta_{\varepsilon}|\rightarrow\infty$ by (\ref{1d T epsilon below a}), and
so $p_{\varepsilon}\rightarrow z_{\alpha}$ pointwise in $[0,l]$. On the other
hand, if $\alpha=b$, then $q_{\varepsilon}\rightarrow z_{b}=b$ pointwise in
$\mathbb{R}$, while $p_{\varepsilon}\left(  s\right)  =\beta_{\varepsilon}$
for $s>T_{\varepsilon}\varepsilon^{-1}$. Hence,
\[
|p_{\varepsilon}\left(  s\right)  -z_{b}(s)|=|p_{\varepsilon}\left(  s\right)
-b|\leq|q_{\varepsilon}\left(  s\right)  -b|+|\beta_{\varepsilon
}-b|\rightarrow0
\]
for all $s\in\lbrack0,l]$.

Since $a\leq p_{\varepsilon}(s)\leq b$, by the Lebesgue dominated convergence
theorem%
\[
\lim_{\varepsilon\rightarrow0^{+}}\mathcal{B}_{1}=\int_{0}^{l}2W(z_{\alpha
}(s))s\,ds\omega^{\prime}(0).
\]
To estimate $\mathcal{B}_{2}$, observe that since $\delta_{\varepsilon}>0$ and
$\alpha_{\varepsilon}\leq p_{\varepsilon}(s)\leq\beta_{\varepsilon}<b$ for
$0\leq s\leq T_{\varepsilon}\varepsilon^{-1}$, by (\ref{W near b}) we have
\[
p_{\varepsilon}^{\prime}(s)\geq(W(p_{\varepsilon}\left(  s\right)
))^{1/2}\geq\sigma(b-p_{\varepsilon}(s))>0,
\]
and so
\[
-\sigma\geq\frac{(b-p_{\varepsilon}(s))^{\prime}}{b-p_{\varepsilon}(s)}%
=(\log(b-p_{\varepsilon}(s)))^{\prime}.
\]
Upon integration, we get%
\[
0\leq b-p_{\varepsilon}(s)\leq(b-\alpha_{\varepsilon})e^{-\sigma s}%
\leq(b-\alpha)e^{-\sigma s}.
\]
In turn, again by (\ref{W near b}), for $s\in\lbrack0,T_{\varepsilon
}\varepsilon^{-1}]$,%
\begin{equation}
W(p_{\varepsilon}(s))\leq\sigma^{-2}(b-p_{\varepsilon}(s))^{2}\leq\sigma
^{-2}(b-\alpha)^{2}e^{-2\sigma s}. \label{1d 204}%
\end{equation}
On the other hand, if $s\in\lbrack T_{\varepsilon}\varepsilon^{-1}%
,T\varepsilon^{-1}]$, then by (\ref{W near b}) and
(\ref{alpha epsilon and beta epsilon}),
\begin{equation}
W(p_{\varepsilon}(s))=W(\beta_{\varepsilon})\leq\sigma^{-2}(b-\beta
_{\varepsilon})^{2}\leq C\varepsilon^{2\gamma}. \label{1d 204a}%
\end{equation}
Hence, if $T_{\varepsilon}\varepsilon^{-1}\leq l$, by (\ref{1d 204}),
\[
\mathcal{B}_{2}\leq C\int_{l}^{\infty}e^{-2\sigma s}s\,ds=Ce^{-2\sigma
l}\left(  2\sigma l+1\right)  ,
\]
while if $T_{\varepsilon}\varepsilon^{-1}\geq l$, by (\ref{1d 204a}),%
\[
\mathcal{B}_{2}\leq C\varepsilon^{2\gamma}l.
\]
Therefore,
\[
\mathcal{B}_{2}\leq Ce^{-2\sigma l}\left(  2\sigma l+1\right)  +C\varepsilon
^{2\gamma}l
\]
for all $0<\varepsilon<\varepsilon_{0}$.

On the other hand, again by (\ref{1d T epsilon above}),
\[
\mathcal{B}_{3}\leq C\frac{\delta_{\varepsilon}T_{\varepsilon}^{2}%
}{2\varepsilon^{2}}\leq C\delta_{\varepsilon}\log^{2}\delta_{\varepsilon}.
\]

\noindent\textbf{Step 3:} To estimate $\mathcal{C}$ in (\ref{1d 557}), observe
that by (\ref{1d 200}), (\ref{1d 103a}), (\ref{1d T epsilon above}), and
(\ref{1d 557}),
\begin{align*}
\mathcal{C}  &  \leq|\omega^{\prime}|_{C^{0,d}}\int_{0}^{T_{\varepsilon}%
}\left(  \frac{1}{\varepsilon}W(v_{\varepsilon})+\varepsilon(v_{\varepsilon
}^{\prime})^{2}\right)  \,dt\,T_{\varepsilon}^{1+d}\frac{1}{\varepsilon}\\
&  \leq C\left(  C_{W}+C\delta_{\varepsilon}|\log\delta_{\varepsilon
}|+C\varepsilon^{\gamma}\right)  \varepsilon^{d}|\log\delta_{\varepsilon
}|^{1+d}\leq C\varepsilon^{d}|\log\delta_{\varepsilon}|^{1+d}%
\end{align*}
for all $0<\varepsilon<\varepsilon_{0}$.

\noindent\textbf{Step 4:} We estimate $\mathcal{D}$ in (\ref{1d 557}). By
(\ref{W near b}), (\ref{alpha epsilon and beta epsilon}), and (\ref{1d 557}),
for $t\geq T_{\varepsilon}$
\[
\mathcal{D}=W(\beta_{\varepsilon})\int_{T_{\varepsilon}}^{T}\omega\,dt\frac
{1}{\varepsilon^{2}}\leq\sigma^{-2}(b-\beta_{\varepsilon})^{2}\int_{0}%
^{T}\omega\,dt\frac{1}{\varepsilon^{2}}\leq C\varepsilon^{2\gamma-2}.
\]

Combining the estimates for $\mathcal{A}$, $\mathcal{B}_{2}$, $\mathcal{B}%
_{3}$, $\mathcal{C}$, and $\mathcal{D}$ and using (\ref{1d 557}) gives
\begin{align}
G_{\varepsilon}^{(2)}(v_{\varepsilon})  &  \leq C\frac{\delta_{\varepsilon}%
}{\varepsilon}+\int_{0}^{l}2W(p_{\varepsilon}(s))s\,ds\omega^{\prime
}(0)+Ce^{-2\sigma l}\left(  2\sigma l+1\right) \label{1d limsup with delta}\\
&  \quad+C\varepsilon^{2\gamma}l+C\delta_{\varepsilon}\log^{2}\delta
_{\varepsilon}+C\varepsilon^{d}|\log\delta_{\varepsilon}|^{1+d}+C\varepsilon
^{2\gamma-2}.\nonumber
\end{align}
By taking
\begin{equation}
\delta_{\varepsilon}:=\varepsilon^{m}\quad\text{or\quad}\delta_{\varepsilon
}:=\frac{\varepsilon}{\log^{m}\varepsilon},\quad m\geq2,
\label{1d delta epsilon}%
\end{equation}
we obtain (\ref{estimate limsup G1}). In turn, letting $\varepsilon
\rightarrow0^{+}$ in (\ref{estimate limsup G1}), we have
\[
\limsup_{\varepsilon\rightarrow0^{+}}G_{\varepsilon}^{(2)}(v_{\varepsilon
})\leq\int_{0}^{l}2W(z_{\alpha}(t))t\,dt\,\omega^{\prime}(0)+Ce^{-2\sigma
l}\left(  2\sigma l+1\right)  .
\]
Since $b-z_{\alpha}$ decays exponentially as $t\rightarrow\infty$, by
(\ref{estimate z alpha}), using (\ref{W near b}) and the Lebesgue dominated
convergence theorem, we let $l\rightarrow\infty$ to obtain
(\ref{limsup G2}).\hfill
\end{proof}

\begin{remark}
Observe that if we take $\delta_{\varepsilon}:=\varepsilon$, it follows from
(\ref{1d limsup with delta}) that%
\begin{align*}
G_{\varepsilon}^{(2)}(v_{\varepsilon})  &  \leq C+\int_{0}^{l}%
2W(p_{\varepsilon}(s))s\,ds\omega^{\prime}(0)+Ce^{-2\sigma l}\left(  2\sigma
l+1\right) \\
&  \quad+C\varepsilon^{2\gamma}l+C\varepsilon\log^{2}\varepsilon
+C\varepsilon^{d}|\log\varepsilon|^{1+d}+C\varepsilon^{2\gamma-2}.
\end{align*}

\end{remark}

\subsection{Properties of Minimizers of $G_{\varepsilon}$}

In this subsection we study qualitative properties of the minimizers of the
functional $G_{\varepsilon}$ defined in \eqref{1d functional}:%
\begin{equation}
G_{\varepsilon}(v):=\int_{I}(W(v(t))+\varepsilon^{2}(v^{\prime}(t))^{2}%
)\omega(t)\,dt,\quad v\in\,H^{1}(I), \label{1d functional G}%
\end{equation}
subject to the Dirichlet boundary conditions
\begin{equation}
v_{\varepsilon}(0)=\alpha_{\varepsilon},\quad v_{\varepsilon}(T)=\beta
_{\varepsilon}. \label{1d dirichlet 1}%
\end{equation}

\begin{theorem}
\label{theorem 1d EL}Assume that $W$ satisfies
\eqref{W_Smooth}-\eqref{W' three zeroes}, that $\omega$ satisfies
 \eqref{etaSmooth}, and that $a\leq\alpha_{\varepsilon},$
$\beta_{\varepsilon}\leq b$. Then the functional $G_{\varepsilon}$ admits a
minimizer $v_{\varepsilon}\in$\thinspace$H^{1}(I)$. Moreover, $v_{\varepsilon
}\in C^{2}([0,T])$, $v_{\varepsilon}$ satisfies the Euler--Lagrange equations
\begin{equation}
2\varepsilon^{2}(v_{\varepsilon}^{\prime}(t)\omega(t))^{\prime}-W^{\prime
}(v_{\varepsilon}(t))\omega(t)=0, \label{1d euler lagrange}%
\end{equation}
and $v_{\varepsilon}\equiv a$, or $v_{\varepsilon}\equiv b$, or
\begin{equation}
a<v_{\varepsilon}(t)<b\quad\text{for all }t\in(0,T). \label{1d truncated}%
\end{equation}

\end{theorem}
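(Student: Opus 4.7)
The plan is to obtain existence by the direct method, reduce to $a \le v_\varepsilon \le b$ via a truncation, derive \eqref{1d euler lagrange} and regularity by the standard first variation, and finally establish the trichotomy via ODE uniqueness.

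For \emph{existence and boundedness}, I take a minimizing sequence $\{v_n\} \subset H^1(I)$ satisfying \eqref{1d dirichlet 1}. Since $\min_{[0,T]} \omega > 0$, the functional dominates $\varepsilon^2 (\min_{[0,T]} \omega) \int_I (v_n')^2 \, dt$, which together with the prescribed trace yields a uniform $H^1$ bound. Extracting a weak limit $v_n \rightharpoonup v$ in $H^1(I)$, the compact embedding $H^1(I) \hookrightarrow C(\overline{I})$ ensures that the boundary conditions pass to the limit and that $\int_I W(v_n) \omega \, dt \to \int_I W(v) \omega \, dt$ by uniform convergence, while convexity of $\zeta \mapsto \zeta^2$ gives weak lower semicontinuity of the gradient term; thus $v$ is a minimizer. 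To enforce $a \le v \le b$, I set $\tilde v := \max\{a, \min\{b, v\}\}$: since $a \le \alpha_\varepsilon, \beta_\varepsilon \le b$, the function $\tilde v$ retains the same Dirichlet data, $|\tilde v'| \le |v'|$ a.e., and by \eqref{WGurtin_Assumption}--\eqref{W' three zeroes} one has $W' < 0$ on $(-\infty, a)$ and $W' > 0$ on $(b, \infty)$, whence $W(\tilde v) \le W(v)$ pointwise; hence $G_\varepsilon(\tilde v) \le G_\varepsilon(v)$ and we may replace $v$ by $\tilde v$.

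The \emph{Euler--Lagrange equation} \eqref{1d euler lagrange} follows in the usual way from $\frac{d}{ds} G_\varepsilon(v_\varepsilon + s\varphi)\big|_{s=0} = 0$ for $\varphi \in C_c^\infty(I)$. Rewriting the equation as $(v_\varepsilon' \omega)' = W'(v_\varepsilon) \omega / (2\varepsilon^2)$ with continuous right-hand side shows $v_\varepsilon' \omega \in C^1([0,T])$, and dividing by $\omega > 0$ yields $v_\varepsilon \in C^2([0,T])$ (one can bootstrap to $C^{2,\alpha_0}$ if needed, but this is not required).

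For the \emph{trichotomy}, I rewrite \eqref{1d euler lagrange} as the first-order system $U' = F(t, U)$ with $U = (v_\varepsilon, v_\varepsilon')$ and
\[
F\bigl(t, (u_1, u_2)\bigr) = \Bigl( u_2, \, \frac{W'(u_1)}{2\varepsilon^2} - \frac{\omega'(t)}{\omega(t)} u_2 \Bigr),
\]
whose right-hand side is locally Lipschitz in $U$ and continuous in $t$ because $W \in C^{2,\alpha_0}$, $\omega \in C^{1,d}$, and $\omega \ge \min_{[0,T]} \omega > 0$. Suppose $v_\varepsilon(t_0) = b$ for some $t_0 \in (0,T)$. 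Since $a \le v_\varepsilon \le b$, the point $t_0$ is a global maximum and so $v_\varepsilon'(t_0) = 0$; as $W'(b) = 0$, the constant function $v \equiv b$ solves the Cauchy problem with initial data $(v, v')(t_0) = (b, 0)$, and Picard--Lindel\"of uniqueness forces $v_\varepsilon \equiv b$. The symmetric argument at $a$ rules out interior contact with $a$. The main delicate point of the argument is precisely this last step: the hypotheses $W'(a) = W'(b) = 0$ make the constant profiles admissible solutions of the Cauchy problem, and it is uniqueness, combined with the elementary observation that an interior touching point of $v_\varepsilon$ with $a$ or $b$ forces $v_\varepsilon'$ to vanish there, that rigidifies $v_\varepsilon$ into a constant.
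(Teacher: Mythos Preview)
Your proof is correct and follows essentially the same approach as the paper: existence via the direct method, reduction to $a\le v_\varepsilon\le b$ by truncation, the Euler--Lagrange equation and $C^2$ regularity from the first variation with bootstrapping, and the trichotomy from ODE uniqueness applied at an interior touching point where $v_\varepsilon'$ must vanish. The only differences are expository: you spell out the compactness and lower semicontinuity in the direct method and justify $W(\tilde v)\le W(v)$ via the sign of $W'$ outside $[a,b]$, whereas the paper leaves these implicit.
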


\begin{proof}
Since $\int_{I}(v^{\prime})^{2}\omega\,dt$ is convex, \eqref{etaSmooth} holds, and $W\geq0$, the
existence of minimizers $v_{\varepsilon}\in$\thinspace$H^{1}(I)$ of
$G_{\varepsilon}$ subject to the Dirichlet boundary conditions
(\ref{1d dirichlet 1}) follows from the direct method of the calculus of variations. Since $a\leq\alpha_{\varepsilon},$ $\beta_{\varepsilon}\leq b$, by
replacing $v_{\varepsilon}$ with the truncation
\[
\bar{v}_{\varepsilon}(t):=\left\{
\begin{array}
[c]{ll}%
a & \text{if }v_{\varepsilon}(t)\leq a,\\
v_{\varepsilon}(t) & \text{if }a<v_{\varepsilon}(t)<b,\\
b & \text{if }v_{\varepsilon}(t)\geq b,
\end{array}
\right.
\]
without loss of generality, we may assume that $v_{\varepsilon}$ satisfies
$a\leq v_{\varepsilon}\leq b$.

As $dG_{\varepsilon}(v_{\varepsilon})=0$, we have
\[
\int_{I}(W^{\prime}(v_{\varepsilon}(t))\varphi(t)+2\varepsilon^{2}%
v_{\varepsilon}^{\prime}(t)\varphi^{\prime}(t))\omega(t)\,dt=0
\]
for all $\varphi\in C_{c}^{1}(I)$. This implies that $W^{\prime}%
(v_{\varepsilon})\omega$ is the weak derivative of $2\varepsilon
^{2}v_{\varepsilon}^{\prime}\omega$. Hence,%
\[
2\varepsilon^{2}v_{\varepsilon}^{\prime}(t)\omega(t)=c+\int_{a}^{s}W^{\prime
}(v_{\varepsilon})\omega\,ds.
\]
Since the right-hand side is of class $C^{1}$ and $\omega\in C^{1,d}(I)$, we
have that $v_{\varepsilon}^{\prime}$ is of class $C^{1}$ and so we can
differentiate to obtain (\ref{1d euler lagrange}).

To prove (\ref{1d truncated}), observe that if there exists $t_{0}\in(0,T)$
such that $v_{\varepsilon}(t_{0})=b$, then since $v_{\varepsilon}\leq b$, the
point $t_{0}$ is a point of local maximum, and so $v_{\varepsilon}^{\prime
}(t_{0})=0$. Since $W^{\prime}(b)=0$, it follows by uniqueness of the Cauchy
problem (\ref{1d euler lagrange}) with initial data $v_{\varepsilon}(t_{0}%
)=b$, $v_{\varepsilon}^{\prime}(t_{0})=0$, that the unique solution if
$v_{\varepsilon}\equiv b$. Similarly, if $v_{\varepsilon}(t_{0})=a$ for some
$t_{0}\in(0,T)$, then $v_{\varepsilon}\equiv a$.\hfill
\end{proof}

\begin{corollary}
\label{corollary bounded derivative}Assume that $W$ satisfies hypotheses
\eqref{W_Smooth}-\eqref{W' three zeroes}, that $\omega$ satisfies
hypothesis \eqref{etaSmooth}, and that that $a\leq\alpha_{\varepsilon},$
$\beta_{\varepsilon}\leq b$. Let $v_{\varepsilon}$ be the minimizer of
$G_{\varepsilon}$ obtained in Theorem \ref{theorem 1d EL}. Then there exists a
constant $C_{0}>0$, depending only on $\omega$, $T$, $a$, $b$, and $W$, such
that%
\[
|v_{\varepsilon}^{\prime}(t)|\leq\frac{C_{0}}{\varepsilon}\quad\text{for all
}t\in I
\]
and for every $0<\varepsilon<1$.
\end{corollary}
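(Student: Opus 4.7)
The plan is to derive a ``near conservation law'' from the Euler--Lagrange equation \eqref{1d euler lagrange} and use it to bound $\varepsilon^2(v_\varepsilon')^2$ pointwise by a constant. Since $v_\varepsilon$ is a critical point of $G_\varepsilon$, I would first multiply the Euler--Lagrange equation by $v_\varepsilon'/\omega$ to obtain, after rearranging,
\[
\varepsilon^2\bigl((v_\varepsilon')^2\bigr)' - \bigl(W(v_\varepsilon)\bigr)' = -2\varepsilon^2 (v_\varepsilon')^2 \, \frac{\omega'}{\omega}.
\]
Setting $H(t):=\varepsilon^2(v_\varepsilon'(t))^2-W(v_\varepsilon(t))$, a short computation gives the weighted identity
\[
\bigl(\omega^2(t)H(t)\bigr)' = -2\,\omega(t)\,\omega'(t)\,W(v_\varepsilon(t)).
\]
This is the key step; the unweighted Cahn--Hilliard case admits $H\equiv\text{const}$, and the weight only introduces a bounded forcing term on the right because $v_\varepsilon\in[a,b]$ (by Theorem \ref{theorem 1d EL}) so $W(v_\varepsilon)\le \max_{[a,b]}W$, and $\omega,\omega'\in L^\infty(I)$ by \eqref{etaSmooth}.

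Next, I need a reference time $t^\ast\in(0,T)$ at which $H(t^\ast)$ is bounded uniformly in $\varepsilon$. By the mean value theorem, there exists $t^\ast\in(0,T)$ with
\[
v_\varepsilon'(t^\ast) \;=\; \frac{\beta_\varepsilon-\alpha_\varepsilon}{T},
\]
and since $a\leq\alpha_\varepsilon,\beta_\varepsilon\leq b$, this yields $|v_\varepsilon'(t^\ast)|\le (b-a)/T$. Consequently $|H(t^\ast)|\le \varepsilon^2(b-a)^2/T^2 + \max_{[a,b]}W \le C$, with $C$ depending only on $T$, $a$, $b$, $W$ (and valid for $\varepsilon<1$).

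Integrating the identity for $(\omega^2 H)'$ from $t^\ast$ to any $t\in[0,T]$ and using $\min_{[0,T]}\omega>0$ together with the uniform bound on $|\omega\omega' W(v_\varepsilon)|$ gives
\[
\omega^2(t)\,|H(t)|\;\le\;\omega^2(t^\ast)\,|H(t^\ast)|+2T\,\|\omega\|_\infty\,\|\omega'\|_\infty\,\max_{[a,b]}W\;\le\;C,
\]
so $|H(t)|\le C/(\min_{[0,T]}\omega)^2$ for all $t\in[0,T]$. Hence
\[
\varepsilon^2\bigl(v_\varepsilon'(t)\bigr)^2 \;\le\; W(v_\varepsilon(t))+|H(t)| \;\le\; \max_{[a,b]}W + C \;=:\; C_0^2,
\]
which proves the claim. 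The main obstacle is the rearrangement leading to the identity for $(\omega^2 H)'$: without it the weight produces a term proportional to $(v_\varepsilon')^2$ on the right-hand side, which is precisely the quantity one is trying to control; the integrating factor $\omega^2$ eliminates that troublesome term and leaves only a bounded forcing, making a Grönwall-free argument possible.
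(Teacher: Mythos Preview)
Your proof is correct and takes a genuinely different route from the paper's.

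The paper argues in stages: it first bounds the energy using the linear competitor, locates a point $t_\varepsilon$ where $\varepsilon^2(v_\varepsilon')^2\omega$ equals its average, integrates the Euler--Lagrange equation once to get the crude bound $\varepsilon^2|v_\varepsilon'|\le C_0$, then bounds $\varepsilon^2|v_\varepsilon''|\le C_0$ from \eqref{1d euler lagrange expanded}, and finally uses a local interpolation (mean value theorem on an interval of length $\varepsilon$ plus the fundamental theorem of calculus) to upgrade to $|v_\varepsilon'|\le C_0/\varepsilon$. Your argument bypasses all of this by spotting the integrating factor $\omega^2$: the identity $(\omega^2 H)'=-2\omega\omega' W(v_\varepsilon)$ reduces the problem to a single integration with a forcing term that is bounded outright, thanks to $a\le v_\varepsilon\le b$. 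The reference point $t^\ast$ comes for free from the mean value theorem applied to $v_\varepsilon$ itself, without any energy estimate. The net effect is that you reach the sharp $1/\varepsilon$ bound in one step instead of three, and you never pass through the weaker $1/\varepsilon^2$ bounds or need the second derivative. The paper's approach, by contrast, is more mechanical and produces as by-products the intermediate estimates on $v_\varepsilon'$ and $v_\varepsilon''$; it also does not require noticing the integrating factor, which is the one non-obvious step in your argument. It is worth remarking that the paper does use the identity $H'=-2\varepsilon^2(v_\varepsilon')^2\omega'/\omega$ later, in the proof of Theorem \ref{theorem monotonicity}, but controls the right-hand side there via the energy bound rather than by your multiplier trick.
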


\begin{proof}
In what follows $C_{0}>0$ is a constant that changes from line to line and
depends only on $\omega$, $T$, $a$, $b$, and $W$. Consider the function
$v_{0}(t):=\alpha_{\varepsilon}\frac{(T-t)}{T}+\beta_{\varepsilon}\frac{t}{T}$.
We have $a\leq v_{0}\leq b$ and $|v_{0}^{\prime}(t)|\leq\frac{b-a}{T}$. Since
$v_{\varepsilon}$ is a minimizer of $G_{\varepsilon}$, it follows
\begin{align*}
\int_{I}(W(v_{\varepsilon})+\varepsilon^{2}(v_{\varepsilon}^{\prime}%
)^{2})\omega\,dt  &  \leq\int_{I}(W(v_{0})+\varepsilon^{2}(v_{0}^{\prime}%
)^{2})\omega\,dt\\
&  \leq\left(  \max_{\lbrack a,b]}W+\frac{(b-a)^{2}}{T^{2}}\right)  \int%
_{I}\omega\,dt\leq C_{0}.
\end{align*}
As $v_{\varepsilon}^{\prime}$ and $\omega$ are continuous, by the mean
value theorem for integrals, there exists $t_{\varepsilon}\in\lbrack0,T]$ such
that%
\[
\varepsilon^{2}(v_{\varepsilon}^{\prime}(t_{\varepsilon})^{2}\omega
(t_{\varepsilon})=\frac{1}{T}\int_{I}\varepsilon^{2}(v_{\varepsilon}^{\prime
})^{2}\omega\,dt\leq C_{0}.
\]
In turn, by (\ref{etaSmooth}),%
\[
\varepsilon|v_{\varepsilon}^{\prime}(t_{\varepsilon})|\leq C_{0}.
\]
By (\ref{1d euler lagrange}),
\begin{equation}
2\varepsilon^{2}v_{\varepsilon}^{\prime}(t)\omega(t)=2\varepsilon
^{2}v_{\varepsilon}^{\prime}(t_{\varepsilon})\omega(t_{\varepsilon}%
)+\int_{t_{\varepsilon}}^{t}W^{\prime}(v_{\varepsilon})\omega\,ds
\label{integral identity 1}%
\end{equation}
for every $t\in\overline{I}$. Since $v_{\varepsilon}$ is bounded by
(\ref{1d truncated}), by \eqref{etaSmooth} this implies that
\begin{equation}
2\varepsilon^{2}|v_{\varepsilon}^{\prime}(t)|\leq C_{0}\varepsilon+\frac
{C_{0}}{\omega(t)}\int_{0}^{T}\omega\,ds\leq C_{0} \label{1d 109}%
\end{equation}
for all $t\in\overline{I}$. Rewrite (\ref{1d euler lagrange}) as
\begin{equation}
2\varepsilon^{2}v_{\varepsilon}^{\prime\prime}(t)+2\varepsilon^{2}\frac
{\omega^{\prime}(t)}{\omega(t)}v_{\varepsilon}^{\prime}(t)=W^{\prime
}(v_{\varepsilon}(t)). \label{1d euler lagrange expanded}%
\end{equation}
Using \eqref{etaSmooth} and (\ref{1d 109}), we obtain
\begin{equation}
2\varepsilon^{2}|v_{\varepsilon}^{\prime\prime}(t)|\leq\frac{|\omega^{\prime
}(t)|}{\omega(t)}2\varepsilon^{2}|v_{\varepsilon}^{\prime}(t)|+C_{0}\leq
C_{0}. \label{1d 110}%
\end{equation}
Next, we use a classical interpolation result. Let $t\in I$ and consider
$t_{1}\in I$ such that $|t-t_{1}|=\varepsilon$. By the mean value theorem
there exists $\theta$ between $t$ and $t_{1}$ such that
\[
v_{\varepsilon}(t)-v_{\varepsilon}(t_{1})=v_{\varepsilon}^{\prime}%
(\theta)(t-t_{1}).
\]
In turn, by the fundamental theorem of calculus
\[
v_{\varepsilon}^{\prime}(t)=v_{\varepsilon}^{\prime}(\theta)+\int_{\theta}%
^{t}v_{\varepsilon}^{\prime\prime}(s)\,ds=\frac{v_{\varepsilon}%
(t)-v_{\varepsilon}(t_{1})}{t-t_{1}}+\int_{\theta}^{t}v_{\varepsilon}%
^{\prime\prime}(s)\,ds.
\]
Using (\ref{1d truncated}) and (\ref{1d 110}), we obtain
\[
|v_{\varepsilon}^{\prime}(t)|\leq\frac{C_{0}}{\varepsilon}+\sup_{I}%
|v_{\varepsilon}^{\prime\prime}||t-\theta|\leq\frac{C_{0}}{\varepsilon}%
+\frac{C_{0}}{\varepsilon^{2}}\varepsilon.
\]
This concludes the proof.\hfill
\end{proof}

Another consequence of Theorem \ref{theorem 1d EL} is the following estimate.

\begin{theorem}
\label{thorem barrier 1d}Assume that $W$ satisfies
\eqref{W_Smooth}-\eqref{W' three zeroes}, that $\omega$ satisfies \eqref{etaSmooth}, and that $a\leq\alpha_{\varepsilon},$
$\beta_{\varepsilon}\leq b$. Let $v_{\varepsilon}$ be the minimizer of
$G_{\varepsilon}$ obtained in Theorem \ref{theorem 1d EL}, and  for $k \in \mathbb{N}$ let
\begin{equation}
 B^k_{\varepsilon}:=\{t\in\lbrack0,T]:~\beta_{-}\leq v_{\varepsilon}(t)\leq
\beta_{\varepsilon}-\varepsilon^{k}\}. \label{1d set B epsilon}%
\end{equation}
Then there exist $\mu>0$ and $0<\varepsilon_{0}<1$ depending only on
$\beta_{-}$, $T$, $\omega$, $W$, such that if $I_{\varepsilon}=[p_{\varepsilon
},q_{\varepsilon}]$ is a maximal subinterval of  $B_{\varepsilon}^k$, then
\begin{equation}
b-v_{\varepsilon}(t)\leq(b-v_{\varepsilon}(p_{\varepsilon}))e^{-\mu
(t-p_{\varepsilon})\varepsilon^{-1}}+(b-v_{\varepsilon}(q_{\varepsilon
}))e^{-\mu(q_{\varepsilon}-t)\varepsilon^{-1}} \label{1d b decay}%
\end{equation}
for all $t\in I_{\varepsilon}$ and all $0<\varepsilon<\varepsilon_{0}$. In
particular,
\begin{equation}
\operatorname*{diam}I_{\varepsilon}\leq C\varepsilon|\log\varepsilon|
\label{1d diam I epsilon}%
\end{equation}
for all $0<\varepsilon<\varepsilon_{0}$, where the constants $0<\varepsilon
_{0}<1$ and $C>0$ depend only on $\beta_{-}$, $T$, $\omega$, $W$ and $k$.
\end{theorem}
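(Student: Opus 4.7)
The plan is to prove \eqref{1d b decay} by a comparison/maximum principle argument, and then to read off \eqref{1d diam I epsilon} by evaluating the pointwise estimate at the midpoint of $I_\varepsilon$.

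First I would set $w:=b-v_\varepsilon\geq 0$ and rewrite the Euler--Lagrange equation \eqref{1d euler lagrange expanded} as
\[
2\varepsilon^2 w''+2\varepsilon^2\frac{\omega'}{\omega}w'=-W'(b-w).
\]
Since $W'(b)=0$, $W''(b)>0$, and $W'<0$ on $(c,b)$ by \eqref{W' three zeroes}, the map $v\mapsto -W'(v)/(b-v)$ extends continuously to $[\beta_-,b]$ with value $W''(b)>0$ at $v=b$ and is strictly positive throughout. Hence there exists $\mu_1>0$, depending only on $\beta_-$ and $W$, such that $-W'(v)\geq\mu_1(b-v)$ for every $v\in[\beta_-,b]$. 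Because $v_\varepsilon(t)\in[\beta_-,b]$ on $I_\varepsilon$, this yields the differential inequality
\[
2\varepsilon^2 w''+2\varepsilon^2\frac{\omega'}{\omega}w'\geq\mu_1 w\qquad\text{on }I_\varepsilon.
\]

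Next I would build the barrier
\[
\phi(t):=(b-v_\varepsilon(p_\varepsilon))\,e^{-\mu(t-p_\varepsilon)/\varepsilon}+(b-v_\varepsilon(q_\varepsilon))\,e^{-\mu(q_\varepsilon-t)/\varepsilon},
\]
with $\mu>0$ to be fixed. The identities $\varepsilon^2\phi''=\mu^2\phi$ and $|\varepsilon\phi'|\leq\mu\phi$ give
\[
2\varepsilon^2\phi''+2\varepsilon^2\frac{\omega'}{\omega}\phi'\leq\bigl(2\mu^2+2\varepsilon\mu\,\|\omega'/\omega\|_\infty\bigr)\phi,
\]
which is $\leq\mu_1\phi$ once, e.g., $\mu^2=\mu_1/4$ and $\varepsilon<\varepsilon_0$ for a threshold depending only on $W$, $\beta_-$, $\omega$. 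Setting $\tilde L u:=2\varepsilon^2 u''+2\varepsilon^2(\omega'/\omega)u'-\mu_1 u$, one then has $\tilde L(w-\phi)\geq 0$ on $I_\varepsilon$ and $(w-\phi)(p_\varepsilon),(w-\phi)(q_\varepsilon)\leq 0$ directly from the definition of $\phi$. Because $\tilde L$ has strictly negative zeroth-order coefficient, the weak maximum principle yields $w\leq\phi$ on $I_\varepsilon$: at any hypothetical positive interior maximum $t^*$ of $w-\phi$ one would have $(w-\phi)''(t^*)\leq 0$, $(w-\phi)'(t^*)=0$, and $(w-\phi)(t^*)>0$, which forces $\tilde L(w-\phi)(t^*)<0$, a contradiction. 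This is precisely \eqref{1d b decay}.

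For \eqref{1d diam I epsilon} I would evaluate \eqref{1d b decay} at $t^*:=(p_\varepsilon+q_\varepsilon)/2$. On $I_\varepsilon$ one has $v_\varepsilon(t^*)\leq\beta_\varepsilon-\varepsilon^k\leq b-\varepsilon^k$, so $w(t^*)\geq\varepsilon^k$; while $w(p_\varepsilon),w(q_\varepsilon)\leq b-\beta_-$. Therefore
\[
\varepsilon^k\leq 2(b-\beta_-)\,e^{-\mu(q_\varepsilon-p_\varepsilon)/(2\varepsilon)},
\]
and taking logarithms gives $q_\varepsilon-p_\varepsilon\leq C\varepsilon|\log\varepsilon|$ for $\varepsilon$ small enough, with $C$ depending on $k$, $\beta_-$, $W$ and $\omega$. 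The main obstacle is the verification that $\phi$ is a suitable sub-solution of the linearized inequality: the first-order term $2\varepsilon^2(\omega'/\omega)\phi'$, absent in the constant-coefficient model, must be absorbed into the zeroth-order part $\mu_1\phi$ by choosing $\mu$ small and then $\varepsilon$ small. Everything else reduces to the classical weak maximum principle for an elliptic ODE with non-positive zeroth-order coefficient.
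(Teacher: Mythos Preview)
Your proof is correct and follows essentially the same approach as the paper: both linearize $-W'$ near $b$ to get $-W'(v)\geq c(b-v)$ on $[\beta_-,b]$, compare $b-v_\varepsilon$ with the same two-sided exponential barrier $\phi$, absorb the first-order term coming from $\omega'/\omega$ by choosing $\mu$ and then $\varepsilon$ small, and conclude via the maximum principle; the diameter estimate is then obtained in both by evaluating at the midpoint. The only cosmetic difference is that the paper carries out the comparison in the divergence form $(u'\omega)'$ rather than the expanded form $u''+(\omega'/\omega)u'$ you use, and bounds $b-v_\varepsilon(p_\varepsilon),\,b-v_\varepsilon(q_\varepsilon)$ by $b$ rather than your slightly sharper $b-\beta_-$.
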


\begin{proof}
We claim that there exists $\mu>0$ such that
\begin{equation}
-W^{\prime}(s)\geq2\mu^{2}(b-s)\quad\text{for all }s\in\lbrack\beta_{-},b].
\label{eqn:WPrimeLinearized}%
\end{equation}
Since $W^{\prime\prime}(b)>0$, by the continuity of $W^{\prime\prime}$, we
have that $W^{\prime\prime}(s)\geq2\mu^{2}>0$ for all $s\in B(b,R_{1})$ and
for some $\mu>0$ and $R_{1}>0$. Upon integration, it follows that
\[
W^{\prime}(s)=-\int_{s}^{b}W^{\prime\prime}(r)\,dr\leq-2\mu^{2}(b-s)
\]
for all $s\in B(b,R_{1})$, with $s<b$. Using the fact that $W^{\prime}<0$ in
$(c,b)$, and by taking $\mu$ smaller, if necessary, we can assume that
\[
W^{\prime}(s)\leq-2\mu^{2}(b-s)
\]
for all $s\in\lbrack\beta_{-},b]$. Note that $\mu$ depends upon $\beta_{-}$
but not on $\varepsilon$. This proves the claim.

Write $I_{\varepsilon}:=[p_{\varepsilon},q_{\varepsilon}]$ and define
\begin{equation}
\phi(t):=(b-v_{\varepsilon}(p_{\varepsilon}))e^{-\mu(t-p_{\varepsilon
})\varepsilon^{-1}}+(b-v_{\varepsilon}(q_{\varepsilon}))e^{-\mu(q_{\varepsilon
}-t)\varepsilon^{-1}} \label{phiBarrierDefinition}%
\end{equation}
with $\mu$ fixed by \eqref{eqn:WPrimeLinearized}. We note that $\phi$
satisfies the following differential inequality:
\begin{align*}
(\phi^{\prime}\omega)^{\prime}  &  =\frac{\mu^{2}}{\varepsilon^{2}}\phi
\omega+\frac{\mu}{\varepsilon}\omega^{\prime}\left(  -(b-v_{\varepsilon
}(p_{\varepsilon}))e^{-\mu(t-p_{\varepsilon})\varepsilon^{-1}}%
+(b-v_{\varepsilon}(q_{\varepsilon}))e^{-\mu(q_{\varepsilon}-t)\varepsilon
^{-1}}\right) \\
&  \leq\frac{1}{\varepsilon^{2}}\left(  \mu^{2}+\varepsilon\frac
{|\omega^{\prime}|}{\omega}\mu\right)  \phi\omega.
\end{align*}
On the other hand, $\omega(t)\geq\omega_{0}>0$ for all $t\in I$. Thus,
\[
\varepsilon\frac{|\omega^{\prime}(t)|}{\omega(t)}\leq\varepsilon\frac
{\max|\omega^{\prime}|}{\omega_{0}}\leq\mu
\]
for all $t\in I$ and all $\varepsilon$ sufficiently small. Therefore in
$I_{\varepsilon}$
\begin{equation}
(\phi^{\prime}\omega)^{\prime}\leq2\varepsilon^{-2}\mu^{2}\phi\omega.
\label{eqn:Barrier1}%
\end{equation}
We then set $g(t):=b-v_{\varepsilon}(t)$,  and using (\ref{1d euler lagrange})
and \eqref{eqn:WPrimeLinearized} we have
\begin{equation}
(g^{\prime}\omega)^{\prime}=-\varepsilon^{-2}(W^{\prime}(v_{\varepsilon
}))\omega\geq2\varepsilon^{-2}\mu^{2}g\omega. \label{eqn:Barrier2}%
\end{equation}
We define $\Psi:=g-\phi$. By \eqref{phiBarrierDefinition},
\eqref{eqn:Barrier1} and \eqref{eqn:Barrier2}, for $\varepsilon$ small we obtain 
the following:
\begin{align*}
&  (\Psi^{\prime}\omega)^{\prime}\geq2\varepsilon^{-2}\mu^{2}\Psi\omega,\\
&  \Psi(p_{\varepsilon})\leq0,\quad\Psi(q_{\varepsilon})\leq0.
\end{align*}
The maximum principle implies that $\Psi\leq0$ for all $t\in I_{\varepsilon}$.
Thus
\[
b-v_{\varepsilon}(t)\leq(b-v_{\varepsilon}(p_{\varepsilon}))e^{-\mu
(t-p_{\varepsilon})\varepsilon^{-1}}+(b-v_{\varepsilon}(q_{\varepsilon
}))e^{-\mu(q_{\varepsilon}-t)\varepsilon^{-1}},
\]
which proves (\ref{1d b decay}). In turn, for $t:=\frac{p_{\varepsilon
}+q_{\varepsilon}}{2}$, we have
\[
\varepsilon^{k}\leq\beta_{\varepsilon}-v_{\varepsilon}(t)\leq b-v_{\varepsilon
}(t)\leq2be^{-\mu2^{-1}(q_{\varepsilon}-p_{\varepsilon})\varepsilon^{-1}},
\]
which implies that $-\frac{\mu}{2}(q_{\varepsilon}-p_{\varepsilon}%
)\varepsilon^{-1}\geq k\log\varepsilon-\log2b$, that is,
\[
0\leq q_{\varepsilon}-p_{\varepsilon}\leq2\mu^{-1}k\varepsilon|\log
\varepsilon|+2\mu^{-1}\varepsilon\log2b.
\]
This asserts (\ref{1d diam I epsilon}). \hfill
\end{proof}

\begin{remark}
With a similar proof, one can show that if $I_{\varepsilon}$ is a maximal
subinterval of
\[
 A_{\varepsilon}^k:=\{t\in\lbrack0,T]:~\alpha_{\varepsilon}+\varepsilon^{k}\leq
v_{\varepsilon}(t)\leq\alpha_{-}\},
\]
then
\[
\operatorname*{diam}I_{\varepsilon}\leq C\varepsilon|\log\varepsilon|
\]
for all $0<\varepsilon<\varepsilon_{0}$, where the constants $0<\varepsilon
_{0}<1$ and $C>0$ depend only on $\alpha_{-}$, $T$, $\omega$, $W$, and $k$.
\end{remark}

Next, we prove some differential inequalities for $v_{\varepsilon}$.

\begin{theorem}
\label{theorem monotonicity}Assume that $W$ satisfies
\eqref{W_Smooth}-\eqref{W' three zeroes}, that $\omega$ satisfies \eqref{etaSmooth}, and that that $a\leq\alpha_{\varepsilon},$
$\beta_{\varepsilon}\leq b$. Let $v_{\varepsilon}$ be the minimizer of
$G_{\varepsilon}$ obtained in Theorem \ref{theorem 1d EL} and let $\alpha
_{-},\beta_{-}$ be given as in \eqref{alpha and beta minus}. Then there exists
a constant $C>0$ such that
\[
\varepsilon(v_{\varepsilon}^{\prime}(0))^{2}-\frac{1}{\varepsilon}%
W(\alpha_{\varepsilon})\leq C
\]
for all $0<\varepsilon<1$. Moreover, there exist a constant $\tau_{0}>0$,
depending only on $\omega$, $T$, $a$, $b$, $\alpha_{-}$, $\beta_{-}$ and $W$,
such that
\begin{equation}
\frac{1}{2}\sigma^{2}(v_{\varepsilon}(t)-a)^{2}\leq\varepsilon^{2}%
(v_{\varepsilon}^{\prime}(t))^{2}\leq\frac{3}{2}\sigma^{-2}(v_{\varepsilon
}(t)-a)^{2} \label{1d v' near a}%
\end{equation}
whenever $a+\tau_{0}\varepsilon^{1/2}\leq v_{\varepsilon}(t)\leq\beta_{-}$,
and
\begin{equation}
\frac{1}{2}\sigma^{2}(b-v_{\varepsilon}(t))^{2}\leq\varepsilon^{2}%
(v_{\varepsilon}^{\prime}(t))^{2}\leq\frac{3}{2}\sigma^{-2}(b-v_{\varepsilon
}(t))^{2} \label{1d v' near b}%
\end{equation}
whenever $\alpha_{-}\leq v_{\varepsilon}(t)\leq b-\tau_{0}\varepsilon^{1/2}$,
where $\sigma>0$ is the constant given in Remark \ref{remark W near b}.
\end{theorem}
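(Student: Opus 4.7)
The plan is to establish a quantitative near-conservation of the ``Hamiltonian''
$H(t):=\varepsilon^{2}(v_{\varepsilon}^{\prime}(t))^{2}-W(v_{\varepsilon}(t))$
along the minimizer. Differentiating and substituting the Euler--Lagrange equation \eqref{1d euler lagrange} in the form $2\varepsilon^{2}v_{\varepsilon}^{\prime\prime}=W^{\prime}(v_{\varepsilon})-2\varepsilon^{2}(\omega^{\prime}/\omega)v_{\varepsilon}^{\prime}$ yields
\begin{equation*}
H^{\prime}(t)=-2\varepsilon^{2}\frac{\omega^{\prime}(t)}{\omega(t)}(v_{\varepsilon}^{\prime}(t))^{2},
\end{equation*}
so that $|H^{\prime}(t)|\le C\varepsilon^{2}(v_{\varepsilon}^{\prime}(t))^{2}$, with $C$ depending only on $\omega$. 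Thus $H$ would be exactly conserved if $\omega$ were constant, and the right-hand side measures the defect.

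My first step would be to prove the sharp a priori energy bound $G_{\varepsilon}(v_{\varepsilon})\le C\varepsilon$ by comparing $v_{\varepsilon}$ with the explicit transition-layer competitor from Step 2 of the proof of Theorem \ref{theorem 1d first gamma} for the constant limit $u\equiv b$: pick $\delta_{\varepsilon}\to 0^{+}$ with $\varepsilon/\delta_{\varepsilon}^{1/2}\to 0$, let $u_{\varepsilon}$ transition directly from $\alpha_{\varepsilon}$ to $b$ on a short interval near $0$ and from $b$ to $\beta_{\varepsilon}$ near $T$, and equal $b$ in between. The calculations in that proof yield $G_{\varepsilon}^{(1)}(u_{\varepsilon})\le C$, so by minimality $G_{\varepsilon}^{(1)}(v_{\varepsilon})\le C$, which gives
\begin{equation*}
\int_{0}^{T}W(v_{\varepsilon})\,dt\le C\varepsilon, \qquad \int_{0}^{T}\varepsilon^{2}(v_{\varepsilon}^{\prime})^{2}\,dt\le C\varepsilon.
\end{equation*}
From the identity $H(0)=H(t)-\int_{0}^{t}H^{\prime}(s)\,ds$, integrating in $t\in[0,T]$ then produces $T|H(0)|\le \int_{0}^{T}|H|\,dt+T\int_{0}^{T}|H^{\prime}|\,ds\le C\varepsilon$, and dividing by $\varepsilon$ delivers the first claim. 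The same identity further gives $|H(t)|\le |H(0)|+\int_{0}^{T}|H^{\prime}|\,ds\le C\varepsilon$ uniformly in $t\in[0,T]$.

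For \eqref{1d v' near a} and \eqref{1d v' near b}, I would combine the uniform bound $|H(t)|\le C\varepsilon$ with the quadratic control of $W$ near the wells from Remark \ref{remark W near b}. In the range $a+\tau_{0}\varepsilon^{1/2}\le v_{\varepsilon}(t)\le \beta_{-}$, we have $\sigma^{2}(v_{\varepsilon}-a)^{2}\le W(v_{\varepsilon})\le \sigma^{-2}(v_{\varepsilon}-a)^{2}$, so the identity $\varepsilon^{2}(v_{\varepsilon}^{\prime})^{2}=W(v_{\varepsilon})+H(t)$ gives
\begin{equation*}
\sigma^{2}(v_{\varepsilon}-a)^{2}-C\varepsilon \;\le\; \varepsilon^{2}(v_{\varepsilon}^{\prime})^{2}\;\le\; \sigma^{-2}(v_{\varepsilon}-a)^{2}+C\varepsilon.
\end{equation*}
Choosing $\tau_{0}$ large enough that $(v_{\varepsilon}-a)^{2}\ge \tau_{0}^{2}\varepsilon$ forces $C\varepsilon \le \tfrac{1}{2}\sigma^{2}(v_{\varepsilon}-a)^{2}$ and $C\varepsilon \le \tfrac{1}{2}\sigma^{-2}(v_{\varepsilon}-a)^{2}$ absorbs the error terms into halves of the quadratic bounds, yielding \eqref{1d v' near a}. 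The analogous argument using $\sigma^{\pm2}(b-v_{\varepsilon})^{2}$ in the range $\alpha_{-}\le v_{\varepsilon}(t)\le b-\tau_{0}\varepsilon^{1/2}$ gives \eqref{1d v' near b}.

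The main obstacle is securing the sharp energy bound $G_{\varepsilon}(v_{\varepsilon})\le C\varepsilon$, since the entire quantitative control of $|H|$ rests on this input; once it is in place, everything else is a mechanical bootstrap from the near-conservation of $H$ together with the quadratic expansion of $W$ near the wells.
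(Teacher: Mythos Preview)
Your proof is correct and follows essentially the same approach as the paper: derive the near-conservation identity $H'(t)=-2\varepsilon^{2}(\omega'/\omega)(v_{\varepsilon}')^{2}$, use a competitor to obtain the first-order energy bound $G_{\varepsilon}^{(1)}(v_{\varepsilon})\le C$, conclude $|H(t)|\le C\varepsilon$ uniformly, and then absorb the $O(\varepsilon)$ error into the quadratic bounds on $W$ from Remark~\ref{remark W near b} by choosing $\tau_{0}$ appropriately. The only cosmetic difference is that the paper locates a good point $t_{\varepsilon}$ via a pigeonhole/mean-value argument and integrates the identity from $0$ to $t_{\varepsilon}$, whereas you average the identity $H(0)=H(t)-\int_{0}^{t}H'$ over $t\in[0,T]$ and use the $L^{1}$ bounds on $H$ and $H'$ directly; both routes yield $|H(0)|\le C\varepsilon$ and hence the uniform bound on $|H|$.
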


\begin{proof}
\textbf{Step 1: }We claim that
\[
\varepsilon(v_{\varepsilon}^{\prime}(0))^{2}-\frac{1}{\varepsilon}%
W(\alpha_{\varepsilon})\leq C
\]
for all $0<\varepsilon<1$ and for some constant $C>0$ independent of
$\varepsilon$. By Theorem \ref{theorem 1d first gamma},%
\begin{equation}
\sup_{0<\varepsilon<1}\int_{I}\left(  \frac{1}{\varepsilon}W(v)+\varepsilon
(v^{\prime})^{2}\right)  \omega\,dt\leq C \label{1d 69}%
\end{equation}
for all $0<\varepsilon<1$ and for some constant $C>0$ independent of
$\varepsilon$. Subdivide $I$ into $\lfloor\varepsilon^{-1}\rfloor$ equal
subintervals $I_{i}$ of equal length. Since%
\[
\sum_{i=1}^{\lfloor\varepsilon^{-1}\rfloor}\int_{I_{i}}\left(  \frac
{1}{\varepsilon}W(v)+\varepsilon(v^{\prime})^{2}\right)  \omega\,dt\leq C
\]
there exists $i_{\varepsilon}\in\{1,\ldots.,\lfloor\varepsilon^{-1}\rfloor\}$
such that%
\[
\int_{I_{i_{\varepsilon}}}\left(  \frac{1}{\varepsilon}W(v)+\varepsilon
(v^{\prime})^{2}\right)  \omega\,dt\leq C\varepsilon.
\]
In turn, there exists $t_{\varepsilon}\in I_{i_{\varepsilon}}$ such that%
\begin{equation}
\left(  \frac{1}{\varepsilon}W(v(t_{\varepsilon}))+\varepsilon(v^{\prime
}(t_{\varepsilon}))^{2}\right)  \omega(t_{\varepsilon})\leq C. \label{1d 70}%
\end{equation}
Multiply (\ref{1d euler lagrange expanded}) by $\frac{1}{\varepsilon
}v_{\varepsilon}^{\prime}(t)$ to get%
\begin{equation}
\varepsilon((v_{\varepsilon}^{\prime}(t))^{2})^{\prime}-\frac{1}{\varepsilon
}(W(v_{\varepsilon}(t)))^{\prime}=-2\varepsilon\frac{\omega^{\prime}%
(t)}{\omega(t)}(v_{\varepsilon}^{\prime}(t))^{2}. \label{1d 71}%
\end{equation}
Integrating between $0$ and $t_{\varepsilon}$, we have%
\[
\varepsilon(v_{\varepsilon}^{\prime}(0))^{2}-\frac{1}{\varepsilon}%
W(\alpha_{\varepsilon})=\varepsilon(v_{\varepsilon}^{\prime}(t_{\varepsilon
}))^{2}-\frac{1}{\varepsilon}W(v_{\varepsilon}(t_{\varepsilon}))-2\varepsilon
\int_{0}^{t_{\varepsilon}}\frac{\omega^{\prime}(t)}{\omega(t)}(v_{\varepsilon
}^{\prime}(t))^{2}dt\leq C
\]
where we used (\ref{1d 70}) and the fact that%
\begin{equation}
2\varepsilon\int_{0}^{t_{\varepsilon}}\frac{|\omega^{\prime}(t)|}{\omega
(t)}(v_{\varepsilon}^{\prime}(t))^{2}dt\leq\int_{I}\varepsilon(v_{\varepsilon
}^{\prime}(t))^{2}\omega(t)dt\leq C \label{1d 72}%
\end{equation}
since $\omega\in C^{1}([0,T])$, $\inf_{I}\omega>0$, and (\ref{1d 69}).

\textbf{Step 2: }Integrating (\ref{1d 71}) between $t$ and $0$ and using Step
1 and (\ref{1d 72}) gives%
\[
\left\vert \varepsilon(v_{\varepsilon}^{\prime}(t))^{2}-\frac{1}{\varepsilon
}W(v_{\varepsilon}(t))\right\vert \leq\varepsilon(v_{\varepsilon}^{\prime
}(0))^{2}+\frac{1}{\varepsilon}W(\alpha_{\varepsilon})+2\varepsilon\int%
_{0}^{T}\frac{|\omega^{\prime}|}{\omega}(v_{\varepsilon}^{\prime})^{2}dt\leq
C_{1}.
\]
In turn,%
\begin{equation}
W(v_{\varepsilon}(t))-C_{1}\varepsilon\leq\varepsilon^{2}(v_{\varepsilon
}^{\prime}(t))^{2}\leq W(v_{\varepsilon}(t))+C_{1}\varepsilon\label{1d 73}%
\end{equation}
for all $t\in I$, for all $0<\varepsilon<1$, and for some constant $C_{1}>0$
independent of $\varepsilon$. By Remark \ref{remark W near b},%
\[
\sigma^{2}(s-a)^{2}\leq W(s)\leq\frac{1}{\sigma^{2}}(s-a)^{2}%
\]
for all $s\in\lbrack a,\beta_{-}]$. Hence,
\begin{align*}
\frac{1}{2}\sigma^{2}(s-a)^{2}  &  \leq\sigma^{2}(s-a)^{2}-C_{1}%
\varepsilon\leq W(s)-C_{1}\varepsilon,\\
W(s)+C_{1}\varepsilon &  \leq\frac{1}{\sigma^{2}}(s-a)^{2}+C_{1}%
\varepsilon\leq\frac{3}{2}\frac{1}{\sigma^{2}}(s-a)^{2}%
\end{align*}
for all $s\in\lbrack a+\tau_{0}\varepsilon^{1/2},\beta_{-}]$, where $\tau
_{0}:=\sqrt{2}\sigma^{-1}C_{1}^{1/2}$ and we are assuming that $0<\sigma<1$.
In turn, by (\ref{1d 73}), we obtain (\ref{1d v' near a}). Estimate
(\ref{1d v' near b}) can be obtained similarly. We omit the details. \hfill
\end{proof}

\begin{remark}
    The proof of \eqref{1d v' near a} and \eqref{1d v' near b} is adapted from \cite{niethammer1995}.
\end{remark}

Next, we strengthen the hypotheses on the Dirichlet data $\alpha_{\varepsilon
}$ and $\beta_{\varepsilon}$ and derive additional properties of minimizers.
In particular, we assume that $\beta_{\varepsilon}\rightarrow b$ (the case
$\beta_{\varepsilon}\rightarrow a$ is similar).

\begin{theorem}
\label{theorem 1d properties minimizers}Assume that $W$ satisfies
\eqref{W_Smooth}-\eqref{W' three zeroes}, that $\alpha_{-},\beta_{-}$ satisfy \eqref{alpha and beta minus}, and that $\omega$ satisfies
hypotheses \eqref{etaSmooth}, \eqref{eta close to eta0}, where%
\begin{equation}
0\leq\omega_{0}<\frac{1}{2C_{W}}\min\{\operatorname*{d}\nolimits_{W}%
(a,\alpha_{-}),\operatorname*{d}\nolimits_{W}(\beta_{-},b)\}\omega(0).
\label{800a}%
\end{equation}
Let $\alpha_{-}\leq\alpha_{\varepsilon},\,\beta_{\varepsilon}\leq b$ satisfy
\eqref{alpha epsilon and beta epsilon} and let $v_{\varepsilon}$ be the
minimizer of $G_{\varepsilon}$ obtained in Theorem \ref{theorem 1d EL}. Given
$k\in\mathbb{N}$ there exist $0<\varepsilon_{0}<1$, $\mu>0$, and $C>0$
depending only on $\alpha_{-}$, $\beta_{-}$, $k$, $A_{0}$, $B_{0}$, $T$,
$\omega$, $W$, such that, for all $0<\varepsilon<\varepsilon_{0}$, the
following properties hold:

\begin{enumerate}
\item[(i)] Either $v_{\varepsilon}>\beta_{-}$ in $I$ or if $R_{\varepsilon}$
is the first time in $[0,T]$ such that $v_{\varepsilon}=\beta_{-}$, then%
\begin{equation}
R_{\varepsilon}\leq C\varepsilon. \label{1d R epsilon}%
\end{equation}

\item[(ii)] Either $v_{\varepsilon}\geq\beta_{\varepsilon}-\varepsilon^{k}$ in
$I$ or if $T_{\varepsilon}$ is the first time such that $v_{\varepsilon}%
=\beta_{\varepsilon}-\varepsilon^{k}$, then
\begin{equation}
T_{\varepsilon}\leq C\varepsilon|\log\varepsilon|. \label{1d T epsilon}%
\end{equation}
Moreover, if $R_{\varepsilon}$ exists, then $R_{\varepsilon}<T_{\varepsilon}$
and $v_{\varepsilon}(t)\in\lbrack\beta_{-},\beta_{\varepsilon}-\varepsilon
^{k}]$ for $t\in\lbrack R_{\varepsilon},T_{\varepsilon}]$.

\item[(iii)] If $T_{\varepsilon}$ exists, then $v_{\varepsilon}\geq b-\tau
_{0}\varepsilon^{1/2}$ in $[T_{\varepsilon},T]$, where $\tau_{0}$ is the
constant in Theorem \ref{theorem monotonicity}.
\end{enumerate}
\end{theorem}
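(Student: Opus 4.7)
The plan is to bootstrap from the second-order energy bound
\[
G_\varepsilon^{(2)}(v_\varepsilon) \leq C, \quad\text{equivalently,}\quad G_\varepsilon^{(1)}(v_\varepsilon) \leq \operatorname*{d}\nolimits_W(\alpha,b)\omega(0) + C\varepsilon, \qquad (\ast)
\]
obtained by testing $v_\varepsilon$ against the recovery sequence of Theorem \ref{theorem 1d second gamma limsup} (for fixed large $l$) and using the minimality of $v_\varepsilon$. I will combine $(\ast)$ with the weighted Modica--Mortola lower bound
\[
G_\varepsilon^{(1)}(v_\varepsilon) \geq \int_I 2W^{1/2}(v_\varepsilon)|v_\varepsilon'|\,\omega\,dt,
\]
and exploit the guiding principle that each ``extra excursion'' of $v_\varepsilon$ away from the monotone ideal path $\alpha\to b$ adds to the right-hand side a term bounded below by $(\omega(0)-\omega_0)$ times the $W$-geodesic length of the excursion. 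Since $\omega(0)-\omega_0>0$ by \eqref{eta close to eta0}--\eqref{800a}, any such excursion of fixed positive $W$-length overwhelms the $C\varepsilon$ budget in $(\ast)$ and is therefore forbidden for small $\varepsilon$.

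For part (i), I split on the sign of $\alpha-\beta_-$. If $\alpha>\beta_-$, then for small $\varepsilon$ one has $\alpha_\varepsilon>\beta_-$; any descent to $\beta_-$ followed by the mandatory return to $\beta_\varepsilon\approx b$ contributes an extra $W$-geodesic length $\geq 2\operatorname*{d}\nolimits_W(\beta_-,\alpha)>0$, violating $(\ast)$. Hence $v_\varepsilon>\beta_-$ throughout. If $\alpha\leq\beta_-$ and $R_\varepsilon$ exists, the same excursion argument, combined with \eqref{1d v' near a}--\eqref{1d v' near b} (which confine interior local extrema to within $\tau_0\varepsilon^{1/2}$ of a well), rules out any dip of $v_\varepsilon$ below $\alpha_-$ on $[0,R_\varepsilon]$, so $v_\varepsilon\in[\alpha_-,\beta_-]$ there. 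On that range \eqref{1d v' near b} gives $\varepsilon|v_\varepsilon'|\geq\sigma(b-\beta_-)/\sqrt{2}$, and the transit time is bounded by $C\varepsilon$, yielding \eqref{1d R epsilon}.

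For part (ii), the key auxiliary step is that $v_\varepsilon$ cannot dip back below $\beta_-$ after $R_\varepsilon$: if it did, the monotonicity afforded by \eqref{1d v' near a}--\eqref{1d v' near b} would force the descent to reach within $\tau_0\varepsilon^{1/2}$ of $a$, after which the mandatory final rise to $\beta_\varepsilon$ would create a full $b\to a\to b$ excursion of $W$-length $\geq C_W$, contradicting $(\ast)$. Thus $[R_\varepsilon, T_\varepsilon]\subseteq B_\varepsilon^k$, and Theorem \ref{thorem barrier 1d} applied to the maximal subinterval of $B_\varepsilon^k$ containing $[R_\varepsilon,T_\varepsilon]$ gives $T_\varepsilon-R_\varepsilon\leq C\varepsilon|\log\varepsilon|$; combined with part (i) this yields \eqref{1d T epsilon}. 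The case $v_\varepsilon>\beta_-$ on $I$ is handled analogously, taking the maximal subinterval of $B_\varepsilon^k$ containing $0$.

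For part (iii), I argue by contradiction. If $v_\varepsilon(t_0)<b-\tau_0\varepsilon^{1/2}$ for some $t_0\in(T_\varepsilon,T)$, then, since $v_\varepsilon(T_\varepsilon)=\beta_\varepsilon-\varepsilon^k$ and $v_\varepsilon(T)=\beta_\varepsilon$ both exceed $b-\tau_0\varepsilon^{1/2}$ for small $\varepsilon$ (because $\min(\gamma,k)>1/2$), there is a strict local minimum in $(T_\varepsilon,T)$ below $b-\tau_0\varepsilon^{1/2}$. Because $v_\varepsilon'$ cannot vanish in $[a+\tau_0\varepsilon^{1/2},b-\tau_0\varepsilon^{1/2}]$ by \eqref{1d v' near a}--\eqref{1d v' near b}, this minimum lies within $\tau_0\varepsilon^{1/2}$ of $a$, producing a full $b\to a\to b$ excursion whose cost contradicts $(\ast)$. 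The main obstacle throughout will be to make the weighted Modica--Mortola inequality precise enough to cleanly isolate the unavoidable boundary-layer contribution $\omega(0)\operatorname*{d}\nolimits_W(\alpha,b)$ from the excursion contribution; this is where \eqref{800a} enters, guaranteeing that $(\omega(0)-\omega_0)$ times any fixed-size extra $W$-length strictly dominates $\omega_0\operatorname*{d}\nolimits_W(\alpha,b)$ and therefore contradicts $(\ast)$ for small $\varepsilon$.
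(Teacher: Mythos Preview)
Your proposal is correct and follows essentially the same approach as the paper: the energy bound $(\ast)$ from the limsup construction, the Modica--Mortola lower bound combined with \eqref{eta close to eta0} to price excursions, the observation from \eqref{1d v' near a}--\eqref{1d v' near b} that interior critical values lie within $\tau_0\varepsilon^{1/2}$ of a well, and Theorem~\ref{thorem barrier 1d} for the $\varepsilon|\log\varepsilon|$ bound. The only differences are organizational: the paper first proves globally that $v_\varepsilon>a+\delta_1$ in $I$ and then bounds $R_\varepsilon$ via the $W$-term alone (rather than via the derivative estimate you use), and it case-splits (ii) on $\alpha_\varepsilon$ versus $\beta_-$ (Substeps~3a--3c) rather than on $\alpha$ versus $\beta_-$, which handles the borderline $\alpha_\varepsilon=\beta_-$ situation you leave implicit.
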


By Corollary \ref{corollary 1d minimizer} and the fact that $\beta=b$, we have
that the minimizer of $G^{(1)}$ is given by the constant function $b$. In
turn, by Theorem \ref{theorem 1d first gamma} and by standard properties of
$\Gamma$-convergence (see \cite[Theorem 1.21]{braides-book2002}), minimizers
$v_{\varepsilon}$ of $G_{\varepsilon}$ converge in \thinspace$L^{1}(I)$ to $b$
as $\varepsilon\rightarrow0^{+}$, i.e.,
\begin{equation}
v_{\varepsilon}\rightarrow b\quad\text{in }\,L^{1}(I)\quad\text{as
}\varepsilon\rightarrow0^{+}. \label{1d minimizers converge}%
\end{equation}

We are now prepared to prove Theorem \ref{theorem 1d properties minimizers}.
For every measurable subset $E\subseteq I$ and for every $v\in$\thinspace
$H^{1}(I)$ satisfying (\ref{1d dirichlet}), we define the localized energy
\begin{equation}
G_{\varepsilon}^{(1)}(v;E):=\int_{E}\left(  \frac{1}{\varepsilon
}W(v)+\varepsilon(v^{\prime})^{2}\right)  \omega~dt. \label{G1 epsilon local}%
\end{equation}

\begin{proof}[Proof of Theorem~\ref{theorem 1d properties minimizers}]
Throughout the proof, the constants $0<\varepsilon_{0}<1$ and $C>0$ depend
only on $\alpha_{-}$, $\beta_{-}$, $k$, $A_{0}$, $B_{0}$, $T$, $\omega$, $W$.
By Theorem \ref{theorem 1d second gamma limsup} there exists $\tilde
{v}_{\varepsilon}\in$\thinspace$H^{1}(I)$ satisfying \eqref{1d dirichlet} such
that
\[
G_{\varepsilon}^{(2)}(\tilde{v}_{\varepsilon})\leq\int_{0}^{l}%
2W(p_{\varepsilon}(t))t\,dt\,\omega^{\prime}(0)+Ce^{-2\sigma l}\left(  2\sigma
l+1\right)  +C\varepsilon^{2\gamma}l+C\varepsilon^{\gamma_{1}}|\log
\varepsilon|^{\gamma_{0}}%
\]
for all $0<\varepsilon<\varepsilon_{0}$. Fixing $l$ and using the fact that
$v_{\varepsilon}$ is a minimizer of $G_{\varepsilon}$, we have that
\begin{equation}
G_{\varepsilon}^{(1)}(v_{\varepsilon})\leq G_{\varepsilon}^{(1)}(\tilde
{v}_{\varepsilon})\leq G^{(1)}(b)+C\varepsilon=\operatorname*{d}%
\nolimits_{W}(\alpha,b)\omega(0)+C\varepsilon\label{energies bounded}%
\end{equation}
for all $0<\varepsilon<\varepsilon_{0}$, where $C>0$ is independent of $\varepsilon$. We extend $v_{\varepsilon
}$ to be $\alpha_{\varepsilon}$ for $t<0$ and $\beta_{\varepsilon}$ for $t>T$.

\textbf{Step 1: }Since%
\[
\operatorname*{d}\nolimits_{W}(a,\alpha_{-})=\lim_{\delta\rightarrow0^{+}%
}2\int_{a+\delta}^{\alpha_{-}}W^{1/2}(r)\,dr,
\]
we can find $0<\delta_{1}<\alpha_{-}-a$ so small that
\begin{equation}
\operatorname*{d}\nolimits_{W}(a+\delta,\alpha_{-})=2\int_{a+\delta}%
^{\alpha_{-}}W^{1/2}(r)\,dr\geq\operatorname*{d}\nolimits_{W}(a,\alpha
_{-})-\epsilon_{0}, \label{700}%
\end{equation}
for all $0<\delta\leq\delta_{1}$, where
\begin{equation}
\epsilon_{0}<\frac{1}{4}\operatorname*{d}\nolimits_{W}(a,\alpha_{-}).
\label{800}%
\end{equation}
We claim that there is $\varepsilon_{0}>0$ such that for all $0<\varepsilon
<\varepsilon_{0}$, the set%
\[
A_{\varepsilon}:=\{t\in\lbrack0,T]:~a\leq v_{\varepsilon}(t)\leq a+\delta
_{1}\}
\]
is empty. To see this, assume by contradiction that there exists $t_{\varepsilon}\in(0,T)$ such
that $v_{\varepsilon}(t_{\varepsilon})=a+\delta_{1}$. Since $v_{\varepsilon}$
is continuous, $v_{\varepsilon}([0,T])\supseteq\lbrack a+\delta_{1}%
,\alpha_{\varepsilon}\vee\beta_{\varepsilon}]$. Hence, we can find a closed
interval $I_{\varepsilon}$ such that $v_{\varepsilon}(I_{\varepsilon
})=[a+\delta_{1},\alpha_{\varepsilon}\wedge\beta_{\varepsilon}]$ and a closed
interval $J_{\varepsilon}$ such that $v_{\varepsilon}(J_{\varepsilon}%
)=[\alpha_{\varepsilon}\wedge\beta_{\varepsilon},\alpha_{\varepsilon}\vee
\beta_{\varepsilon}]$. Then by (\ref{energies bounded}), and
(\ref{eta close to eta0}),%
\begin{align*}
\operatorname*{d}\nolimits_{W}(\alpha,b)\omega(0)+C\varepsilon &  \geq
G_{\varepsilon}^{(1)}(v_{\varepsilon})\geq G_{\varepsilon}^{(1)}%
(v;I_{\varepsilon}\cup J_{\varepsilon})\\
&  \geq\int_{I_{\varepsilon}\cup J_{\varepsilon}}2W^{1/2}(v_{\varepsilon
})|v_{\varepsilon}^{\prime}|\omega\,dt\geq\int_{I_{\varepsilon}\cup
J_{\varepsilon}}2W^{1/2}(v_{\varepsilon})|v_{\varepsilon}^{\prime}%
|\,dt(\omega(0)-\omega_{0})\\
&  =\left(  \operatorname*{d}\nolimits_{W}(a+\delta_1,\alpha_{\varepsilon}%
\wedge\beta_{\varepsilon})+\operatorname*{d}\nolimits_{W}(\alpha_{\varepsilon
}\wedge\beta_{\varepsilon},\alpha_{\varepsilon}\vee\beta_{\varepsilon
})\right)  (\omega(0)-\omega_{0}).
\end{align*}
In view of (\ref{distance definition}), $\operatorname*{d}\nolimits_{W}%
(\cdot,r)$ and $\operatorname*{d}\nolimits_{W}(s,\cdot)$ are Lipschitz
continuous with Lipschitz constant $L=\max_{[a,b]}\sqrt{W}$. Hence, by
(\ref{alpha epsilon and beta epsilon}),
\[
\operatorname*{d}\nolimits_{W}(\alpha_{\varepsilon}\wedge\beta_{\varepsilon
},\alpha_{\varepsilon}\vee\beta_{\varepsilon})\geq\operatorname*{d}%
\nolimits_{W}(\alpha,\beta)-L(A_{0}\varepsilon^{\gamma}+B_{0}\varepsilon
^{\gamma}),
\]
and so, using the fact that $\alpha_{-}<\alpha_{\varepsilon}$, $\beta
_{\varepsilon}$, we have%
\[
\operatorname*{d}\nolimits_{W}(\alpha,b)\omega(0)+C\varepsilon\geq\left(
\operatorname*{d}\nolimits_{W}(a+\delta_1,\alpha_{-})+\operatorname*{d}%
\nolimits_{W}(\alpha,b)\right)  (\omega(0)-\omega_{0})-C\varepsilon^{\gamma}%
\]
or, equivalently,%
\begin{align*}
\operatorname*{d}\nolimits_{W}(a+\delta_1,\alpha_{-})\omega(0)  &  \leq\left(
\operatorname*{d}\nolimits_{W}(a+\delta_1,\alpha_{-})+\operatorname*{d}%
\nolimits_{W}(\alpha,b)\right)  \omega_{0}+C\varepsilon\\
&  \leq C_{W}\omega_{0}+C\varepsilon<\frac{1}{2}\operatorname*{d}%
\nolimits_{W}(a,\alpha_{-})\omega(0),
\end{align*}
for $0<\varepsilon<\varepsilon_{0}$, where in the last inequality we used
(\ref{eta 0 alpha-}), and where $\varepsilon_{0}$ is so small that%
\[
C\varepsilon_{0}\leq\frac{1}{4}\operatorname*{d}\nolimits_{W}(a,\alpha
_{-})\omega(0).
\]
Hence, also by (\ref{700}),%
\[
\operatorname*{d}\nolimits_{W}(a,\alpha_{-})-\epsilon_{0}\leq\operatorname*{d}%
\nolimits_{W}(a+\delta_1,\alpha_{-})<\frac{1}{2}\operatorname*{d}\nolimits_{W}%
(a,\alpha_{-}),
\]
which contradicts (\ref{800}).

\textbf{Step 2: }To prove (\ref{1d R epsilon}), observe that by Step 1, for
all $t\in\lbrack0,R_{\varepsilon})$ we have that $v_{\varepsilon}%
(t)\in\lbrack a+\delta_{1},\beta_{-})$. Hence,
\[
G_{\varepsilon}^{(1)}(v_{\varepsilon};[0,R_{\varepsilon}])\geq\varepsilon
^{-1}R_{\varepsilon}\min_{[0,T]}\omega\min_{\lbrack a+\delta_{1},\beta_{-}%
]}W,
\]
and so (\ref{1d R epsilon}) follows from (\ref{energies bounded}).

\textbf{Step 3: }To prove Item (ii), we consider three separate cases.

\textbf{Substep 3 a. }Assume first that either $\alpha_{\varepsilon}<\beta
_{-}$ or $\alpha_{\varepsilon}=\beta_{-}$ and $v_{\varepsilon}^{\prime}(0)>0$.
If $\alpha_{\varepsilon}<\beta_{-}$, since $v_{\varepsilon}(t)\in\lbrack
a+\delta_{1},\beta_{-})$ for all $t\in\lbrack0,R_{\varepsilon})$, we have that
$v_{\varepsilon}^{\prime}(R_{\varepsilon})\geq0$. On the other hand, by
(\ref{1d v' near b}), $v^{\prime}_{\varepsilon}(R_{\varepsilon})>0$, which in turn implies,
again by (\ref{1d v' near b}), that $v_{\varepsilon}^{\prime}(t)>0$ for all
$t\geq R_{\varepsilon}$ such that $v_{\varepsilon}(t)\leq b-\tau
_{0}\varepsilon^{1/2}$. Similarly, if $\alpha_{\varepsilon}=\beta_{-}$ and
$v^{\prime}_{\varepsilon}(0)>0$, then by (\ref{1d v' near b}) that $v_{\varepsilon}^{\prime
}(t)>0$ for all $t\geq R_{\varepsilon}=0$ such that $v_{\varepsilon}(t)\leq
b-\tau_{0}\varepsilon^{1/2}$.

Hence, in both cases, there exists a maximal interval $I_{\varepsilon}$ of the
set $B_{\varepsilon}$ defined in (\ref{1d set B epsilon}) whose left endpoint
is $R_{\varepsilon}$. Let $S_{\varepsilon}$ be the right endpoint of
$I_{\varepsilon}$. If $v_{\varepsilon}(S_{\varepsilon})=\beta_{\varepsilon
}-\varepsilon^{k}$, then $S_{\varepsilon}=T_{\varepsilon}$ and so
(\ref{1d T epsilon}) follows from (\ref{1d diam I epsilon}) and
(\ref{1d R epsilon}).

If $v_{\varepsilon}(S_{\varepsilon})=\beta_{-}$, then since $v_{\varepsilon
}^{\prime}(t)>0$ for all $t\geq R_{\varepsilon}$ such that $v_{\varepsilon
}(t)\leq b-\tau_{0}\varepsilon^{1/2}$, there exists $P_{\varepsilon}%
\in(R_{\varepsilon},S_{\varepsilon})$ such that $v_{\varepsilon}%
(P_{\varepsilon})=b-\tau_{0}\varepsilon^{1/2}$. It follows that
$v_{\varepsilon}([R_{\varepsilon},P_{\varepsilon}])=[\beta_{-},b-\tau
_{0}\varepsilon^{1/2}]$, while $v_{\varepsilon}([P_{\varepsilon}%
,S_{\varepsilon}])\supseteq\lbrack\beta_{-},b-\tau_{0}\varepsilon^{1/2}]$.
Then by (\ref{energies bounded}), and (\ref{eta close to eta0}), we have 
\begin{align*}
\operatorname*{d}\nolimits_{W}(\alpha,b)\omega(0)+C\varepsilon &  \geq
G_{\varepsilon}^{(1)}(v_{\varepsilon})\geq G_{\varepsilon}^{(1)}%
(v;[0,P_{\varepsilon}]\cup\lbrack P_{\varepsilon},S_{\varepsilon}])\\
&  \geq\int_{\lbrack0,P_{\varepsilon}]\cup\lbrack P_{\varepsilon
},S_{\varepsilon}]}2W^{1/2}(v_{\varepsilon})|v_{\varepsilon}^{\prime}%
|\omega\,dt\\
&  \geq\int_{\lbrack0,P_{\varepsilon}]\cup\lbrack P_{\varepsilon
},S_{\varepsilon}]}2W^{1/2}(v_{\varepsilon})|v_{\varepsilon}^{\prime
}|\,dt(\omega(0)-\omega_{0})\\
&  =\left(  \operatorname*{d}\nolimits_{W}(\alpha_{\varepsilon},b-\tau
_{0}\varepsilon^{1/2})+\operatorname*{d}\nolimits_{W}(\beta_{-},b-\tau
_{0}\varepsilon^{1/2})\right)  (\omega(0)-\omega_{0}).
\end{align*}
As in Step 1, using the fact that $\operatorname*{d}\nolimits_{W}(\cdot,r)$
and $\operatorname*{d}\nolimits_{W}(s,\cdot)$ are Lipschitz continuous and
(\ref{alpha epsilon and beta epsilon}), it follow that%
\[
\operatorname*{d}\nolimits_{W}(\alpha,b)\omega(0)+C\varepsilon\geq
(\operatorname*{d}\nolimits_{W}(\alpha,b)+\operatorname*{d}\nolimits_{W}%
(\beta_{-},b)-L(A_{0}\varepsilon^{\gamma}+2\tau_{0}\varepsilon^{1/2}%
))(\omega(0)-\omega_{0}),
\]
or, equivalently,
\[
\lbrack\operatorname*{d}\nolimits_{W}(\alpha,b)+\operatorname*{d}%
\nolimits_{W}(\beta_{-},b)]\omega_{0}+C(\varepsilon^{\gamma}+\varepsilon
^{1/2})\geq\operatorname*{d}\nolimits_{W}(\beta_{-},b)\omega(0),
\]
which contradicts (\ref{800a}), provided we take $0<\varepsilon<\varepsilon
_{0}$ with $\varepsilon_{0}$ sufficiently small (depending only on $\beta_{-}$
and $W$).

On the other hand, if $v_{\varepsilon}(t)>\beta_{-}$ for all $t\in I$, then
$I_{\varepsilon}=[0,T_{\varepsilon}]$ is a maximal interval of the set
$B_{\varepsilon}$ defined in (\ref{1d set B epsilon}), and so
(\ref{1d T epsilon}) follows from (\ref{1d diam I epsilon}).

\textbf{Substep 3 b. }Assume first that $\alpha_{\varepsilon}=\beta_{-}$ and
$v_{\varepsilon}^{\prime}(0)\leq0$. Then%
\[
(\omega v_{\varepsilon}^{\prime})^{\prime}(0)=\omega(0) W^{\prime}(\beta_{-})<0
\]
and so in both cases $ v_{\varepsilon}^{\prime}(t)<0$ for all $t>0$ small. It follows from
(\ref{1d v' near a}), that $v_{\varepsilon}^{\prime}(t)<0$ for all $t>0$ such that
$v_{\varepsilon}(t)\geq a+\tau_{0}\varepsilon^{1/2}$. Since $v_{\varepsilon
}(T)=\beta_{\varepsilon}$, this implies that there exist $L_{\varepsilon
}<M_{\varepsilon}<N_{\varepsilon}$ such that $v_{\varepsilon}%
([0,L_{\varepsilon}])\supseteq\lbrack a+\tau_{0}\varepsilon^{1/2},\beta_{-}]$
and $v_{\varepsilon}([M_{\varepsilon},N_{\varepsilon}])\supseteq\lbrack
\beta_{-},\beta_{\varepsilon}]$. Then by (\ref{energies bounded}), and
(\ref{eta close to eta0}), we obtain
\begin{align*}
\operatorname*{d}\nolimits_{W}(\alpha,b)\omega(0)+C\varepsilon &
=\operatorname*{d}\nolimits_{W}(\beta_{-},b)\omega(0)+C\varepsilon\geq
G_{\varepsilon}^{(1)}(v_{\varepsilon})\geq G_{\varepsilon}^{(1)}%
(v;[0,L_{\varepsilon}]\cup\lbrack M_{\varepsilon},N_{\varepsilon}])\\
&  \geq\int_{\lbrack0,L_{\varepsilon}]\cup\lbrack M_{\varepsilon
},N_{\varepsilon}]}2W^{1/2}(v_{\varepsilon})|v_{\varepsilon}^{\prime}%
|\omega\,dt\\
&  \geq\int_{\lbrack0,L_{\varepsilon}]\cup\lbrack M_{\varepsilon
},N_{\varepsilon}]}2W^{1/2}(v_{\varepsilon})|v_{\varepsilon}^{\prime
}|\,dt(\omega(0)-\omega_{0})\\
&  =\left(  \operatorname*{d}\nolimits_{W}(a+\tau_{0}\varepsilon^{1/2}%
,\beta_{-})+\operatorname*{d}\nolimits_{W}(\beta_{-},\beta_{\varepsilon
})\right)  (\omega(0)-\omega_{0}).
\end{align*}
Using the fact that $\operatorname*{d}\nolimits_{W}(\cdot,r)$ and
$\operatorname*{d}\nolimits_{W}(s,\cdot)$ are Lipschitz continuous and
(\ref{alpha epsilon and beta epsilon}), it follow that%
\[
\operatorname*{d}\nolimits_{W}(\beta_{-},b)\omega(0)+C\varepsilon
\geq(\operatorname*{d}\nolimits_{W}(a,\beta_{-})+\operatorname*{d}%
\nolimits_{W}(\beta_{-},b)-L(B_{0}\varepsilon^{\gamma}+2\tau_{0}%
\varepsilon^{1/2}))(\omega(0)-\omega_{0}),
\]
or, equivalently,
\[
\lbrack\operatorname*{d}\nolimits_{W}(a,\beta_{-})+\operatorname*{d}%
\nolimits_{W}(\beta_{-},b)]\omega_{0}+C(\varepsilon^{\gamma}+\varepsilon
^{1/2})\geq\operatorname*{d}\nolimits_{W}(a,\beta_{-})\omega(0)\geq
\operatorname*{d}\nolimits_{W}(a,\alpha_{-})\omega(0),
\]
which contradicts (\ref{800a}), provided we take $0<\varepsilon<\varepsilon
_{0}$ with $\varepsilon_{0}$ sufficiently small (depending only on $\alpha
_{-}$ and $W$).

This contradiction shows that if $\alpha=\beta_{-}$, then $v_{\varepsilon
}^{\prime}(0)>0$ and so we are back to Substep 3 a.

\textbf{Substep 3 c. }Finally, we consider the case in which $\alpha
_{\varepsilon}>\beta_{-}$. We claim that $v_{\varepsilon}%
>\beta_{-}$ in $I$. Indeed, assume by contradiction that $R_{\varepsilon}$
exists. Then $v_{\varepsilon}^{\prime}(R_{\varepsilon})\leq0$ and so by (\ref{1d v' near a}%
), $v_{\varepsilon}^{\prime}(t)<0$ for all $t>0$ such that $v_{\varepsilon}(t)\geq
a+\tau_{0}\varepsilon^{1/2}$. Since $v_{\varepsilon}(T)=\beta_{\varepsilon}$,
this implies that there exist $R_{\varepsilon}<L_{\varepsilon}<M_{\varepsilon
}<N_{\varepsilon}$ such that $v_{\varepsilon}([R_{\varepsilon},L_{\varepsilon
}])\supseteq\lbrack a+\tau_{0}\varepsilon^{1/2},\beta_{-}]$ and
$v_{\varepsilon}([M_{\varepsilon},N_{\varepsilon}])\supseteq\lbrack\beta
_{-},\beta_{\varepsilon}]$. Then by (\ref{energies bounded}), and
(\ref{eta close to eta0}), we have
\begin{align*}
\operatorname*{d}\nolimits_{W}(\alpha,b)\omega(0)+C\varepsilon &  \geq
G_{\varepsilon}^{(1)}(v_{\varepsilon})\geq G_{\varepsilon}^{(1)}%
(v;[R,L_{\varepsilon}]\cup\lbrack M_{\varepsilon},N_{\varepsilon}])\\
&  \geq\int_{\lbrack0,L_{\varepsilon}]\cup\lbrack M_{\varepsilon
},N_{\varepsilon}]}2W^{1/2}(v_{\varepsilon})|v_{\varepsilon}^{\prime}%
|\omega\,dt\\
&  \geq\int_{\lbrack0,L_{\varepsilon}]\cup\lbrack M_{\varepsilon
},N_{\varepsilon}]}2W^{1/2}(v_{\varepsilon})|v_{\varepsilon}^{\prime
}|\,dt(\omega(0)-\omega_{0})\\
&  =\left(  \operatorname*{d}\nolimits_{W}(a+\tau_{0}\varepsilon^{1/2}%
,\beta_{-})+\operatorname*{d}\nolimits_{W}(\beta_{-},\beta_{\varepsilon
})\right)  (\omega(0)-\omega_{0}).
\end{align*}
Using the fact that $\operatorname*{d}\nolimits_{W}(\cdot,r)$ and
$\operatorname*{d}\nolimits_{W}(s,\cdot)$ are Lipschitz continuous and
(\ref{alpha epsilon and beta epsilon}), it follow that%
\[
\operatorname*{d}\nolimits_{W}(\alpha,b)\omega(0)+C\varepsilon\geq
(\operatorname*{d}\nolimits_{W}(a,\beta_{-})+\operatorname*{d}\nolimits_{W}%
(\beta_{-},b)-L(B_{0}\varepsilon^{\gamma}+2\tau_{0}\varepsilon^{1/2}%
))(\omega(0)-\omega_{0}),
\]
or, equivalently,
\[
\lbrack\operatorname*{d}\nolimits_{W}(a,\beta_{-})+\operatorname*{d}%
\nolimits_{W}(\beta_{-},b)]\omega_{0}+C(\varepsilon^{\gamma}+\varepsilon
^{1/2})\geq\operatorname*{d}\nolimits_{W}(a,\alpha)\omega(0)\geq
\operatorname*{d}\nolimits_{W}(a,\alpha_{-})\omega(0),
\]
which contradicts (\ref{800a}), provided we take $0<\varepsilon<\varepsilon
_{0}$ with $\varepsilon_{0}$ sufficiently small (depending only on $\alpha
_{-}$ and $W$).

\textbf{Step 4. }To prove Item (iii), assume that $T_{\varepsilon}$ exists.
Assume by contradiction that there exists a first time $Q_{\varepsilon
}>T_{\varepsilon}$ such that $v_{\varepsilon}=b-\tau_{0}\varepsilon^{1/2}$.
Then $v_{\varepsilon}^{\prime}(Q_{\varepsilon})\leq0$, but so by
(\ref{1d v' near a}) and (\ref{1d v' near b}), $v^{\prime}_{ \varepsilon}(t)<0$ for all
$t\geq Q_{\varepsilon}$ such that $v_{\varepsilon}(t)\geq a+\tau
_{0}\varepsilon^{1/2}$. Since $v_{\varepsilon}(T)=\beta_{\varepsilon}$, there
exists a first time $S_{\varepsilon}$ such that $v_{\varepsilon}=a+\tau
_{0}\varepsilon^{1/2}$. Hence, $v_{\varepsilon}([0,T_{\varepsilon}%
])\supseteq\lbrack\alpha_{\varepsilon},\beta_{\varepsilon}-\varepsilon^{k}]$
and $v_{\varepsilon}([Q_{\varepsilon},S_{\varepsilon}])\supseteq\lbrack
a+\tau_{0}\varepsilon^{1/2},b-\tau_{0}\varepsilon^{1/2}]$. Then by
(\ref{energies bounded}), and (\ref{eta close to eta0}), we have 
\begin{align*}
\operatorname*{d}\nolimits_{W}(\alpha,b)\omega(0)+C\varepsilon &  \geq
G_{\varepsilon}^{(1)}(v_{\varepsilon})\geq G_{\varepsilon}^{(1)}%
(v;[0,T_{\varepsilon}]\cup\lbrack Q_{\varepsilon},S_{\varepsilon}])\\
&  \geq\int_{\lbrack0,T_{\varepsilon}]\cup\lbrack Q_{\varepsilon
},S_{\varepsilon}]}2W^{1/2}(v_{\varepsilon})|v_{\varepsilon}^{\prime}%
|\omega\,dt\\
&  \geq\int_{\lbrack0,T_{\varepsilon}]\cup\lbrack Q_{\varepsilon
},S_{\varepsilon}]}2W^{1/2}(v_{\varepsilon})|v_{\varepsilon}^{\prime
}|\,dt(\omega(0)-\omega_{0})\\
&  =\left(  \operatorname*{d}\nolimits_{W}(\alpha_{\varepsilon},\beta
_{\varepsilon}-\varepsilon^{k})+\operatorname*{d}\nolimits_{W}(a+\tau
_{0}\varepsilon^{1/2},b-\tau_{0}\varepsilon^{1/2})\right)  (\omega
(0)-\omega_{0}).
\end{align*}
Using the fact that $\operatorname*{d}\nolimits_{W}(\cdot,r)$ and
$\operatorname*{d}\nolimits_{W}(s,\cdot)$ are Lipschitz continuous and
(\ref{alpha epsilon and beta epsilon}), it follow that%
\[
\operatorname*{d}\nolimits_{W}(\alpha,b)\omega(0)+C\varepsilon\geq
(\operatorname*{d}\nolimits_{W}(\alpha,b)+\operatorname*{d}\nolimits_{W}%
(a,b)-C(\varepsilon^{k}+\varepsilon^{\gamma}+\varepsilon^{1/2}))(\omega
(0)-\omega_{0}),
\]
or, equivalently,
\[
\lbrack\operatorname*{d}\nolimits_{W}(a,\beta_{-})+\operatorname*{d}%
\nolimits_{W}(\beta_{-},b)]\omega_{0}+C(\varepsilon^{k}+\varepsilon^{\gamma
}+\varepsilon^{1/2})\geq\operatorname*{d}\nolimits_{W}(a,b)\omega
(0)\geq\operatorname*{d}\nolimits_{W}(a,\alpha_{-})\omega(0),
\]
which contradicts (\ref{800a}), provided we take $0<\varepsilon<\varepsilon
_{0}$ with $\varepsilon_{0}$ sufficiently small (depending only on $\alpha
_{-}$, $\beta_{-}$, and $W$). \hfill
\end{proof}

\subsection{Second-Order $\Gamma$-liminf}

In this subsection, we prove the $\liminf$ counterpart of Theorem
\ref{theorem 1d second gamma limsup}.

\begin{theorem}
[Second-Order $\Gamma$-Liminf]\label{theorem liminf second 1d}Assume that $W$ satisfies \eqref{W_Smooth}-\eqref{W' three zeroes}, that that $\alpha_{-}$
satisfies \eqref{alpha and beta minus}, and that $\omega$ satisfies \eqref{etaSmooth}, \eqref{eta close to eta0}, and
\eqref{eta 0 alpha-}. Let $\alpha_{-}\leq\alpha_{\varepsilon},\,\beta
_{\varepsilon}\leq b$ satisfy \eqref{alpha epsilon and beta epsilon} and let
$v_{\varepsilon}$ be the minimizer of $G_{\varepsilon}$ obtained in Theorem
\ref{theorem 1d EL}. Then there exist $0<\varepsilon_{0}<1$, $C>0$, and
$l_{0}>1$, depending only on $\alpha_{-}$, $A_{0}$, $B_{0}$, $T$, $\omega$,
and $W$, such that%
\[
G_{\varepsilon}^{(2)}(v_{\varepsilon})\geq2\omega^{\prime}(0)\int_{0}%
^{l}W^{1/2}(w_{\varepsilon})w_{\varepsilon}^{\prime}s\,ds-Ce^{-l\mu}\left(
l\mu+1\right)  -C\varepsilon^{1/2} l^2-C\varepsilon^{\gamma_{1}}|\log
\varepsilon|^{2+\gamma_{0}}%
\]
for all $0<\varepsilon<\varepsilon_{0}$ and $l>l_{0}$, where $w_{\varepsilon
}(s):=v_{\varepsilon}(\varepsilon s)$ for $s\in\lbrack0,T\varepsilon^{-1}]$
satisfies%
\[
\lim_{\varepsilon\rightarrow0^{+}}\int_{0}^{l}W^{1/2}(w_{\varepsilon
})w_{\varepsilon}^{\prime}s\,ds=\int_{0}^{l}W^{1/2}(z_{\alpha})z_{\alpha
}^{\prime}s\,ds
\]
for every $l>0$ and where $z_{\alpha}$ solves the Cauchy problem
\eqref{cauchy problem z alpha}. In particular,%
\[
\liminf_{\varepsilon\rightarrow0^{+}}G_{\varepsilon}^{(2)}(v_{\varepsilon
})\geq2\omega^{\prime}(0)\int_{0}^{\infty}W^{1/2}(z_{\alpha})z_{\alpha
}^{\prime}s\,ds.
\]

\end{theorem}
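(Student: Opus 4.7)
The proof mirrors the limsup construction in Theorem~\ref{theorem 1d second gamma limsup}, but now applied to the minimizer itself. Using Taylor's formula $\omega(t) = \omega(0) + \omega'(0) t + R_1(t)$ with $|R_1(t)| \leq |\omega'|_{C^{0,d}} t^{1+d}$, and the definition of $G_\varepsilon^{(2)}$, I decompose
\begin{align*}
G_\varepsilon^{(2)}(v_\varepsilon) &= \underbrace{\frac{\omega(0)}{\varepsilon}\!\left[\int_0^T\! \Big(\tfrac{1}{\varepsilon}W(v_\varepsilon)+\varepsilon(v_\varepsilon')^2\Big)\,dt - \operatorname*{d}\nolimits_W(\alpha,b)\right]}_{\mathcal{A}} \\
&\quad + \underbrace{\omega'(0)\int_0^T\Big(\tfrac{1}{\varepsilon^2} W(v_\varepsilon)+(v_\varepsilon')^2\Big)t\,dt}_{\mathcal{B}} + \underbrace{\int_0^T\Big(\tfrac{1}{\varepsilon^2}W(v_\varepsilon)+(v_\varepsilon')^2\Big)R_1(t)\,dt}_{\mathcal{C}}.
\end{align*}
Comparing $v_\varepsilon$ against the recovery sequence of Theorem~\ref{theorem 1d second gamma limsup} gives the uniform bound $\int_0^T(\tfrac{1}{\varepsilon}W(v_\varepsilon) + \varepsilon (v_\varepsilon')^2)\,dt \leq \operatorname*{d}\nolimits_W(\alpha, b)\omega(0) + C\varepsilon$, which I will use throughout.

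\textbf{Treatment of $\mathcal{A}$ and $\mathcal{C}$.} For $\mathcal{A}$, the Modica--Mortola inequality $\tfrac{1}{\varepsilon} W + \varepsilon (v')^2 \geq 2 W^{1/2}(v)|v'|$ and the chain rule applied to an antiderivative of $2W^{1/2}$ yield $\int_0^T (\tfrac{1}{\varepsilon} W + \varepsilon (v_\varepsilon')^2) dt \geq \operatorname*{d}\nolimits_W(\alpha_\varepsilon, \beta_\varepsilon)$; the Lipschitz continuity of $\operatorname*{d}\nolimits_W$ on $[a,b]^2$ and hypothesis~\eqref{alpha epsilon and beta epsilon} then give $\mathcal{A} \geq -C\varepsilon^{\gamma-1}$. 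For $\mathcal{C}$, Theorem~\ref{theorem 1d properties minimizers}(ii)--(iii) provides $T_\varepsilon \leq C\varepsilon|\log\varepsilon|$ and $v_\varepsilon \geq b - \tau_0\varepsilon^{1/2}$ on $[T_\varepsilon, T]$. On $[T_\varepsilon, T]$ the estimate~\eqref{1d v' near b} together with $v_\varepsilon \geq \beta_\varepsilon - \varepsilon^k$ (for $k$ large) make that contribution lower order; on $[0, T_\varepsilon]$, the pointwise bound $|R_1(t)| \leq C T_\varepsilon^{1+d}$ combined with the uniform energy bound yields $|\mathcal{C}| \leq C\varepsilon^{d}|\log\varepsilon|^{1+d}$.

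\textbf{Treatment of $\mathcal{B}$ and main obstacle.} This is the heart of the argument. After the rescaling $s := t/\varepsilon$, $w_\varepsilon(s) := v_\varepsilon(\varepsilon s)$, the change of variables gives
\[
\mathcal{B} = \omega'(0)\int_0^{T/\varepsilon}\Big(W(w_\varepsilon) + (w_\varepsilon')^2\Big)s\,ds.
\]
By Theorem~\ref{theorem 1d properties minimizers}, $w_\varepsilon$ is monotone non-decreasing on the interval where $w_\varepsilon \in [\alpha_-, b-\tau_0\varepsilon^{1/2}]$, so the algebraic identity $W + (w')^2 = 2W^{1/2}w' + (W^{1/2} - w')^2$ can be exploited. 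When $\omega'(0) \geq 0$, discarding the non-negative defect already yields the desired lower bound; the subtlety is the opposite case $\omega'(0) < 0$, where one needs an \emph{upper} bound on the defect. Here I invoke the sharper equipartition estimate $|\varepsilon^2 (v_\varepsilon')^2 - W(v_\varepsilon)| \leq C\varepsilon$ from Theorem~\ref{theorem monotonicity}, which together with the two-sided bound~\eqref{1d v' near b} forcing $W^{1/2}(w_\varepsilon) + w_\varepsilon' \geq c\varepsilon^{1/2}$, controls $(W^{1/2}(w_\varepsilon) - w_\varepsilon')^2 \leq C\varepsilon$; integrating against $s$ on $[0,l]$ produces the error term of order $\varepsilon^{1/2} l^2$ stated in the theorem. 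The tail on $[l, T_\varepsilon/\varepsilon]$ is handled by the exponential barrier of Theorem~\ref{thorem barrier 1d}: $b - w_\varepsilon(s) \leq Ce^{-\mu s}$ yields $\int_l^{T_\varepsilon/\varepsilon} W^{1/2}(w_\varepsilon) w_\varepsilon' s\,ds \leq C e^{-\mu l}(\mu l + 1)$.

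\textbf{Passage to the limit.} The same equipartition identity shows $w_\varepsilon$ satisfies an ODE that is a perturbation of $z' = W^{1/2}(z)$ with $w_\varepsilon(0) = \alpha_\varepsilon \to \alpha$; continuous dependence on parameters (as invoked already in Theorem~\ref{theorem 1d second gamma limsup}) yields $w_\varepsilon \to z_\alpha$ pointwise on every bounded interval. Since $a \leq w_\varepsilon \leq b$, dominated convergence transfers $\int_0^l W^{1/2}(w_\varepsilon) w_\varepsilon' s\,ds$ to $\int_0^l W^{1/2}(z_\alpha) z_\alpha' s\,ds$. Finally, letting $l \to \infty$ and using the exponential decay~\eqref{estimate z alpha} of $b - z_\alpha$ delivers the stated $\liminf$ inequality. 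The main obstacle throughout is the sign ambiguity of $\omega'(0)$, which precludes a purely one-sided Young inequality and forces reliance on the Euler--Lagrange-based equipartition bounds derived in Theorems~\ref{theorem monotonicity} and~\ref{thorem barrier 1d}.
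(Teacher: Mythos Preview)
Your decomposition $\mathcal{A}+\mathcal{B}+\mathcal{C}$ (Taylor expand $\omega$ first, then deal with each piece) differs from the paper's in the \emph{order of operations}, and this creates a genuine gap in your treatment of $\mathcal{C}$ on the tail $[T_\varepsilon,T]$.

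On $[T_\varepsilon,T]$ Theorem~\ref{theorem 1d properties minimizers}(iii) only gives $v_\varepsilon\geq b-\tau_0\varepsilon^{1/2}$, \emph{not} $v_\varepsilon\geq\beta_\varepsilon-\varepsilon^k$; and \eqref{1d v' near b} does \emph{not} apply in this range (it requires $v_\varepsilon\leq b-\tau_0\varepsilon^{1/2}$). The sharpest tool available there is the equipartition bound \eqref{1d 73}, which yields $\tfrac{1}{\varepsilon^2}W(v_\varepsilon)+(v_\varepsilon')^2\leq C/\varepsilon$. Since $|R_1(t)|\sim t^{1+d}$ is only bounded by a constant on an interval of length $T-T_\varepsilon\sim T$, you obtain
\[
\Big|\int_{T_\varepsilon}^T\big(\tfrac{1}{\varepsilon^2}W(v_\varepsilon)+(v_\varepsilon')^2\big)R_1\,dt\Big|\leq \frac{C}{\varepsilon}\int_0^T t^{1+d}\,dt=\frac{C}{\varepsilon},
\]
which blows up. Even refining with the barrier of Theorem~\ref{thorem barrier 1d} to get $b-v_\varepsilon\leq C(\varepsilon^\gamma+\varepsilon^k)$ on $[T_\varepsilon,T]$ does not help, because the $C\varepsilon$ in \eqref{1d 73} forces $(v_\varepsilon')^2\leq C/\varepsilon$ regardless of how small $W(v_\varepsilon)$ is.

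The paper sidesteps this entirely by reversing the order: it applies the identity $W(w)+(w')^2=(W^{1/2}(w)-w')^2+2W^{1/2}(w)w'$ \emph{against the full weight} $\omega_\varepsilon>0$ on $[0,l_\varepsilon]$, so the defect $(W^{1/2}-w')^2\omega_\varepsilon\geq0$ and the tail $\int_{l_\varepsilon}^{T/\varepsilon}(W+(w')^2)\omega_\varepsilon\geq0$ can both be \emph{dropped} outright for a lower bound. Only the term $2W^{1/2}(w_\varepsilon)w_\varepsilon'\,\omega_\varepsilon$ survives, and \emph{that} is Taylor expanded; its remainder $\int_0^{l_\varepsilon}W^{1/2}(w_\varepsilon)|w_\varepsilon'|\,\varepsilon^{1+d}s^{1+d}\,ds$ is harmless because $l_\varepsilon\leq C|\log\varepsilon|$ and $W^{1/2}(w_\varepsilon)|w_\varepsilon'|$ is uniformly bounded. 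This also removes your ``main obstacle'': the sign of $\omega'(0)$ is irrelevant, since the defect is never multiplied by it. Your equipartition argument for $\mathcal{B}$ (which additionally needs $w_\varepsilon'\geq0$ on $[0,l]$ to conclude $(W^{1/2}-w')^2\leq C\varepsilon$) becomes unnecessary.
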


Note that Theorems \ref{theorem 1d second gamma limsup} and
\ref{theorem liminf second 1d} together provide the second-order asymptotic
development by $\Gamma$-convergence for the functionals $G_{\varepsilon}$
defined in (\ref{1d functional G}). To prove Theorem
\ref{theorem liminf second 1d}, it is convenient to rescale the functionals
$G_{\varepsilon}$. We define
\begin{equation}
H_{\varepsilon}(w):=\int_{0}^{T\varepsilon^{-1}}(W(w(s))+(w^{\prime}%
(s))^{2})\omega_{\varepsilon}(s)\,ds \label{rescaledProblemFormulation}%
\end{equation}
for all $w\in$\thinspace$H^{1}((0,T\varepsilon^{-1}))$ such that
\begin{equation}
w(0)=\alpha_{\varepsilon},\quad w(T\varepsilon^{-1})=\beta_{\varepsilon},
\label{eqn:RescaledMassConstraint}%
\end{equation}
where
\begin{equation}
\omega_{\varepsilon}(s):=\omega(\varepsilon s). \label{rescaledEtaDefinition}%
\end{equation}
Note that if $v_{\varepsilon}$ is the minimizers of $G_{\varepsilon}$ obtained
in Theorem \ref{theorem 1d EL}, then
\begin{equation}
w_{\varepsilon}(s):=v_{\varepsilon}(\varepsilon s),\quad s\in\lbrack
0,T\varepsilon^{-1}] \label{w epsilon}%
\end{equation}
is a minimizer of $H_{\varepsilon}$.

We prove that the functions $w_{\varepsilon}$ necessarily converge.

\begin{lemma}
\label{profilesConverge}Assume that the hypotheses of Theorem
\ref{theorem liminf second 1d} hold. Let $w_{\varepsilon}$ be as in
\eqref{w epsilon}. Then $w_{\varepsilon}\rightharpoonup z_{\alpha}$ in
$H^{1}((0,l))$ for every $l\in\mathbb{N}$, where $z_{\alpha}$ solves the
Cauchy problem \eqref{cauchy problem z alpha}. Moreover, the family
\[
|w_{\varepsilon}^{\prime}(t)|\leq C\quad\text{for all }t\in(0,T\varepsilon
^{-1})
\]
and all $0<\varepsilon<1$, where the constant $C>0$ depends only on $\omega$,
$T$, $a$, $b$, and $W$.
\end{lemma}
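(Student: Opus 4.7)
The plan is to combine Arzel\`a--Ascoli compactness with a first--integral identity and a sign argument to identify the limit uniquely as $z_\alpha$.

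First, Corollary \ref{corollary bounded derivative} gives $|v_\varepsilon'(t)| \leq C_0/\varepsilon$ on $I$, so by the chain rule $|w_\varepsilon'(s)| = \varepsilon|v_\varepsilon'(\varepsilon s)| \leq C_0$, which proves the second assertion. Together with $a \leq w_\varepsilon \leq b$ from Theorem \ref{theorem 1d EL}, this bounds $\{w_\varepsilon\}$ in $W^{1,\infty}((0,l))$ for every $l$. Rescaling (\ref{1d euler lagrange expanded}) yields
\[
2 w_\varepsilon''(s) = W'(w_\varepsilon(s)) - 2\varepsilon \frac{\omega'(\varepsilon s)}{\omega(\varepsilon s)} w_\varepsilon'(s),
\]
so $\{w_\varepsilon''\}$ is uniformly bounded in $L^\infty_{\mathrm{loc}}$. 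Applying Arzel\`a--Ascoli to the pair $(w_\varepsilon, w_\varepsilon')$ yields a subsequence converging in $C^1_{\mathrm{loc}}([0,\infty))$, and hence weakly in $H^1((0,l))$ for every $l$, to some $w$ satisfying $w(0) = \lim \alpha_\varepsilon = \alpha$.

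Second, multiplying (\ref{1d euler lagrange}) by $v_\varepsilon'/\varepsilon$ and integrating from $0$ to $t$, as in the derivation of (\ref{1d 71}), gives a first--integral identity for $v_\varepsilon$. After the rescaling $t = \varepsilon s$ this becomes
\[
(w_\varepsilon'(s))^2 - W(w_\varepsilon(s)) = \bigl[\varepsilon^2(v_\varepsilon'(0))^2 - W(\alpha_\varepsilon)\bigr] - 2\varepsilon \int_0^s \frac{\omega'(\varepsilon\tau)}{\omega(\varepsilon\tau)}(w_\varepsilon'(\tau))^2\,d\tau.
\]
Step 2 of the proof of Theorem \ref{theorem monotonicity} bounds the bracket by $C\varepsilon$, and the uniform bound on $w_\varepsilon'$ controls the integral by $C\varepsilon s$. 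Passing to the $C^1_{\mathrm{loc}}$ limit identified above, I obtain $(w'(s))^2 = W(w(s))$ on $[0,\infty)$ with $w(0) = \alpha$.

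The main obstacle is fixing the sign of $w'$. Step 1 of the proof of Theorem \ref{theorem 1d properties minimizers} shows $v_\varepsilon > a + \delta_1$ on $I$ for all sufficiently small $\varepsilon$, hence $w \geq a + \delta_1 > a$ on $[0,\infty)$. If $\alpha < b$, then $W(\alpha) > 0$, and I argue by contradiction: suppose $w'(0) = -\sqrt{W(\alpha)}$. Then $w$ decreases near $0$; if $w'$ vanished at some $s_0 > 0$ one would have $W(w(s_0)) = 0$, forcing $w(s_0) \in \{a,b\}$, which is ruled out by $w > a$ and (monotone decrease from $\alpha < b$ implying) $w \leq \alpha < b$. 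Hence $w' < 0$ throughout $[0,\infty)$ and $w$ decreases to a limit $\ell \in [a+\delta_1, \alpha)$. Since $w'(s) = -\sqrt{W(w(s))} \to -\sqrt{W(\ell)}$ must tend to $0$, $W(\ell) = 0$ forces $\ell = a$, contradicting $\ell \geq a + \delta_1$. Therefore $w'(0) = +\sqrt{W(\alpha)}$, and uniqueness for \eqref{cauchy problem z alpha} gives $w = z_\alpha$. In the remaining case $\alpha = b$, both $\alpha_\varepsilon$ and $\beta_\varepsilon$ tend to $b$; the barrier estimate of Theorem \ref{thorem barrier 1d} propagated from each endpoint gives $b - v_\varepsilon \to 0$ uniformly on $I$, so $w_\varepsilon \to b = z_b$ uniformly on compact subsets of $[0,\infty)$. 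The limit being unique in each case, the whole family converges.
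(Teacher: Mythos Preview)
Your proof is correct and takes a genuinely different route from the paper's. The paper extracts a weak $H^1_{\mathrm{loc}}$ limit, passes to the limit in the second--order Euler--Lagrange equation to obtain $2w_0'' = W'(w_0)$, then uses the energy bound $\int_0^\infty\bigl((w_0')^2+W(w_0)\bigr)\,ds \le \operatorname{d}_W(\alpha,b)$ to force $w_0'(s)\to 0$ at infinity, and only then integrates to the first integral $(w_0')^2=W(w_0)$; the sign is fixed by tracking the time $R_\varepsilon$ at which $v_\varepsilon$ first hits $\beta_-$. You instead upgrade to $C^1_{\mathrm{loc}}$ compactness via the second--derivative bound, read off the first integral directly from the $\varepsilon$--level identity~(\ref{1d 73}) (which already gives $|(w_\varepsilon')^2-W(w_\varepsilon)|\le C_1\varepsilon$ pointwise, so your displayed integral identity is correct but not even needed), and then pin down the sign by a clean contradiction using only the lower bound $w\ge a+\delta_1$ from Step~1 of Theorem~\ref{theorem 1d properties minimizers}. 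This last step is more transparent than the paper's $R_\varepsilon$ argument. For $\alpha=b$ the paper's approach via the energy bound ($\operatorname{d}_W(b,b)=0$ forces $w_0\equiv b$) is shorter than your barrier argument; your barrier route does work, but note that ``propagated from each endpoint'' really means from the endpoints of each maximal subinterval of $B_\varepsilon^k$, which requires first invoking Substep~3c of Theorem~\ref{theorem 1d properties minimizers} to ensure $v_\varepsilon>\beta_-$ throughout. Alternatively, you could have dispatched $\alpha=b$ with the same tools as $\alpha<b$: the first integral and $w(0)=b$, $a<w\le b$ force $w\equiv b$ by the Lipschitz uniqueness already used in Proposition~\ref{proposition z epsilon}.
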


\begin{proof}
Extend $w_{\varepsilon}$ to be $\beta_{\varepsilon}$ for $t\geq T\varepsilon
^{-1}$. The fact that the family $\{w_{\varepsilon}^{\prime}\}_{\varepsilon}$
is uniformly bounded in $L^{\infty}(\mathbb{R}_{+})$ follows from Corollary
\ref{corollary bounded derivative}. Furthermore, we have that the functions
$w_{\varepsilon}$ are bounded in $L^{\infty}(\mathbb{R}_{+})$ by
(\ref{1d truncated}). Let $\varepsilon_{n}\rightarrow0^{+}$. After a
diagonalization argument, we can find a subsequence $\{\varepsilon_{n_{k}%
}\}_{k}$ of $\{\varepsilon_{n}\}_{n}$ and $w_{0}\in H_{\operatorname*{loc}%
}^{1}(\mathbb{R}_{+})$ such that
\begin{equation}
w_{\varepsilon_{n_{k}}}\rightharpoonup w_{0}\text{ in }H_{\operatorname*{loc}%
}^{1}(\mathbb{R}_{+}). \label{rescaledWeakConvergence}%
\end{equation}
For simplicity, in what follows, we write $\varepsilon$ in place of
$\varepsilon_{n_{k}}$.

Since $w_{\varepsilon}(0)=\alpha_{\varepsilon}\rightarrow\alpha$,   we have that
that $w_{0}(0)=\alpha$. By Theorem \ref{theorem 1d EL} and (\ref{w epsilon}),
we obtain
\begin{equation}%
\begin{cases}
2(w_{\varepsilon}^{\prime}\omega_{\varepsilon})^{\prime}-W^{\prime
}(w_{\varepsilon})\omega_{\varepsilon}=0\quad\text{on }(0,T\varepsilon
^{-1}),\\
w_{\varepsilon}(0)=\alpha_{\varepsilon},\quad w_{\varepsilon}(T\varepsilon
^{-1})=\beta_{\varepsilon}.
\end{cases}
\label{rescaledELEquation}%
\end{equation}
Hence for every $\phi\in C_{c}^{\infty}(\mathbb{R}_{+})$ and for $\varepsilon$
small enough we find that
\[
\int_{0}^{T\varepsilon^{-1}}2w_{\varepsilon}^{\prime}\omega_{\varepsilon}%
\phi^{\prime}+W^{\prime}(w_{\varepsilon})\omega_{\varepsilon}\phi\,ds=0.
\]
Letting $\varepsilon\rightarrow0$ and using \eqref{rescaledEtaDefinition} and
\eqref{rescaledWeakConvergence} gives
\[
\int_{\mathbb{R}}2w_{0}^{\prime}\omega(0)\phi^{\prime}+W^{\prime}(w_{0}%
)\omega(0)\phi\,ds=0,
\]
which then shows that $w_{0}$ solves the initial value problem
\begin{equation}
\left\{
\begin{array}
[c]{l}%
2w_{0}^{\prime\prime}=W^{\prime}(w_{0})\quad\text{in }\mathbb{R}_{+},\\
w_{0}(0)=\alpha.
\end{array}
\right.  \label{1DLimitODE}%
\end{equation}
Furthermore, by (\ref{1d truncated}) we know that $a\leq w_{0}\leq b$, which
by \eqref{1DLimitODE} implies that $|w_{0}^{\prime\prime}|\leq C$. Also, by
(\ref{energies bounded}), the fact that $H_{\varepsilon}(w_{\varepsilon
})=G_{\varepsilon}(v_{\varepsilon})$,
\begin{align*}
\omega(0)\int_{0}^{l}((w_{0}^{\prime})^{2}+W(w_{0}))\,ds  &  \leq
\lim_{\varepsilon\rightarrow0}\int_{0}^{l}((w_{\varepsilon}^{\prime}%
)^{2}+W(w_{\varepsilon}))\omega_{\varepsilon}\,ds\\
&  \leq\lim_{\varepsilon\rightarrow0^{+}}H_{\varepsilon}(w_{\varepsilon
})=\operatorname*{d}\nolimits_{W}(\alpha,b)\omega(0)
\end{align*}
for every $l\in\mathbb{N}$, and thus
\begin{equation}
\int_{0}^{\infty}((w_{0}^{\prime})^{2}+W(w_{0}))\,ds\leq\operatorname*{d}%
\nolimits_{W}(\alpha,b). \label{rescaledLimitEnergyBound}%
\end{equation}
If $\alpha=b$, then this inequality implies that $w_{0}\equiv b$. Otherwise,
if $\alpha<b$, (\ref{rescaledLimitEnergyBound}) combined with the fact that
$|w_{0}^{\prime\prime}(t)|\leq C$ for all $t\in\mathbb{R}_{+}$ (by
\eqref{1DLimitODE}) implies that $\lim_{s\rightarrow+\infty}w_{0}^{\prime
}(s)=0$. In turn, $|w_{0}^{\prime}(t)|\leq C$ for all $t\in\mathbb{R}_{+}$,
and since%
\[
\liminf_{t\rightarrow\infty}W(w_{0})=0
\]
in view (\ref{rescaledLimitEnergyBound}), we have that%
\[
\text{either}\quad\lim_{t\rightarrow\infty}w_{0}(t)=a\quad\text{or\quad}%
\lim_{t\rightarrow\infty}w_{0}(t)=b.
\]
By integrating \eqref{1DLimitODE} we find that
\begin{equation}
(w_{0}^{\prime})^{2}=W(w_{0}). \label{rescaledODE2}%
\end{equation}
We now distinguish two cases. If $\beta_{-}<\alpha<b$, we define $R_{\varepsilon
}=0$. On the other hand, if $\alpha\leq\beta_{-}$, then by Theorem
\ref{theorem 1d properties minimizers}, we have that $R_{\varepsilon}\leq
C\varepsilon$, where $R_{\varepsilon}$ is the first time in $[0,T_{\varepsilon
}]$ such that $v_{\varepsilon}=\max\{\alpha_{\varepsilon},\beta_{-}\}$. Hence,
in both cases $w_{\varepsilon}(\varepsilon^{-1}R_{\varepsilon})=v_{\varepsilon
}(R_{\varepsilon})\geq\beta_{-}$. Since $\varepsilon^{-1}R_{\varepsilon}\leq
C$, by extracting a subsequence, we can assume that $\varepsilon
^{-1}R_{\varepsilon}\rightarrow s_{0}$. In turn, $w_{0}(s_{0})=\max
\{a,\beta_{-}\}$. It follows from (\ref{rescaledODE2}), that $w_{0}^{\prime
}(s_{0})=W^{1/2}(\max\{a,\beta_{-}\})>0$. \ Hence, $w_{0}$ is increasing after
$s_{0}$ and so it is the unique solution to the Cauchy problem%
\[
\left\{
\begin{array}
[c]{l}%
w_{0}^{\prime}=W^{1/2}(w_{0}),\\
w_{0}(s_{0})=\max\{a,\beta_{-}\}.
\end{array}
\right.
\]
By uniqueness, it follows that $a<w_{0}(s)<b$ for all $s$, which means that
$w_{0}$ is strictly increasing. In turn,%
\[
\left\{
\begin{array}
[c]{l}%
w_{0}^{\prime}=W^{1/2}(w_{0}),\\
w_{0}(0)=\alpha,
\end{array}
\right.
\]
and%
\[
\lim_{t\rightarrow\infty}w_{0}(t)=b.
\]
This shows that $w_{0}=z_{\alpha}$. Using the fact that $\{\varepsilon
_{n}\}_{n}$ was an arbitrary sequence, the statement of the lemma follows. \hfill
\end{proof}

Next, we will use the previous lemmas to derive a second-order liminf
inequality, which immediately implies Theorem \ref{theorem liminf second 1d}.

\begin{proof}
[Proof of Theorem \ref{theorem liminf second 1d}]By Theorem
\ref{theorem 1d properties minimizers}, we have $v_{\varepsilon
}(T_{\varepsilon})\geq\beta_{\varepsilon}-\varepsilon^{k}$ for all
$0<\varepsilon<\varepsilon_{0}$, where $T_{\varepsilon}=0$ if $v_{\varepsilon
}>\beta_{ \varepsilon}-\varepsilon^{k}$ in $I$ and otherwise $T_{\varepsilon}$ is the
first time that $v_{\varepsilon}=\beta_{\varepsilon}-\varepsilon^{k}$,
and we have $T_{\varepsilon}\leq C\varepsilon|\log\varepsilon|$. Moreover,
$v_{\varepsilon}(t)\in\lbrack\beta_{-},\beta_{\varepsilon}-\varepsilon^{k}]$
for $t\in\lbrack R_{\varepsilon},T_{\varepsilon}]$, where $R_{\varepsilon
}<T_{\varepsilon}$ is either the first time such that $v_{\varepsilon}%
=\beta_{-}$ or $R_{\varepsilon}=0$ and $v_{\varepsilon}>\beta_{-}$ in $I$.

Here, $\varepsilon_{0}$ and $C$ depend only on $\alpha_{-}$, $\beta_{-}$,
$A_{0}$, $B_{0}$, $T$, $\omega$, $W$. In what follows, we will take
$\varepsilon_{0}$ smaller and $C$ larger, if necessary, preserving the same
dependence on the parameters of the problem.

Setting $l_{\varepsilon}:=\varepsilon^{-1}T_{\varepsilon}$, we have that
\begin{equation}
w_{\varepsilon}(l_{\varepsilon})\geq\beta_{\varepsilon}-\varepsilon^{k},
\label{b epsilon}%
\end{equation}
for $0<\varepsilon<\varepsilon_{0}$, where
\begin{equation}
l_{\varepsilon}\leq C|\log\varepsilon|. \label{KepsilonBound}%
\end{equation}
By \eqref{rescaledProblemFormulation} we have
\begin{align*}
&  \frac{H_{\varepsilon}(w_{\varepsilon})-\operatorname*{d}\nolimits_{W}%
(\alpha,b)\omega(0)}{\varepsilon}\\
&  =\varepsilon^{-1}\int_{0}^{l_{\varepsilon}}(W^{1/2}(w_{\varepsilon
})-w_{\varepsilon}^{\prime})^{2}\omega_{\varepsilon}\,ds+2\varepsilon^{-1}%
\int_{0}^{l_{\varepsilon}}W^{1/2}(w_{\varepsilon})w_{\varepsilon}^{\prime
}(\omega_{\varepsilon}-\omega(0))\,ds\\
&  \quad+\varepsilon^{-1}\int_{l_{\varepsilon}}^{T\varepsilon^{-1}}\left(
W(w_{\varepsilon})+(w_{\varepsilon}^{\prime})^{2}\right)  \omega_{\varepsilon
}\,ds+\varepsilon^{-1}\omega(0)\left(  2\int_{0}^{l_{\varepsilon}}%
W^{1/2}(w_{\varepsilon})w_{\varepsilon}^{\prime}\,ds-\operatorname*{d}%
\nolimits_{W}(\alpha,b)\right) \\
&  \geq2\varepsilon^{-1}\int_{0}^{l_{\varepsilon}}W^{1/2}(w_{\varepsilon
})w_{\varepsilon}^{\prime}(\omega_{\varepsilon}-\omega(0))\,ds+\varepsilon
^{-1}\omega(0)\left(  2\int_{0}^{l_{\varepsilon}}W^{1/2}(w_{\varepsilon
})w_{\varepsilon}^{\prime}\,ds-\operatorname*{d}\nolimits_{W}(\alpha,b)\right)
\\
&  =:\mathcal{A}+\mathcal{B}.
\end{align*}
To estimate $\mathcal{B}$, observe that by the change of variables
$r=w_{\varepsilon}(s)$, we obtain
\begin{align*}
2\int_{0}^{l_{\varepsilon}}W^{1/2}(w_{\varepsilon})w_{\varepsilon}^{\prime
}\,ds  &  =2\int_{\alpha_{\varepsilon}}^{w_{\varepsilon}(l_{\varepsilon}%
)}W^{1/2}(r)\,dr\\
&  =2\int_{\alpha}^{b}W^{1/2}(r)\,dr-2\int_{\alpha_{\varepsilon}}^{\alpha
}W^{1/2}(r)\,dr-2\int_{w_{\varepsilon}(l_{\varepsilon})}^{b}W^{1/2}(r)\,dr.
\end{align*}
By (\ref{alpha epsilon and beta epsilon}),%
\[
2\left\vert \int_{\alpha_{\varepsilon}}^{\alpha}W^{1/2}(r)\,dr\right\vert
\leq2\max_{[a,b]}W^{1/2}|\alpha_{\varepsilon}-\alpha|\leq C\varepsilon
^{\gamma}.
\]
On the other hand, by (\ref{W near b}), (\ref{alpha epsilon and beta epsilon}%
), and (\ref{b epsilon}),%
\[
2\int_{w_{\varepsilon}(l_{\varepsilon})}^{b}W^{1/2}(r)\,dr\leq C\int%
_{w_{\varepsilon}(l_{\varepsilon})}^{b}\left(  b-r\right)
\,dr=C(b-w_{\varepsilon}(l_{\varepsilon}))^{2}\leq C(\varepsilon
^{2k}+\varepsilon^{2\gamma}).
\]
Hence,
\[
\mathcal{B}\geq-C(\varepsilon^{\gamma}+\varepsilon^{2k}).
\]
To estimate $\mathcal{A}$, we use Taylor's formula and assumption
\eqref{etaSmooth} to get
\[
|\omega_{\varepsilon}(s)-\omega(0)-\varepsilon s\omega^{\prime}(0)|\leq
|\omega^{\prime}|_{C^{0,d}}\varepsilon^{1+d}|s|^{1+d}.
\]
Using (\ref{1d truncated}), Lemma \ref{profilesConverge} and
\eqref{KepsilonBound}, we have 
\[
\left\vert \varepsilon^{-1}\int_{0}^{l_{\varepsilon}}W^{1/2}(w_{\varepsilon
})w_{\varepsilon}^{\prime}\varepsilon^{1+d}|s|^{1+d}\,ds\right\vert \leq
C\varepsilon^{d}|\log\varepsilon|^{2+d}.
\]
Thus, we find that
\begin{align}
\mathcal{A}  &  \geq2\omega^{\prime}(0)\int_{0}^{l_{\varepsilon}}%
W^{1/2}(w_{\varepsilon})w_{\varepsilon}^{\prime}s\,ds-C\varepsilon^{d}%
|\log\varepsilon|^{2+d}\label{711}\\
&  =2\omega^{\prime}(0)\int_{0}^{l}W^{1/2}(w_{\varepsilon})w_{\varepsilon
}^{\prime}s\,ds+2\omega^{\prime}(0)\int_{l}^{l_{\varepsilon}}W^{1/2}%
(w_{\varepsilon})w_{\varepsilon}^{\prime}s\,ds-C\varepsilon^{d}|\log
\varepsilon|^{2+d}\nonumber\\
&  =:\mathcal{A}_{1}+\mathcal{A}_{2}-C\varepsilon^{d}|\log\varepsilon
|^{2+d},\nonumber
\end{align}
where $l$ is fixed.

To estimate $\mathcal{A}_{2}$, we distinguish two cases. If $l_{\varepsilon
}\leq l$, then we use the fact that $v_{\varepsilon}(t)\geq b-\tau
_{0}\varepsilon^{1/2}$ for all $t\in\lbrack T_{\varepsilon},T]$ to obtain
\[
0\leq b-v_{\varepsilon}(t)\leq\tau\varepsilon^{1/2}.
\]
In turn, by (\ref{W near b}),
\[
W^{1/2}(w_{\varepsilon}(s))\leq\sigma^{-1}(b-w_{\varepsilon}(s))\leq
C\varepsilon^{1/2}%
\]
for all $s\in\lbrack l_{\varepsilon},l]$. Hence, also by Lemma
\ref{profilesConverge},%
\[
\mathcal{A}_{2}\geq-2|\omega^{\prime}(0)|\int_{l_{\varepsilon}}^{l}%
W^{1/2}(w_{\varepsilon})|w_{\varepsilon}^{\prime}|s\,ds\geq-C\varepsilon
^{1/2} l^2.
\]
On the other hand, if $l_{\varepsilon}>l$, since $v_{\varepsilon}(t)\in
\lbrack\beta_{-},\beta_{\varepsilon}-\varepsilon^{k}]$ for $t\in\lbrack
R_{\varepsilon},T_{\varepsilon}]$, where $R_{\varepsilon}<T_{\varepsilon}$ is
either the first time such that $v_{\varepsilon}=\beta_{-}$ or $R_{\varepsilon
}=0$ and $v_{\varepsilon}>\beta_{-}$ in $I$, we have that $[R_{\varepsilon
},T_{\varepsilon}]$ is a maximal interval of the set $ B^k_{\varepsilon}$ defined
in (\ref{1d set B epsilon}), and so by  Theorem~\ref{thorem barrier 1d}, and
(\ref{alpha epsilon and beta epsilon}),%
\begin{align*}
b-v_{\varepsilon}(t)  &  \leq(b-v_{\varepsilon}(R_{\varepsilon}))e^{-\mu
(t-R_{\varepsilon})\varepsilon^{-1}}+(b-v_{\varepsilon}(T_{\varepsilon
}))e^{-\mu(T_{\varepsilon}-t)\varepsilon^{-1}}\\
&  \leq(b-v_{\varepsilon}(R_{\varepsilon}))e^{-\mu(t-R_{\varepsilon
})\varepsilon^{-1}}+(b-\beta_{\varepsilon}+\varepsilon^{k})\\
&  \leq be^{-\mu(t-R_{\varepsilon})\varepsilon^{-1}}+B_{0}\varepsilon^{\gamma
}+\varepsilon^{k}%
\end{align*}
for $t\in\lbrack R_{\varepsilon},T_{\varepsilon}]$. By Theorem
\ref{theorem 1d properties minimizers}, we have that $R_{\varepsilon}\leq
C\varepsilon$. It follows that $r_{\varepsilon}:=\varepsilon^{-1}%
R_{\varepsilon}\leq C$ and
\[
0\leq b-w_{\varepsilon}(s)\leq be^{-\mu(s-r_{\varepsilon})}+B_{0}%
\varepsilon^{\gamma}+\varepsilon^{k}%
\]
for all $s\in\lbrack r_{\varepsilon},l_{\varepsilon}]$ and all $0<\varepsilon
<\varepsilon_{0}$. Using (\ref{W near b}) and Lemma \ref{profilesConverge}, we
have 
\begin{align*}
\mathcal{A}_{2}  &  \geq-C\int_{l}^{l_{\varepsilon}}(b-w_{\varepsilon
})s\,ds\geq-C\rho\int_{l}^{\infty}e^{-\mu(s-r_{\varepsilon})}%
s\,ds+C(\varepsilon^{\gamma}+\varepsilon^{k})l_{\varepsilon}^{2}\\
&  \geq-Ce^{\mu r_{\varepsilon}}e^{-l\mu}\left(  l\mu+1\right)  -C(\varepsilon
^{\gamma}+\varepsilon^{k})l_{\varepsilon}^{2}\\
&  \geq-Ce^{-l\mu}\left(  l\mu+1\right)  -C(\varepsilon^{\gamma}%
+\varepsilon^{k})\log^{2}\varepsilon
\end{align*}
where we used (\ref{KepsilonBound}) and the fact that $r_{\varepsilon}\leq C$
and we take $l>C\geq r_{\varepsilon}$. Using this estimate in (\ref{711})
gives%
\[
\mathcal{A}\geq2\omega^{\prime}(0)\int_{0}^{l}W^{1/2}(w_{\varepsilon
})w_{\varepsilon}^{\prime}s\,ds-Ce^{-l\mu}\left(  l\mu+1\right)
-C l^2\varepsilon^{1/2}-C\varepsilon^{\gamma_{1}}|\log\varepsilon|^{2+d},
\]
where $\gamma_{1}=\min\{d,\gamma,k\}$. Combining the estimates for
$\mathcal{A}$ and $\mathcal{B}$ gives%
\begin{equation}\begin{split}
\frac{H_{\varepsilon}(w_{\varepsilon})-\operatorname*{d}\nolimits_{W}%
(\alpha,b)\omega(0)}{\varepsilon}\geq 2\omega^{\prime}(0)&\int_{0}^{l}%
W^{1/2}(w_{\varepsilon})w_{\varepsilon}^{\prime}s\,ds-Ce^{-l\mu}\left(
l\mu+1\right)  \\ &-C l^2\varepsilon^{1/2}-C\varepsilon^{\gamma_{1}}|\log
\varepsilon|^{2+d} \end{split}\label{limsup H unifform}%
\end{equation}
for all $0<\varepsilon<\varepsilon_{0}$ and all $l>C$.

By \eqref{rescaledWeakConvergence}, we can write
\[
\lim_{\varepsilon\rightarrow0^{+}}\int_{0}^{l}W^{1/2}(w_{\varepsilon
})w_{\varepsilon}^{\prime}s\,ds=\int_{0}^{l}W^{1/2}(z_{\alpha})z_{\alpha
}^{\prime}s\,ds.
\]
Taking first $\varepsilon\rightarrow0^{+}$ and then $l\rightarrow\infty$ and
using the Lebesgue dominated convergence theorem and (\ref{estimate z alpha})
gives
\begin{equation}
\liminf_{\varepsilon\rightarrow0^{+}}\frac{H_{\varepsilon}(w_{\varepsilon
})-\operatorname*{d}\nolimits_{W}(\alpha,b)\omega(0)}{\varepsilon}\geq
2\int_{0}^{\infty}W^{1/2}(z_{\alpha})z_{\alpha}^{\prime}s\,ds\,\omega^{\prime
}(0). \label{liminf H epsilon}%
\end{equation}
Since $H(w_{\varepsilon})=G_{\varepsilon}^{(1)}(v_{\varepsilon})$, this
concludes the proof. \hfill
\end{proof}

\section{Properties of Minimizers of $F_{\varepsilon}$}

\label{section minimizers}In this section,  we study qualitative properties of
critical points and minimizers of the functional $F_{\varepsilon}$ given in
(\ref{functional cahn-hilliard}) and subject to the Dirichlet boundary
conditions (\ref{dirichlet boundary conditions}).

The following theorem is the analog of Lemma 4.3 in Sternberg and Zumbrun
\cite{sternberg-zumbrun1998}. Here, we replace the mass constraint with
Dirichlet boundary conditions.

 Recall \eqref{Omega delta}.

\begin{theorem}
\label{theorem decay sz}Let $\Omega\subset\mathbb{R}^{N}$ be an open, bounded,
connected set with boundary of class $C^{2,d}\ $, $0<d\leq1$. Assume that $W$
satisfies \eqref{W_Smooth}-\eqref{W' three zeroes} and that
$g_{\varepsilon}$ satisfy \eqref{bounds g},
\eqref{g epsilon smooth}-\eqref{g epsilon -g bound}. Let $u_{\varepsilon}\in
H^{1}(\Omega)$ be a critical point of \eqref{functional cahn-hilliard} subject
to the Dirichlet boundary condition \eqref{dirichlet boundary conditions}.
Then
\begin{equation}
a\leq u_{\varepsilon}(x)\leq b\quad\text{for all }x\in\Omega.
\label{u between a and b}%
\end{equation}
Moreover, for every
\begin{equation}
0<\rho<b-c, \label{rho restrictions}%
\end{equation}
there exist $\mu_{\rho}>0$ and $C_{\rho}>0$, independent of $\varepsilon$,
such that for all $\varepsilon$ sufficiently small the following estimates
hold%
\begin{equation}
0\leq b-u_{\varepsilon}(x)\leq C_{\rho}e^{-\mu_{\rho}\operatorname*{dist}%
(x,K_{\rho})/(2\varepsilon)}\quad\text{for }x\in\Omega\setminus K_{\rho},
\label{decay estimate b}%
\end{equation}
where $K_{\rho}:=\{x\in\Omega:\,u_{\varepsilon}(x)\leq b-\rho\}\cup
\Omega_{\varepsilon|\log\varepsilon|}$.
\end{theorem}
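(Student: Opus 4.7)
The statement splits into two independent assertions. For the bounds \eqref{u between a and b} I would test the weak Euler--Lagrange equation against the truncations $(u_{\varepsilon}-b)^{+}$ and $(a-u_{\varepsilon})^{+}$. For the exponential decay \eqref{decay estimate b} I would derive a linear elliptic differential inequality for $w_{\varepsilon}:=b-u_{\varepsilon}$ on $U:=\Omega\setminus K_\rho$ and then dominate $w_{\varepsilon}$ by a viscosity supersolution obtained as an infimum of radial exponentials centered at points of $\overline{K_\rho}$.

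For \eqref{u between a and b}: any critical point satisfies, in the weak sense,
\[
\int_{\Omega}\bigl(2\varepsilon^{2}\nabla u_{\varepsilon}\cdot\nabla\varphi+W'(u_{\varepsilon})\varphi\bigr)\,dx=0\qquad\text{for every }\varphi\in H^{1}_{0}(\Omega).
\]
Because $a<\alpha_{-}\leq g_{\varepsilon}\leq b$ on $\partial\Omega$ by \eqref{bounds g}, both $(u_{\varepsilon}-b)^{+}$ and $(a-u_{\varepsilon})^{+}$ lie in $H^{1}_{0}(\Omega)$. Testing with the first, and using that \eqref{W' three zeroes}--\eqref{WGurtin_Assumption} force $W'(s)>0$ for all $s>b$, yields
\[
0=\int_{\{u_{\varepsilon}>b\}}\bigl(2\varepsilon^{2}|\nabla u_{\varepsilon}|^{2}+W'(u_{\varepsilon})(u_{\varepsilon}-b)\bigr)\,dx,
\]
a sum of nonnegative terms. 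Hence $\nabla(u_{\varepsilon}-b)^{+}\equiv 0$, and Poincar\'e's inequality on $H^{1}_{0}(\Omega)$ gives $u_{\varepsilon}\leq b$ a.e. Testing with $(a-u_{\varepsilon})^{+}$ and using $W'(s)<0$ for $s<a$ produces the symmetric lower bound. With $u_{\varepsilon}$ bounded, $W'(u_{\varepsilon})\in L^{\infty}$, so standard elliptic regularity furnishes a $C^{2,\alpha}$ representative in the interior, enabling pointwise conclusions.

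For \eqref{decay estimate b}: fix $0<\rho<b-c$. On $[b-\rho,b]\subset(c,b]$ the quotient $(-W'(s))/(b-s)$ is continuous and positive, extending at $s=b$ to $W''(b)>0$ by \eqref{WPrime_At_Wells}; compactness yields $\mu_{\rho}>0$ with
\[
-W'(s)\geq 2\mu_{\rho}^{2}(b-s)\qquad\text{for all }s\in[b-\rho,b].
\]
Since $u_{\varepsilon}\in(b-\rho,b]$ on $U$, the Euler--Lagrange equation $2\varepsilon^{2}\Delta u_{\varepsilon}=W'(u_{\varepsilon})$ rearranges to
\[
\varepsilon^{2}\Delta w_{\varepsilon}-\mu_{\rho}^{2}w_{\varepsilon}\geq 0\qquad\text{in }U.
\]
For every $y_{0}\in\overline{K_\rho}$ define $\phi_{y_{0}}(y):=(b-a)\exp(-\mu_{\rho}|y-y_{0}|/(2\varepsilon))$; a direct radial computation gives, at every $y\neq y_{0}$,
\[
\varepsilon^{2}\Delta\phi_{y_{0}}-\mu_{\rho}^{2}\phi_{y_{0}}=\phi_{y_{0}}\Bigl(-\tfrac{3}{4}\mu_{\rho}^{2}-\tfrac{(N-1)\varepsilon\mu_{\rho}}{2|y-y_{0}|}\Bigr)\leq 0,
\]
so each $\phi_{y_{0}}$ is a classical supersolution of $Lv:=\varepsilon^{2}\Delta v-\mu_{\rho}^{2}v=0$ in $U$. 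The envelope
\[
\phi(y):=\inf_{y_{0}\in\overline{K_\rho}}\phi_{y_{0}}(y)=(b-a)\exp\bigl(-\mu_{\rho}\operatorname*{dist}(y,K_\rho)/(2\varepsilon)\bigr)
\]
is therefore a viscosity supersolution of $Lv=0$ in $U$. Because $\Omega_{\varepsilon|\log\varepsilon|}\subset K_\rho$ separates $\overline{U}$ from $\partial\Omega$ (for $\varepsilon$ small), one has $\partial U\subset\overline{K_\rho}$ and hence $\phi\equiv b-a\geq w_{\varepsilon}$ on $\partial U$. The comparison principle for $L$, whose zeroth-order coefficient $-\mu_{\rho}^{2}$ is nonpositive, then gives $w_{\varepsilon}\leq\phi$ throughout $U$, which is precisely \eqref{decay estimate b} with $C_{\rho}=b-a$.

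The main technical obstacle, and the reason for the factor $1/(2\varepsilon)$ rather than $1/\varepsilon$ in the exponent, is that the naive one-dimensional candidate $\exp(-\mu_{\rho}|y-y_{0}|/\varepsilon)$ fails to be a supersolution of $Lv=0$ once $N\geq 2$: the radial Laplacian produces an extra term with the wrong sign proportional to $(N-1)/|y-y_{0}|$. Halving the rate absorbs this curvature correction into the slack $\tfrac{3}{4}\mu_{\rho}^{2}$. The resulting merely Lipschitz regularity of $\operatorname*{dist}(\cdot,K_\rho)$ is handled by passing to the infimum of the smooth point-barriers $\phi_{y_{0}}$ and invoking the standard viscosity comparison principle for linear elliptic operators with nonpositive zeroth-order coefficient.
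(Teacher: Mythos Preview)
Your proof is correct, and both parts reach the same conclusions as the paper, though by somewhat different routes. For \eqref{u between a and b} the paper argues pointwise via the strong maximum principle on the classical Euler--Lagrange equation, whereas you use the weak formulation with truncation test functions; both are standard and equivalent here. For \eqref{decay estimate b} the paper isolates the barrier construction into a separate proposition (Proposition~\ref{proposition sz}) that builds a smooth supersolution as the \emph{integral} $q(x)=M_{1}\varepsilon^{-N}\int_{K_{\varepsilon}}e^{-\mu|x-y|/\varepsilon}\,dy$ over a thickened $K$, then bounds $q$ above and below. Your approach instead takes the \emph{infimum} of the same point-barriers, obtaining the distance-function barrier $(b-a)e^{-\mu_{\rho}\operatorname*{dist}(\,\cdot\,,K_{\rho})/(2\varepsilon)}$ directly, and closes with a viscosity comparison (which here reduces to the elementary argument: at an interior positive maximum of $w_{\varepsilon}-\phi$, the infimum is realized by some $y_{0}^{\ast}$, and classical comparison with $\phi_{y_{0}^{\ast}}$ yields a contradiction). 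Your route is a bit more direct and avoids the auxiliary thickening $K_{\varepsilon}$; the paper's integral barrier has the advantage of being $C^{2}$, so only the classical maximum principle is needed.

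One correction to your final explanatory paragraph: the claim that $e^{-\mu_{\rho}|y-y_{0}|/\varepsilon}$ ``fails to be a supersolution'' because the radial term has the ``wrong sign'' is mistaken. For a radially \emph{decreasing} function the term $\frac{N-1}{r}\phi'$ is negative, so in fact
\[
\varepsilon^{2}\Delta\bigl(e^{-\mu_{\rho}r/\varepsilon}\bigr)-\mu_{\rho}^{2}e^{-\mu_{\rho}r/\varepsilon}
=-\frac{(N-1)\varepsilon\mu_{\rho}}{r}\,e^{-\mu_{\rho}r/\varepsilon}\leq 0,
\]
and the full-rate exponential \emph{is} already a supersolution (this is exactly the computation \eqref{sz1} in the paper). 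Your argument with the halved rate still proves the stated estimate, but the justification you give for the factor $1/2$ is not the real one; in the paper that factor arises only in the upper bound for the integrated barrier $q$, where $e^{-\mu|x-y|/\varepsilon}$ is split as $e^{-\mu|x-y|/(2\varepsilon)}\cdot e^{-\mu|x-y|/(2\varepsilon)}$ to extract the distance and integrate the remainder.
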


In the definition of the set $K_\rho$ the choice of $\Omega_{\varepsilon|\log\varepsilon|}$ is somewhat arbitrary. In what follows (see Theorem \ref{theorem uniform convergence level sets}), the choice $\Omega_r$ for $r>0$ small would have sufficed.

The proof relies on the following proposition, which is essentially due to
Sternberg and Zumbrun \cite[Proposition 4.1]{sternberg-zumbrun1998}.

\begin{proposition}
\label{proposition sz}Let $\Omega\subset\mathbb{R}^{N}$ be an open, bounded,
connected set with $C^{1}$ boundary and let $K\subset\Omega$ be a compact set.
Suppose that $v:\overline{\Omega}\rightarrow\mathbb{R}$ is a function in
$C^{2}(\Omega)\cap C^{1}(\overline{\Omega})$ satisfying the conditions%
\begin{equation}
\left\{
\begin{array}
[c]{ll}%
\varepsilon^{2}\Delta v\geq\mu^{2}v & \text{in }\Omega\setminus K,\\
v\leq M & \text{on }\partial\Omega\cup\partial K,
\end{array}
\right.  \label{sz1a}%
\end{equation}
where $\mu>0$ and $M$ is a positive constant (not necessarily independent of
$\varepsilon$). Then there exists a constant $C_{0}$ independent of
$\varepsilon$ such that%
\[
v(x)\leq C_{0}Me^{-\mu\operatorname*{dist}(x,K)/(2\varepsilon)}\quad\text{for
}x\in\Omega\setminus K
\]
for all $\varepsilon>0$ sufficiently small.
\end{proposition}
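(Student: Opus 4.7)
\bigskip

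\noindent\textbf{Proof proposal.} The strategy is a maximum-principle comparison with an explicit exponentially-decaying supersolution. Write $L := \varepsilon^2 \Delta - \mu^2 I$; the hypothesis \eqref{sz1a} says $Lv \geq 0$ and $v \leq M$ on $\partial\Omega\cup\partial K$.

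First I would record the easy global bound: because $L$ has strictly negative zero-order coefficient $-\mu^2$, the weak maximum principle for subsolutions of $L$ applies on the bounded Lipschitz open set $\Omega\setminus K$, giving $v \leq M$ throughout $\Omega\setminus K$.

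Next I would construct the barrier. The natural candidate is
\[
\phi(x) := C_0\,M\, e^{-\mu\, d(x,K)/(2\varepsilon)}, \qquad d(x,K):=\operatorname{dist}(x,K).
\]
Where $d(\cdot,K)$ is smooth one has $|\nabla d|=1$ and a direct computation gives
\[
\varepsilon^2\Delta\phi - \mu^2\phi \;=\; \Bigl(\tfrac{\mu^2}{4}-\mu^2\Bigr)\phi - \tfrac{\mu\varepsilon}{2}(\Delta d)\,\phi \;=\; -\tfrac{3\mu^2}{4}\phi - \tfrac{\mu\varepsilon}{2}(\Delta d)\,\phi.
\]
The factor $\tfrac12$ in the exponent is exactly what creates the $-\tfrac{3\mu^2}{4}\phi$ surplus that absorbs the first-order term $\tfrac{\mu\varepsilon}{2}(\Delta d)\phi$ once $\varepsilon$ is small enough. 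Thus $L\phi \leq 0$ in the classical sense wherever $d(\cdot,K)$ is smooth, i.e. on $\{x:d(x,K)<\mathrm{reach}(K)\}$ minus a small neighborhood of $K$. For the boundary comparison, on $\partial K$ we have $\phi\geq C_0 M\geq M\geq v$ (take $C_0\geq 1$), and on $\partial \Omega$, since $v\leq M$, it suffices to have $\phi\geq M$, which will follow from the interior argument below because the iteration is run outward from $K$.

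The main obstacle is that $d(\cdot,K)$ is merely Lipschitz, so the computation above is not globally justified; this is the standard technical point. I would handle it in the Sternberg--Zumbrun manner by a \emph{local} radial comparison on balls, which avoids ever needing $d(\cdot,K)$ to be $C^2$. Concretely: for each $x_0\in\Omega\setminus K$ with $B_\delta(x_0)\subset\Omega\setminus K$ and $v\leq m$ on $\partial B_\delta(x_0)$, one compares $v$ with the radial solution of $\varepsilon^2\Delta w=\mu^2 w$ in $B_\delta(x_0)$ with $w=m$ on $\partial B_\delta(x_0)$ (a modified Bessel profile), whose centre value obeys
\[
w(x_0)\;\leq\; C\Bigl(\tfrac{\delta}{\varepsilon}\Bigr)^{(N-1)/2}\!m\,e^{-\mu\delta/\varepsilon}\;\leq\; m\,e^{-\mu\delta/(2\varepsilon)}
\]
for all $\varepsilon$ small, the last inequality being the precise reason why $\mu/(2\varepsilon)$ rather than $\mu/\varepsilon$ appears in the conclusion. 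The weak maximum principle for $L$ on $B_\delta(x_0)$ then yields $v(x_0)\leq w(x_0)$. Iterating this one-step decay along a chain of overlapping balls of fixed radius $\delta$ that connects $x_0$ to a nearest point of $K$, the decay factors multiply telescopically and give
\[
v(x_0)\;\leq\; M\,e^{-\mu\,(\text{chain length})/(2\varepsilon)}\;\leq\; C_0 M\,e^{-\mu\,d(x_0,K)/(2\varepsilon)},
\]
where $C_0$ absorbs the overshoot inherent in a discrete chain covering. I expect the only delicate bookkeeping is to make the chain argument uniform in $x_0$ (so that $\delta$ and the per-step constant are independent of $\varepsilon$), and to verify that the small-argument asymptotics of the modified Bessel profile really do produce the factor $e^{-\mu\delta/(2\varepsilon)}$ with a constant independent of $\varepsilon$; both are straightforward once $\delta$ is chosen small but fixed.
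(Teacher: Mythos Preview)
Your approach differs from the paper's. Rather than iterating local comparisons, the paper builds a single global supersolution by convolution,
\[
q(x)\;:=\;M_1\,\varepsilon^{-N}\!\int_{K_\varepsilon}e^{-\mu|x-y|/\varepsilon}\,dy,\qquad K_\varepsilon:=\{x\in\Omega:\operatorname{dist}(x,K)\leq\varepsilon\},
\]
and applies the maximum principle once to $v-q$ on $\Omega\setminus K_\varepsilon$. The key observation is that the radial profile $\phi(x)=e^{-\mu|x|/\varepsilon}$ already satisfies $\varepsilon^2\Delta\phi\leq\mu^2\phi$ (the curvature term $-(N-1)\varepsilon\mu|x|^{-1}\phi$ in the Laplacian has the favorable sign), so any superposition of translates remains a supersolution; the smoothing by integration over $K_\varepsilon$ sidesteps the non-$C^2$ obstacle with $d(\cdot,K)$ that you correctly flag. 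The halved rate $\mu/(2\varepsilon)$ then arises not from the supersolution computation but from the upper estimate on the integral defining $q$, by writing $e^{-\mu|x-y|/\varepsilon}=e^{-\mu|x-y|/(2\varepsilon)}e^{-\mu|x-y|/(2\varepsilon)}$, pulling out one factor, and integrating the other. This is tidier than a chain argument: no iteration, no bookkeeping of overlapping balls, no Bessel asymptotics.

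Your chain-of-balls method is a legitimate alternative in spirit, but the sketch has a geometric loose end. Each ball $B_\delta(x_j)$ must lie in $\Omega\setminus K$ for the local radial comparison to be available, yet a chain from $x_0$ to its nearest point of $K$ along a straight segment need not stay inside $\Omega$ when $\Omega$ is non-convex or when $x_0$ is close to $\partial\Omega$ but far from $K$. The hypothesis $v\leq M$ on $\partial\Omega$ does not directly rescue this, since your one-step decay needs the full ball in the domain where the PDE holds. Your aside that ``$\phi\geq M$ on $\partial\Omega$ will follow from the interior argument'' conflates the abandoned global barrier with the iteration and does not address the point. In the paper's intended application $K$ always contains a collar $\Omega_{\varepsilon|\log\varepsilon|}$ of $\partial\Omega$, so every point of $\partial\Omega$ is at distance $0$ from $K$ and the difficulty evaporates; for the proposition as stated in full generality you would need to say more about how the chain is routed.
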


\begin{proof}
By the maximum principle, $v\leq M$ in $\Omega\setminus K$. Let%
\[
K_{\varepsilon}:=\{x\in\Omega:\,\operatorname*{dist}(x,K)\leq\varepsilon\}.
\]
Consider the radial function $\phi(x):=e^{-\mu|x|/\varepsilon}$. Letting
$r=\left\vert x\right\vert $, we have that%
\begin{equation}
\varepsilon^{2}\Delta\phi=\varepsilon^{2}\left(  \frac{\partial^{2}\phi
}{\partial r^{2}}+\frac{N-1}{r}\frac{\partial\phi}{\partial r}\right)
=\mu^{2}e^{-\mu r/\varepsilon}-\frac{N-1}{r}\varepsilon\mu e^{-\mu
r/\varepsilon}\leq\mu^{2}\phi. \label{sz1}%
\end{equation}
Define%
\[
q(x):=M_{1}\varepsilon^{-N}\int_{K_{\varepsilon}}e^{-\mu|x-y|/\varepsilon
}dy,\quad x\in\overline{\Omega\setminus K_{\varepsilon}},
\]
where $M_{1}>0$ is to be determined. If $x\in\Omega\setminus K_{\varepsilon}$,
then we can differentiate under the integral sign and use (\ref{sz1}) to find%
\begin{equation}
\varepsilon^{2}\Delta q\leq\mu^{2}q\quad\text{in }\Omega\setminus
K_{\varepsilon}. \label{sz2}%
\end{equation}
If $x_{0}\in\overline{\Omega}$ and $\operatorname*{dist}(x_{0},K)\geq
\varepsilon$, then there exists $y_{0}\in K$ such that $|x_{0}-y_{0}%
|=\operatorname*{dist}(x_{0},K)\geq\varepsilon$. In particular, $\Omega\cap
B(y_{0},\varepsilon)\subseteq K_{\varepsilon}$, and hence,%
\begin{equation}
q(x_{0})\geq M_{1}\varepsilon^{-N}\int_{\Omega\cap B(y_{0},\varepsilon
)}e^{-\mu|x_{0}-y|/\varepsilon}dy\geq M_{1}\varepsilon^{-N}e^{-\mu
\operatorname*{dist}(x,K)/\varepsilon}e^{-\mu}|\Omega\cap B(y_{0}%
,\varepsilon)|, \label{sz2a}%
\end{equation}
where we used the fact that if $y\in B(y_{0},\varepsilon)$, then
$|y-x_{0}|\leq|y-y_{0}|+|y_{0}-x_{0}|<\operatorname*{dist}(x_{0}%
,K)+\varepsilon$. If $B(y_{0},\varepsilon/2)\subseteq\Omega$, then
$|\Omega\cap B(y_{0},\varepsilon)|\geq|B(y_{0},\varepsilon/2)|=\alpha
_{N}2^{-N}\varepsilon^{N}$, where $\alpha_{N}=|B(0,1)|$. Otherwise, there
exists $y_{1}\in B(y_{0},\varepsilon/2)\cap\partial\Omega$. Since
$\partial\Omega$ is of class $C^{1}$, it is Lipschitz continuous, hence, by
taking by taking $\varepsilon$ sufficiently small, we can find a cone
$K_{y_{1},\varepsilon}$ with vertex $y_{1}$
and vertex angle depending on the Lipschitz constant associated to $\Omega$
such that $K_{y_{1},\varepsilon}\cap B(y_1,\varepsilon/2)\subseteq\Omega\cup\{y_{1}\}$. Since
$y_{1}\in B(y_{0},\varepsilon/2)$, we have that $K_{y_{1},\varepsilon
}\cap B(y_1,\varepsilon/2)\subseteq (\Omega\cap  B(y_{0},\varepsilon))\cup \{y_1\}$, and so
\[
\mathcal{L}^{N}(\Omega\cap B(y_{0},\varepsilon))\geq\mathcal{L}^{N}%
(K_{y_{1},\varepsilon} \cap B(y_1,\varepsilon/2))=c_{0}\varepsilon^{N}.
\]
This shows that $\mathcal{L}^{N}(\Omega\cap B(y_{0},\varepsilon))\geq
\min\{c_{0},\alpha_{N}2^{-N}\}\varepsilon^{ N}$. Take
\[
M_{1}:=Me^{2\mu}/\min\{c_{0},\alpha_{N}2^{-N}\}.
\]
Observe that if $x_{0}\in\partial K_{\varepsilon}\cap\Omega$, then
$\operatorname*{dist}(x_{0},K)=\varepsilon$, and so by (\ref{sz2a}),
\begin{equation}
q(x_{0})\geq M_{1}e^{-2\mu}\min\{c_{0},\alpha_{N}2^{-N}\}\geq M, \label{sz3}%
\end{equation}
while if $x_{0}\in\partial\Omega\setminus K_{\varepsilon}$, then
\begin{equation}
q(x_{0})\geq M_{1}e^{-\mu\operatorname*{dist}(x,K)/\varepsilon}e^{-\mu}%
\min\{c_{0},\alpha_{N}2^{-N}\}\geq Me^{\mu-\mu\operatorname*{dist}%
(x,K)/\varepsilon} \label{sz4}%
\end{equation}
Next we estimate $q$ from above on $\Omega\setminus K_{\varepsilon}$. If
$x_{0}\in\Omega\setminus K_{\varepsilon}$, then $|x_{0}-y|\geq
\operatorname*{dist}(x_{0},K_{\varepsilon})$ for all $y\in K_{\varepsilon}$,
and so,%
\begin{align}
q(x_{0})  &  =M_{1}\varepsilon^{-N}\int_{K_{\varepsilon}}e^{-\mu
|x_{0}-y|/(2\varepsilon)}e^{-\mu|x_{0}-y|/(2\varepsilon)}dy\nonumber\\
&  \leq M_{1}\varepsilon^{-N}e^{-\mu\operatorname*{dist}(x_{0},K_{\varepsilon
})/(2\varepsilon)}\int_{K_{\varepsilon}}e^{-\mu|x_{0}-y|/(2\varepsilon
)}dy\label{sz5}\\
&  \leq M_{1}\varepsilon^{-N}e^{-\mu\operatorname*{dist}(x_{0},K_{\varepsilon
})/(2\varepsilon)}\int_{\mathbb{R}^{N}\setminus B(x_{0},\operatorname*{dist}%
(x_{0},K_{\varepsilon}))}e^{-\mu|x_{0}-y|/(2\varepsilon)}dy\nonumber\\
&  =M_{1}\varepsilon^{-N}e^{-\mu\operatorname*{dist}(x_{0},K_{\varepsilon
})/(2\varepsilon)}\beta_{N}\int_{\operatorname*{dist}(x_{0},K_{\varepsilon}%
)}^{\infty}e^{-\mu r/(2\varepsilon)}r^{N-1}dr\nonumber\\
&  \leq M_{1}e^{-\mu\operatorname*{dist}(x_{0},K_{\varepsilon})/(2\varepsilon
)}\beta_{N}\int_{0}^{\infty}e^{-\mu t/2}t^{N-1}dt=:MC_{0}e^{-\mu
\operatorname*{dist}(x_{0},K_{\varepsilon})/(2\varepsilon)},\nonumber
\end{align}
where  we set $\beta_N:= \mathcal{H}^{N-1}(\mathbb{S}^{N-1})$, we used spherical coordinates, and made the change of variables
$t:= r/\varepsilon$.

If we now define $w:=v-q$, by (\ref{sz1a}) and (\ref{sz2}), we have that
$\varepsilon^{2}\Delta w\geq\mu^{2}w$ in in $\Omega\setminus K_{\varepsilon}$,
while by the fact that $v\leq M$ in $\Omega\setminus K$ and (\ref{sz3}),
$w\leq0$ in $\partial K_{\varepsilon}\cap\Omega$. Finally, if $x\in
\partial\Omega\setminus K_{\varepsilon}$, by (\ref{sz1a}) and (\ref{sz4}),
\[
w(x)\leq M-Me^{\mu
-\mu\operatorname*{dist}(x,K)/\varepsilon}\leq0.
\]
Hence, we have shown that%
\[
\left\{
\begin{array}
[c]{ll}%
\varepsilon^{2}\Delta w\geq\mu^{2}w & \text{in }\Omega\setminus K_{\varepsilon
},\\
w\leq0 & \text{on }\partial(\Omega\setminus K_{\varepsilon}).
\end{array}
\right.
\]
It follows from the maximum principle that $w\leq0$ in $\Omega\setminus
K_{\varepsilon}$, that is,%
\[
v(x)\leq q(x)\leq MC_{0}e^{-\mu\operatorname*{dist}(x,K_{\varepsilon
})/(2\varepsilon)},\quad x\in\Omega\setminus K_{\varepsilon}.
\]
Finally, observe that if $x\in\Omega\setminus K$, then
$\operatorname*{dist}(x,K)\leq\operatorname*{dist}%
(x,K_{\varepsilon})+\varepsilon$, and so%
\[
v(x)\leq MC_{0}e^{-\mu/2}e^{-\mu\operatorname*{dist}(x,K)/(2\varepsilon
)},\quad x\in\Omega\setminus K_{\varepsilon}.
\]
On the other hand, if $x\in K\setminus K_{\varepsilon}$, then
$\operatorname*{dist}(x,K)\leq\varepsilon$ and so, using the fact that $v\leq
M$ in $\Omega\setminus K$, we have
\[
v(x)\leq M\leq M\frac{e^{-\mu\operatorname*{dist}(x,K)/(2\varepsilon)}%
}{e^{-\mu/2}},
\]
which concludes the proof. \hfill
\end{proof}

We turn to the proof of Theorem \ref{theorem decay sz}.

\begin{proof}
[Proof of Theorem \ref{theorem decay sz}]To prove (\ref{u between a and b}),
assume that there exists $x_{0}\in\Omega$ such that $u_{\varepsilon}%
(x_{0})\geq b$. Assume first that $u_{\varepsilon}(x_{0})>b$. Since
$W^{\prime}(s)>0$ for $s>b$ and $u_{\varepsilon}\leq b$ on $\partial\Omega$,
we can assume that $u_{\varepsilon}$ achieves its maximum at $x_{0}$. But
then,%
\[
0\geq\Delta u_{\varepsilon}(x_{0})=\frac{1}{2\varepsilon^{2}}W^{\prime
}(u_{\varepsilon}(x_{0}))>0.
\]
Similarly, we can conclude that $u_{\varepsilon}\geq a$.

Next, let $v:=b-u_{\varepsilon}$. In $\Omega\setminus K_{\rho}$, we have that%
\[
\varepsilon^{2}\Delta v=-\frac{1}{2}\frac{W^{\prime}(u_{\varepsilon}%
)}{b-u_{\varepsilon}}v=\frac{1}{2}\frac{W^{\prime}(b)-W^{\prime}%
(u_{\varepsilon})}{b-u_{\varepsilon}}v\geq\mu_{\rho}^{2}v,
\]
where%
\[
\mu_{\rho}^{2}:=\frac{1}{2}\sup_{b-\rho\leq s<b}\frac{W^{\prime}(b)-W^{\prime
}(s)}{b-s}>0
\]
by (\ref{WPrime_At_Wells}), (\ref{W' three zeroes}), and
(\ref{rho restrictions}). Taking $M:= b$, we can apply Proposition
\ref{proposition sz} to obtain (\ref{decay estimate b}).\hfill
\end{proof}

\begin{remark}
If $W$ is symmetric with respect to $c$, or, more generally, if $c:=\frac
{a+b}{2}$, then
\[
a<u_{\varepsilon}(x)<b
\]
for all $x\in\Omega$. To see this, suppose first that $a=-1$, $c=0$, and
$b=1$. Assume that there exists $x_{0}\in\Omega$ such that $u_{\varepsilon
}(x_{0})=\pm1$. Let%
\[
v_{\varepsilon}:= u_{\varepsilon}^{2}-1.
\]
Then for all $x\in\Omega$ such that $-1<u_{\varepsilon}(x)<1$,
\begin{align*}
\Delta v_{\varepsilon}  &  =\Delta(u_{\varepsilon}^{2})=2\sum_{i=1}^{N}%
\frac{\partial}{\partial x_{i}}\left(  u_{\varepsilon}\frac{\partial
u_{\varepsilon}}{\partial x_{i}}\right)  =2\sum_{i=1}^{N}\left(
\frac{\partial u_{\varepsilon}}{\partial x_{i}}\right)  ^{2}+2u_{\varepsilon
}\Delta u_{\varepsilon}\\
&  =2|\nabla u_{\varepsilon}|^{2}+\frac{u_{\varepsilon}}{\varepsilon^{2}%
}W^{\prime}(u_{\varepsilon})\geq\frac{u_{\varepsilon}}{\varepsilon^{2}}%
\frac{W^{\prime}(u_{\varepsilon})}{u_{\varepsilon}^{2}-1}v_{\varepsilon}.
\end{align*}
Since $W^{\prime}(s)>0$ for $-1<s<0$ and $W^{\prime}(s)<0$ for $0<s<1$, we
have that $\frac{sW^{\prime}(s)}{s^{2}-1}\geq0$. Moreover,
\[
\frac{sW^{\prime}(s)}{s^{2}-1}=\frac{s}{s+1}\frac{W^{\prime}(s)-W^{\prime}%
(1)}{s-1}\rightarrow\frac{1}{2}W^{\prime\prime}(1)>0
\]
as $s\rightarrow1^{-}$ and
\[
\frac{sW^{\prime}(s)}{s^{2}-1}=\frac{s}{s-1}\frac{W^{\prime}(s)-W^{\prime
}(-1)}{s+1}\rightarrow\frac{1}{2}W^{\prime\prime}(-1)>0
\]
as $s\rightarrow-1^{+}$. Hence, by defining%
\[
c_{\varepsilon}(x):=\left\{
\begin{array}
[c]{cc}%
\frac{u_{\varepsilon}}{\varepsilon^{2}}\frac{W^{\prime}(u_{\varepsilon}%
)}{u_{\varepsilon}^{2}-1} & \text{if }-1<u_{\varepsilon}(x)<1,\\
\frac{1}{2\varepsilon^{2}}W^{\prime\prime}(1) & \text{if }u_{\varepsilon
}(x)=1,\\
\frac{1}{2 \varepsilon^{2}}W^{\prime\prime}(-1) & \text{if }u_{\varepsilon}(x)=-1,
\end{array}
\right.
\]
we have that%
\[
\Delta v_{\varepsilon}\geq c_{\varepsilon}(x)v_{\varepsilon}(x),
\]
where $c_{\varepsilon}(x)\geq0$. Moreover, $v_{\varepsilon}=u_{\varepsilon
}^{2}-1=g_{\varepsilon}^{2}-1<0$ on $\partial\Omega$. Since $v_{\varepsilon
}(x_{0})=0$, it follows from \cite[Theorem 4, Chapter 6]{evans-book2010} that
$v_{\varepsilon}$ is constant in $\Omega$, which is a contradiction.

To remove the additional condition that $a=-1$ and $b=1$, it suffices to
replace $W$ with
\[
\bar{W}(r):= W\left(  \frac{b-a}{2}r+\frac{a+b}{2}\right)
\]
and $u_{\varepsilon}$ with $\bar{u}_{\varepsilon}:=\frac{2}{b-a}u_{\varepsilon
}-\frac{a+b}{b-a}$.
\end{remark}

\bigskip

\begin{theorem}
\label{theorem gradient estimate}Let $\Omega\subset\mathbb{R}^{N}$ be an open,
bounded, connected set with boundary of class $C^{2,d}\ $, $0<d\leq1$. Assume
that $W$ satisfies \eqref{W_Smooth}-\eqref{W' three zeroes} and
that $g_{\varepsilon}$ satisfy \eqref{bounds g},
\eqref{g epsilon smooth}-\eqref{g epsilon -g bound}. Let $u_{\varepsilon}\in
H^{1}(\Omega)$ be a minimizer of \eqref{functional cahn-hilliard} subject to
the Dirichlet boundary condition \eqref{dirichlet boundary conditions}. Then
there exists a constant $C>0$ depending only on $N$ such that
\[
|\nabla u_{\varepsilon}(x)|\leq\frac{C}{\varepsilon}\quad\text{for all }%
x\in\Omega\setminus\Omega_{\varepsilon}.
\]
Moreover, under the additional hypothesis that $g_{\varepsilon}\in C^{2}(\overline
{\Omega})$, with
\[
\Vert\nabla^{2}g_{\varepsilon}\Vert_{L^{\infty}(\Omega)}\leq\frac
{M}{\varepsilon^{2}},
\]
for some constant $M>0$, there exists a constant $C>0$ depending only on $\Omega$ and $M$, such that
\[
|\nabla u_{\varepsilon}(x)|\leq\frac{C}{\varepsilon}\quad\text{for all }%
x\in\Omega.
\]

\end{theorem}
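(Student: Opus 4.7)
The plan is to derive the gradient bound by a rescaling argument combined with standard interior and boundary elliptic estimates for the Euler--Lagrange equation $2\varepsilon^{2}\Delta u_{\varepsilon}=W^{\prime}(u_{\varepsilon})$. Throughout we use Theorem~\ref{theorem decay sz}, which gives $a\le u_{\varepsilon}\le b$ in $\Omega$; hence $W^{\prime}(u_{\varepsilon})$ is uniformly bounded by a constant depending only on $W$.

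For the interior estimate, fix $x_{0}\in\Omega\setminus\Omega_{\varepsilon}$, so $B(x_{0},\varepsilon)\subset\Omega$, and set $v(y):=u_{\varepsilon}(x_{0}+\varepsilon y)$ for $y\in B(0,1)$. Then $v$ is bounded in $[a,b]$ and solves the semilinear equation
\[
2\Delta v(y)=W^{\prime}(v(y))\quad\text{in }B(0,1),
\]
whose right-hand side is uniformly bounded. Standard interior $L^{p}$/Schauder estimates (or the De Giorgi--Nash gradient bound for bounded solutions of semilinear equations with bounded right-hand side) yield $|\nabla v(0)|\le C$ with $C$ depending only on $N$, $a$, $b$, and $\|W^{\prime}\|_{L^{\infty}([a,b])}$. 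Unwinding the scaling $\nabla u_{\varepsilon}(x_{0})=\varepsilon^{-1}\nabla v(0)$ gives $|\nabla u_{\varepsilon}(x_{0})|\le C/\varepsilon$.

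For the boundary estimate, I consider $x_{0}\in\Omega_{\varepsilon}$ and work with $w_{\varepsilon}:=u_{\varepsilon}-g_{\varepsilon}$, which vanishes on $\partial\Omega$ and satisfies $2\varepsilon^{2}\Delta w_{\varepsilon}=W^{\prime}(u_{\varepsilon})-2\varepsilon^{2}\Delta g_{\varepsilon}$ in $\Omega$. Using the diffeomorphism of Lemma~\ref{lemma diffeomorphism}, I flatten $\partial\Omega$ in a neighborhood of the projection $y_{0}:=\operatorname{proj}_{\partial\Omega}(x_{0})$, producing an equation on $B_{+}(0,r):=\{z\in B(0,r):z_{N}>0\}$ with coefficients of class $C^{1,d}$ and with vanishing Dirichlet data on the flat portion. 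Rescaling $z=\varepsilon y$ and setting $\tilde{w}(y):=w_{\varepsilon}(\varepsilon y+\text{origin})$, $\tilde{g}(y):=g_{\varepsilon}(\varepsilon y+\text{origin})$, the scaled equation has bounded coefficients and bounded right-hand side on $B_{+}(0,1)$: here the hypothesis $\|\nabla^{2}g_{\varepsilon}\|_{L^{\infty}}\le M/\varepsilon^{-2}$ is exactly what makes $\varepsilon^{2}\Delta g_{\varepsilon}$ uniformly bounded, and moreover gives uniform $C^{1,1}$ control on $\tilde{g}$. Applying boundary $W^{2,p}$ or Schauder estimates to $\tilde{w}$ (see Gilbarg--Trudinger), followed by Morrey's embedding, yields $|\nabla \tilde w(y)|+|\nabla\tilde g(y)|\le C$ near the origin, and scaling back produces the desired bound $|\nabla u_{\varepsilon}(x_{0})|\le C/\varepsilon$.

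The main obstacle is the boundary case: one must ensure that after flattening and rescaling, all geometric quantities (the coefficients coming from the Jacobian of the straightening map, the curvature of $\partial\Omega$, and the data $\tilde g$) remain uniformly controlled as $\varepsilon\to 0^+$. The curvature and metric terms are handled by the $C^{2,d}$-regularity of $\partial\Omega$ from Lemma~\ref{lemma diffeomorphism}, while the scaling of $g_{\varepsilon}$ requires precisely the hypothesis $\|\nabla^{2}g_{\varepsilon}\|_{L^{\infty}}\le M/\varepsilon^{2}$; without this, $\varepsilon^{2}\Delta g_{\varepsilon}$ could blow up and the scaled problem would no longer have bounded right-hand side, which is why the global bound requires the additional assumption.
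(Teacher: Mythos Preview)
Your proof is correct and follows essentially the same approach as the paper: both exploit $a\le u_\varepsilon\le b$ (from Theorem~\ref{theorem decay sz}) to bound the right-hand side of the Euler--Lagrange equation and then apply elliptic gradient estimates, subtracting $g_\varepsilon$ for the global bound. The only difference is that the paper invokes a ready-made lemma of Bethuel--Brezis--H\'elein (giving $|\nabla u(x)|^2\le C(\|u\|_\infty\|\Delta u\|_\infty+\operatorname{dist}(x,\partial\Omega)^{-2}\|u\|_\infty^2)$ for interior points and $\|\nabla u\|_\infty^2\le C\|u\|_\infty\|\Delta u\|_\infty$ for $u\in H_0^1(\Omega)$), which spares the explicit flattening and local rescaling you carry out by hand.
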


The following lemma is due to Bethuel, Brezis, and H\'{e}lein \cite[Lemma
A.1, Lemma A.2]{bethuel-brezis-helein1993}.

\begin{lemma}
\label{lemma bbh interior}Let $\Omega\subset\mathbb{R}^{N}$ be an open, bounded set, and let $f\in L^{\infty}(\Omega)$. Assume that $u\in H^{1}%
(\Omega)\cap L^{\infty}(\Omega)$ is a weak solution to
\[
\Delta u=f\quad\text{in }\Omega.
\]
Then for every $x\in\Omega$,
\[
|\nabla u(x)|^{ 2}\leq C\left(  \Vert u\Vert_{L^{\infty}(\Omega)}\Vert
f\Vert_{L^{\infty}(\Omega)}+\frac{1}{\operatorname*{dist}^{2}(x,\partial
\Omega)}\Vert u\Vert_{L^{\infty}(\Omega)}^{2}\right)  ,
\]
where $C>0$ is a constant depending only on $N$.

Moreover, if $u\in H_{0}^{1}(\Omega)$, then
\[
\Vert\nabla u\Vert_{L^{\infty}(\Omega)}^{2}\leq C\Vert u\Vert_{L^{\infty
}(\Omega)}\Vert f\Vert_{L^{\infty}(\Omega)},
\]
where $C>0$ is a constant depending only on $\Omega$.
\end{lemma}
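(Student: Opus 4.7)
The plan is to split $u$ locally on small balls into a harmonic piece plus a Newtonian potential, and then optimize the radius. Fix $x_0 \in \Omega$, set $d := \operatorname{dist}(x_0, \partial\Omega)$, and for any $r \in (0, d]$ decompose $u = v + w$ on $B := B(x_0, r)$, where $v$ is the harmonic extension of $u|_{\partial B}$ into $B$, so that $w \in H^1_0(B)$ solves $\Delta w = f$. Using the Green's function representation $w(x) = \int_B G_r(x, y) f(y)\, dy$ together with the pointwise bounds $|G_r(x,y)| \lesssim |x-y|^{2-N}$ (logarithmic when $N = 2$) and $|\nabla_x G_r(x,y)| \lesssim |x-y|^{1-N}$, one obtains
\[
\|w\|_{L^\infty(B)} \leq C r^2 \|f\|_{L^\infty(\Omega)}, \qquad \|\nabla w\|_{L^\infty(B)} \leq C r \|f\|_{L^\infty(\Omega)},
\]
with $C$ depending only on $N$. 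The harmonic function $v$ obeys the classical interior gradient estimate $|\nabla v(x_0)| \leq (C/r)\sup_B |v|$, and combining with the triangle inequality gives
\[
|\nabla u(x_0)| \leq \frac{C}{r}\|u\|_{L^\infty(\Omega)} + C r \|f\|_{L^\infty(\Omega)} \qquad \text{for every } r \in (0, d].
\]

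Optimizing by choosing $r^\star := \min\{d, \sqrt{\|u\|_{L^\infty}/\|f\|_{L^\infty}}\}$ and squaring yields the two cases that produce the first inequality: when $r^\star = \sqrt{\|u\|_{L^\infty}/\|f\|_{L^\infty}} \leq d$ the two contributions balance and give $C\|u\|_{L^\infty}\|f\|_{L^\infty}$; when instead $r^\star = d$, the condition $d \leq \sqrt{\|u\|_{L^\infty}/\|f\|_{L^\infty}}$ forces $d^2 \|f\|_{L^\infty}^2 \leq \|u\|_{L^\infty}\|f\|_{L^\infty}$, and squaring $|\nabla u(x_0)| \leq (C/d)\|u\|_{L^\infty} + C d \|f\|_{L^\infty}$ produces precisely the $\operatorname{dist}(x,\partial\Omega)^{-2}$ term.

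For the $H^1_0$ statement, the interior bound alone degenerates as $d(x) \to 0$, so the task is to control $|\nabla u|$ uniformly up to $\partial\Omega$. The cleanest route is to redo the splitting argument inside a half-ball $B^+$ obtained by a local $C^2$ chart flattening a patch of $\partial\Omega$: the pulled-back $\tilde u$ vanishes on $\{x_N = 0\}$ and satisfies a uniformly elliptic equation $a^{ij}\partial_i\partial_j \tilde u + b^i \partial_i \tilde u = \tilde f$ whose coefficients are controlled by $\Omega$. Extending $\tilde u$ by odd reflection across $\{x_N=0\}$ gives a solution on a full ball (with odd-reflected $\tilde f$) to which a variant of the first part of the lemma applies, provided the lower order term $b^i \partial_i \tilde u$ is treated as a perturbation. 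Pasting this boundary bound with the interior estimate applied on $\{x \in \Omega : d(x) \geq c(\Omega)\}$ yields the global inequality with constant depending only on $\Omega$.

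The main obstacle is the $H^1_0$ case: after flattening, the equation for $\tilde u$ is no longer the pure Laplace equation, the odd reflection produces coefficients that may not be smooth across $\{x_N=0\}$, and one must verify half-ball Green's-function decay of the type $|x-y|^{1-N}$ for boundary points. A cleaner alternative, which avoids flattening altogether, is to exploit $u \in C^{0,1}(\overline\Omega)$ with $u = 0$ on $\partial\Omega$ to obtain the barrier-type bound $|u(x)| \leq d(x) \|\nabla u\|_{L^\infty(\Omega)}$ for $x$ close to $\partial\Omega$; substituting this into the first-part estimate at boundary-near points absorbs the singular $d^{-2}\|u\|_{L^\infty}^2$ term, after which a straightforward covering/bootstrap argument over $\Omega$ closes the global estimate.
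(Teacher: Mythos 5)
The paper does not actually prove this lemma; it cites Bethuel--Brezis--H\'elein [Lemma A.1, Lemma A.2], so your proposal must stand on its own. Your first part does: the decomposition $u=v+w$ on $B(x_0,r)$ with $v$ harmonic and $w\in H^1_0(B)$ solving $\Delta w=f$, the bounds $\sup_B|v|\le\Vert u\Vert_{L^\infty}$ (maximum principle), $\Vert w\Vert_{L^\infty(B)}\le Cr^2\Vert f\Vert_{L^\infty}$, $|\nabla w(x_0)|\le Cr\Vert f\Vert_{L^\infty}$ (Green/Newtonian potential), the interior gradient estimate for harmonic functions, and the optimization $r=\min\{d,\sqrt{\Vert u\Vert_\infty/\Vert f\Vert_\infty}\}$ is exactly the classical argument behind Lemma A.1 (modulo the routine remark that interior elliptic regularity makes $\nabla u(x)$ pointwise meaningful). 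This part is correct.

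The gap is in the $H^1_0$ statement. Your ``cleaner alternative'' is circular: set $M:=\Vert\nabla u\Vert_{L^\infty(\Omega)}$; the bound $|u(y)|\le d(y)M$ inserted into your intermediate estimate $|\nabla u(x)|\le \frac{C}{d(x)}\sup_{B(x,d(x))}|u|+Cd(x)\Vert f\Vert_{L^\infty}$ only gives $|\nabla u(x)|\le C'M+Cd(x)\Vert f\Vert_{L^\infty}$, because $\sup_{B(x,d(x))}d(\cdot)\le 2d(x)$ and the harmonic gradient estimate carries a constant $\ge 1$ (in fact $\ge N$); taking the supremum over near-boundary points yields $M\le C'M+\dots$ with $C'>1$, so nothing is ``absorbed'' and the argument does not close -- no choice of radius helps, since the unknown $M$ re-enters with a non-small constant. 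The other route you sketch (flattening plus odd reflection) is left with exactly the obstacles you list, unresolved. What is genuinely needed is a boundary barrier that produces the correct scale $\mu:=\sqrt{\Vert u\Vert_\infty/\Vert f\Vert_\infty}$ without invoking $\Vert\nabla u\Vert_\infty$: for instance, in the collar $\{0<d<\mu\wedge\delta_0\}$ (where $d$ is $C^2$ and $|\Delta d|\le K$, which is where the boundary regularity implicitly required by the second statement enters) compare $\pm u$ with $w:=\beta\bigl(2(\mu\wedge\delta_0)d-d^2\bigr)$, with $\beta$ of order $\max\{\Vert f\Vert_\infty,\Vert u\Vert_\infty/\delta_0^2\}$, so that $\Delta w\le-\Vert f\Vert_\infty$ and $w\ge\Vert u\Vert_\infty$ on the inner boundary; this gives $|u(x)|\le C\sqrt{\Vert u\Vert_\infty\Vert f\Vert_\infty}\,d(x)$ near $\partial\Omega$ (using also the maximum-principle bound $\Vert u\Vert_\infty\le C(\Omega)\Vert f\Vert_\infty$ in the regime $\mu>\delta_0$), and then your interior estimate, applied with $r=d(x)$ near the boundary and $r\sim\mu$ in the interior, yields $\Vert\nabla u\Vert_\infty^2\le C\Vert u\Vert_\infty\Vert f\Vert_\infty$. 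So the second half of your proposal needs to be reworked along these (or BBH's) lines; as written it does not prove the global estimate.
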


We turn to the proof of Theorem \ref{theorem gradient estimate}.

\begin{proof}
[Proof of Theorem \ref{theorem gradient estimate}]By Lemma
\ref{lemma bbh interior}, for every $x\in\Omega\setminus\Omega_{\varepsilon}$,%
\begin{align*}
|\nabla u_{\varepsilon}(x)|^{2}  &  \leq C\left(  \Vert u_{\varepsilon}%
\Vert_{L^{\infty}(\Omega)}\left\Vert \frac{1}{2\varepsilon^{2}}W^{\prime
}(u_{\varepsilon})\right\Vert _{L^{\infty}(\Omega)}+\frac{1}%
{\operatorname*{dist}^{2}(x,\partial\Omega)}\Vert u_{\varepsilon}%
\Vert_{L^{\infty}(\Omega)}^{2}\right) \\
&  \leq C\left(  \max\{|a|,|b|\}\max_{[a,b]}|W^{\prime}|\frac{1}%
{2\varepsilon^{2}}+\frac{1}{\varepsilon^{2}}\max\{|a|^{2},|b|^{2}\}\right)  ,
\end{align*}
where we used the facts that $a\leq u_{\varepsilon}\leq b$
(\ref{u between a and b}).

 To prove the last statement, observe that the function $v_{\varepsilon}:=u_{\varepsilon
}-g_{\varepsilon}\in H_{0}^{1}(\Omega)$ is a weak solution to
\[
\left\{
\begin{array}
[c]{ll}%
\Delta v_{\varepsilon}=\frac{1}{2\varepsilon^{2}}W^{\prime}(u_{\varepsilon
})-\Delta g_{\varepsilon} & \text{in }\Omega,\\
v_{\varepsilon}=0 & \text{on }\partial\Omega.
\end{array}
\right.
\]
It now suffices to apply the second part of Lemma
\ref{lemma bbh interior}.

\hfill
\end{proof}

Given the functional%
\[
F(u):=\int_{\Omega}(W(u)+|\nabla u|^{2})\,dx,
\]
we say that a function $u\in H^{1}(\Omega)$ is a \emph{local minimizer} of $F$
if for every $U\Subset\Omega$ and all $w\in H^{1}(\Omega)$ with support
contained in $U$, we have that $F(u+v)\geq F(u)$. The following theorem is a
special case of a result of Caffarelli and Cordoba
\cite{caffarelli-cordoba1995} (we refer to the paper for the general statement).

\begin{theorem}
\label{theorem caffarelli cordoba}Assume that $W$ satisfies hypotheses
\eqref{W_Smooth}-\eqref{W' three zeroes}. Let $u\in H^{1}(B(0,R))$, with
$a\leq u\leq b$, $R>2$, be a local minimizer of
\begin{equation}
F(v):=\int_{B(0,R)}(W(v)+|\nabla v|^{2})\,dx,\quad v\in H^{1}(B(0,R)),
\label{functional F}%
\end{equation}
and assume that for $a<\lambda<b$ there exists $c_{\lambda}>0$ such that%
\[
\mathcal{L}^{N}(B(0,1)\cap\{u>\lambda\})>c_{0}.
\]
Then there exists $c_{1}>0$ (depending only on $\lambda$, $c_{0}$, $N$, and
$W$) such that
\[
\mathcal{L}^{N}(B(0,r)\cap\{u>\lambda\})>c_{1}r^{N}%
\]
for every $1<r<R$.
\end{theorem}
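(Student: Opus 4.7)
My approach is to show that $V(r) := \mathcal{L}^N(B(0,r)\cap\{u>\lambda\})$ satisfies a differential inequality of the form
\[
c\, V(r)^{(N-1)/N} \le V'(r)\quad\text{for a.e.\ }r\in(1,R-1),
\]
which, combined with the hypothesis $V(1) > c_0$, integrates via ODE comparison (with separation of variables) to the sought bound $V(r) \ge c_1 r^N$ on the whole interval $1<r<R$.

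The differential inequality itself is produced by comparing $u$ with a truncated competitor. For a.e.\ $r$ and for small $\delta>0$, I construct $v_{r,\delta}\in H^1(B(0,R))$ by setting $v_{r,\delta}(x) := \min(u(x),\mu)$ for $|x|\le r$ with $\mu\in(a,\lambda)$ an auxiliary level to be chosen, radially interpolating linearly between $\min(u,\mu)$ on $\partial B(0,r)$ and $u$ on $\partial B(0,r+\delta)$ across the annulus, and setting $v_{r,\delta}=u$ outside $B(0,r+\delta)$. Local minimality of $u$ in $B(0,r+\delta)$ gives $F(u;B(0,r+\delta))\le F(v_{r,\delta};B(0,r+\delta))$, and the standard $\delta^{-1}$-type bound on the radial interpolation on the annulus yields
\[
\int_{B(0,r)\cap\{u>\lambda\}}\!\!(W(u)+|\nabla u|^2)\,dx \;\le\; C\delta^{-1}(b-\mu)^2\mathcal{H}^{N-1}(\partial B(0,r)\cap\{u>\mu\}) + C\delta.
\]

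To extract a perimeter from the left-hand side, I use the Modica--Mortola pointwise inequality $W(u)+|\nabla u|^2 \ge 2W^{1/2}(u)|\nabla u|$ together with the coarea formula, which converts the bulk energy on $B(0,r)\cap\{u>\lambda\}$ into $\inf_{\lambda\le s\le b}2W^{1/2}(s)$ times the perimeter of $\{u>\lambda\}\cap B(0,r)$. The relative isoperimetric inequality in $B(0,r)$ then bounds this perimeter from below by $c\,V(r)^{(N-1)/N}$. Optimizing in $\delta$ and invoking the coarea identity $V'(r)=\mathcal{H}^{N-1}(B'(0,r)\cap\{u>\lambda\})$ for a.e.\ $r$ delivers the desired differential inequality.

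The main obstacle is the delicate choice of the intermediate level $\mu$ in tandem with the annular width $\delta$. Since $W(\mu)>0$ for any admissible $\mu$, a naive truncation is not automatically energy-decreasing, so one must choose $\mu$ close enough to $a$ that $W^{1/2}$ stays bounded below by a strictly positive constant on $[\mu,\lambda]$ (securing a useful perimeter coefficient), yet control the factor $(b-\mu)^2$ governing the cost of the radial interpolation on the annulus. Establishing this balance uniformly in $r\in(1,R-1)$, and verifying that the annular error terms are absorbed after optimizing in $\delta$, is the technical heart of the argument; once this is done, the ODE comparison step is immediate.
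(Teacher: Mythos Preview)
The paper does not prove this theorem: it is stated as a special case of a result of Caffarelli and C\'ordoba \cite{caffarelli-cordoba1995}, and the reader is referred to that paper for the proof. There is therefore no argument in the present paper to compare your attempt against.

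Your outline is in the spirit of the Caffarelli--C\'ordoba density argument, but the differential inequality you describe is not closed, and the difficulty is more than a matter of balancing parameters. When you truncate at level $\mu<\lambda$, two things happen. First, the annulus cost is naturally controlled by $\mathcal{H}^{N-1}\bigl(\partial B(0,r)\cap\{u>\mu\}\bigr)=V_\mu'(r)$, not by $V_\lambda'(r)$; since $\{u>\mu\}\supseteq\{u>\lambda\}$, this is the wrong derivative and you cannot simply replace one by the other. Second, the interior energy gain is
\[
\int_{B(0,r)\cap\{u>\mu\}}\bigl(W(u)-W(\mu)+|\nabla u|^2\bigr)\,dx,
\]
which carries a negative volume contribution $-W(\mu)\,V_\mu(r)$. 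Since $W(\mu)>0$ for every $\mu\in(a,\lambda)$, this term scales like $r^N$ and, for large $r$, overwhelms the perimeter term of order $r^{N-1}$ that you extract via Modica--Mortola and the relative isoperimetric inequality; taking $\mu$ close to $a$ makes the constant small but does not change the scaling. After optimizing in $\delta$ you therefore arrive at an inequality of the form
\[
c\,V_\lambda(r)^{(N-1)/N}-W(\mu)\,V_\mu(r)\;\le\;C\,V_\mu'(r),
\]
which mixes two different monotone quantities and is not useful for $r$ large.

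The original Caffarelli--C\'ordoba proof circumvents this by first establishing the uniform energy bound $F(u;B(0,r))\le Cr^{N-1}$ via a separate competitor (one that equals a well on an inner ball, with a transition annulus of width $1$), and then using this bound together with a discrete iteration over annuli of unit width rather than a single continuous ODE. The energy upper bound is precisely what keeps the problematic volume term under control across the iteration. If you want to carry out your plan, you will need to incorporate that energy bound explicitly.
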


\begin{remark}
\label{remark caffarelli-cordoba}A similar estimate continues to
hold if we replace $\{u>\lambda\}$ with $\{u<\lambda\}$ in Theorem
\ref{theorem caffarelli cordoba}. To see this, define $\bar{W}(s):=W(a+b-s)$,
and observe that if $u\in H^{1}(B(0,R))$ is a local minimizer of
\eqref{functional F}, then $v:=-u+a+b$, is a local minimizer of
\[
\bar{F}(w):=\int_{B(0,R)}(\bar{W}(w)+|\nabla w|^{2})\,dx,\quad v\in
H^{1}(B(0,R)).
\]
Moreover, $\{u<\lambda\}=\{v>a+b-\lambda\}$, where $a+b-\lambda\in(a,b)$.
Hence, it suffices to apply Theorem \ref{theorem caffarelli cordoba} to
$\bar{F}$.
\end{remark}

\begin{theorem}
\label{theorem uniform convergence level sets}Let $\Omega\subset\mathbb{R}%
^{N}$ be an open, bounded, connected set with boundary of class $C^{2,d}\ $,
$0<d\leq1$. Assume that $W$ satisfies
\eqref{W_Smooth}-\eqref{W' three zeroes} and that $g_{\varepsilon}$ satisfy \eqref{bounds g},
\eqref{g epsilon smooth}-\eqref{g epsilon -g bound}. Suppose also that
\eqref{u0=b} holds. Let $u_{\varepsilon}\in H^{1}(\Omega)$ be a minimizer of
\eqref{functional cahn-hilliard} subject to the Dirichlet boundary condition
\eqref{dirichlet boundary conditions}. Then%
\[
u_{\varepsilon}\rightarrow b\quad\text{in }L^{1}(\Omega).
\]
Moreover, for every\ $a<\lambda<b$ and for every $\delta>0$, there exists
$\varepsilon_{\delta}>0$ such that%
\begin{align} \label{incl:levelsetsinOmegadelta}
\{u_{\varepsilon}\leq\lambda\}\subseteq\Omega_{\delta}
\end{align}
for all $0<\varepsilon<\varepsilon_{\delta}$.
\end{theorem}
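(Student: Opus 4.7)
The plan is to prove the $L^1$-convergence first, and then obtain the inclusion \eqref{incl:levelsetsinOmegadelta} by a blow-up/contradiction argument based on the density estimate of Caffarelli and C\'ordoba (Theorem~\ref{theorem caffarelli cordoba} together with Remark~\ref{remark caffarelli-cordoba}).

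\textbf{Step 1 (L$^1$ convergence).} By the result of Owen--Rubinstein--Sternberg recalled in the introduction, the family $\{\mathcal{F}_\varepsilon^{(1)}\}_\varepsilon$ $\Gamma$-converges in $L^1(\Omega)$ to $\mathcal{F}^{(1)}$. A standard Modica--Mortola type compactness argument shows that if $\sup_n \mathcal{F}_{\varepsilon_n}^{(1)}(u_n) < \infty$ then $\{u_n\}$ is precompact in $L^1(\Omega)$. Since $u_\varepsilon$ is a minimizer of $\mathcal{F}_\varepsilon$, hence of $\mathcal{F}_\varepsilon^{(1)}$, and since by hypothesis \eqref{u0=b} and Proposition~\ref{proposition b minimizer} the constant $b$ is the unique minimizer of $\mathcal{F}^{(1)}$, the standard $\Gamma$-convergence convergence-of-minimizers theorem (see e.g.\ \cite[Theorem~1.21]{braides-book2002}) yields $u_\varepsilon \to b$ in $L^1(\Omega)$.

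\textbf{Step 2 (uniform closeness to the boundary, contradiction setup).} Fix $a < \lambda < b$ and $\delta > 0$, and suppose for contradiction that there exist sequences $\varepsilon_n \to 0^+$ and $x_n \in \Omega$ with $\operatorname{dist}(x_n,\partial\Omega) \geq \delta$ and $u_{\varepsilon_n}(x_n) \leq \lambda$. Define the rescaled functions
\[
v_n(y) := u_{\varepsilon_n}(x_n + \varepsilon_n y), \qquad y \in B(0, \delta/\varepsilon_n).
\]
A change of variables shows that $v_n$ is a local minimizer of $F(v):=\int(W(v)+|\nabla v|^2)\,dy$ on $B(0, \delta/\varepsilon_n)$, with $a \leq v_n \leq b$ by Theorem~\ref{theorem decay sz} and $v_n(0) \leq \lambda$. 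By Theorem~\ref{theorem gradient estimate}, for all $n$ large enough $|\nabla u_{\varepsilon_n}| \leq C/\varepsilon_n$ on $B(x_n, \delta/2) \subset \Omega \setminus \Omega_{\varepsilon_n}$, hence $|\nabla v_n(y)| \leq C$ for $|y| \leq \delta/(2\varepsilon_n)$. Setting $\lambda' := (\lambda + b)/2 \in (a,b)$ and $r_0 := \min\{1,(b-\lambda)/(2C)\}$, we obtain
\[
v_n(y) \leq v_n(0) + C|y| \leq \lambda + C r_0 \leq \lambda' \quad\text{for all } |y| \leq r_0,
\]
so $\mathcal{L}^N(B(0,1) \cap \{v_n < \lambda'\}) \geq \mathcal{L}^N(B(0,r_0)) =: c_0 > 0$.

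\textbf{Step 3 (Caffarelli--C\'ordoba and conclusion).} Applying Theorem~\ref{theorem caffarelli cordoba} in the form given by Remark~\ref{remark caffarelli-cordoba} (with $\lambda'$ in place of $\lambda$ and $R = \delta/\varepsilon_n > 2$ for $n$ large), we obtain a constant $c_1 = c_1(\lambda',c_0,N,W) > 0$ such that
\[
\mathcal{L}^N\bigl(B(0,r) \cap \{v_n < \lambda'\}\bigr) > c_1 r^N \quad\text{for every } 1 < r < \delta/\varepsilon_n.
\]
Choosing $r = \delta/(2\varepsilon_n)$ and undoing the rescaling (which multiplies Lebesgue measure by $\varepsilon_n^N$), we conclude
\[
\mathcal{L}^N\bigl(B(x_n,\delta/2) \cap \{u_{\varepsilon_n} < \lambda'\}\bigr) > c_1 (\delta/2)^N.
\]
Since $\lambda' < b$, this gives
\[
\int_\Omega |u_{\varepsilon_n} - b|\, dx \geq (b - \lambda')\, c_1 (\delta/2)^N > 0
\]
for all $n$ large, contradicting Step~1. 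Therefore \eqref{incl:levelsetsinOmegadelta} holds for all $\varepsilon$ sufficiently small, completing the proof.

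The main obstacle is the opening step of the contradiction argument: converting the single-point information $u_{\varepsilon_n}(x_n) \leq \lambda$ into the positive-measure hypothesis required to apply the density estimate. This is exactly where the gradient bound of Theorem~\ref{theorem gradient estimate} is essential, since in rescaled coordinates it yields a uniform Lipschitz constant and thus a ball of definite radius on which $v_n$ stays below $\lambda'$.
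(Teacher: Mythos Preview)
Your proof is correct and follows essentially the same approach as the paper: both establish $L^1$-convergence via $\Gamma$-convergence of minimizers, then argue by contradiction using a blow-up, the interior gradient bound to seed a sublevel set of positive measure in the unit ball, and the Caffarelli--C\'ordoba density estimate (Remark~\ref{remark caffarelli-cordoba}) to propagate this to macroscopic scales, contradicting $u_\varepsilon\to b$ in $L^1$. The only cosmetic differences are that the paper extracts a convergent subsequence $x_n\to x_0$ (which you do not need) and states the final contradiction slightly less explicitly; one minor point is that your inequality $v_n(y)\le\lambda'$ on $B(0,r_0)$ is non-strict while Remark~\ref{remark caffarelli-cordoba} is phrased with $\{v_n<\lambda'\}$, but this is harmless (take $r_0$ slightly smaller or $\lambda'$ slightly larger).
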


\begin{proof}
The fact that $u_{\varepsilon}\rightarrow b$ in $L^{1}(\Omega)$ follows from
(\ref{u0=b}) and standard properties of $\Gamma$-convergence (see
\cite[Theorem 1.21]{braides-book2002}). Next, we prove \eqref{incl:levelsetsinOmegadelta}. Given $a<\lambda<b$ and $R>0$, assume by contradiction that there exist
$\varepsilon_{n}\rightarrow0^{+}$ and $x_{n}\in\Omega\setminus\Omega_{2R}$
such that $u_{\varepsilon_{n}}(x_{n})\leq\lambda$. By compactness, we can
assume that $x_{n}\rightarrow x_{0}$. Define $v_{n}(y):=u_{\varepsilon_{n}%
}(x_{n}+\varepsilon_{n}y)$, $y\in B(0,R/\varepsilon_{n})$. By a change of
variables, and the minimality of $u_{\varepsilon_{n}}$, we have that $v_{n}$
is a local minimizer of
\[
F(v):=\int_{B(0,R/\varepsilon_{n})}(W(v)+|\nabla v|^{2})\,dx.
\]
By Theorem \ref{theorem gradient estimate} applied to $u_{\varepsilon}$ in
$\Omega\setminus\Omega_{\varepsilon}$, there exists $C_{0}>0$ such that
\[
|\nabla v_{n}(y)|\leq C_{0}\quad\text{for all }y\in B(0,R/\varepsilon_{n})
\]
provided $0<\varepsilon<2R$. Given $\lambda<\lambda_{1}<b$, since
$v_{n}(0)\leq\lambda$, it follows that
\[
v_{n}(y)\leq v_{n}(0)+C_{0}|y|\leq\lambda+C_{0}|y|<\lambda_{1}%
\]
for all $y\in B(0,(\lambda_{1}-\lambda)/C_{0})$, where, without loss of
generality, we assume that $(\lambda_{1}-\lambda)/C_{0}<1$. Hence,%
\[
\mathcal{L}^{N}(B(0,1)\cap\{v_{n}<\lambda_{1}\})\geq\mathcal{L}^{N}%
(B(0,(\lambda-\lambda_{1})/C_{0}))=c_{0}.
\]
It follows from Remark \ref{remark caffarelli-cordoba} that there exists
$c_{1}>0$ (depending only on $\lambda$, $c_{0}$, $N$, and $W$) such that
\[
\mathcal{L}^{N}(B(0,r)\cap\{v_{n}<\lambda_{1}\})>c_{1}r^{N}%
\]
for every $1<r<R/\varepsilon_{n}$. By the change of variables $x := x_{n}%
+\varepsilon_{n}y$, we find%
\[
\mathcal{L}^{N}(B(x_{n},\varepsilon_{n}r)\cap\{u_{\varepsilon_{n}}<\lambda
_{1}\})>c_{1}(\varepsilon_{n}r)^{N}%
\]
for every $1<r<R/\varepsilon_{n}$. As a consequence,
\[
\mathcal{L}^{N}(B(x_{n},R)\cap\{u_{\varepsilon_{n}}<\lambda_{1}\})\geq
c_{1}R^{N}.
\]
But this is a contradiction, since $B(x_{n},R)\subseteq B(x_{0},2R)\subseteq
\Omega$, and $u_{\varepsilon}\rightarrow b$ in $L^{1}(\Omega)$.\hfill
\end{proof}

\begin{theorem}
\label{theorem boundary estimates}Let $\Omega\subset\mathbb{R}^{N}$ be an
open, bounded, connected set with boundary of class $C^{2,d}\ $, $0<d\leq1$.
Assume that $W$ satisfies
\eqref{W_Smooth}-\eqref{W' three zeroes} and that $g_{\varepsilon}$ satisfy \eqref{bounds g},
\eqref{g epsilon smooth}-\eqref{g epsilon -g bound}. Let
$0<\delta<<1$ and suppose that
\eqref{u0=b} holds.  Then there exist $\mu>0$ and $C>0$, independent of
$\varepsilon$ and $\delta$, such that for all $\varepsilon$ sufficiently small
the following estimate holds%
\begin{equation}
0\leq b-u_{\varepsilon}(x)\leq Ce^{-\mu\delta/\varepsilon}\quad\text{for }%
x\in\Omega\setminus\Omega_{2\delta}. \label{decay b}%
\end{equation}

\end{theorem}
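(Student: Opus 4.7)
The plan is to combine Theorem \ref{theorem decay sz}, which gives an exponential decay estimate away from the set where $u_\varepsilon$ is bounded away from $b$, with Theorem \ref{theorem uniform convergence level sets}, which asserts that, under hypothesis \eqref{u0=b}, all sublevel sets $\{u_\varepsilon\le\lambda\}$ with $\lambda<b$ are eventually contained in a boundary strip $\Omega_\delta$. Once both are in place, the desired estimate \eqref{decay b} follows by elementary geometric considerations on the distance function to $\partial\Omega$.

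First, I would fix once and for all a $\rho>0$ with $0<\rho<b-c$, chosen independently of $\delta$ and $\varepsilon$, so that Theorem \ref{theorem decay sz} applies and furnishes constants $\mu_\rho>0$ and $C_\rho>0$, also independent of $\delta$ and $\varepsilon$, such that
\[
0\le b-u_\varepsilon(x)\le C_\rho\, e^{-\mu_\rho\operatorname*{dist}(x,K_\rho)/(2\varepsilon)}
\quad\text{for all }x\in\Omega\setminus K_\rho,
\]
where $K_\rho=\{x\in\Omega:\,u_\varepsilon(x)\le b-\rho\}\cup\Omega_{\varepsilon|\log\varepsilon|}$. Next, apply Theorem \ref{theorem uniform convergence level sets} with the choice $\lambda:=b-\rho$ and the given $\delta>0$: there exists $\varepsilon_\delta>0$ such that $\{u_\varepsilon\le b-\rho\}\subseteq\Omega_\delta$ for all $0<\varepsilon<\varepsilon_\delta$. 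Taking $\varepsilon$ even smaller if necessary so that $\varepsilon|\log\varepsilon|<\delta$, we also have $\Omega_{\varepsilon|\log\varepsilon|}\subseteq\Omega_\delta$, and therefore
\[
K_\rho\subseteq\Omega_\delta.
\]

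The geometric step is then the observation that $\operatorname*{dist}(x,\Omega_\delta)\ge\delta$ for every $x\in\Omega\setminus\Omega_{2\delta}$. Indeed, if $y\in\Omega_\delta$ then $\operatorname*{dist}(y,\partial\Omega)<\delta$, while $\operatorname*{dist}(x,\partial\Omega)\ge 2\delta$, so by the triangle inequality applied to the distance function to $\partial\Omega$,
\[
|x-y|\ \ge\ \operatorname*{dist}(x,\partial\Omega)-\operatorname*{dist}(y,\partial\Omega)\ \ge\ 2\delta-\delta=\delta.
\]
Combining this with $K_\rho\subseteq\Omega_\delta$ gives $\operatorname*{dist}(x,K_\rho)\ge\delta$ for every $x\in\Omega\setminus\Omega_{2\delta}$, and plugging into the estimate from Theorem \ref{theorem decay sz} yields
\[
0\le b-u_\varepsilon(x)\le C_\rho\, e^{-\mu_\rho\delta/(2\varepsilon)}
\quad\text{for all }x\in\Omega\setminus\Omega_{2\delta},
\]
so that \eqref{decay b} holds with $C:=C_\rho$ and $\mu:=\mu_\rho/2$, both independent of $\varepsilon$ and $\delta$.

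The only subtle point is ensuring that the constants $C$ and $\mu$ do not silently depend on $\delta$ through $\rho$: this is why $\rho$ must be fixed a priori, independently of $\delta$, before invoking Theorem \ref{theorem uniform convergence level sets}. Once $\rho$ is frozen, the smallness of $\varepsilon$ (quantified by $\varepsilon_\delta$) absorbs all the $\delta$-dependence, as required by the statement.
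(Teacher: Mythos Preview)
Your proposal is correct and follows essentially the same approach as the paper: fix $\rho\in(0,b-c)$, invoke Theorem \ref{theorem decay sz} for the exponential decay away from $K_\rho$, use Theorem \ref{theorem uniform convergence level sets} to trap $\{u_\varepsilon\le b-\rho\}$ inside $\Omega_\delta$, and conclude via the distance estimate. You are in fact more careful than the paper in spelling out the inclusion $\Omega_{\varepsilon|\log\varepsilon|}\subseteq\Omega_\delta$, the triangle-inequality argument for $\operatorname*{dist}(x,\Omega_\delta)\ge\delta$, and the independence of the constants from $\delta$.
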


\begin{proof}
Fix $\rho$ as in (\ref{rho restrictions}). By Theorem \ref{theorem decay sz},
there exist $\mu>0$ and $C>0$, independent of $\varepsilon$, such that for all
$\varepsilon$ sufficiently small the following estimates hold%
\begin{equation}
0\leq b-u_{\varepsilon}(x)\leq Ce^{-\mu\operatorname*{dist}(x,K_{\rho
})/(2\varepsilon)}\quad\text{for }x\in\Omega\setminus K_{\rho}, \label{cc1}%
\end{equation}
where $K_{\rho}:=\{x\in\overline{\Omega}:\,u_{\varepsilon}(x)\leq b-\rho
\}\cup\Omega_{\varepsilon|\log\varepsilon|}$. By Theorem
\ref{theorem uniform convergence level sets}, there exists $\varepsilon
_{\delta,\rho}>0$ such that%
\begin{equation}
\{u_{\varepsilon}\leq b-\rho\}\subseteq\Omega_{\delta} \label{cc2}%
\end{equation}
for all $0<\varepsilon<\varepsilon_{\delta,\rho}$. Thus,
\[
0\leq b-u_{\varepsilon}(x)\leq Ce^{-\mu\delta/(2\varepsilon)}%
\]
for all $x\in\Omega\setminus\Omega_{2\delta}$. \hfill
\end{proof}

\section{Second-Order $\Gamma$-Limit}

\label{section main theorems}In this section, we finally prove Theorem
\ref{theorem main}.

\begin{theorem}
[Second-Order $\Gamma$-Limsup]\label{theorem limsup}Assume that $\Omega\subset
\mathbb{R}^{N}$ is an open, bounded, connected set and that its boundary
$\partial\Omega$ is of class $C^{2,d}$, $0<d\leq1$. Assume that $W$ satisfies \eqref{W_Smooth}-\eqref{W' three zeroes} and that
$g_{\varepsilon}$ satisfy \eqref{bounds g},
\eqref{g epsilon smooth}-\eqref{g epsilon -g bound}. Suppose also that
\eqref{u0=b} holds. Then there exists $\{u_{\varepsilon}\}_{\varepsilon}$ in
$H^{1}(\Omega)$ such that $\operatorname*{tr}u_{\varepsilon}=g_{\varepsilon}$
on $\partial\Omega$, $u_{\varepsilon}\rightarrow b$ in $L^{1}(\Omega)$, and%
\[
\limsup_{\varepsilon\rightarrow0^{+}}\mathcal{F}_{\varepsilon}^{(2)}%
(u_{\varepsilon})\leq\int_{\partial\Omega}\kappa(y)\int_{0}^{\infty}%
2W^{1/2}(z_{g(y)}(s))z_{g(y)}^{\prime}(s)s\,dsd\mathcal{H}^{N-1}(y)
\]
where $z_{g(y)}$ solves the Cauchy problem \eqref{cauchy problem z alpha}
with $\alpha=g(y)$.
\end{theorem}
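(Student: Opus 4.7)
The plan is to glue the one-dimensional recovery profiles from Theorem~\ref{theorem 1d second gamma limsup} fiber by fiber along inward normals to $\partial\Omega$, and extend by the constant $b$ in the bulk. Fix $\delta_{0}>0$ small enough that the map $\Phi(y,t)=y+t\nu(y)$ of Lemma~\ref{lemma diffeomorphism} is a $C^{1,d}$-diffeomorphism of $\partial\Omega\times\lbrack 0,\delta_{0}]$ onto $\overline{\Omega}_{\delta_{0}}$, and so that the weights $\omega_{y}(t):=\det J_{\Phi}(y,t)$ verify \eqref{etaSmooth}, \eqref{eta close to eta0}, and \eqref{eta 0 alpha-} uniformly in $y\in\partial\Omega$. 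By \eqref{det=1}--\eqref{curvature}, $\omega_{y}(0)=1$ and $\omega_{y}^{\prime}(0)=\kappa(y)$.

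For each $y\in\partial\Omega$, apply Theorem~\ref{theorem 1d second gamma limsup} on the interval $(0,\delta_{0})$ with weight $\omega_{y}$ and Dirichlet data $\alpha_{\varepsilon}:=g_{\varepsilon}(y)$, $\beta_{\varepsilon}:=b$, producing $v_{\varepsilon}^{y}\in H^{1}((0,\delta_{0}))$ with $v_{\varepsilon}^{y}(0)=g_{\varepsilon}(y)$, $v_{\varepsilon}^{y}(\delta_{0})=b$, transition length $T_{\varepsilon}^{y}\leq C\varepsilon|\log\varepsilon|$, and $v_{\varepsilon}^{y}\equiv b$ on $(T_{\varepsilon}^{y},\delta_{0})$. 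Define
\[
u_{\varepsilon}(x):=\begin{cases} v_{\varepsilon}^{y(x)}\bigl(\operatorname{dist}(x,\partial\Omega)\bigr), & x\in\overline{\Omega}_{\delta_{0}},\\ b, & x\in\Omega\setminus\Omega_{\delta_{0}}, \end{cases}
\]
where $y(x)$ is the nearest-point projection onto $\partial\Omega$. Since $v_{\varepsilon}^{y}$ is built as the inverse of $r\mapsto\int_{g_{\varepsilon}(y)}^{r}\varepsilon(\delta_{\varepsilon}+W(s))^{-1/2}\,ds$, it is smooth in its initial condition $g_{\varepsilon}(y)$, and the joint regularity of $\Phi^{-1}$ and $g_{\varepsilon}\in H^{1}(\partial\Omega)$ yields $u_{\varepsilon}\in H^{1}(\Omega)$ with $\operatorname{tr}u_{\varepsilon}=g_{\varepsilon}$ on $\partial\Omega$ and $u_{\varepsilon}\to b$ in $L^{1}(\Omega)$.

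The energy computation uses the change of variables $x=\Phi(y,t)$ and the orthogonal decomposition $|\nabla u_{\varepsilon}|^{2}=(\partial_{t}u_{\varepsilon})^{2}+|\nabla_{\tau}u_{\varepsilon}|^{2}_{(y,t)}$ in the metric on the parallel surface at distance $t$, yielding
\[
\mathcal{F}_{\varepsilon}^{(1)}(u_{\varepsilon})=\int_{\partial\Omega}G_{\varepsilon}^{(1)}(v_{\varepsilon}^{y})\,d\mathcal{H}^{N-1}(y)+\mathrm{T}_{\varepsilon},
\]
where $\mathrm{T}_{\varepsilon}$ collects the tangential-gradient contributions on $\Omega_{\delta_{0}}$. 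A sensitivity bound $|\partial_{\alpha}v_{\varepsilon}^{\alpha}(t)|\leq C$, obtained from ODE smoothness in the parameter combined with the exponential decay of Theorem~\ref{thorem barrier 1d}, together with $T_{\varepsilon}^{y}\leq C\varepsilon|\log\varepsilon|$, gives
\[
\mathrm{T}_{\varepsilon}\leq C\varepsilon \cdot(\varepsilon|\log\varepsilon|)\int_{\partial\Omega}|\nabla_{\tau}g_{\varepsilon}|^{2}\,d\mathcal{H}^{N-1},
\]
so $\mathrm{T}_{\varepsilon}/\varepsilon=o(1)$ by hypothesis \eqref{g epsilon bound derivatives}. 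Using the Lipschitz bound $|d_{W}(b,g_{\varepsilon}(y))-d_{W}(b,g(y))|\leq C\varepsilon^{\gamma}$ from \eqref{g epsilon -g bound} and $\gamma>1$, we obtain
\[
\mathcal{F}_{\varepsilon}^{(2)}(u_{\varepsilon})=\int_{\partial\Omega}G_{\varepsilon}^{(2)}(v_{\varepsilon}^{y})\,d\mathcal{H}^{N-1}(y)+o(1).
\]
The quantitative bound \eqref{estimate limsup G1} of Theorem~\ref{theorem 1d second gamma limsup} controls each $G_{\varepsilon}^{(2)}(v_{\varepsilon}^{y})$ by $\kappa(y)\int_{0}^{\infty}2W^{1/2}(z_{g(y)})z_{g(y)}^{\prime}s\,ds$ plus an error that vanishes as $\varepsilon\to 0^{+}$ then $l\to\infty$; dominated convergence concludes.

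The principal obstacle is the uniformity in $y$ of the 1D estimate \eqref{estimate limsup G1}: its constants depend on $\omega$, $\alpha_{-}$, and the boundary data, all of which vary with $y$. One must inspect the construction of Theorem~\ref{theorem 1d second gamma limsup} and verify, using compactness of $\partial\Omega$ together with the uniform range $g_{\varepsilon}(\partial\Omega)\subseteq\lbrack\alpha_{-},b]$ and uniform $C^{1,d}$ bounds on $\omega_{y}$, that all constants are in fact uniform. A secondary technical point is the sensitivity estimate $|\partial_{\alpha}v_{\varepsilon}^{\alpha}(t)|\leq C$ used to bound $\mathrm{T}_{\varepsilon}$: because $v_{\varepsilon}^{\alpha}$ is only piecewise smooth (extended by the constant $b$ past $T_{\varepsilon}^{\alpha}$), differentiation across the matching point must be handled by a separate exponential-decay argument near the well $b$.
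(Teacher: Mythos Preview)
Your proposal is correct and follows essentially the same construction as the paper: fiberwise 1D recovery profiles from Theorem~\ref{theorem 1d second gamma limsup} glued via the tubular-neighborhood diffeomorphism $\Phi$, extended by $b$ in the bulk, with the normal contribution reduced to $\int_{\partial\Omega}G_{\varepsilon}^{(2)}(v_{\varepsilon}^{y})\,d\mathcal{H}^{N-1}$ and the tangential contribution shown to be $o(1)$ via \eqref{g epsilon bound derivatives}.

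The only point worth flagging concerns your handling of the tangential derivative. The paper does not invoke an abstract sensitivity bound or the barrier estimate of Theorem~\ref{thorem barrier 1d}; instead it differentiates the explicit relation $v_{\varepsilon}(y,\Psi_{\varepsilon}(y,r))=r$ in a tangential direction to obtain the closed form
\[
\frac{\partial v_{\varepsilon}}{\partial\tau}(y,t)=\frac{(\delta_{\varepsilon}+W(v_{\varepsilon}(y,t)))^{1/2}}{(\delta_{\varepsilon}+W(g_{\varepsilon}(y)))^{1/2}}\,\frac{\partial g_{\varepsilon}}{\partial\tau}(y),
\]
from which $\bigl|\partial_{\tau}v_{\varepsilon}(y,t)\bigr|\leq C\bigl|\partial_{\tau}g_{\varepsilon}(y)\bigr|$ follows by monotonicity of $W$ on $[c,b]$ and the lower bound $g_{\varepsilon}\geq\alpha_{-}$. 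This is cleaner and sidesteps the piecewise-smoothness issue you raise. Note also that Theorem~\ref{thorem barrier 1d} applies to \emph{minimizers} of $G_{\varepsilon}$, not to the explicit recovery profiles constructed in Theorem~\ref{theorem 1d second gamma limsup}, so that reference is misplaced; the exponential decay you need comes directly from the ODE \eqref{1d 556}.
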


\begin{proof}
By Lemma \ref{lemma diffeomorphism}, for $\delta>0$ sufficiently small the
function $\Phi:\partial\Omega\times\lbrack0,\delta]\rightarrow\overline
{\Omega}_{\delta}$ is of class $C^{1\,d}$. In turn, the function%
\[
\omega(y,t):=\det J_{\Phi}(y,t)
\]
is of class $C^{1,d}$ and
\[
\omega_{1}:=\min_{y\in\partial\Omega}\omega(y,0)>0.
\]
Fix
\begin{equation}
0<\omega_{0}<\frac{1}{4}\frac{C_{W}-\operatorname*{d}\nolimits_{W}%
(a,\alpha_{-})}{C_{W}}\omega_{1}.\label{omega0}%
\end{equation}
By taking $\delta>0$ sufficiently small, we can assume that%
\begin{equation}
|\omega(y,t_{1})-\omega(y,t_{2})|\leq\omega_{0}\label{omega uc}%
\end{equation}
for all $y\in\partial\Omega$ and all $t_{1},t_{2}\in\lbrack0,\delta]$.

Let $\delta_{\varepsilon}\rightarrow0^{+}$ as $\varepsilon\rightarrow0^{+}$,
and for each $y\in\overline{\Omega}$ define
\begin{equation}
\Psi_{\varepsilon}(y,r):=\int_{g_{\varepsilon}(y)}^{r}\frac{\varepsilon
}{(\delta_{\varepsilon}+W(s))^{1/2}}\,ds, \label{Psi epsilon}%
\end{equation}
and
\begin{equation}
0 \leq T_{\varepsilon}(y):=\Psi_{\varepsilon}(y,b). \label{T epsilon y def}%
\end{equation}
Note that $T_{\varepsilon}\in C^{1}(\overline{\Omega})$ with%
\begin{align*}
T_{\varepsilon}(y)  &  \leq\int_{g_{-}}^{b}\frac{\varepsilon}{(\delta
_{\varepsilon}+W(s))^{1/2}}\,ds\\
&  \leq-\frac{\sigma}{2}\varepsilon\log(\sigma^{2}\delta_{\varepsilon}%
)+\sigma\varepsilon\log(1+2(b-a))
\end{align*}
by (\ref{bounds g}) and Proposition \ref{proposition asymptotic behavior}.
Hence, there exist $C_{0}>0$ and $\varepsilon_{0}>0$, depending only on $W$
such that
\begin{equation}
T_{\varepsilon}(y)\leq C_{0}\varepsilon|\log\delta_{\varepsilon}|
\label{T epsilon y}%
\end{equation}
for all $0<\varepsilon<\varepsilon_{0}$ and all $y\in\partial\Omega$.

For each fixed $y\in\partial\Omega$, let $v_{\varepsilon}(y,\cdot
):[0,T_{\varepsilon}(y)]\rightarrow\lbrack g_{\varepsilon}(y),b]$ be the
inverse of $\Psi_{\varepsilon}(y,\cdot)$. Then $v_{\varepsilon}\left(
y,0\right)  =g_{\varepsilon}(y)$, $v_{\varepsilon}(y,T_{\varepsilon}(y))=b$,
and
\begin{equation}
\frac{\partial v_{\varepsilon}}{\partial t}(y,t)=\frac{(\delta_{\varepsilon
}+W(v_{\varepsilon}\left(  y,t\right)  ))^{1/2}}{\varepsilon}
\label{partial v epsilon t}%
\end{equation}
for $t\in\lbrack0,T_{\varepsilon}(y)]$. Assume first that $g_{\varepsilon}\in
C^{1}(\partial\Omega)$. Then by standard results on the smooth dependence of
solutions on a parameter (see, e.g. \cite[Section 2.4]{gerald-book2012}), we
have that $v_{\varepsilon}$ is of class $C^{1}$ in the variables $(y,t)$.
Extend $v_{\varepsilon}(y,t)$ to be equal to $b$ for $t>T_{\varepsilon}(y)$.

We have
\[
v_{\varepsilon}(y,\Psi_{\varepsilon}(y,r))=r
\]
for all $g_{\varepsilon}(y)\leq r\leq b$. For every $y\in\partial\Omega$ and
every tangent vector $\tau$ to $\partial\Omega$ at $y$, differentiating in the
direction $\tau$ gives
\[
\frac{\partial v_{\varepsilon}}{\partial\tau}(y,\Psi_{\varepsilon}%
(y,r))+\frac{\partial v_{\varepsilon}}{\partial t}(y,\Psi_{\varepsilon
}(y,r))\frac{\partial\Psi_{\varepsilon}}{\partial\tau}(y,r)=0.
\]
Hence,
\[
\frac{\partial v_{\varepsilon}}{\partial\tau}(y,t)+\frac{\partial
v_{\varepsilon}}{\partial t}(y,t)\frac{\partial\Psi_{\varepsilon}}%
{\partial\tau}(y,r)=0
\]
for all $y\in\partial\Omega$ and $t\in\lbrack0,T_{\varepsilon}(y))$.

By (\ref{Psi epsilon}),
\[
\frac{\partial\Psi_{\varepsilon}}{\partial\tau}(y,r)=-\frac{\varepsilon
}{(\delta_{\varepsilon}+W(g_{\varepsilon}(y)))^{1/2}}\frac{\partial
g_{\varepsilon}}{\partial\tau}(y),
\]
and so by (\ref{partial v epsilon t}), we have%
\begin{align*}
\frac{\partial v_{\varepsilon}}{\partial\tau}(y,t)  &  =-\frac{\partial
v_{\varepsilon}}{\partial t}(y,t)\frac{\partial\Psi_{\varepsilon}}%
{\partial\tau}(y,r)\\
&  =\frac{(\delta_{\varepsilon}+W(v_{\varepsilon}\left(  y,t\right)  ))^{1/2}%
}{(\delta_{\varepsilon}+W(g_{\varepsilon}(y)))^{1/2}}\frac{\partial
g_{\varepsilon}}{\partial\tau}(y)
\end{align*}
for $t\in\lbrack0,T_{\varepsilon}(y))$, while $\frac{\partial v_{\varepsilon}%
}{\partial\tau}(y,t)=0$ for $t>T_{\varepsilon}(y)$. Observe that if
$g_{\varepsilon}(y)\geq c$, then since $W$ is decreasing for $c\leq s\leq b$
and $v_{\varepsilon}(y,\cdot)$ is increasing, we have that $W(v_{\varepsilon
}\left(  y,t\right)  )\leq W(g_{\varepsilon}\left(  y\right)  )$. Hence,
$\left\vert \frac{\partial v_{\varepsilon}}{\partial\tau}(y,t)\right\vert
\leq\left\vert \frac{\partial g_{\varepsilon}}{\partial\tau}(y)\right\vert $.
On the other hand, if $g_{\varepsilon}(y)\leq c$, then by (\ref{bounds g}),
\[
(\delta_{\varepsilon}+W(g_{\varepsilon}(y)))^{1/2}\geq\min_{\lbrack g_{-}%
,c]}W^{1/2}=:W_{0}>0.
\]
Since $a\leq v_{\varepsilon}(y,t)\leq b$, in both cases, we have%
\begin{equation}
\left\vert \frac{\partial v_{\varepsilon}}{\partial\tau}(y,t)\right\vert
\leq\left\{
\begin{array}
[c]{ll}%
C\left\vert \frac{\partial g_{\varepsilon}}{\partial\tau}(y)\right\vert  &
\text{if }y\in\partial\Omega\text{ and }t\in\lbrack0,T_{\varepsilon}(y)),\\
0 & \text{if }y\in\partial\Omega\text{ and }t\in(T_{\varepsilon}(y),\delta].
\end{array}
\right.  \label{partial v epsilon y}%
\end{equation}
If $g_{\varepsilon}\in H^{1}(\partial\Omega)$, a density argument shows that
$v_{\varepsilon}\in H^{1}(\partial\Omega\times(0,\delta))$ and that
(\ref{partial v epsilon t}) and (\ref{partial v epsilon y}) continues to hold a.e.

Set%
\begin{equation}
u_{\varepsilon}(x):=\left\{
\begin{array}
[c]{ll}%
v_{\varepsilon}(\Phi^{-1}(x)) & \text{if }x\in\Omega_{\delta},\\
b & \text{if }x\in\Omega\setminus\Omega_{\delta},
\end{array}
\right.  . \label{u epsilon bl}%
\end{equation}
Then $u_{\varepsilon}\in H^{1}(\Omega)$, with
\begin{equation}
|\nabla u_{\varepsilon}(x)|^{2}\leq\left\vert \frac{\partial v_{\varepsilon}%
}{\partial t}(\Phi^{-1}(x))\right\vert ^{2}+C\Vert\nabla y\Vert_{L^{\infty
}(\Omega_{\delta})}^{2}\left\vert \nabla_{\tau}v_{\varepsilon}(\Phi
^{-1}(x))\right\vert ^{2}, \label{299b}%
\end{equation}
where we used the facts that $\Phi^{-1}(x)=(y(x),\operatorname*{dist}%
(x,\partial\Omega))$, $\left\vert \nabla\operatorname*{dist}(x,\partial
\Omega)\right\vert =1$, and $\tau\cdot\nabla\operatorname*{dist}%
(x,\partial\Omega)=0$ for every vector $\tau$ such that $\tau\cdot\nu(y)=0$.

In view of Lemma \ref{lemma diffeomorphism}, we can use the change of
variables $x=\Phi(y,t)$ and Tonelli's theorem to write%
\begin{align*}
\mathcal{F}_{\varepsilon}^{(2)}(u_{\varepsilon}) &  =\int_{\partial\Omega}%
\int_{0}^{\delta}\left(  \frac{1}{\varepsilon^{2}}W(u_{\varepsilon}%
(\Phi(y,t)))+|\nabla u_{\varepsilon}(\Phi(y,t))|^{2}\right)  \omega
(y,t)\,dtd\mathcal{H}^{N-1}(y)\\
&  \quad-\frac{1}{\varepsilon}\int_{\partial\Omega}d(g(y),b)\,d\mathcal{H}%
^{N-1}(y)\\
&  \leq\left(  \int_{\partial\Omega}\int_{0}^{\delta}\left(  \frac
{1}{\varepsilon^{2}}W(v_{\varepsilon}(y,t))+\left\vert \frac{\partial
v_{\varepsilon}}{\partial t}(y,t)\right\vert ^{2}\right)  \omega
(y,t)\,dtd\mathcal{H}^{N-1}(y)\right.  \\
&  \quad\left.  -\frac{1}{\varepsilon}\int_{\partial\Omega}%
d(g(y),b)\,d\mathcal{H}^{N-1}(y)\right)  \\
&  \quad+C\Vert\nabla y\Vert_{L^{\infty}(\Omega_{\delta})}^{2}\int%
_{\partial\Omega}\int_{0}^{\delta}\left\vert \nabla_{\tau}v_{\varepsilon
}(y,t)\right\vert ^{2}\omega(y,t)\,dtd\mathcal{H}^{N-1}(y)=:\mathcal{A}%
+\mathcal{B}.
\end{align*}
Taking $\delta_{\varepsilon}$ as in (\ref{1d delta epsilon}), by Theorem
\ref{theorem 1d second gamma limsup}, there exist constants $0<\varepsilon
_{0}<1$, $C,C_{0}>0$, and $\gamma_{0},\gamma_{1}>0$, depending only on
$\alpha_{-}$, $A_{0}$, $B_{0}$, $T$, $\omega$, and $W$, such that
\begin{align*}
&  \int_{0}^{\delta}\left(  \frac{1}{\varepsilon^{2}}W(v_{\varepsilon
}(y,t))+\left\vert \frac{\partial v_{\varepsilon}}{\partial t}(y,t)\right\vert
^{2}\right)  \omega(y,t)\,dt-\frac{1}{\varepsilon}\operatorname*{d}%
\nolimits_{W}(b,g(y))\\
&  \leq\int_{0}^{l}2W(p_{\varepsilon}(y,t))t\,dt\,\frac{\partial\omega
}{\partial t}(y,0)+Ce^{-2\sigma l}\left(  2\sigma l+1\right)  +C\varepsilon
^{2\gamma}l+C\varepsilon^{\gamma_{1}}|\log\varepsilon|^{\gamma_{0}}%
\end{align*}
for all $0<\varepsilon<\varepsilon_{0}$ and all $l>0$, where $p_{\varepsilon
}(y,t)=v_{\varepsilon}(y,\varepsilon t)$, $p_{\varepsilon}(y,\cdot)\rightarrow
z_{g(y)}$ pointwise in $[0,\infty)$, where $z_{\alpha}$ solves the Cauchy
problem \eqref{cauchy problem z alpha}. Hence, by Lemma
\ref{lemma diffeomorphism},%
\begin{align*}
\mathcal{A}\leq\int_{\partial\Omega}\kappa(y)\int_{0}^{l}2W(p_{\varepsilon
}(y,t))t\,dtd\mathcal{H}^{N-1}(y)&+Ce^{-2\sigma l}\left(  2\sigma l+1\right)
\\&+C\varepsilon^{2\gamma}l+C\varepsilon^{\gamma_{1}}|\log\varepsilon
|^{\gamma_{0}}%
\end{align*}
for all $0<\varepsilon<\varepsilon_{0}$ and all $l>0$. Since $p_{\varepsilon
}(y,t)\rightarrow z_{g(y)}(t)$ for all $t\in\lbrack0,l]$ and $a\leq
p_{\varepsilon}(y,t)\leq b$, we can apply the Lebesgue dominated convergence
theorem to obtain%
\begin{align*}
\lim_{\varepsilon\rightarrow0^{+}}\int_{\partial\Omega}\kappa(y)\int_{0}%
^{l}2W(p_{\varepsilon}(y,t))t\,dt\,&d\mathcal{H}^{N-1}(y)\\&=\int_{\partial\Omega
}\kappa(y)\int_{0}^{l}2W(z_{g(y)}(t))t\,dt\,d\mathcal{H}^{N-1}(y).
\end{align*}
Hence,
\[
\limsup_{\varepsilon\rightarrow0^{+}}\mathcal{A}\leq\int_{\partial\Omega
}\kappa(y)\int_{0}^{l}2W(z_{g(y)}(t))t\,dt\,d\mathcal{H}^{N-1}(y)+Ce^{-2\sigma
l}\left(  2\sigma l+1\right)  .
\]
By (\ref{estimate z alpha}) and\ the Lebesgue dominated convergence theorem,
the right-hand side converges to%
\[
\int_{\partial\Omega}\kappa(y)\int_{0}^{\infty}2W(z_{g(y)}%
(t))t\,dt\,d\mathcal{H}^{N-1}(y).
\]

On the other hand, by (\ref{T epsilon y}) and (\ref{partial v epsilon y}),
\begin{align}
\mathcal{B}  &  \leq C\Vert\nabla y\Vert_{L^{\infty}(\Omega_{\delta})}^{2}%
\int_{\partial\Omega}\left\vert \nabla_{\tau}g_{\varepsilon}(y)\right\vert
^{2}\int_{0}^{T_{\varepsilon}(y)}\omega(y,t)\,dt\,d\mathcal{H}^{N-1}%
(y)\nonumber\\
&  \leq C\varepsilon|\log\varepsilon|\Vert\omega\Vert_{L^{\infty}%
(\partial\Omega\times\lbrack0,\delta])}\int_{\partial\Omega}\left\vert
\partial_{\tau}g_{\varepsilon}(y)\right\vert ^{2}d\mathcal{H}^{N-1}(y)=o(1)
\label{299c}%
\end{align}
by (\ref{g epsilon bound derivatives}).

In conclusion, we have shown that%
\begin{equation}
\mathcal{F}_{\varepsilon}^{(2)}(u_{\varepsilon})\leq\int_{\partial\Omega
}\kappa(y)\int_{0}^{\infty}2W^{1/2}(z_{g(y)}(s))z_{g(y)}^{\prime
}(s)s\,ds\,d\mathcal{H}^{N-1}(y)+o(1).\nonumber
\end{equation}

\textbf{Step 2: }We claim that
\[
u_{\varepsilon}\rightarrow u_{0}\quad\text{in }L^{1}(\Omega).
\]
In view of Lemma \ref{lemma diffeomorphism}, we can use the change of
variables $x:=\Phi(y,t)$ and Tonelli's theorem to write%
\begin{align*}
\int_{\Omega}|u_{\varepsilon}-u_{0}|\,dx  &  =\int_{\partial\Omega}\int%
_{0}^{\delta}|u_{\varepsilon}(\Phi(y,t)))-b|\omega(y,t)\,dtd\mathcal{H}%
^{N-1}(y)\\
&  =\int_{\partial\Omega}\int_{0}^{T_{\varepsilon}(y)}|v_{\varepsilon
}(y,t)-b|\omega(y,t)\,dtd\mathcal{H}^{N-1}(y)\\
&  \leq C\varepsilon|\log\varepsilon|,
\end{align*}
where we used the fact that $v_{\varepsilon}(y,t)=b$ for $t\geq T_{\varepsilon
}(y)$ and (\ref{T epsilon y}).\hfill
\end{proof}

For every measurable set $E\subseteq\Omega$, we define the localized energy%
\[
E_{\varepsilon}(u,E):=\int_{E}\left(  \frac{1}{\varepsilon^{2}}W(u)+|\nabla
u|^{2}\right)  \,dx,\quad u\in H^{1}(\Omega).
\]

\begin{theorem}
[Second-Order $\Gamma$-Liminf]\label{theorem liminf}Assume that $\Omega\subset
\mathbb{R}^{N}$ is an open, bounded, connected set and that its boundary
$\partial\Omega$ is of class $C^{2,d}$, $0<d\leq1$. Assume that $W$ satisfies \eqref{W_Smooth}-\eqref{W' three zeroes} and that
$g_{\varepsilon}$ satisfy \eqref{bounds g},
\eqref{g epsilon smooth}-\eqref{g epsilon -g bound}. Suppose also that
\eqref{u0=b} holds. Then%
\[
\liminf_{\varepsilon\rightarrow0^{+}}\mathcal{F}_{\varepsilon}^{(2)}%
(u_{\varepsilon})\geq\int_{\partial\Omega}\kappa(y)\int_{0}^{\infty}%
2W^{1/2}(z_{g(y)}(s))z_{g(y)}^{\prime}(s)s\,dsd\mathcal{H}^{N-1}(y),
\]
where $z_{\alpha}$ solves the Cauchy problem \eqref{cauchy problem z alpha}
with $\alpha=g(y)$.
\end{theorem}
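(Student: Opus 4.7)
Without loss of generality I may restrict to sequences with $\liminf_{\varepsilon\to 0^+}\mathcal{F}_\varepsilon^{(2)}(u_\varepsilon) < \infty$, pass to a subsequence realizing the liminf, and (thanks to the first-order $\Gamma$-convergence result together with \eqref{u0=b}) assume that $u_\varepsilon \to b$ in $L^1(\Omega)$. The plan is to localize in a tubular neighborhood of $\partial\Omega$, slice along the inward normal, and apply the one-dimensional second-order liminf inequality (Theorem~\ref{theorem liminf second 1d}) on each slice, then integrate via Fatou's lemma.

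Using the diffeomorphism $\Phi$ from Lemma~\ref{lemma diffeomorphism}, pick $\delta>0$ small enough that $\omega(y,t):=\det J_\Phi(y,t)$ satisfies $|\omega(y,t)-\omega(y,0)|<\omega_0$ on $\partial\Omega\times[0,\delta]$ for some $\omega_0$ small enough to fulfil \eqref{eta 0 alpha-} uniformly in $y$. Since $\omega(y,0)=1$ by \eqref{det=1} and the integrand is non-negative, Fubini combined with the inequality $|\nabla u_\varepsilon|^2 \ge (\partial_t(u_\varepsilon\circ\Phi))^2$ and a trivial discard on $\Omega\setminus\Omega_\delta$ gives
\begin{align*}
\mathcal{F}_\varepsilon^{(2)}(u_\varepsilon) \ge \int_{\partial\Omega}\Bigl\{\int_0^\delta\Bigl(\tfrac{1}{\varepsilon^2}W(v_\varepsilon^y)+|(v_\varepsilon^y)'|^2\Bigr)\omega(y,t)\,dt - \tfrac{1}{\varepsilon}\operatorname*{d}\nolimits_W(b,g(y))\Bigr\}\,d\mathcal{H}^{N-1}(y),
\end{align*}
where $v_\varepsilon^y(t):=u_\varepsilon(\Phi(y,t))$ satisfies $v_\varepsilon^y(0)=g_\varepsilon(y)$.

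For each fixed $y\in\partial\Omega$, I replace the slice $v_\varepsilon^y$ by the minimizer $w_\varepsilon^y$ of the one-dimensional weighted Cahn--Hilliard functional on $(0,\delta)$ with weight $\omega(y,\cdot)$ and Dirichlet data $\alpha_\varepsilon^y:=g_\varepsilon(y)$ and $\beta_\varepsilon^y:=v_\varepsilon^y(\delta)$ supplied by Theorem~\ref{theorem 1d EL}. By minimality, the quantity in braces above is bounded below by $G_\varepsilon^{(2)}(w_\varepsilon^y)$ computed with these parameters (recalling $\omega(y,0)=1$ so $\operatorname*{d}\nolimits_W(b,g(y))\omega(y,0)=\operatorname*{d}\nolimits_W(b,g(y))$). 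Moreover, Theorem~\ref{theorem boundary estimates} applied at $\{t=\delta\}$ yields $|\beta_\varepsilon^y-b|\le Ce^{-\mu\delta/(2\varepsilon)}$ uniformly in $y$, which is far stronger than the polynomial decay \eqref{alpha epsilon and beta epsilon} required by the hypotheses of Theorem~\ref{theorem liminf second 1d}. Invoking that theorem (with $\alpha=g(y)$, noting $\omega'(y,0)=\kappa(y)$ by \eqref{curvature}) gives, for each $y$ and for $l$ sufficiently large,
\[
G_\varepsilon^{(2)}(w_\varepsilon^y) \ge 2\kappa(y)\int_0^l W^{1/2}(q_\varepsilon^y)(q_\varepsilon^y)' s\,ds - R_\varepsilon(y,l),
\]
where $q_\varepsilon^y(s):=w_\varepsilon^y(\varepsilon s)\to z_{g(y)}(s)$ in $H^1_{\mathrm{loc}}$ by Lemma~\ref{profilesConverge}, and $R_\varepsilon(y,l)$ collects the explicit error terms $Ce^{-l\mu}(l\mu+1) + C l^2\varepsilon^{1/2} + C\varepsilon^{\gamma_1}|\log\varepsilon|^{2+\gamma_0}$. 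Integrating in $y$, applying Fatou's lemma to the liminf over $\varepsilon$, letting $l\to\infty$, and using \eqref{estimate z alpha} to dominate the tail in $s$ delivers the desired bound.

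The main obstacle is proving that all constants and error terms in Theorem~\ref{theorem liminf second 1d} can be chosen \emph{uniformly in $y\in\partial\Omega$}, so that Fatou's lemma is legitimately applicable and the remainder $R_\varepsilon(y,l)$ vanishes after integration. Uniformity should follow from compactness of $\partial\Omega$, the uniform bound $g_\varepsilon(y)\in[\alpha_-,b]$ from \eqref{bounds g}, the uniform $C^{1,d}$ regularity of $\omega(y,\cdot)$ with bounds independent of $y$ by Lemma~\ref{lemma diffeomorphism}, and the uniform exponential decay of $\beta_\varepsilon^y\to b$. A secondary technical point is that the 1D liminf is stated only for minimizers, which is precisely why the replacement $v_\varepsilon^y\mapsto w_\varepsilon^y$ is essential; the bound $G_\varepsilon(v_\varepsilon^y)\ge G_\varepsilon(w_\varepsilon^y)$ is what allows the transfer of information from the (arbitrary) trace profile to the well-behaved 1D minimizer.
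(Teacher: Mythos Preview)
Your proposal is correct and follows essentially the same route as the paper: localize via the diffeomorphism $\Phi$, slice along normals dropping tangential derivatives, replace each slice by its one-dimensional minimizer, apply Theorem~\ref{theorem liminf second 1d} with $\beta_\varepsilon^y$ controlled through Theorem~\ref{theorem boundary estimates}, and integrate over $\partial\Omega$ via Fatou's lemma using the uniform-in-$y$ lower bound $-C_l$ (coming from Corollary~\ref{corollary bounded derivative}). Note only that your opening reduction---treating $u_\varepsilon$ as an arbitrary sequence and invoking first-order $\Gamma$-convergence to force $u_\varepsilon\to b$---is both unnecessary and slightly inconsistent with the rest of the argument, since Theorem~\ref{theorem boundary estimates}, which both you and the paper rely on, applies only to minimizers of $F_\varepsilon$; the paper's proof (and implicitly its statement) is really about minimizers.
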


\begin{proof}
We choose $\omega$ and $\delta$ as in the proof of Theorem
\ref{theorem limsup}. By Theorem \ref{theorem boundary estimates} (with
$\Omega_{\delta}$ and $\Omega_{2\delta}$ replaced by $\Omega_{\delta/2}$ and
$\Omega_{\delta}$, respectively), we can assume that
\begin{equation}
0\leq b-u_{\varepsilon}(x)\leq Ce^{-\mu\delta/\varepsilon}\quad\text{for }%
x\in\Omega\setminus\Omega_{\delta}\label{902}%
\end{equation}
for all $0<\varepsilon<\varepsilon_{\delta}$.

Write
\begin{align*}
F_{\varepsilon}^{(2)}(u_{\varepsilon}) &  =E_{\varepsilon}(u_{\varepsilon
},\Omega\setminus\Omega_{\delta})\\
&  \quad+\left(  E_{\varepsilon}(u_{\varepsilon},\Omega_{\delta})-\frac
{1}{\varepsilon}\int_{\partial\Omega}\operatorname*{d}\nolimits_{W}%
(g,b)\,d\mathcal{H}^{N-1}\right)  \\
&  \quad=:\mathcal{A}+\mathcal{B}.
\end{align*}
Since $\mathcal{A}\geq0$, it remains to study $\mathcal{B}$. In view of Lemma
\ref{lemma diffeomorphism}, we can use the change of variables $x=\Phi(y,t)$
and Tonelli's theorem to write%
\[
E_{\varepsilon}(u_{\varepsilon},\Omega_{\delta})=\int_{\partial\Omega}\int%
_{0}^{\delta}\left(  \frac{1}{\varepsilon^{2}}W(u_{\varepsilon}(\Phi
(y,t)))+|\nabla u_{\varepsilon}(\Phi(y,t))|^{2}\right)  \omega
(y,t)\,dtd\mathcal{H}^{N-1}(y).
\]
Since $u_{\varepsilon}\in C^{1}(\overline{\Omega})$, if we define%
\[
\tilde{u}_{\varepsilon}(y,t):=u_{\varepsilon}(y+t\nu(y)),
\]
we have that%
\[
\frac{\partial\tilde{u}_{\varepsilon}}{\partial t}(y,t)=\frac{\partial
u_{\varepsilon}}{\partial\nu(y)}(y+t\nu(y)),
\]
and so,%
\begin{align}
&  E_{\varepsilon}(u_{\varepsilon},\Omega_{\delta})-\frac{1}{\varepsilon}%
\int_{\partial\Omega}\operatorname*{d}\nolimits_{W}(g,b)\,d\mathcal{H}%
^{N-1}\label{900}\\
&  \geq\int_{\partial\Omega}\left[  \int_{0}^{\delta}\left(  \frac
{1}{\varepsilon^{2}}W(\tilde{u}_{\varepsilon}(y,t))+\left\vert \frac
{\partial\tilde{u}_{\varepsilon}}{\partial t}(y,t)\right\vert ^{2}\right)
\omega(y,t)\,dt-\frac{1}{\varepsilon}\operatorname*{d}\nolimits_{W}%
(g(y),b)\right]  d\mathcal{H}^{N-1}(y).\nonumber
\end{align}
For $y\in\partial\Omega$, in view of (\ref{902}), we have that%
\begin{equation}
b-C_{\rho}e^{-\mu_{\rho}\delta/(2\varepsilon)}\leq\tilde{u}_{\varepsilon
}(y,\delta)\leq b.\label{903}%
\end{equation}
Let $v_{\varepsilon}^{y}\in H^{1}([0,\delta])$ be the minimizer of the
functional
\[
v\mapsto\int_{0}^{\delta}\left(  \frac{1}{\varepsilon^{2}}W(v(t))+|v^{\prime
}(t)|^{2}\right)  \omega(y,t)\,dt
\]
defined for all $v\in H^{1}([0,\delta])$ such that $v(0)=g_{\varepsilon}(y)$
and $v(\delta)=\tilde{u}_{\varepsilon}(y,\delta)$. In view of
(\ref{g epsilon -g bound}) and (\ref{903}), we can apply Theorem
\ref{theorem liminf second 1d} to find $0<\varepsilon_{0}<1$, $C>0$, and
$l_{0}>1$, depending only on $\alpha_{-}$, $a$, $b$, $\delta$, $\omega$, and
$W$ such that%
\begin{align*}
\psi_{\varepsilon}(y) &  :=\int_{0}^{\delta}\left(  \frac{1}{\varepsilon^{2}%
}W(\tilde{u}_{\varepsilon}(y,t))+\left\vert \frac{\partial\tilde
{u}_{\varepsilon}}{\partial t}(y,t)\right\vert ^{2}\right)  \omega
(y,t)\,dt-\frac{1}{\varepsilon}\operatorname*{d}\nolimits_{W}(b,g(y))\\
&  \geq\int_{0}^{\delta}\left(  \frac{1}{\varepsilon^{2}}W(v_{\varepsilon}%
^{y}(t))+|(v_{\varepsilon}^{y})^{\prime}(t)|^{2}\right)  \omega(y,t)\,dt-\frac
{1}{\varepsilon}\operatorname*{d}\nolimits_{W}(b,g(y))\\
&  \geq2\frac{\partial\omega}{\partial t}(y,0)\int_{0}^{l}W^{1/2}%
(w_{\varepsilon})w_{\varepsilon}^{\prime}s\,ds-Ce^{-l\mu}\left(
l\mu+1\right)  \\&\hspace{5cm}-C l^2\varepsilon^{1/2}-C\varepsilon^{\gamma_{1}}|\log
\varepsilon|^{2+\gamma_{0}}=:\phi_{\varepsilon}(y),
\end{align*}
for all $0<\varepsilon<\varepsilon_{0}$ and $l>l_{0}$, where $w_{\varepsilon
}(s):=v_{\varepsilon}(\varepsilon s)$ for $s\in\lbrack0,\delta\varepsilon
^{-1}]$ satisfies%
\begin{equation}
\lim_{\varepsilon\rightarrow0^{+}}\int_{0}^{l}W^{1/2}(w_{\varepsilon
})w_{\varepsilon}^{\prime}s\,ds=\int_{0}^{l}W^{1/2}(z_{ g(y)})z_{ g(y)
}^{\prime}s\,ds\label{904}%
\end{equation}
for every $l>0$ and where $z_{ g(y)}$ solves the Cauchy problem
\eqref{cauchy problem z alpha} with $\alpha=g(y)$. By Corollary
\ref{corollary bounded derivative}, there exists a constant $C>0$ depending
only on $\alpha_{-}$, $a$, $b$, $\delta$, $\omega$, and $W$ such that
$|w_{\varepsilon}(t)|\leq C$ for all $t\in\lbrack0,\delta\varepsilon^{-1}]$
and for all $0<\varepsilon<\varepsilon_{0}$. Hence, $|\phi_{\varepsilon
}(y)|\leq C_{l}$ for all $y\in\partial\Omega$ and for all $0<\varepsilon
<\varepsilon_{0}$. Since $\psi_{\varepsilon}-C_{l}\geq0$, we can apply Fatou's
lemma to obtain%
\begin{align*}
 & \liminf_{\varepsilon\rightarrow0^{+}}\int_{\partial\Omega}\psi_\varepsilon(y) d\mathcal{H}^{N-1}(y)
\geq\int_{\partial\Omega}\liminf_{\varepsilon\rightarrow0^{+}}\psi_\varepsilon(y) d\mathcal{H}^{N-1}(y)\\
& \hspace{4.1cm}  \geq\int_{\partial\Omega}\liminf_{\varepsilon\rightarrow0^{+}}%
\phi_{\varepsilon}(y)\,d\mathcal{H}^{N-1}(y)\\
&  =\int_{\partial\Omega}2\kappa(y)\left(  \int_{0}^{l}W^{1/2}(z_{g(y)}%
)z_{g(y)}^{\prime}s\,ds-Ce^{-l\mu}\left(  l\mu+1\right)  \right)
\,d\mathcal{H}^{N-1}(y).
\end{align*}
Letting $l\rightarrow\infty$ and using the Lebesgue monotone convergence
theorem for the first term gives%
\begin{align*}
&  \liminf_{\varepsilon\rightarrow0^{+}}\int_{\partial\Omega}\psi_\varepsilon(y)  d\mathcal{H}^{N-1}(y)\\
&  \geq\int_{\partial\Omega}2\kappa(y)\int_{0}^{\infty}W^{1/2}(z_{g(y)}%
(s))z_{g(y)}^{\prime}(s)s\,dsd\mathcal{H}^{N-1}(y).
\end{align*}
Recalling the definition of $\psi_\varepsilon$ concludes the proof. \hfill
\end{proof}

\section{Note Added to Proof}

When this paper was almost complete, we became aware of the paper by Alikakos
and Fusco \cite{alikakos-fusco2023} (and consequently of
\cite{alikakos-zhiyuan2024}, \cite{gazoulis2024}, \cite{sandier-sternberg2024}%
), where they studied the case $g_{\varepsilon}\equiv z_{0}$, where
$z_{0}\notin W^{-1}(\{0\})$, in the vectorial case, that is, when
$W:\mathbb{R}^{m}\rightarrow\lbrack0,\infty)$ with $m\geq1$, and $W$ has a
finite number of wells. In \cite[Lemma 3.1 and Theorem 3.3]%
{alikakos-fusco2023}, the authors proved that there exists $z_{1}\in
W^{-1}(\{0\})$ such that minimizers $u_{\varepsilon}$ of $F_{\varepsilon}$
satisfy the bound
\begin{equation}
\varepsilon\sigma^{+}\mathcal{H}^{N-1}(\partial\Omega)(1-C_{1}\varepsilon
^{1/3})\leq F_{\varepsilon}(u_{\varepsilon})\leq\varepsilon\sigma
^{+}\mathcal{H}^{N-1}(\partial\Omega)+C_{2}\varepsilon^{2},
\label{estimate alikakos-fusco0}%
\end{equation}
where $\sigma^{+}$ is the vectorial version of $\operatorname*{d}%
\nolimits_{W}(z_{0},z_{1})$ and $C_{1}$ and $C_{2}$ are positive constants
independent of $\varepsilon$. Using this estimate, they were able to show
that
\begin{equation}
|u_{\varepsilon}(x)-z_{1}|\leq Ke^{-k(\operatorname*{dist}(x,\partial
\Omega)-C\varepsilon^{1/[3(N-1)]})_{+}/\varepsilon},\quad x\in\Omega,
\label{estimate alikakos-fusco}%
\end{equation}
where $C$, $K$, $k$ are positive constants independent of $\varepsilon$.

In the scalar case $m=1$ we are able to replace
(\ref{estimate alikakos-fusco0}) with the sharp bound (\ref{sharp bound}).

\section*{Acknowledgements}

The research of I. Fonseca was partially supported the National Science
Foundation under grants No. DMS-2205627, DMS-2108784 and DMS-23423490, and that of 
G. Leoni
under grant No. DMS-2108784. The research of L. Kreutz was supported by the DFG through the Emmy Noether Programme (project number 509436910). 

G. Leoni would like to thank R. Murray and I. Tice for useful conversations on
the subject of this paper.
 The authors would like to thank Pascal  Steinke and Francesco Colasanto for carefully reading the manuscript.

\bibliographystyle{abbrv}
\bibliography{modica-mortola-refs}

\begin{thebibliography}{10}

\bibitem{alikakos-fusco2023}
N.~D. Alikakos and G.~Fusco.
\newblock Sharp lower bounds for the vector {A}llen-{C}ahn energy and
  qualitative properties of minimizers under no symmetry hypotheses.
\newblock {\em Bull. Hellenic Math. Soc.}, 67:12--58, 2023.

\bibitem{alikakos-zhiyuan2024}
N.~D. Alikakos and Z.~Geng.
\newblock On the triple junction problem without symmetry hypotheses.
\newblock {\em Arch. Ration. Mech. Anal.}, 248(2):Paper No. 24, 58, 2024.

\bibitem{anzellotti-baldo1993}
G.~Anzellotti and S.~Baldo.
\newblock Asymptotic development by {$\Gamma$}-convergence.
\newblock {\em Appl. Math. Optim.}, 27(2):105--123, 1993.

\bibitem{anzellotti-baldo-orlandi1996}
G.~Anzellotti, S.~Baldo, and G.~Orlandi.
\newblock {$\Gamma$}-asymptotic developments, the {C}ahn-{H}illiard functional,
  and curvatures.
\newblock {\em J. Math. Anal. Appl.}, 197(3):908--924, 1996.

\bibitem{baldi2001}
A.~Baldi et~al.
\newblock Weighted bv functions.
\newblock {\em Houston J. Math}, 27(3):683--705, 2001.

\bibitem{baldo1990}
S.~Baldo.
\newblock Minimal interface criterion for phase transitions in mixtures of
  {C}ahn-{H}illiard fluids.
\newblock {\em Ann. Inst. H. Poincar\'e{} C Anal. Non Lin\'eaire}, 7(2):67--90,
  1990.

\bibitem{bellettini-nayam-novaga2015}
G.~Bellettini, A.-H. Nayam, and M.~Novaga.
\newblock {$\Gamma$}-type estimates for the one-dimensional {A}llen-{C}ahn's
  action.
\newblock {\em Asymptot. Anal.}, 94(1-2):161--185, 2015.

\bibitem{bethuel-brezis-helein1993}
F.~Bethuel, H.~Brezis, and F.~H\'elein.
\newblock Asymptotics for the minimization of a {G}inzburg-{L}andau functional.
\newblock {\em Calc. Var. Partial Differential Equations}, 1(2):123--148, 1993.

\bibitem{braides-book2002}
A.~Braides.
\newblock {\em {$\Gamma$}-convergence for beginners}, volume~22 of {\em Oxford
  Lecture Series in Mathematics and its Applications}.
\newblock Oxford University Press, Oxford, 2002.

\bibitem{caffarelli-cordoba1995}
L.~A. Caffarelli and A.~C\'ordoba.
\newblock Uniform convergence of a singular perturbation problem.
\newblock {\em Comm. Pure Appl. Math.}, 48(1):1--12, 1995.

\bibitem{cristoferi-gravina2021}
R.~Cristoferi and G.~Gravina.
\newblock Sharp interface limit of a multi-phase transitions model under
  nonisothermal conditions.
\newblock {\em Calc. Var. Partial Differential Equations}, 60(4):Paper No. 142,
  62, 2021.

\bibitem{dalmaso-book1993}
G.~Dal~Maso.
\newblock {\em An introduction to {$\Gamma$}-convergence}.
\newblock Progress in Nonlinear Differential Equations and their Applications,
  8. Birkh\"auser Boston, Inc., Boston, MA, 1993.

\bibitem{dalmaso-fonseca-leoni2015}
G.~Dal~Maso, I.~Fonseca, and G.~Leoni.
\newblock Second order asymptotic development for the anisotropic
  {C}ahn-{H}illiard functional.
\newblock {\em Calc. Var. Partial Differential Equations}, 54(1):1119--1145,
  2015.

\bibitem{dephilippis-maggi2015}
G.~De~Philippis and F.~Maggi.
\newblock Regularity of free boundaries in anisotropic capillarity problems and
  the validity of {Y}oung's law.
\newblock {\em Arch. Ration. Mech. Anal.}, 216(2):473--568, 2015.

\bibitem{evans-book2010}
L.~Evans.
\newblock {\em Partial differential equations}, volume~19 of {\em Graduate
  Studies in Mathematics}.
\newblock American Mathematical Society, Providence, RI, 1998.

\bibitem{fonseca-kreutz-leoni2025II}
I.~Fonseca, L.~Kreutz, and G.~Leoni.
\newblock Second-order {$\Gamma$}-limit for the {C}ahn--{H}illiard functional
  with {D}irichlet boundary conditions, {II}, 2025.

\bibitem{fonseca-liu2017}
I.~Fonseca and P.~Liu.
\newblock The weighted ambrosio--tortorelli approximation scheme.
\newblock {\em SIAM Journal on Mathematical Analysis}, 49(6):4491--4520, 2017.

\bibitem{fonseca-tartar1989}
I.~Fonseca and L.~Tartar.
\newblock The gradient theory of phase transitions for systems with two
  potential wells.
\newblock {\em Proceedings of the Royal Society of Edinburgh: Section A
  Mathematics}, 111(1-2):89--102, 1989.

\bibitem{gazoulis2024}
D.~Gazoulis.
\newblock On the {$ \Gamma $}-convergence of the {A}llen-{C}ahn functional with
  boundary conditions, 2024.

\bibitem{lee-book2013}
J.~M. Lee.
\newblock {\em Introduction to smooth manifolds}, volume 218 of {\em Graduate
  Texts in Mathematics}.
\newblock Springer, New York, second edition, 2013.

\bibitem{leoni-murray2016}
G.~Leoni and R.~Murray.
\newblock Second-{O}rder {$\Gamma$}-limit for the {C}ahn--{H}illiard
  {F}unctional.
\newblock {\em Arch. Ration. Mech. Anal.}, 219(3):1383--1451, 2016.

\bibitem{leoni-murray2019}
G.~Leoni and R.~Murray.
\newblock Local minimizers and slow motion for the mass preserving
  {A}llen-{C}ahn equation in higher dimensions.
\newblock {\em Proc. Amer. Math. Soc.}, 147(12):5167--5182, 2019.

\bibitem{modica1987}
L.~Modica.
\newblock The gradient theory of phase transitions and the minimal interface
  criterion.
\newblock {\em Arch. Rational Mech. Anal.}, 98(2):123--142, 1987.

\bibitem{modica-mortola1977}
L.~Modica and S.~Mortola.
\newblock Un esempio di {$\Gamma$}-convergenza.
\newblock {\em Boll. Un. Mat. Ital. B (5)}, 14(1):285--299, 1977.

\bibitem{niethammer1995}
B.~S. Niethammer.
\newblock Existence and uniqueness of radially symmetric stationary points
  within the gradient theory of phase transitions.
\newblock {\em European Journal of Applied Mathematics}, 6(1):45–67, 1995.

\bibitem{owen-rubinstein-sternberg1990}
N.~C. Owen, J.~Rubinstein, and P.~Sternberg.
\newblock Minimizers and gradient flows for singularly perturbed bi-stable
  potentials with a {D}irichlet condition.
\newblock {\em Proc. Roy. Soc. London Ser. A}, 429(1877):505--532, 1990.

\bibitem{sandier-sternberg2024}
E.~Sandier and P.~Sternberg.
\newblock Allen-cahn solutions with triple junction structure at infinity,
  2024.

\bibitem{sternberg1988}
P.~Sternberg.
\newblock The effect of a singular perturbation on nonconvex variational
  problems.
\newblock {\em Arch. Rational Mech. Anal.}, 101(3):209--260, 1988.

\bibitem{sternberg-zumbrun1998}
P.~Sternberg and K.~Zumbrun.
\newblock Connectivity of phase boundaries in strictly convex domains.
\newblock {\em Archive for Rational Mechanics and Analysis}, 141(4):375--400,
  1998.

\bibitem{gerald-book2012}
G.~Teschl.
\newblock {\em Ordinary differential equations and dynamical systems}, volume
  140 of {\em Graduate Studies in Mathematics}.
\newblock American Mathematical Society, Providence, RI, 2012.

\end{thebibliography}

\end{document}